\numberwithin{equation}{section}
\numberwithin{figure}{section}
\newtheoremstyle{theoremstyle}
  {10pt}      %  Space above
  {5pt}       %  Space below
  {\itshape}  %  Body font
  {}          %  Indent amount (empty = no indent, \parindent = para indent)
  {\bfseries} %  Thm head font
  {:}         %  Punctuation after thm head
  {.5em}      %  Space after thm head: " " = normal interword space;
\newtheoremstyle{examplestyle}
  {10pt}      %  Space above
  {5pt}       %  Space below
  {}          %  Body font
  {}          %  Indent amount (empty = no indent, \parindent = para indent)
  {\bfseries} %  Thm head font
  {:}         %  Punctuation after thm head
  {.5em}      %  Space after thm head: " " = normal interword space;
\theoremstyle{theoremstyle}
\newtheorem{theorem}{Theorem}[section]
\newtheorem*{theorem*}{Theorem}
\newtheorem{lemma}[theorem]{Lemma}
\newtheorem{proposition}[theorem]{Proposition}
\newtheorem*{proposition*}{Proposition}
\newtheorem{corollary}[theorem]{Corollary}
\newtheorem*{corollary*}{Corollary}
\theoremstyle{examplestyle}
\newtheorem{definition}[theorem]{Definition}
\newtheorem{definition*}{Definition}
\newtheorem{remark}[theorem]{Remark}
\newtheorem{remark*}{Remark}
\let\smash=\wedge
\let\iso=\cong
\let\directsum=\oplus
\let\minus=\smallsetminus
\let\ul=\underline
\definecolor{sq3sq1color}{rgb}{0.5,0,0.5}
\definecolor{sq2color}{rgb}{0.1,0.7,0.1}
\colorlet{taucolor}{red}
\colorlet{partialsq2sq1color}{sq3sq1color!53!black}
\colorlet{incsq2sq1color}{sq3sq1color!67!green}
\colorlet{sq2rhosq1color}{taucolor!45!sq2color}
\colorlet{sq2prcolor}{sq2color!53!black}
\colorlet{incsq2color}{sq2color!67!yellow}
\colorlet{sq2partialcolor}{sq2color!42!blue}
\colorlet{tauprcolor}{taucolor!53!black}
\colorlet{taupartialcolor}{taucolor!42!yellow}
\newcommand{\unit}{\mathbf{1}}
\newcommand{\SH}{\mathbf{SH}}
\newcommand{\forget}{\mathrm{forget}}
\newcommand{\hyper}{\mathsf{h}}
\newcommand{\bideg}{{(\star)}}
\newcommand{\im}{\operatorname{im}}
\newcommand{\holim}{\operatornamewithlimits{holim}}
\newcommand{\Ext}{{\operatorname{Ext}}}
\newcommand{\Spec}{{\operatorname{Spec}}}
\newcommand{\colim}{\operatornamewithlimits{colim}}
\newcommand{\PP}{\mathbf{P}}
\newcommand{\MGL}{\mathbf{MGL}}
\newcommand{\MU}{\mathbf{MU}}
\newcommand{\BP}{\mathbf{BP}}
\newcommand{\KQ}{\mathbf{KQ}}
\newcommand{\kq}{\mathbf{kq}}
\newcommand{\ko}{\mathbf{ko}}
\newcommand{\KW}{\mathbf{KW}}
\newcommand{\kw}{\mathbf{kw}}
\newcommand{\KGL}{\mathbf{KGL}}
\newcommand{\kgl}{\mathbf{kgl}}
\newcommand{\ku}{\mathbf{ku}}
\newcommand{\E}{\mathsf{E}}
\newcommand{\M}{\mathbf{M}}
\newcommand{\Witt}{\mathbf{W}}
\newcommand{\fundid}{\mathbf{I}}
\newcommand{\dd}{\mathsf{d}}
\newcommand{\s}{\mathsf{s}}
\newcommand{\f}{\mathsf{f}}
\newcommand{\image}{\mathrm{Im}}
\newcommand{\Mil}{\mathsf{M}}
\newcommand{\MilWitt}{\mathsf{MW}}
\newcommand{\KMil}{\mathbf{K}^{\Mil}}
\newcommand{\kmil}{\mathbf{k}^{\Mil}}
\newcommand{\kerpart}{\mathbf{d}}
\newcommand{\KMW}{\mathbf{K}^{\MilWitt}}
\newcommand{\NN}{{\mathbb N}}
\newcommand{\ZZ}{{\mathbb Z}}
\newcommand{\QQ}{\mathbb{Q}}
\newcommand{\Z}{\mathbb{Z}}
\newcommand{\RR}{\mathbb{R}}
\newcommand{\CC}{\mathbb{C}}
\newcommand{\sphere}{\mathbb{S}}
\newcommand{\pr}{{\mathrm{pr}}}
\newcommand{\id}{{\mathrm{id}}}
\newcommand{\inc}{{\mathrm{inc}}}
\newcommand{\A}{\mathbf{A}}
\newcommand{\Char}{\mathsf{char}}
\newcommand{\MZ}{\mathbf{M}\mathbb{Z}}
\newcommand{\HZ}{\mathbf{H}\mathbb{Z}}
\newcommand{\Sq}{\mathsf{Sq}}
\newcommand{\Top}{\mathsf{top}}
\newcommand{\Sm}{\mathbf{Sm}}
\newcommand{\slicecomp}{\mathsf{sc}}
\newcommand{\Hom}{\operatorname{Hom}}
\title{{\bf The second stable homotopy groups \\ of motivic spheres}}
\author{Oliver R\"ondigs, Markus Spitzweck, Paul Arne {\O}stv{\ae}r}
\date{April 1, 2021} 
\begin{document}
\maketitle
\begin{abstract}
We compute the $2$-line of stable homotopy groups of motivic spheres over fields of characteristic not two in terms 
of motivic cohomology and hermitian $K$-groups.
\end{abstract}

{\footnotesize{\tableofcontents}}
\newpage

\section{Introduction}
\label{section:introduction}

One of the fundamental problems in motivic homotopy theory over a field $F$ is to calculate the homotopy groups 
$\pi_{t,w}\unit$ of the motivic sphere spectrum $\unit$.
Here the integers $t,w\in\ZZ$ refer to the topological degree and the weight, respectively.
In a precise sense, 
these groups are the universal motivic invariants because the motivic sphere is the unit for the tensor product 
on motivic spectra.
All the relations witnessed in $\pi_{\ast,\ast}\unit$ hold in every other theory representable in the 
stable motivic homotopy category, 
such as algebraic cobordism, algebraic and hermitian $K$-theory, motivic cohomology, and higher Witt theory.  
By work of Morel \cite{MorelICM2006}, 
the group $\pi_{s,t}\unit=0$ if $s<t$ and the so-called $0$-line $\bigoplus_{n\in\ZZ}\pi_{n,n}\unit$ is the 
Milnor-Witt $K$-theory $\KMW(F)$ of $F$, 
which incorporates a great deal of arithmetic information about the base field.
It turns out that the Grothendieck-Witt group of isomorphism classes of symmetric bilinear forms features 
very naturally in this calculation; 
a surprising fact since the construction of motivic homotopy theory does not involve quadratic forms.
Over the complex numbers, 
the Betti realization functor witnesses all the classical stable homotopy groups as the weight zero part of the 
corresponding groups of the motivic sphere \cite{levine.comparison}.
The key inputs enabling the connection between motivic homotopy groups and quadratic forms are the resolutions 
of the Bloch-Kato, and Milnor conjectures on Galois cohomology and quadratic forms in
\cite{Voevodsky.reduced},  \cite{ovv}, and \cite{MR2811603}.

In \cite{rso.oneline} we extended these calculations to the $1$-line $\bigoplus_{n\in\ZZ}\pi_{n+1,n}\unit$ 
of the stable motivic homotopy groups, 
for every field $F$ of characteristic unequal to two.
Our approach makes a systematic analysis of the slice spectral sequences for the sphere and hermitian $K$-theory 
\cite{voevodsky.open}, \cite{roendigs-oestvaer.hermitian}.
To "turn the pages" in the slice spectral sequence for the sphere, we calculate sufficiently many differentials 
to deduce that it collapses at its $E^{2}$-page,
at least as far as the $1$-line is concerned.
A case-by-case analysis achieves this, which combines Voevodsky's description of the motivic Steenrod algebra 
\cite{Voevodsky.reduced}, \cite{hko.positivecharacteristic}, 
with the observation that the slice differentials induce graded Milnor $K$-theory $\KMil(F)$-module 
maps \cite{milnor.k-quadratic}.

This paper deepens our understanding of the stable motivic homotopy groups by means of an explicit calculation of 
the $2$-line $\bigoplus_{n\in\ZZ}\pi_{n+2,n}\unit$ over fields of characteristic different from two.
To that end, 
we develop powerful computational techniques taking into account the Milnor-Witt $K$-theory module structure on 
successive stages of the slice filtration.
Owing to a comparison between the motivic sphere and very effective hermitian $K$-theory, 
we also strengthen the main result in \cite{rso.oneline}.
Moving up from the $1$- to the $2$-line increases the computational complexity, 
which is rooted in the problem of controlling mod-$n$ motivic cohomology groups as $n$ increases.

Suppose $S$ is a base scheme and $\Lambda$ is a localization of $\mathbb{Z}$ such that $(S,\Lambda)$ 
is a compatible pair in the sense of \cite[\S2.1]{rso.oneline},
e.g., 
$S$ is smooth over a field or a Dedekind domain of mixed characteristic such that every positive residue characteristic 
of $S$ is invertible in $\Lambda$.
Later we will restrict to the case of a field $F$ of exponential characteristic $c\geq 1$ and 
$\Lambda=\mathbb{Z}[\tfrac{1}{c}]$.
By Theorem 2.12 in \cite{rso.oneline} the slices of the $\Lambda$-local sphere spectrum are given by a 
finite wedge product of suspensions of motivic Eilenberg-MacLane spectra
\begin{equation}
\label{equation:sphereslices}
\s_{q}(\unit_{\Lambda})
\cong
\bigvee_{p\geq 0} \Sigma^{2q-p,q}\mathbf{M}(\Lambda \otimes\Ext_{\MU_{\ast}\MU}^{p,2q}(\MU_{\ast},\MU_{\ast})).
\end{equation}
Here, 
the extension groups are calculated in comodules over the Hopf algebroid for the cobordism spectrum $\MU$;
these form the $E^{2}$-page of the classical Adams-Novikov spectral sequence \cite{ravenel.green}.
Voevodsky's slice spectral sequence \cite{voevodsky.open} takes \eqref{equation:sphereslices} as its input
\begin{equation}
\label{equation:splicespectralsequence}
E^{1}_{t,q,w}=
\pi_{t,w}\s_{q}(\unit_{\Lambda})
\Longrightarrow
\pi_{t,w}\unit_{\Lambda}.
\end{equation}

To formulate our results, we compare the motivic sphere spectrum with hermitian $K$-theory $\KQ$, or rather its
very effective cover $\kq$. Over a perfect field, 
$\kq$ coincides with the effective cover $\f_{0}(\KQ_{\geq 0})$ with respect to the slice filtration by $\kq$. 
Here $\KQ_{\geq 0}$ is the 
connective cover of $\KQ$ with respect to the homotopy $t$-structure on the stable motivic homotopy category.
For a compatible pair $(S,\Lambda)$ as above, 
the slices of $\kq$ have the following form.
\begin{equation}
\label{equation:slices-kq} 
\s_{q}(\kq_{\Lambda})
\cong
\begin{cases}
\Sigma^{2q,q}\mathbf{M}\Lambda
\vee  
\bigvee_{0\leq i<\frac{q}{2}}\Sigma^{2i+q,q}\mathbf{M}\Lambda/2 & 0\leq q\equiv 0\bmod 2 \\
\bigvee_{0\leq i<\frac{q+1}{2}}\Sigma^{2i+q,q}\mathbf{M}\Lambda/2 & 0\leq q\equiv 1\bmod 2\\
\ast & q<0
\end{cases}
\end{equation}
The canonical map $\kq_{\Lambda}\to \KQ_{\Lambda}$ induces an inclusion on all slices.
The infinite wedge sum in the identification of $\s_{q}(\KQ_{\Lambda})$ in 
\cite[Theorem 4.18]{roendigs-oestvaer.hermitian}, \cite[Theorem 2.15]{rso.oneline} 
explains to some extent why we prefer to use $\kq_{\Lambda}$ in our comparison with $\unit_{\Lambda}$.
\vspace{0.1in}

A refinement of \cite[Theorem 5.6]{rso.oneline}, see Section \ref{section:1line}, yields the following calculation.
\begin{theorem}
\label{theorem:1line}
Let $F$ be a field of characteristic zero.
For every integer $n$,
the unit map $\unit\to \kq$ induces a short exact sequence
\[ 
0 
\rightarrow 
\KMil_{2-n}(F)/24
\rightarrow 
\pi_{n+1,n} \unit
\rightarrow 
\pi_{n+1,n}\kq
\rightarrow
0.
\]
\end{theorem}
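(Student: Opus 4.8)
\noindent\emph{Strategy.} Because $\Char F=0$ we may take $\Lambda=\mathbb{Z}$, so that \eqref{equation:sphereslices} and \eqref{equation:slices-kq} give, in low weights, $\s_0(\unit)=\MZ$, $\s_1(\unit)=\Sigma^{1,1}\MZ/2$, $\s_2(\unit)=\Sigma^{2,2}\MZ/2\vee\Sigma^{3,2}\mathbf{M}(\mathbb{Z}/24)$ (the summand $\Sigma^{3,2}\mathbf{M}(\mathbb{Z}/24)$ coming from $\mathbb{Z}\otimes\Ext^{1,4}_{\MU_\ast\MU}(\MU_\ast,\MU_\ast)=\mathbb{Z}/24$), while $\s_0(\kq)=\MZ$, $\s_1(\kq)=\Sigma^{1,1}\MZ/2$, and $\s_2(\kq)=\Sigma^{4,2}\MZ\vee\Sigma^{2,2}\MZ/2$. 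The plan is to compare the slice spectral sequence \eqref{equation:splicespectralsequence} for $\unit$ with its analogue for $\kq$ along the unit map $\unit\to\kq$, restricted to the $1$-line $\pi_{n+1,n}$.

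First I would recall the analysis of \eqref{equation:splicespectralsequence} on the $1$-line from the proof of \cite[Theorem 5.6]{rso.oneline}: there all slice differentials into and out of the columns contributing to total degree $(n+1,n)$ are determined, the columns $q\geq 3$ are shown not to survive in this total degree, and $\pi_{n+1,n}\unit$ is assembled from the surviving pieces of $\s_0,\s_1,\s_2$, with the injectivity of $\KMil_{2-n}(F)/24\to\pi_{n+1,n}\unit$ established. The summand $\Sigma^{3,2}\mathbf{M}(\mathbb{Z}/24)$ of $\s_2(\unit)$ contributes precisely $\pi_{n+1,n}\Sigma^{3,2}\mathbf{M}(\mathbb{Z}/24)=H^{2-n,2-n}(F,\mathbb{Z}/24)=\KMil_{2-n}(F)/24$, and counting bidegrees shows that this is the only contribution to the $1$-line of $\unit$ from a slice summand not matched by a summand of the corresponding slice of $\kq$.

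Next I would run the slice spectral sequence for $\kq$ out of \eqref{equation:slices-kq}, using that the summands $\Sigma^{2q,q}\MZ$ (present only for even $q$) contribute $H^{2q-n-1,q-n}(F,\mathbb{Z})$, which vanishes for $q\geq 1$ by the dimension bounds on motivic cohomology and agrees with the $\s_0(\unit)$-contribution for $q=0$, while every remaining summand is a shift of $\MZ/2$ whose $1$-line contribution is concentrated in the columns $q\leq 2$; the needed differentials follow from those for $\unit$ by functoriality of the slice filtration, together with the hermitian differentials of \cite{roendigs-oestvaer.hermitian}. (Here is where using $\kq$ instead of $\KQ$ pays off, since \eqref{equation:slices-kq} is a \emph{finite} wedge.) Then I would carry out the comparison: the unit map is a morphism of slice towers, hence induces a morphism of spectral sequences which on $E^1$ is $\pi_{t,w}$ applied to the slice maps $\s_q(\unit)\to\s_q(\kq)$; this is the identity on $\s_0=\MZ$, an isomorphism of the summand $\Sigma^{1,1}\MZ/2\subseteq\s_1$ onto $\s_1(\kq)$ and of $\Sigma^{2,2}\MZ/2\subseteq\s_2$ onto the like-named summand of $\s_2(\kq)$ (identified through the same classes in the motivic Steenrod algebra as in \cite{rso.oneline}), and the zero map on $\Sigma^{3,2}\mathbf{M}(\mathbb{Z}/24)\subseteq\s_2(\unit)$, since $\mathbf{M}(\mathbb{Z}/24)$ admits no nonzero map into $\Sigma^{4,2}\MZ\vee\Sigma^{2,2}\MZ/2$ in the relevant bidegrees. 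Comparing $E^\infty$-pages, the induced map $\pi_{n+1,n}\unit\to\pi_{n+1,n}\kq$ is surjective with kernel exactly the image of $\KMil_{2-n}(F)/24$, which is the asserted short exact sequence.

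The step I expect to be the main obstacle is the comparison itself, and specifically the bookkeeping that makes it exact on the nose: checking that the slice maps on the mod-$2$ summands are isomorphisms and compatible with \emph{all} differentials meeting the $1$-line, confirming that the $\MZ$-summands of $\s_q(\kq)$ with no preimage in $\s_q(\unit)$ really contribute nothing in total degree $(n+1,n)$, and ensuring that no residual extension ambiguity intervenes so that the injectivity of $\KMil_{2-n}(F)/24\to\pi_{n+1,n}\unit$ coming from \cite{rso.oneline} passes through to the refined statement.
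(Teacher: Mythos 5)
Your overall strategy (compare the two slice spectral sequences along the unit map) is the paper's strategy, but the bookkeeping contains a factual error that breaks the argument at its central point. The group $\Ext^{1,4}_{\MU_\ast\MU}(\MU_\ast,\MU_\ast)$ is $\ZZ/12$, not $\ZZ/24$, so $\s_2(\unit)=\Sigma^{2,2}\MZ/2\vee\Sigma^{3,2}\MZ/12$ (see Lemma~\ref{lem:unit-kq-s2} and Figure~\ref{fig:d1-sphere}). Consequently the summand $\KMil_{2-n}/24$ of the kernel is \emph{not} the contribution of a single slice summand: it is assembled from $h^{2-n,2-n}_{12}$ in filtration $2$ (detected by $\alpha_{2/2}$) and a filtration-$3$ piece $h^{2-n,3-n}/\tau\partial^{12}_{2}h^{1-n,2-n}_{12}$ coming from the summand $\Sigma^{3,3}\MZ/2\{\alpha_1^3\}$ of $\s_3(\unit)$, glued by a nontrivial additive extension --- the motivic analogue of the classical fact that $\pi_3^s\cong\ZZ/24$ is built in the Adams--Novikov spectral sequence from $\ZZ/12\{\alpha_{2/2}\}$ and $\ZZ/2\{\alpha_1^3\}$. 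Your claim that "counting bidegrees shows that this is the only contribution ... not matched by a summand of $\kq$" is therefore false: the $q=3$ column contributes to the kernel as well, because $E^{2}_{-n+1,3,-n}(\kq)=0$ (the entering $d^1$ for $\kq$ contains a surjective $\tau\pr^\infty_2$ from the integral summand $\Sigma^{4,2}\MZ$, Lemma~\ref{lem:e2-kq-1>2}) while $E^{2}_{-n+1,3,-n}(\unit)\neq 0$ in general (the entering map is only $\tau\partial^{12}_2$). Resolving this extension is exactly the content of \cite[Theorem 5.5]{rso.oneline}, which the paper invokes; it is not a "residual ambiguity" one can wave away.

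Two further points. First, your assertion that $\s_2(\unit)\to\s_2(\kq)$ vanishes on the torsion summand because "$\mathbf{M}(\ZZ/24)$ admits no nonzero map into $\Sigma^{4,2}\MZ\vee\Sigma^{2,2}\MZ/2$ in the relevant bidegrees" is wrong: there is the integral Bockstein $\partial^{12}_{\infty}\colon\Sigma^{3,2}\MZ/12\to\Sigma^{4,2}\MZ$, and Lemma~\ref{lem:unit-kq-s2} shows the unit map restricts to a unit multiple of it (pinning down that unit requires the comparison with $\MGL$). This map happens to induce zero on $E^1_{-n+1,2,-n}$ only because its target $H^{n+3,n+2}$ vanishes for degree reasons, not because the map of spectra is zero. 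Second, you do not address convergence: the slice spectral sequence computes $\pi_\ast$ of the slice completion, and identifying this with $\pi_{n+1,n}\unit$ requires the $\eta$-arithmetic square together with the vanishing $\pi_1\unit[\tfrac{1}{\eta}]=0$ from \cite{roendigs.etainv} and the $E^2$-vanishing $\pi_{1+(m)}\unit^\wedge_\eta=0$ for $m\geq 3$; for $\kq$ one needs Proposition~\ref{prop:eff-kq-conv} (valid since $1\not\equiv 0,3\bmod 4$). These steps are carried out at the start of Section~\ref{section:1line} and cannot be omitted.
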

In particular, this proves Morel's $\pi_{1}$-conjecture, 
i.e., 
there is a short exact sequence
\begin{equation}
\label{eq:first-stable-stem2} 
0 
\to 
\KMil_{2}(F)/24 
\to 
\pi_{1,0}\unit 
\to 
F^{\times}/2\oplus\Z/2
\to
0.
\end{equation}
With reference to the $\KMW(F)$-module structure, 
the second motivic Hopf map $\nu\in\pi_{3,2}\unit$ generates $\mathbf{K}^\M_{2}(F)/24$ 
and the topological Hopf map $\eta_{\Top}\in\pi_{1,0}\unit$ generates $F^{\times}/2\oplus\Z/2$.
These generators are subject to the relations $2\eta_\Top=0$  and 
$\eta^{2}\eta_{\Top}=6(1-\epsilon)\nu=12\nu$, implying
$24\nu=0$ (as predicted by a motivic analogue of the Adams conjecture in this degree).
Here $\eta\in\pi_{1,1}\unit$ and $\hyper=1-\epsilon\in\pi_{0,0}\unit$ are the first and zeroth motivic Hopf maps.
By specializing to the field of real numbers $\RR$, 
we can distinguish between the elements $\rho^{2}\eta_{\mathrm{top}}$ and $\rho^4\nu$ in the integral group 
$\pi_{-1,-2}\unit_{\RR}$, 
which cannot be witnessed after 2-completion according to \cite{dugger-isaksen.real}.
On the other hand we have $\rho^3\eta_{\mathrm{top}} = \rho^5\nu$ in $\pi_{-2,-3}\unit_{\RR}$.
\vspace{0.1in}

Theorem \ref{theorem:1line} holds more generally over any field $F$ of exponential characteristic $c\neq 2$.
For $\Lambda=\mathbb{Z}[\tfrac{1}{c}]$ there is an exact sequence
\begin{equation}
\label{equation:first-stable-stem2}
0 
\rightarrow 
\KMil_{2-n}(F)\otimes \Lambda/24
\rightarrow 
\pi_{n+1,n} \unit_{\Lambda}
\rightarrow 
\pi_{n+1,n}\kq_{\Lambda}
\rightarrow
0.
\end{equation}
It is unclear whether the restriction on $c$ is necessary.
To begin with, 
an identification of the motivic Steenrod algebra at the characteristic, 
extending \cite{Voevodsky.reduced} and \cite{hko.positivecharacteristic}, 
may be required for answering this question.
\vspace{0.1in}

In this paper we continue the work in \cite{rso.oneline} by calculating the $2$-line.
\begin{theorem}
\label{theorem:2line}
Let $F$ be a field of characteristic zero.
For every integer $n$,
the unit map $\unit\to \kq$ induces a short exact sequence
\[ 
0 
\rightarrow 
H^{1-n,2-n}(F)/24\directsum \KMil_{4-n}(F)/2
\rightarrow 
\pi_{n+2,n} \unit
\rightarrow 
\pi_{n+2,n}\kq.
\] 
In particular, 
$\pi_{n+2,n}\unit = 0$ for $n\geq 5$ and $\pi_{6,4}\unit\iso \ZZ/2$.
\end{theorem}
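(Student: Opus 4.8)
\emph{Proof plan.} We would extend the method of \cite{rso.oneline} for the $1$-line. As $F$ has characteristic zero, take $\Lambda=\ZZ$ and work with Voevodsky's slice spectral sequence \eqref{equation:splicespectralsequence} for $\unit$. By \eqref{equation:sphereslices}, applying $\pi_{n+2,n}$ to the $q$-th slice gives $\bigoplus_{p\geq 0}H^{2q-p-n-2,\,q-n}\bigl(F;\Ext^{p,2q}_{\MU_{\ast}\MU}(\MU_{\ast},\MU_{\ast})\bigr)$. Over a field such a group vanishes unless $0\leq 2q-p-n-2\leq q-n$, that is unless $2q-p\geq n+2$ and $q\leq p+2$; together with the vanishing line $\Ext^{p,2q}=0$ for $q<p$ and the finiteness, in each of the three strips $q-p\in\{0,1,2\}$, of the set of columns $p$ carrying a nonzero Adams-Novikov $E_{2}$-group, this forces only finitely many slices $\s_{q}(\unit)$ to contribute to $\pi_{n+2,n}\unit$, with coefficients in a short explicit list of finite abelian groups --- copies of $\ZZ/2$ together with $\ZZ/24$. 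Via the Beilinson-Lichtenbaum isomorphism, the $E^{1}$-page in total degree $(n+2,n)$ becomes an explicit patchwork of groups $\KMil_{\ast}(F)/m$ and $H^{\ast,\ast+1}(F;\ZZ/m)$.

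The heart of the proof would be the computation of the slice differentials in this range. Two structural facts carry most of the load. First, every slice differential is a morphism of modules over $\KMW(F)$ --- indeed over the motivic cohomology ring of $\Spec F$ --- so it is pinned down by its values on the few module generators situated in low weight. Second, Voevodsky's computation of the motivic Steenrod algebra controls the mod-$2$ part of those generating differentials, while the known $0$-line $\KMW(F)$ and $1$-line (Theorem \ref{theorem:1line}) remove the remaining ambiguity; multiplicativity in $\eta$ and in $\hyper=1-\epsilon$ then propagates everything across the page. Many of the differentials vanish for weight reasons and the nontrivial ones record relations such as $24\nu=0$. Afterwards the $E^{\infty}$-page in degree $(n+2,n)$ is determined, and the residual slice-filtration extension is again constrained by the $\KMW(F)$-module structure. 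The hard part will be precisely this: ruling out exotic differentials that might truncate or hit the classes that ought to survive to $E^{\infty}$. As the introduction signals, the obstruction is that the mod-$24$ (and higher mod-$m$) motivic cohomology groups now entering are markedly harder to keep track of than the mod-$2$ groups that dominated the $1$-line.

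For the short exact sequence we would compare this spectral sequence with that of $\kq$ through the unit map $\unit\to\kq$, which induces a morphism of slice spectral sequences respecting all module structures. The slices of $\kq$ are given by \eqref{equation:slices-kq}, and its slice differentials --- together with the identification of $\pi_{\ast,\ast}\kq$ with very effective hermitian $K$-groups --- are available from \cite{roendigs-oestvaer.hermitian}, \cite{rso.oneline}. Tracking which classes surviving in the slice spectral sequence of $\unit$ are sent to classes that support a differential (hence vanish) in that of $\kq$, one identifies $\ker\bigl(\pi_{n+2,n}\unit\to\pi_{n+2,n}\kq\bigr)$ as the subgroup detected on the $\Ext^{1,4}$-summand of $\s_{2}(\unit)$ and the $\Ext^{2,8}$-summand of $\s_{4}(\unit)$; carrying out the attendant Bockstein bookkeeping in motivic cohomology turns this into $H^{1-n,2-n}(F)/24\directsum\KMil_{4-n}(F)/2$. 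Injectivity into $\pi_{n+2,n}\unit$ follows because the slice filtration admits no differential out of this bidegree in the relevant range.

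Finally, the displayed consequences drop out. For $n\geq 5$ the constraints above confine every coefficient group $\Ext^{p,2q}$ that could contribute to the empty near-edge region of the Adams-Novikov $E_{2}$-page, so the $E^{1}$-page of \eqref{equation:splicespectralsequence} already vanishes in degree $(n+2,n)$ and hence $\pi_{n+2,n}\unit=0$. For $n=4$ the $E^{1}$-page in degree $(6,4)$ has, after the differentials, the single surviving contribution $\KMil_{0}(F)/2\iso\ZZ/2$ --- the $H^{1-n,2-n}$-summand vanishing as $1-n<0$ --- so $\pi_{6,4}\unit\iso\ZZ/2$, generated by $\nu^{2}$.
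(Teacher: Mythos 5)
Your overall strategy---compute the relevant columns of the slice spectral sequence for $\unit$, kill the higher differentials using the $\KMil$/$\KMW$-module structure and the motivic Steenrod algebra, and compare with $\kq$ via the unit map to isolate the kernel---is the same as the paper's. But there is one genuine gap and one incorrect claim. The gap is convergence: the slice spectral sequence converges (conditionally) to $\pi_{*}\slicecomp(\unit)\simeq\pi_{*}\unit^{\wedge}_{\eta}$, not to $\pi_{*}\unit$, so ``no differential out of this bidegree'' does not by itself give injectivity into $\pi_{n+2,n}\unit$. The paper has to prove separately (Theorem~\ref{thm:twoline-slicecomp}) that $\pi_{n+2,n}\unit\to\pi_{n+2,n}\unit^{\wedge}_{\eta}$ is an isomorphism, which uses the $\eta$-arithmetic square, the vanishing of $\pi_{2}\unit[\tfrac{1}{\eta}]$ from the $\eta$-inverted sphere, and a nontrivial surjectivity statement for $\pi_{3}\unit^{\wedge}_{\eta}\directsum\pi_{3}\unit[\tfrac{1}{\eta}]\to\pi_{3}\unit^{\wedge}_{\eta}[\tfrac{1}{\eta}]$ whose proof hinges on identifying the generator of the relevant cokernel as the image of the third Hopf map $\sigma$ (Lemmas~\ref{lem:cok-eta-arithmetic-leq3}--\ref{lem:connecting-eta-arithmetic-surj}). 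Your plan never addresses this, and without it the short exact sequence is only known for the $\eta$-completed sphere.

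Two further points. First, your claim that for $n\geq 5$ ``the $E^{1}$-page already vanishes in degree $(n+2,n)$'' is false: entries such as $h^{0,0}$ generated by $\alpha_{1}^{q}$ persist on $E^{1}$ in that column for arbitrarily large $n$; they die only on $E^{2}$ via the $\tau$-multiplication $d^{1}$-differentials (Lemma~\ref{lem:e2-unit-2>4}), and the conclusion $\pi_{n+2,n}\unit=0$ also needs $\pi_{2+(n)}\kq=0$ for $n\geq 3$. Second, while ``differentials are $\KMW$-module maps pinned down on generators'' is the right slogan, the actual vanishing arguments (Lemmas~\ref{lem:unit-2nd-diff-to-24}, \ref{lem:unit-2nd-diff-to-23}, \ref{lem:pi3MZmapstrivially}) require base change to prime fields, real and complex realization, the transfer argument of \cite{ovv} for the kernel of multiplication by $\rho^{2}$, cohomological-dimension bounds, and a filtration-by-filtration comparison of the $\KMW$-modules $\pi_{2}\f_{q}\unit$ and $\pi_{2}\f_{q}\kq$; likewise the passage from the $E^{\infty}$-term $H^{1-n,2-n}/12$ to the summand $H^{1-n,2-n}/24$ is a nontrivial extension problem resolved via the Wood cofiber sequence for $\kq$, not mere Bockstein bookkeeping. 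As a plan your proposal points in the right direction, but these are the places where it would not yet compile into a proof.
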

Here $H^{1-n,2-n}(F)$ refers to integral motivic cohomology.
Its mod-$24$ reduction maps canonically to $\pi_{n+2,n}\unit$, 
which contains a quotient of $h^{1-n,2-n}_{24}$.  
Theorem \ref{theorem:2line} holds more generally for fields with $\Lambda=\mathbb{Z}[\tfrac{1}{c}]$-coefficients, 
$c\neq 2$, 
as in \eqref{equation:first-stable-stem2}.
The $\KMW(F)$-module $\mathbf{K}^{\M}_{4-n}/2$ is generated by the motivic Hopf map $\nu^{2}$. 
On the $2$-line, we find the relation
\[ 
\rho^{2}\nu^{2} = \eta_{\mathrm{top}}\nu\in \pi_{4,2}\unit. 
\]

For $n=2$, 
the rightmost map in Theorem~\ref{theorem:2line} is not surjective since $\pi_{4,2}\unit$ is torsion and 
$\pi_{4,2}\kq$ is isomorphic to the zeroth symplectic Grothendieck group $\mathbf{K}^{\mathrm{Sp}}_{0}(F) \cong 2\ZZ$ of even integers. 
For $n=1$,
there is a split short exact sequence
\[ 
0 \to \mathbf{K}^{\M}_{3}(F)/2 \to \pi_{3,1}\unit \to \mu_{24} (F)\to 0. 
\]
Here $\pi_{3,1}\unit \to \pi_{3,1}\kq = H^{1,1}$ is given by the inclusion of the $24$-th roots of unity into $F^{\times}$. 
Since $\pi_{0,-1}\unit_\CC = 0$ and $\pi_{0,1}\unit_\CC=\pi_{2,1}\unit_\CC = \ZZ/2$, 
the product map from the $1$-line to the $2$-line is not surjective over the complex numbers, 
contrary to the topological situation.
\vspace{0.1in}

We carry out our calculations of stable motivic stems on $F$-points, 
but the results hold on the level of motivic homotopy sheaves since these are strictly $\mathbf{A}^1$-invariant 
and hence unramified sheaves on $\Sm_{F}$, 
see \cite[Theorem 6.2.7]{morel.connectivity}, \cite[Theorem 2.11]{morel.field}.  
\vspace{0.1in}

Throughout the paper, we employ the following notation.
\vspace{0.1in}

\begin{tabular}{l|l}
$F$, $S$ & field, finite dimensional separated Noetherian scheme \\
$\Sm_{S}$ & smooth schemes of finite type over $S$ \\
$S^{s,t} = S^{s-t+(t)}$ &  motivic $(s,t)=s-t+(t)$-sphere \\
$\Sigma^{s,t} = \Sigma^{s-t+(t)}$ & suspension with $S^{s,t}=S^{s-t+(t)}$ \\
$\SH(S)$  & motivic stable homotopy category over $S$ \\ 
$\E$, $\unit=S^{0,0}$ & generic motivic spectrum, the motivic sphere spectrum  \\
$R$, $\mathbf{M}R$ & ring, motivic Eilenberg-MacLane spectrum of $R$ \\
$\MGL$, $\MU$, $\BP$ & algebraic and complex cobordism, \\ 
& Brown-Peterson spectrum \\
$\KGL$, $\KQ$, $\KW$ & algebraic and hermitian $K$-theory, Witt-theory \\
$\f_{q}$, $\s_{q}$ & $q$th effective cover and slice functors \\
$\KMW$, $\KMil$, $\kmil\!=\KMil\!/2$ & Milnor-Witt $K$-theory, Milnor $K$-theory (modulo 2) \\
$H^{\ast,\ast}$, $h^{\ast,\ast}$, $h^{\ast,\ast}_{n}$ & integral, mod-$2$, 
mod-$n$ motivic cohomology groups  \\
$\partial^a_b\colon h_{a}^{s,t} \to h^{s+1,t}_{b}$ & connecting homomorphism, 
$a,b\in \NN\cup \{\infty\}$, $h_\infty = H$ \\
$ \inc^a_b,\pr^a_b\colon h^{s,t}_{a}\to h^{s,t}_{b}$ & inclusion, projection homomorphism, \\ 
& $a,b\in \NN\cup \{\infty\}$, $h_\infty = H$.
\end{tabular}
\vspace{0.05in}
\noindent

The ring $R$ is commutative and unital.
For legibility we suppress $\Lambda=\mathbb{Z}[\tfrac{1}{c}]$-coefficients in our notation.
Our standard conventions are 
$\mathbf{P}^1\simeq \mathbf{T}\simeq S^{2,1}$ and $\mathbf{A}^{1}\minus \{0\}\simeq S^{1,1}$ 
for the Tate object $\mathbf{T}=\mathbf{A}^{1}/\mathbf{A}^{1}\minus \{0\}$.
Let $S$ be a scheme. For a motivic spectrum $\E\in \SH(S)$
and integers $s,w\in \ZZ$,
let $\pi_{s,w} \E$ denote the abelian group 
$[\Sigma^{s,w}\unit,\E]$, where $\E$ is a motivic spectrum and
$\unit_F=\unit$ is the motivic sphere spectrum. The grading conventions
are such that the suspension functor $\Sigma^{2,1}=\Sigma^{1+(1)}$
is suspension with $\PP^1$, and $\Sigma^{1,0}=\Sigma^{1+(0)}=\Sigma^1=\Sigma$
is suspension with the simplicial circle. Set $\pi_{s+(w)}\E:=\pi_{s+w,w}\E$, 
and let
\[ \pi_{s+\bideg}\E := \bigoplus_{w\in \ZZ} \pi_{s+w,w}\E \]
denote the direct sum, considered as a $\ZZ$-graded module over the
$\ZZ$-graded ring $\pi_{0+\bideg}\unit$. The notation
$\pi_{s-\bideg}\E:=\pi_{s+(-\star)}\E$ will be used frequently, as
well as the abbreviation $\pi_{s}\E:=\pi_{s+\bideg}\E$.
Note that $\pi_{s}\E$ is canonically a (left) $\pi_{0}\unit$-module
for every $s\in \ZZ$. Over a field $F$, the graded ring
$\pi_{0}\unit_F\iso \KMW(F)$ is the Milnor-Witt $K$-theory of $F$ by Morel's theorem \cite{morelmotivicpi0}.
Hence $\pi_{s}\E$ is naturally a $\KMW$-module for every $s\in \ZZ$.
The strictly 
$\A^1$-invariant sheaf obtained as the associated Nisnevich sheaf
of $U\mapsto \pi_{s,w} \E_U$ for $U\in \Sm_F$ is denoted
$\underline{\pi}_{s,w}\E$, which gives rise to
the homotopy module $\underline{\pi}_{s+\bideg}\E$.
As mentioned already, our main results can be suitably
reinterpreted as statements regarding homotopy modules.
We write $\dd^{\E}_{1}(q)\colon\s_{q}(\E)\to\Sigma^{1,0}\s_{q+1}(\E)$ or simply $\dd^{\E}_{1}$ for the first slice 
differential of $\E$ \cite[\S2]{roendigs-oestvaer.hermitian}, \cite[\S7]{voevodsky.open}.
For $r\geq 1$, 
we write $d^{r}_{p,q,w}(\E)\colon E^{r}_{p,q,w}(\E)\to E^{r}_{p-1,q+r,w}(\E)$ or simply $d^{r}(\E)$ 
for the $r$th differential in the weight $w$ slice spectral sequence.

\section{Very effective hermitian $K$-theory}
\label{sec:very-effect-herm}

The path to computations for the sphere spectrum proceeds via a
comparison with the very effective cover $\kq\to \KQ$ of the motivic ring
spectrum representing hermitian $K$-theory that is classifying vector bundles equipped with quadratic forms. 
Over perfect fields of
characteristic not two, it can be identified as
$\kq=\f_{0}(\KQ_{\geq 0})$, the effective cover of the connective cover of 
$\KQ$ \cite{bachmann.generalized}, although it can be defined formally
over any base scheme for which $\KQ$ is defined 
\cite{SO}.\footnote{The Ph.D. thesis \cite{kumar} provides a motivic spectrum over $\Spec(\ZZ)$, and hence any base scheme, 
which pulls back to $\KQ$ over any scheme in which $2$ is invertible.}
Since $\unit$ is very effective, the unit map for $\KQ$ factors uniquely over $\kq$.
The computations for $\kq$ require an analysis of the
slice spectral sequence for $\kq$ extending the results in \cite{aro.kq}.
Its convergence will be necessary for applying these slice spectral sequence computations and hence is discussed first. 
When the base scheme $S$ has no points of residue field characteristic two, 
let $\tau$ denote the generator of $h^{0,1}\cong\mu_{2}(\mathcal{O}_{S})$ and $\rho$ denote the class of $-1$ in $h^{1,1}\cong \mathcal{O}^{\times}_{S}/2$.
We denote motivic Steenrod operations by $\Sq^{i}$ \cite[\S9]{Voevodsky.reduced}. Let 
$\kgl:=\f_{0}\KGL$ be the (very) effective cover of the motivic ring spectrum $\KGL$ classifying vector bundles. 
See \cite{aro.kq} for a proof of the next result.

\begin{theorem}
\label{thm:kq-wood}
Multiplication with the Hopf map $\eta$ yields a homotopy cofiber sequence
\[ 
\Sigma^{1,1}\kq 
\xrightarrow{\eta} 
\kq 
\xrightarrow{\forget}
\kgl 
\xrightarrow{\mathrm{hyper}}
\Sigma^{2,1}\kq.
\]
\end{theorem}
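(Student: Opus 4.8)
This is the very effective, motivic incarnation of Wood's cofiber sequence $ko\smash C\eta\simeq ku$. The plan is to produce the sequence directly for $\kq$ and $\kgl$, verify the key equivalence $\kq/\eta\simeq\kgl$ by a slice computation, and import the identification of the fourth map from hermitian $K$-theory.

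First I would assemble the maps. The forgetful map $\forget\colon\kq\to\kgl$ (restricting the forgetful map $\KQ\to\KGL$) satisfies $\forget\circ\eta\simeq 0$, because multiplication by $\eta$ on $\kgl$ is null: already $\pi_{1,1}\kgl=0$, as the slice spectral sequence for $\kgl$ has $E_1$-term in that bidegree built only from the motivic cohomology groups $H^{2q-1,q-1}(F)$, all of which vanish. Hence $\forget$ factors as $\overline{\forget}\colon\kq/\eta\to\kgl$, where $\kq/\eta:=\mathrm{cofib}(\eta\colon\Sigma^{1,1}\kq\to\kq)$. Both $\kq/\eta$ and $\kgl$ are very effective — very effective spectra are closed under colimits and under $\smash$, and $\Sigma^{\infty}S^{1,1}$ is very effective, being a retract of the suspension spectrum of the smooth scheme $\A^{1}\minus\{0\}$ — and both have convergent slice spectral sequences, the convergence for $\kq/\eta$ following from that for $\kq$ and $\Sigma^{1,1}\kq$ since the $\infty$-slice functor is exact. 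Therefore it suffices to show that $\overline{\forget}$ is an isomorphism on every slice.

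The slice functors $\s_{q}$ are exact (each effective cover $\f_{q}$ is right adjoint to the inclusion of a stable, colimit-closed subcategory), and they shift under $\Sigma^{1,1}$ as $\s_{q}(\Sigma^{1,1}\kq)\simeq\Sigma^{1,1}\s_{q-1}(\kq)$, since $\Sigma^{2,1}$ raises the slice index by one and $\Sigma^{1,0}$ preserves it. Hence $\s_{q}(\kq/\eta)\simeq\mathrm{cofib}\bigl(\Sigma^{1,1}\s_{q-1}(\kq)\xrightarrow{\s_{q}(\eta)}\s_{q}(\kq)\bigr)$, the cofiber of an explicit map between wedges of suspensions of $\mathbf{M}\Lambda$ and $\mathbf{M}\Lambda/2$ read off from \eqref{equation:slices-kq}. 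Here $\s_{q}(\eta)$ is $\eta$-multiplication on slices, governed by the module structure of the $\kq$-slice spectral sequence over that of $\unit$ — equivalently by the first slice differentials. One then checks that $\s_{q}(\eta)$ matches the $\mathbf{M}\Lambda/2$-summands of source and target isomorphically (and, for odd $q$, collapses one copy of $\mathbf{M}\Lambda$ onto a copy of $\mathbf{M}\Lambda/2$ via mod-$2$ reduction), so that the cofiber is exactly the remaining summand $\Sigma^{2q,q}\mathbf{M}\Lambda=\s_{q}(\kgl)$, with $\s_{q}(\overline{\forget})$ realizing this identification. This gives $\kq/\eta\simeq\kgl$ and hence a cofiber sequence $\Sigma^{1,1}\kq\xrightarrow{\eta}\kq\xrightarrow{\forget}\kgl\to\Sigma^{2,1}\kq$; that its fourth map is the hyperbolic map $\mathrm{hyper}$ follows by comparison with the classical cofiber sequence $\Sigma^{1,1}\KQ\xrightarrow{\eta}\KQ\xrightarrow{\forget}\KGL\xrightarrow{\mathrm{hyper}}\Sigma^{2,1}\KQ$ for the periodic spectra (compare \cite{roendigs-oestvaer.hermitian}), to which the very effective covers map.

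The main obstacle is the slice-level identification of $\s_{q}(\eta)$: determining the $\eta$-action on the Eilenberg-MacLane summands of $\s_{\bullet}(\kq)$ precisely enough to see each slice cofiber collapse onto the single copy of $\Sigma^{2q,q}\mathbf{M}\Lambda$. This hinges on the explicit first slice differentials for hermitian $K$-theory and is the real content of Theorem~\ref{thm:kq-wood}; the remaining ingredients — exactness of $\s_{q}$, the shift $\s_{q}\Sigma^{1,1}\simeq\Sigma^{1,1}\s_{q-1}$, and the very effectivity and slice convergence of $\kq/\eta$ — are routine.
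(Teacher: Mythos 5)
First, note that the paper does not prove Theorem~\ref{thm:kq-wood} at all: it is quoted from \cite{aro.kq}, so your proposal can only be compared with the argument given there. Your overall architecture (factor $\forget$ through $\kq/\eta$, identify the cofiber with $\kgl$, pin down the connecting map by comparison with the periodic Wood sequence for $\KQ$) is the right shape, and your description of $\s_q(\eta)$ and of the resulting slice cofibers is numerically correct and recoverable from \eqref{equation:gradedhermitianktheoryslices}.

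There is, however, one genuine gap: the step ``$\overline{\forget}$ is an isomorphism on all slices, hence an equivalence.'' For this you need $\kq/\eta$ to be slice complete, and your justification --- that slice completeness of $\kq/\eta$ ``follows from that for $\kq$ and $\Sigma^{1,1}\kq$ by exactness of $\slicecomp$'' --- fails because $\kq$ is \emph{not} slice complete: Corollary~\ref{cor:eta-slice-comp-kq} and Proposition~\ref{prop:eff-kq-conv} say precisely that $\slicecomp(\kq)\simeq \kq^{\wedge}_{\eta}$, which differs from $\kq$ over fields such as $\RR$. Exactness of $\slicecomp$ only gives $\slicecomp(\kq/\eta)\simeq \slicecomp(\kq)/\eta$, and identifying this with $\kq/\eta$ is essentially equivalent to Corollary~\ref{cor:eta-slice-comp-kq}, whose proof in the paper \emph{uses} Theorem~\ref{thm:kq-wood}; so your route is circular at this point. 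Nor can you drop the convergence hypothesis: the cofiber of $\overline{\forget}$ is an effective spectrum with vanishing slices, and such spectra need not vanish (the fiber of $\kq\to\slicecomp(\kq)$ is a nonzero example). The standard repair, and what \cite{aro.kq} in effect does, is to compare homotopy sheaves rather than slices: $\kq/\eta$ and $\kgl$ are both connective (very effectivity gives this for free), the homotopy $t$-structure is non-degenerate on connective spectra by Morel's connectivity theorem, and the sheaves $\underline{\pi}_{s+(w)}$ of both sides can be computed from the known Wood sequence for the periodic spectra $\KQ$ and $\KGL$ together with the identification of the (very) effective covers. With that substitution your argument goes through; as written, the convergence step is unjustified.
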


\begin{corollary}
\label{cor:eta-slice-comp-kq}
There is a canonical equivalence between the $\eta$-completion of $\kq$ and its slice completion
\[ 
\kq^{\wedge}_{\eta} 
\simeq 
\slicecomp(\kq). 
\]
\end{corollary}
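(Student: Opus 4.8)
The plan is to deduce the asserted equivalence from two intermediate ones,
\[
\slicecomp(\kq)\simeq\slicecomp(\kq^{\wedge}_{\eta})
\qquad\text{and}\qquad
\slicecomp(\kq^{\wedge}_{\eta})\simeq\kq^{\wedge}_{\eta},
\]
whose composite is the statement. Throughout I would use the description $\kq^{\wedge}_{\eta}\simeq\holim_{n}\kq/\eta^{n}$, with $\kq/\eta^{n}=\cone(\Sigma^{n,n}\kq\xrightarrow{\eta^{n}}\kq)$ (valid since $\kq$ is connective), and the description $\slicecomp(-)=\holim_{q}\bigl((-)/\f_{q}(-)\bigr)$ of the slice completion, which is an exact endofunctor of $\SH(S)$.

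For the first equivalence, I would identify the fiber of the canonical map $\kq\to\kq^{\wedge}_{\eta}$. Since the fiber of $\kq\to\kq/\eta^{n}$ is $\Sigma^{n,n}\kq$ (mapping to $\kq$ via $\eta^{n}$), the fiber of $\kq\to\kq^{\wedge}_{\eta}$ is $\holim_{n}\Sigma^{n,n}\kq$, the homotopy limit formed along the maps $\Sigma^{n+1,n+1}\kq\xrightarrow{\eta}\Sigma^{n,n}\kq$. Because the effective cover functor $\f_{q}$ is a right adjoint it preserves homotopy limits, so a homotopy limit of $q$-effective spectra is again $q$-effective; as the tower $(\Sigma^{n,n}\kq)_{n}$ is eventually $q$-effective for each $q$, the spectrum $\holim_{n}\Sigma^{n,n}\kq$ is $q$-effective for all $q$, hence has vanishing slice completion. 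Applying $\slicecomp$ to the cofiber sequence $\holim_{n}\Sigma^{n,n}\kq\to\kq\to\kq^{\wedge}_{\eta}$ then yields the first equivalence.

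For the second equivalence, the essential input is Theorem~\ref{thm:kq-wood}, which gives $\kq/\eta\simeq\kgl$. Applying the octahedral axiom to $\eta^{n+1}=\eta^{n}\circ\eta$ produces cofiber sequences $\Sigma^{n,n}\kgl\to\kq/\eta^{n+1}\to\kq/\eta^{n}$, so inductively each $\kq/\eta^{n}$ is a finite iterated extension of the suspensions $\Sigma^{i,i}\kgl$, $0\le i<n$. Now $\kgl$ is slice complete: its $q$th slice is $\Sigma^{2q,q}\MZ\simeq\Sigma^{q}(\G^{\wedge q}\wedge\MZ)$, which is $q$-connective, so $\f_{q}(\kgl)$ is $q$-connective and $\holim_{q}\f_{q}(\kgl)\simeq 0$. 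Since slice-complete spectra are closed under suspension and cofiber sequences (as $\slicecomp$ is exact), every $\kq/\eta^{n}$ is slice complete. Finally $\f_{q}$, and therefore $\slicecomp$, commutes with $\holim_{n}$, whence
\[
\slicecomp(\kq^{\wedge}_{\eta})\simeq\holim_{n}\slicecomp(\kq/\eta^{n})\simeq\holim_{n}\kq/\eta^{n}=\kq^{\wedge}_{\eta}.
\]

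I expect the formal manipulations with homotopy limits to be routine but to need some care: the closure of $q$-effective and of slice-complete spectra under the relevant homotopy limits, the commutation of $\f_{q}$ with $\holim_{n}$, and, for $\holim_{q}\f_{q}(\kgl)\simeq0$, the fact that over a finite-dimensional $S$ a homotopy limit of spectra whose connectivity tends to infinity is contractible. The conceptual heart is the slice completeness of $\kgl$: it says precisely that passing to $\kq/\eta$ discards the part of the slice filtration of $\kq$---the mod-$2$ summands visible in \eqref{equation:slices-kq}---that obstructs slice completeness of $\kq$ itself.
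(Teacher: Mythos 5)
Your overall strategy is the right one and is essentially what the paper does: the printed proof consists of citing \cite[Lemma 3.13]{rso.oneline} together with Theorem~\ref{thm:kq-wood}, thereby reducing everything to the slice completeness of $\kgl$; your two intermediate equivalences are precisely the content of that cited lemma, unpacked. The identification $\kq/\eta\simeq\kgl$ and the slice completeness of $\kgl$ are indeed the conceptual heart, as you say.

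Two of your justifications do not hold up as written, however. First, ``$\f_q$ is a right adjoint, hence preserves homotopy limits, so a homotopy limit of $q$-effective spectra is again $q$-effective'' is a non sequitur: $\f_q$ is right adjoint to the \emph{inclusion} of the $q$-effective subcategory, which tells you that the inclusion preserves colimits; it does not tell you that this localizing subcategory (closed under colimits by construction) is closed under even sequential homotopy limits in $\SH(S)$. Showing that $\slicecomp\bigl(\holim_n\Sigma^{n,n}\kq\bigr)\simeq 0$ is exactly the delicate point that the cited lemma is there to handle. One either needs a genuine closure or commutation statement (e.g.\ a characterization of effectivity by vanishing of homotopy sheaves plus a $\lim^1$ argument), or one can sidestep the issue entirely: $\slicecomp(E)$ is always $\eta$-complete, because $\eta$ acts nilpotently on each $E/\f_qE$ (a finite extension of slices, which are $\MZ$-modules on which $\eta$ acts as zero), so the canonical map $\kq\to\slicecomp(\kq)$ factors through $\kq^{\wedge}_{\eta}$, and a map of $\eta$-complete spectra is an equivalence once it is one modulo $\eta$, where by exactness of $\slicecomp$ it becomes $\kgl\to\slicecomp(\kgl)$. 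Second, the inference ``$\s_q(\kgl)$ is $q$-connective, hence $\f_q(\kgl)$ is $q$-connective'' is not valid without a convergence input --- connectivity of the slices does not by itself bound the connectivity of the effective cover. The standard fix is Bott periodicity, $\f_q\kgl\simeq\f_q\KGL\simeq\Sigma^{2q,q}\kgl$ with $\kgl$ connective, or simply Voevodsky's slice completeness of $\KGL$ together with $\f_q\kgl\simeq\f_q\KGL$ for $q\geq 0$, which is what the paper invokes.
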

\begin{proof}
By \cite[Lemma 3.13]{rso.oneline} and Theorem \ref{thm:kq-wood} it suffices to prove that $\kgl$ is slice complete. 
This follows via $\holim_q\f_{q}\kgl \simeq \holim_q \f_{q}\KGL \simeq \ast$
since $\KGL$ is slice complete by \cite{Voevodsky:motivicss}. 
\end{proof}

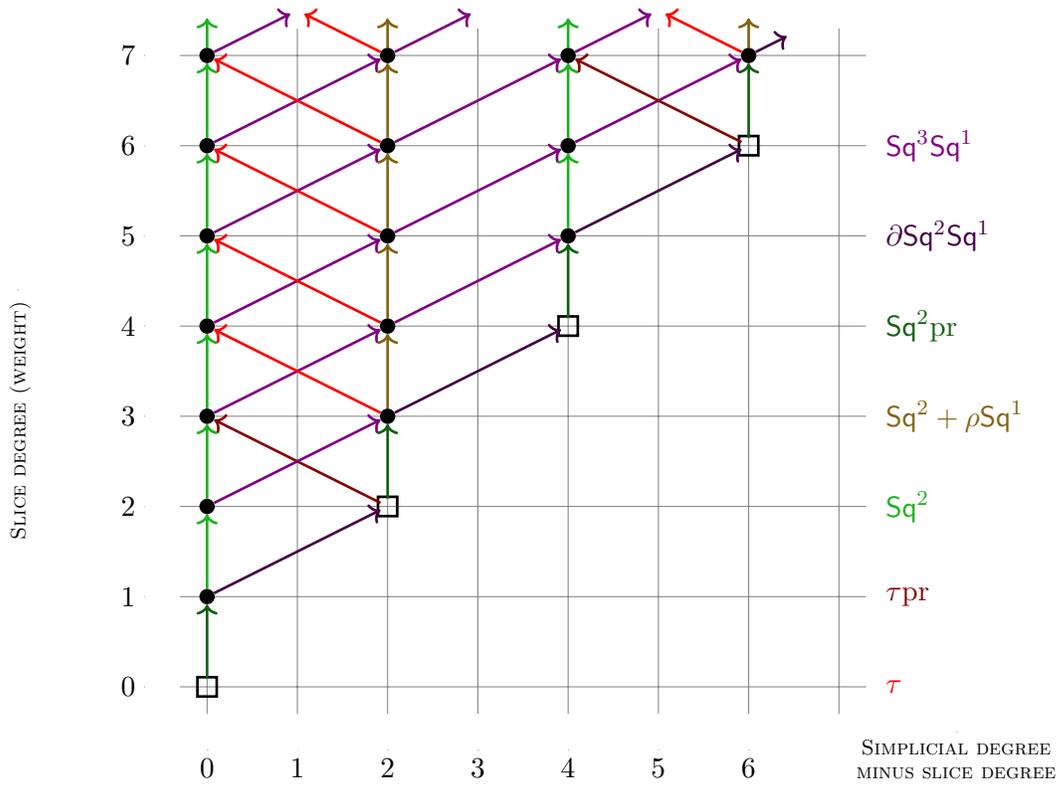
\begin{figure}
  \pgfsetshortenend{3pt}
  \pgfsetshortenstart{3pt}
  \begin{tikzpicture}[scale=1.2,line width=1pt]
    \draw[help lines] (-4.3,-2.3) grid (3.3,5.3);
    {\draw[fill]     
      (-4.7,-2) circle (0pt) node[left=-1pt] {$0$}
      ;}
    \node[rectangle, draw] at (-4,-2) {};
    {\draw[fill]     
      (-4,-1) circle (2pt) 
      (-4.7,-1) circle (0pt) node[left=-1pt] {$1$}
      ;}
    {\draw[fill]     
      (-4,0) circle (2pt) 
      (-4.7,0) circle (0pt) node[left=-1pt] {$2$}
      ;}
    \node[rectangle, draw] at (-2,0) {};
    {\draw[fill]     
      (-4,1) circle (2pt) (-2,1) circle (2pt) 
      (-4.7,1) circle (0pt) node[left=-1pt] {$3$}
      ;}
    {\draw[fill]     
      (-4,2) circle (2pt) (-2,2) circle (2pt) 
      (-4.7,2) circle (0pt) node[left=-1pt] {$4$}
      ;}
    \node[rectangle, draw] at (0,2) {};
    {\draw[fill]     
      (-4,3) circle (2pt) (-2,3) circle (2pt) (0,3) circle (2pt)  
      (-4.7,3) circle (0pt) node[left=-1pt] {$5$}
      ;}
    {\draw[fill]     
      (-4,4) circle (2pt) (-2,4) circle (2pt) (0,4) circle (2pt) 
      (-4.7,4) circle (0pt) node[left=-1pt] {$6$}
      ;}
    \node[rectangle, draw] at (2,4) {};

    {\draw[fill]     
      (-4,5) circle (2pt) (-2,5) circle (2pt) (0,5) circle (2pt) (2,5) circle (2pt) 
      (-4.7,5) circle (0pt) node[left=-1pt] {$7$}
      ;}
    {\draw[fill]     
      (-6.1,2.4) circle (0pt) node[left=-1pt,rotate=90] {\sc \scriptsize Slice degree (weight)}
      ;}

    {\draw[fill]     
    (-4,-2.7) circle (0pt) node[below=-1pt] {$0$}
    (-3,-2.7) circle (0pt) node[below=-1pt] {$1$}
    (-2,-2.7) circle (0pt) node[below=-1pt] {$2$}
    (-1,-2.7) circle (0pt) node[below=-1pt] {$3$}
    (0,-2.7) circle (0pt) node[below=-1pt] {$4$} 
    (1,-2.7) circle (0pt) node[below=-1pt] {$5$}
    (2,-2.7) circle (0pt) node[below=-1pt] {$6$}
    (4.3,-2.5) circle (0pt) node[below=-1pt] {\sc \scriptsize Simplicial degree}
    (4.3,-2.8) circle (0pt) node[below=-1pt] {\sc \scriptsize minus slice degree}
    ;}
  
  {\draw[sq2prcolor,->] 
    (-4,-2) -- (-4,-1)
    ;}
  {\draw[sq2prcolor,->] 
    (-2,0) -- (-2,1)
    ;}
  {\draw[sq2prcolor,->] 
    (0,2) -- (0,3)
    ;}
  {\draw[sq2prcolor,->] 
    (2,4) -- (2,5)
    ;}
   {\draw[sq2color,->] 
    (0,3) -- (0,4)
    ;}
  {\draw[sq2color,->] 
    (0,4) -- (0,5)
    ;}
    {\draw[partialsq2sq1color,->] 
    (-4,-1) -- (-2,0)
    ;}
  {\draw[partialsq2sq1color,->] 
    (-2,1) -- (0,2)
    ;}
  {\draw[partialsq2sq1color,->] 
    (0,3) -- (2,4)
    ;}
  {\draw[partialsq2sq1color,->] 
    (2,5) -- (2.5,5.25)
    ;}
  {\draw[tauprcolor,->] 
    (-2,0) -- (-4,1)
    ;}
  
  {\draw[sq2color,->] 
    (-4,0) -- (-4,1)
    ;}
  {\draw[sq2color,->] 
    (-4,-1) -- (-4,0)
    ;}
  {\draw[sq2color,->] 
    (-4,1) -- (-4,2)
    ;}
  {\draw[sq2color,->] 
    (-4,2) -- (-4,3)
    ;}
  {\draw[sq2color,->] 
    (-4,3) -- (-4,4)
    ;}
  {\draw[sq2color,->] 
    (-4,4) -- (-4,5)
    ;}
  {\draw[sq2color,->] 
    (-4,5) -- (-4,5.5)
    ;}
  {\draw[sq2color,->] 
    (0,5) -- (0,5.5)
    ;}
  
  {\draw[sq3sq1color,->] 
    (-4,0) -- (-2,1)
    ;}
  {\draw[sq3sq1color,->] 
    (-4,1) -- (-2,2)
    ;}
  {\draw[sq3sq1color,->] 
    (-4,2) -- (-2,3)
    ;}
  {\draw[sq3sq1color,->] 
    (-4,3) -- (-2,4)
    ;}
  {\draw[sq3sq1color,->] 
    (-4,4) -- (-2,5)
    ;}
  {\draw[sq3sq1color,->] 
    (-4,5) -- (-3,5.5)
    ;}
  {\draw[sq3sq1color,->] 
    (-2,2) -- (0,3)
    ;}
  {\draw[sq3sq1color,->] 
    (-2,3) -- (0,4)
    ;}
  {\draw[sq3sq1color,->] 
    (-2,4) -- (0,5)
    ;}
  {\draw[sq3sq1color,->] 
    (-2,5) -- (-1,5.5)
    ;}
  {\draw[sq3sq1color,->] 
    (0,4) -- (2,5)
    ;}
  {\draw[sq3sq1color,->] 
    (0,5) -- (1,5.5)
    ;}
  
  {\draw[taucolor,->] 
    (-2,1) -- (-4,2)
    ;}
  {\draw[taucolor,->] 
    (-2,2) -- (-4,3)
    ;}
  {\draw[taucolor,->] 
    (-2,3) -- (-4,4)
    ;}
  {\draw[taucolor,->] 
    (-2,4) -- (-4,5)
    ;}
  {\draw[taucolor,->] 
    (-2,5) -- (-3,5.5)
    ;}
  {\draw[tauprcolor,->] 
    (2,4) -- (0,5)
    ;}
  {\draw[taucolor,->] 
    (2,5) -- (1,5.5)
    ;}
      
  {\draw[sq2rhosq1color,->] 
    (-2,1) -- (-2,2)
    ;}
  {\draw[sq2rhosq1color,->] 
    (-2,2) -- (-2,3)
    ;}
  {\draw[sq2rhosq1color,->] 
    (-2,3) -- (-2,4)
    ;}
  {\draw[sq2rhosq1color,->] 
    (-2,4) -- (-2,5)
    ;}
  {\draw[sq2rhosq1color,->] 
    (-2,5) -- (-2,5.5)
    ;}
  {\draw[sq2rhosq1color,->] 
    (2,5) -- (2,5.5)
    ;}
  
    {\draw[sq3sq1color] 
    (3.4,4) circle (0pt) node[right] {$\Sq^3\Sq^1$}
    ;}
  {\draw[partialsq2sq1color] 
    (3.4,3) circle (0pt) node[right] {$\partial \Sq^{2}\Sq^1$}
    ;}
  {\draw[sq2prcolor] 
    (3.4,2) circle (0pt) node[right] {$\Sq^{2} \pr$}
    ;}
  {\draw[sq2rhosq1color] 
    (3.4,1) circle (0pt) node[right] {$\Sq^{2}+\rho\Sq^1$}
    ;}
  {\draw[sq2color] 
    (3.4,0) circle (0pt) node[right] {$\Sq^{2}$}
    ;}
  {\draw[tauprcolor] 
    (3.4,-1) circle (0pt) node[right] {$\tau \pr$}
    ;}
  {\draw[taucolor] 
    (3.4,-2) circle (0pt) node[right] {$\tau$}
    ;}
\end{tikzpicture}
\caption{The first slice differential for $\kq$.}
\label{fig:d1-kq}
\end{figure}

The Ph.D. thesis \cite{lopez}
provides an $\E_\infty$-ring structure on $\KQ$, which
by \cite{grso} lifts to $\kq$ for formal reasons.
The graded slices of $\kq$ then admit the multiplicative description
\begin{equation}
\label{equation:gradedhermitianktheoryslices}
\s_{\ast}(\kq) 
\iso 
\MZ[\eta,\sqrt{\alpha}]/(2\eta=0,\eta^{2}\xrightarrow{\delta}\sqrt{\alpha}). 
\end{equation}
Here $\vert \eta\vert=(1,1)$ and $\vert \sqrt{\alpha}\vert=(4,2)$.
Figure~\ref{fig:d1-kq} displays the slice $\dd^{1}$-differential for $\kq$.
This follows from \eqref{equation:slices-kq} and the analogous result for $\KQ$.

\begin{definition}
\label{defn:kw}
Let $\kw:=\kq[\tfrac{1}{\eta}]$ be shorthand for the $\eta$-inversion of $\kq$.
\end{definition}

\begin{figure}
  \pgfsetshortenend{3pt}
  \pgfsetshortenstart{3pt}
  \begin{tikzpicture}[scale=1.2,line width=1pt]
    \draw[help lines] (-4.3,-2.3) grid (3.3,5.3);
    \foreach \i in {-2,...,5} {\node[label=left:$\i$] at (-4.4,\i) {};}
    \foreach \i in {0,...,6} {\node[label=below:$\i$] at (\i-4,-2.3) {};}
    
    \node[left=-1pt,rotate=90] at (-6.1,2.4) {\sc \scriptsize Slice degree (weight)};
    
    \node[below=-1pt] at (4.3,-2.5) {\sc \scriptsize Simplicial degree};
    \node[below=-1pt] at (4.3,-2.8) {\sc \scriptsize minus slice degree};
    
    \node[color=sq3sq1color,right] at (3.4,4)  {$\Sq^3\Sq^1$};
    \node[sq2rhosq1color,right] at (3.4,2) {$\Sq^{2}+\rho\Sq^1$};
    \node[sq2color,right] at (3.4,0) {$\Sq^{2}$};
    \node[taucolor,right] at (3.4,-2) {$\tau$};
    
    \foreach \i in {-2,...,5} 
    {\foreach \j in {-4,-2,0,2} 
      { \draw[fill] (\j,\i) circle (2pt);}}

    \foreach \i in {-2,...,4} 
    {\foreach \j in {-2,2}
      {\draw[taucolor,->] 
        (\j,\i) -- (\j-2,\i+1)
        ;}}

    \foreach \i in {-2,...,4} 
    {\foreach \j in {-4,0}
      {\draw[sq2color,->] 
        (\j,\i) -- (\j,\i+1)
        ;}}

    \foreach \i in {-2,...,4} 
    {\foreach \j in {-2,2}
      {\draw[sq2rhosq1color,->] 
        (\j,\i) -- (\j,\i+1)
        ;}}

    \foreach \i in {-2,...,4} 
    {\foreach \j in {-4,-2,0}
      {\draw[sq3sq1color,->] 
        (\j,\i) -- (\j+2,\i+1)
        ;}}

    \foreach \i in {-2,...,4} 
    {\draw[sq3sq1color,->] 
        (2,\i) -- (3,\i+0.5)
        ;}
    {\draw[sq3sq1color,->] 
      (-4,5) -- (-3,5.5)
      ;}
    {\draw[sq3sq1color,->] 
      (-2,5) -- (-1,5.5)
      ;}
    {\draw[sq3sq1color,->] 
      (0,5) -- (1,5.5)
      ;}

    {\draw[sq2color,->] 
      (-4,5) -- (-4,5.5)
      ;}
    {\draw[sq2color,->] 
      (0,5) -- (0,5.5)
      ;}

    {\draw[sq2rhosq1color,->] 
      (-2,5) -- (-2,5.5)
      ;}
    {\draw[sq2rhosq1color,->] 
      (2,5) -- (2,5.5)
      ;}
    
    {\draw[taucolor,->] 
      (-2,5) -- (-3,5.5)
      ;}
    {\draw[taucolor,->] 
      (2,5) -- (1,5.5)
      ;}
  \end{tikzpicture}
\caption{The first slice differential for $\kw$.}
\label{fig:d1-kw}
\end{figure}

Figure \ref{fig:d1-kw} displays the slices together with the slice $\dd^{1}$-differentials for $\kw$.

\begin{proposition}
\label{prop:kw-defn-agree}
The canonical map $\kq \to \KQ\to\KQ[\tfrac{1}{\eta}]=\KW$ yields an identification $\kw \simeq\KW_{\geq 0}$ between $\kw$ and the connective cover of $\KW$.
\end{proposition}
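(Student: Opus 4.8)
The plan is to realize both $\kw$ and $\KW_{\geq 0}$ as $\eta$-inversions of \emph{effective} spectra and then to reduce the claim to a statement about slices. First I would record some formal facts about $\eta$-inversion $(-)[\tfrac1\eta]=\colim_n\Sigma^{-n,-n}(-)$. Since smashing with $S^{1,1}\simeq\G$ is a $t$-exact autoequivalence of $\SH(S)$ for the homotopy $t$-structure, and that $t$-structure is compatible with filtered colimits, $\eta$-inversion is exact, preserves connectivity and coconnectivity, commutes with the connective-cover functor $(-)_{\geq 0}$, and commutes with the slice functors $\s_q$ (which preserve filtered colimits). Applying this to the connective cover $\KQ_{\geq0}\to\KQ$ gives
\[
\KW_{\geq0}=\bigl(\KQ[\tfrac1\eta]\bigr)_{\geq0}\simeq\bigl(\KQ_{\geq0}\bigr)[\tfrac1\eta].
\]
On the other hand, over the perfect field $F$ one has $\kq\simeq\f_0(\KQ_{\geq0})$ by \cite{bachmann.generalized} (over a general base this is the definition of $\kq$, cf.\ \cite{SO}), so $\kw\simeq\f_0(\KQ_{\geq0})[\tfrac1\eta]$. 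Hence it suffices to prove that the counit $c\colon\f_0(\KQ_{\geq0})\to\KQ_{\geq0}$ becomes an equivalence after inverting $\eta$.

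Write $C:=\cone(c)$. Because $\f_q\circ\f_0\simeq\f_q$ for $q\geq0$ and $\f_0(\KQ_{\geq0})$ is effective, the slices of $C$ are concentrated in negative weights: $\s_qC=0$ for $q\geq0$, and $\s_qC\simeq\s_q\KQ_{\geq0}$ for $q<0$. Multiplication by $\eta$ raises the slice degree by one, since it induces the maps $\Sigma^{1,1}\s_{q-1}(\,\cdot\,)\to\s_q(\,\cdot\,)$ on the slice tower; therefore every class of $\s_qC$ with $q<0$ is killed by $\eta^{-q}$. As $\s_q$ commutes with $\eta$-inversion, the slices of $C[\tfrac1\eta]$ are the $\eta$-inverted graded object $\s_\ast(C)[\tfrac1\eta]$, which vanishes. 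Thus $C[\tfrac1\eta]$ has trivial slices.

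It remains to conclude $C[\tfrac1\eta]\simeq\ast$ from the vanishing of all its slices, and this is the main obstacle. Packaged one way: $C[\tfrac1\eta]$ is the cofiber of $\kw\to\KW_{\geq0}$, so it is enough to know that $\kw$ and $\KW_{\geq0}$ are both slice complete — slice-complete spectra are stable under cofiber sequences, and a slice-complete spectrum with vanishing slices is zero. Here one should exploit the circle of ideas behind Corollary~\ref{cor:eta-slice-comp-kq} (namely Theorem~\ref{thm:kq-wood}, the slice completeness of $\kgl$ and $\KGL$ \cite{Voevodsky:motivicss}, and the slice spectral sequence analysis for hermitian $K$-theory of \cite{roendigs-oestvaer.hermitian},\cite{aro.kq}) to establish the necessary convergence for the $\eta$-inverted and connective-cover spectra. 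Carrying this out carefully is the technical heart, and it is the step where the standing hypothesis on the characteristic enters, through the identification of the motivic Steenrod algebra. Equivalently, one may argue on homotopy sheaves: as both $\kw$ and $\KW_{\geq0}$ are connective, it is enough to see that $\underline\pi_m\bigl(C[\tfrac1\eta]\bigr)=(\underline\pi_mC)[\tfrac1\eta]=0$ for $m\geq1$, i.e.\ that the very effective truncation $\kq\to\KQ$ becomes a $\underline\pi_{\geq0}$-isomorphism after inverting $\eta$.
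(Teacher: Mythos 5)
Your opening reductions are fine: $\eta$-inversion is $t$-exact and commutes with filtered colimits, so $\KW_{\geq 0}\simeq(\KQ_{\geq 0})[\tfrac1\eta]$, and the problem becomes whether the cone $C$ of $\f_0(\KQ_{\geq 0})\to\KQ_{\geq 0}$ dies after inverting $\eta$; your computation that $\s_q(C[\tfrac1\eta])=\colim_n\Sigma^{-n,-n}\s_{q+n}C$ vanishes (the terms are eventually zero) is also correct. The gap is the step you yourself flag as ``the main obstacle'': vanishing of all slices only forces $C[\tfrac1\eta]\simeq\ast$ if $C[\tfrac1\eta]$ is slice complete, and the route you propose for this --- establishing slice completeness of $\kw$ and $\KW_{\geq 0}$ --- cannot work, because these spectra are \emph{not} slice complete in general. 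By \cite[Theorem 6.3]{roendigs-oestvaer.hermitian} the slice spectral sequence for $\KW$ converges to the $\fundid$-adic completion of the Witt ring; over $F=\RR$ one has $\Witt(\RR)\iso\ZZ$, which is not $\fundid=2\ZZ$-adically complete, so $\KW_{\geq 0}\to\slicecomp(\KW_{\geq 0})$ is not an equivalence. ($\eta$-inverted spectra are precisely the spectra for which ``trivial slices $\Rightarrow$ trivial'' tends to fail.) Your final sentence states the correct equivalent formulation --- that $\kq\to\KQ$ becomes a $\ul{\pi}_{\geq 0}$-isomorphism after inverting $\eta$ --- but leaves it unproved, and that statement is the entire content of the proposition.

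The missing argument is short and avoids slices altogether. Since $\kw=\colim_n\Sigma^{-n,-n}\kq$ is a filtered colimit of connective spectra, it is connective and the map to $\KW$ factors through $\KW_{\geq 0}$; both sides have vanishing $\ul{\pi}_{t,w}$ for $t<w$. For $t\geq w$ one has
\[
\ul{\pi}_{t,w}\kw\iso\colim_n\ul{\pi}_{t+n,w+n}\kq\iso\colim_n\ul{\pi}_{t+n,w+n}\KQ\iso\ul{\pi}_{t,w}\KW\iso\ul{\pi}_{t,w}\KW_{\geq 0},
\]
where the middle isomorphism holds because for $n$ large enough that $w+n\geq 0$ the map $\kq=\f_0(\KQ_{\geq 0})\to\KQ$ is a $\ul{\pi}_{t+n,w+n}$-isomorphism (the effective cover does not change homotopy sheaves in nonnegative weights, and the connective cover does not change them in degrees $t+n\geq w+n$). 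No convergence or completeness input is needed, and this is exactly how the paper argues.
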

\begin{proof}
There is an induced map $\kw\to \KW_{\geq 0}$ since $\kw$ is a homotopy colimit of connective motivic spectra ($\kq$ is connective by definition).
We show that it is a $\ul{\pi}_{t,w}$-isomorphism. 
The homotopy sheaves are trivial for $t<w$.
When $t\geq w$, then $\ul{\pi}_{t,w}\kw$ is isomorphic to 
\[ 
\colim_n \ul{\pi}_{t+n,w+n}\kq 
\iso 
\colim_n\ul{\pi}_{t+n,w+n}\KQ 
\iso
\ul{\pi}_{t,w}\KW \]
via the canoncial map, which concludes the proof.
\end{proof}

As a consequence, the following proposition provides that
the slice spectral sequence for $\kq$ computes
the correct data in suitable degrees.

\begin{proposition}
\label{prop:eff-kq-conv}
The canonical map $\kq \to \slicecomp(\kq)$ induces an isomorphism
\[ 
\pi_{k}\kq 
\overset{\cong}{\rightarrow} 
\pi_{k}\slicecomp (\kq) 
\]
of $\KMW$-modules for all $k\in \NN$ with $k\not\equiv 0,3\bmod 4$.
\end{proposition}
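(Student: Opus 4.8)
The plan is to reduce the statement, via Corollary~\ref{cor:eta-slice-comp-kq}, to a comparison between $\kq$ and its $\eta$-completion, and then to control the homotopy fiber of the completion map by means of the identification $\kw=\KW_{\ge 0}$ from Proposition~\ref{prop:kw-defn-agree}. By Corollary~\ref{cor:eta-slice-comp-kq} the claim becomes the assertion that $\kq\to\kq^{\wedge}_{\eta}$ induces an isomorphism on $\pi_{k}$ for $k\not\equiv 0,3\bmod 4$. Let $\C$ be its homotopy fiber. From the long exact sequence
\[
\pi_{k+1}\kq^{\wedge}_{\eta}\to\pi_{k}\C\to\pi_{k}\kq\to\pi_{k}\kq^{\wedge}_{\eta}\to\pi_{k-1}\C
\]
of $\KMW$-modules, it suffices to show that $\pi_{k}\C=0$ for $k\not\equiv 0,3\bmod 4$ and, in the remaining borderline case $k\equiv 1\bmod 4$ (where $\pi_{k-1}\C$ sits in the Witt degree $k-1\equiv 0\bmod 4$ and need not vanish), that the connecting map $\pi_{k}\kq^{\wedge}_{\eta}\to\pi_{k-1}\C$ is zero.

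To compute $\C$ I would use that $\kq^{\wedge}_{\eta}=\holim_{n}\kq/\eta^{n}$, that the homotopy fiber of $\kq\to\kq/\eta^{n}$ is $\Sigma^{n,n}\kq$ (mapping to $\kq$ by $\eta^{n}$), and that the bonding maps of this tower are multiplication by $\eta$, whence $\C\simeq\holim_{n}\Sigma^{n,n}\kq$. Consequently the homotopy of $\C$ in a fixed bidegree $(s,w)$ fits into a $\lim$--$\lim^{1}$ exact sequence for the tower $\{\pi_{s-n,w-n}\kq\}_{n\ge 0}$ with bonding maps given by $\eta$-multiplication, so that everything is governed by the behaviour of the homotopy modules $\ul{\pi}_{\ast+\bideg}\kq$ in very negative weights. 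Here Morel's $\KMW$-module structure on these modules, the slice description \eqref{equation:slices-kq} together with Figure~\ref{fig:d1-kq}, and---decisively---Theorem~\ref{thm:kq-wood} and Proposition~\ref{prop:kw-defn-agree}, which identify $\kw=\kq[\tfrac1\eta]$ with $\KW_{\ge 0}$, enter. Since $\KW$ is $(1,1)$-periodic with $\ul{\pi}_{s+\bideg}\KW$ the unramified Witt sheaf $\Witt$ for $s\equiv 0\bmod 4$ and zero otherwise, the $\eta$-periodic part of $\ul{\pi}_{\ast+\bideg}\kq$ is $4$-periodic and concentrated in degrees $\equiv 0\bmod 4$. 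Feeding this into the $\lim$--$\lim^{1}$ sequences---the $\lim$-terms contributing in degrees $\equiv 0\bmod 4$, and the $\lim^{1}$-terms, which are fed by the homotopy one simplicial degree higher, contributing in degrees $\equiv 3\bmod 4$---shows that $\pi_{k}\C=0$ for $k\not\equiv 0,3\bmod 4$. This already settles every $k\equiv 2\bmod 4$, since then $\pi_{k-1}\C=0$ as well.

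For $k\equiv 1\bmod 4$ the vanishing $\pi_{k}\C=0$ yields injectivity of $\pi_{k}\kq\to\pi_{k}\kq^{\wedge}_{\eta}$, and surjectivity is equivalent to the vanishing of the connecting map $\pi_{k}\kq^{\wedge}_{\eta}\to\pi_{k-1}\C$, i.e.\ to injectivity of $\pi_{k-1}\C\to\pi_{k-1}\kq$ in the Witt degree $k-1$. Here I would argue directly from the slice filtration: $\pi_{k}\kq^{\wedge}_{\eta}=\pi_{k}\slicecomp(\kq)$ is assembled, through the slice spectral sequence, out of the motivic cohomology groups occurring in the slices of $\kq$---on which $\eta$ acts nilpotently---whereas $\pi_{k-1}\C$ is a Witt-theoretic group on which $\eta$ acts invertibly, so inspecting the two filtrations shows there is no room for a nonzero connecting map, equivalently that $\C\to\kq$ is injective in degrees $\equiv 0\bmod 4$. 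I expect this bookkeeping---computing the $\lim$- and $\lim^{1}$-terms of $\ul{\pi}_{\ast+\bideg}\C$ precisely enough to pin down exactly the four residue classes, and verifying the vanishing of the connecting map in the borderline case---to be the main obstacle; everything else is formal, resting on Corollary~\ref{cor:eta-slice-comp-kq}, Theorem~\ref{thm:kq-wood}, Proposition~\ref{prop:kw-defn-agree}, and the known shape of the slices.
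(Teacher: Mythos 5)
Your reduction to the statement that $\kq\to\kq^{\wedge}_{\eta}$ is a $\pi_{k}$-isomorphism, and the identification of its homotopy fiber $\C$ with $\holim_{n}\Sigma^{n,n}\kq$, are both correct, but the central step --- the vanishing of the $\lim$- and $\lim^1$-terms of the tower $\{\pi_{k+(w-n)}\kq\}_{n}$ in the claimed residue classes --- is not justified by what you say. You infer it from the fact that the $\eta$-periodic homotopy $\pi_{k}\kq[\tfrac{1}{\eta}]=\pi_{k}\kw$ is concentrated in degrees $\equiv 0\bmod 4$; but that is the \emph{colimit} of the groups $\pi_{k+(w+n)}\kq$ along increasing weight, whereas your Milnor sequence involves the \emph{inverse} limit (and its $\lim^1$) of the groups $\pi_{k+(w-n)}\kq$ along decreasing weight. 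These are different functors of different towers, and the tower in question does not stabilize: by Theorem~\ref{thm:kq-wood} the cofiber of $\eta$ is $\kgl$, whose homotopy in very negative weights contains Milnor $K$-theory and is far from trivial, so $\eta\colon\pi_{k+(j-1)}\kq\to\pi_{k+(j)}\kq$ is not eventually an isomorphism. Without an actual computation of these groups for $j\ll 0$ --- which is close to what the proposition is needed for in the first place --- the asserted vanishing of $\lim$ and $\lim^1$ outside degrees $\equiv 0,3\bmod 4$ is unproven. Your borderline case $k\equiv 1\bmod 4$ has the same soft spot: that $\eta$ acts locally nilpotently on $\pi_{k}\slicecomp(\kq)$ is exactly the assertion $\pi_{k}\slicecomp(\kq)[\tfrac{1}{\eta}]=0$, which requires the $E^{2}=E^{\infty}$ computation for $\KW$ and does not follow merely from the slices being motivic cohomology (completeness of the slice filtration only gives convergence to zero in the filtration topology, not actual nilpotence).

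The missing observation, and the way the paper argues, is that $\C=\holim_{n}\Sigma^{n,n}\kq$ is itself $\eta$-periodic: multiplication by $\eta$ on this homotopy limit is the shift of the tower, hence an equivalence. Therefore $\C\simeq\C[\tfrac{1}{\eta}]$ is the homotopy fiber of $\kq[\tfrac{1}{\eta}]\to\slicecomp(\kq)[\tfrac{1}{\eta}]$ --- this is the $\eta$-arithmetic square --- and one only needs $\pi_{k}\kw=0$ and $\pi_{k}\slicecomp(\kq)[\tfrac{1}{\eta}]=0$ for $k\not\equiv 0\bmod 4$; the first follows from Proposition~\ref{prop:kw-defn-agree} and the known homotopy of $\KW$, the second from the $E^{2}=E^{\infty}$-page of the slice spectral sequence for $\KW$. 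The resulting Mayer--Vietoris sequence then treats $k\equiv 1$ and $k\equiv 2\bmod 4$ uniformly, with no separate analysis of a borderline case. If you insert the $\eta$-periodicity of $\C$ into your argument, the rest of your outline collapses onto the paper's proof.
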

\begin{proof}
Since $\kw=\kq[\tfrac{1}{\eta}]$, 
$\eta$ acts invertibly on $\s_{\ast}(\kw)$ and the columns in the slice spectral sequences for $\kq$ and $\kw$ agree outside a finite range. 
The calculation of the $E^{2}=E^\infty$-page for $\KW$ in \cite[Theorem 6.3]{roendigs-oestvaer.hermitian} implies that $\pi_{k}\slicecomp(\kq)[\tfrac{1}{\eta}] = 0$ if $k\not\equiv 0\bmod 4$.
Corollary~\ref{cor:eta-slice-comp-kq} shows $\slicecomp(\kq)\simeq {\unit}^\wedge_\eta$.
It remains to apply the $\eta$-arithmetic square for $\kq$ and recall that $\kq[\tfrac{1}{\eta}] =\kw$ is the connective cover of $\KW$,
see Proposition~\ref{prop:kw-defn-agree}.
The result follows since $\pi_{k}\kw = 0$ for $k<0$ or $k\not\equiv 0\bmod 4$.
\end{proof}

Having clarified the range in which the slice spectral sequence for $\kq$ actually
computes $\kq$, it is now time to present these computations, starting
with the following summary.

\begin{theorem}
\label{theoremE2kq}
The nontrivial rows in the $1$st and $2$nd columns of the $E^{2}$-page of the $-n$th slice spectral sequence for $\kq$ are given as:
\begin{center}
\begin{tabular}{lll}
\hline
$q$ & $E^{2}_{-n+1,q,-n}(\kq)$                      & $E^{2}_{-n+2,q,-n}(\kq)$ \\ \hline
$3$ & $0$                                                     & $h^{n+1,n+3}/\bigl(\Sq^{2}h^{n-1,n+2}+\tau\pr^\infty_2 H^{n+1,n+2}\bigr)$ \\
$2$ & $h^{n+1,n+2}/\Sq^{2}h^{n-1,n+1}$     & $2 H^{n+2,n+2} \directsum \bigl(h^{n,n+2}/\Sq^{2}h^{n-2,n+1}\bigr)$ \\
$1$ & $h^{n,n+1}/\Sq^{2}\pr^\infty_2 H^{n-2,n}$        & $\ker (h^{n-1,n+1}\xrightarrow{\Sq^{2}}h^{n+1,n+2})$ \\
$0$ & $H^{n-1,n}$                                         & $\ker (H^{n-2,n}\xrightarrow{\Sq^{2}\pr}h^{n,n+1})$ 
\end{tabular}
\end{center}
\end{theorem}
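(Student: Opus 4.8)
The $E^{2}$-page is the homology of the first-page complex, so the plan is to write $\bigl(E^{1}_{\bullet,\bullet,-n}(\kq),\dd^{1}\bigr)$ out explicitly in the range of the table and take homology. By \eqref{equation:slices-kq} and $E^{1}_{t,q,-n}(\kq)=\pi_{t,-n}\s_{q}(\kq)$, each wedge summand $\Sigma^{a,q}\mathbf{M}R$ of $\s_{q}(\kq)$ contributes the group $H^{a-t,\,q+n}(F;R)$, with $R=\Lambda$ for the summand $a=2q$ of an even slice and $R=\Lambda/2$ for the summands $a=2i+q$; since $H^{p,q}(F)=0$ unless $0\le p\le q$, only the summands with $a\in\{q,q+1,q+2\}$ can contribute in the first two columns. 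This yields, in the first column ($t=-n+1$), the entries $H^{n-1,n}$ for $q=0$ and $h^{q+n-1,q+n}$ for $q\ge 1$; and in the second column ($t=-n+2$), the entries $H^{n-2,n}$, $h^{n-1,n+1}$, $H^{n+2,n+2}\directsum h^{n,n+2}$ for $q=0,1,2$ and $h^{q+n-2,q+n}\directsum h^{q+n,q+n}$ for $q\ge 3$.

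Next I read $\dd^{1}$ off Figure~\ref{fig:d1-kq}, evaluated on $\Spec F$: in these two columns the components of $\dd^{1}\colon E^{1}_{t,q,-n}\to E^{1}_{t-1,q+1,-n}$ between consecutive slices are $\Sq^{2}$ (mod~$2$, bidegree $(2,1)$); multiplication by $\tau$, resp.\ $\tau\pr^{\infty}_{2}$ when the source is an integral summand (bidegree $(0,1)$); $\Sq^{2}\pr^{\infty}_{2}$ out of an integral summand (bidegree $(2,1)$); $\Sq^{3}\Sq^{1}$ (mod~$2$, bidegree $(4,1)$); and $\partial^{2}_{\infty}\Sq^{2}\Sq^{1}$ into an integral summand (bidegree $(4,1)$).

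It then remains to compute $\ker(\dd^{1}\text{ out})/\operatorname{im}(\dd^{1}\text{ in})$ in each spot. Two facts carry the computation. First, multiplication by $\tau$ is an isomorphism $h^{p,q}\xrightarrow{\sim}h^{p,q+1}$ for $p\le q$ (Beilinson--Lichtenbaum / the norm residue isomorphism): so whenever the Milnor-type summand $h^{q+n,q+n}$ appears (rows $q\ge 3$ of the second column), the outgoing $\dd^{1}$ restricted to it is $\tau$, which is bijective onto its target; hence this summand never survives, the kernel of the total outgoing $\dd^{1}$ is the graph of a $\tau^{-1}\Sq^{2}$-type map, and projection to the remaining summand identifies it with $h^{q+n-2,q+n}$. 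Second, on an integral summand the outgoing component is $\pr^{\infty}_{2}$ or $\tau\pr^{\infty}_{2}$, whose kernel is multiplication by $2$ (again because $\tau$ is injective on $h^{p,p}$) — this is the source of the terms $2H^{n+2,n+2}$. The incoming differentials then cut out the quotients $/\Sq^{2}h^{\ast,\ast}$ and $/\tau\pr^{\infty}_{2}H^{\ast,\ast}$; verifying that the integral and mod-$2$ contributions genuinely split off as the stated direct summands uses the motivic Adem relations $\Sq^{1}\Sq^{2}=\Sq^{3}$ and $\Sq^{2}\Sq^{2}=\tau\Sq^{3}\Sq^{1}$ together with $\Sq^{1}\pr^{\infty}_{2}=0$, $\pr^{\infty}_{2}\partial^{2}_{\infty}=\Sq^{1}$ and $\Sq^{2}\tau=0$. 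For $q\ge 4$ the $\tau$-isomorphism again makes the $E^{2}$-entries vanish in both columns (the incoming $\dd^{1}$ from the summand $h^{q+n-1,q+n-1}$ one slice below is onto in the first column, and onto $\ker(\dd^{1}\text{ out})$ in the second); combined with $\s_{q}=\ast$ for $q<0$, this confirms that the listed rows are the only nonzero ones.

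The entire proof is this last step, and its non-formal content is the diagonalization just indicated: that the several components of $\dd^{1}$ conspire, after a change of basis, so that the integral pieces contribute $2H^{\ast,\ast}$ to the kernel while the mod-$2$ pieces appear as the displayed quotients. This really uses the arithmetic of $\Sq^{2}$, $\Sq^{3}\Sq^{1}$ and the Bockstein — the motivic Adem relation $\Sq^{2}\Sq^{2}=\tau\Sq^{3}\Sq^{1}$ in particular is what forces the incoming differentials to land where claimed — rather than being a formal property of the spectral sequence. A minor further nuisance is transcribing the multi-component $\dd^{1}$ from Figure~\ref{fig:d1-kq} and keeping all the bidegrees straight.
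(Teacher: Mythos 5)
Your proposal is correct and takes essentially the same route as the paper: the proof of Theorem~\ref{theoremE2kq} is exactly the entry-by-entry homology computation you describe, identifying the kernel of the outgoing $\dd^{1}$ with a graph via $c_{2}\mapsto(c_{2},\tau^{-1}\Sq^{2}c_{2})$ and splitting off $2H^{n+2,n+2}$ from the integral summand by an explicit change of basis (the map $\psi_{n}$ of Lemma~\ref{lem:e2-kq-22}). The one ingredient your sketch leaves implicit is the vanishing $\Sq^{2}\pr^{\infty}_{2}\colon H^{n-3,n}\to h^{n-1,n+1}=0$ (Corollary~\ref{cor:sq2prinfty2iszero}), which is what makes the $q=1$ entry of the second column a pure kernel with no quotient; it follows from the relations $\Sq^{1}\pr^{\infty}_{2}=0$ and $\Sq^{2}(\tau^{3}y)=\rho\cdot\tau^{2}\rho y$ that you already list.
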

The proof of Theorem \ref{theoremE2kq} is based on the identification of the slices of $\kq$ in \eqref{equation:slices-kq}. 
Various arguments are required to conclude $E^{2}=E^\infty$ for all of these tridegrees. First, we need to record how $\unit\to \kq$ affects slices.

\begin{lemma}
\label{lem:unit-kq-s0}
On $0$-slices, the unit map $\unit \to \kq$ induces the identity
\[ 
\s_{0}(\unit) 
= 
\MZ 
\xrightarrow{1}
\MZ 
= 
\s_{0}(\kq).
\]
\end{lemma}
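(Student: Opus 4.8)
The plan is to identify the $0$-slice of the sphere spectrum and of $\kq$ explicitly, and then check that the unit map induces the identity on the common value $\MZ$. Both computations are already available in the literature: by \eqref{equation:sphereslices} the $0$-slice $\s_0(\unit_\Lambda)$ is $\mathbf{M}(\Lambda\otimes\Ext^{0,0}_{\MU_*\MU}(\MU_*,\MU_*)) = \mathbf{M}\Lambda$, since $\Ext^{0,0}_{\MU_*\MU}(\MU_*,\MU_*)=\ZZ$; and by \eqref{equation:slices-kq} the $0$-slice $\s_0(\kq_\Lambda)$ is $\Sigma^{0,0}\mathbf{M}\Lambda=\mathbf{M}\Lambda$ (the $q=0$ even case, with no mod-$2$ summands since the range $0\le i<0$ is empty). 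So the content of the lemma is that the induced self-map of $\s_0 = \MZ$ is the identity rather than multiplication by some other integer.

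First I would recall that $\s_0$ is a lax symmetric monoidal functor (indeed $\s_0(\unit)$ is the unit in the category of slices, and the slice filtration is multiplicative, by Voevodsky's construction \cite{voevodsky.open} together with the work of \cite{grso}). Hence $\s_0$ sends the ring map $\unit\to\kq$ to a map of ring spectra $\mathbf{M}\Lambda = \s_0(\unit)\to\s_0(\kq) = \mathbf{M}\Lambda$. Since $\s_0(\unit)=\MZ$ is the initial object among effective motivic spectra in the heart of the slice filtration (equivalently $[\MZ,\MZ]=\ZZ$ with the identity as the unit), any ring map $\MZ\to\MZ$ is necessarily the identity: a unital ring endomorphism of $\MZ$ must fix $1$, and $\pi_{0,0}\MZ = \ZZ$ so the endomorphism ring is $\ZZ$ with the identity corresponding to $1$. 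Therefore $\s_0(\unit\to\kq)$ is the identity on $\MZ$.

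Alternatively, and perhaps more robustly, one can argue on homotopy: $\s_0$ preserves the unit, and the composite $\unit\to\kq\to\s_0(\kq)$ agrees with $\unit\to\s_0(\unit)\to\s_0(\kq)$ by naturality of the $0$-slice projection; evaluating on $\pi_{0,0}$ gives a commuting square of rings in which the top horizontal map $\pi_{0,0}\unit = \KMW(F)\to\pi_{0,0}\kq = \GW(F)$ is the standard (surjective) rank-type map, the left vertical map $\KMW(F)\to\pi_{0,0}\MZ = \ZZ$ is the rank homomorphism, and the bottom map $\ZZ\to\ZZ$ is what we want to pin down. Since the rank map $\KMW(F)\to\ZZ$ is surjective and the diagram commutes, the bottom map sends $1$ (the image of $\langle 1\rangle$) to $1$, hence is the identity.

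The main obstacle is not any computation — both $0$-slices are known to be $\MZ$ — but rather making precise the assertion that $\s_0$ is (lax) monoidal and that the unit map is compatible with the slice-tower structure, so that $\s_0(\unit\to\kq)$ is genuinely a \emph{ring} map and not merely an additive one; once that is in place the identification of $\mathrm{End}_{\SH(F)^{\eff,\heartsuit}}(\MZ)$ with $\ZZ$ forces the conclusion. This compatibility is exactly the formalism recorded in \cite{grso} (building on \cite{voevodsky.open}), so I would cite that and keep the argument short.
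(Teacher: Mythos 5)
Your argument is correct and is essentially the paper's own: the proof there consists of the single observation that the zero-slice functor preserves the ring structure (citing \cite[Lemma 2.29]{rso.oneline}), which is exactly the monoidality point you elaborate. Your additional check via $\pi_{0,0}$ and the rank homomorphism is a fine way to make the one-line argument concrete, but no new idea is needed.
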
 
\begin{proof}
The zero slice functor preserves the ring structure;
see also \cite[Lemma 2.29]{rso.oneline}.
\end{proof}

\begin{lemma}
\label{lem:unit-kq-s1}
On $1$-slices, the unit map $\unit \to \kq$ induces the identity
\[ 
\s_{1}(\unit) 
= 
\Sigma^{1,1}\MZ/2\{\alpha_{1}\} 
\xrightarrow{1}
\Sigma^{1,1}\MZ/2 
= 
\s_{1}(\kq).
\]
\end{lemma}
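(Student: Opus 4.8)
The plan is to read this off from the multiplicative structure on graded slices, using Lemma~\ref{lem:unit-kq-s0} as input. Since $\unit\to\kq$ is a map of commutative motivic ring spectra and the slice filtration is lax symmetric monoidal, it induces a homomorphism of bigraded rings $\s_{\ast}(\unit)\to\s_{\ast}(\kq)$ lying over the identity $\s_{0}(\unit)=\MZ\xrightarrow{1}\MZ=\s_{0}(\kq)$ of Lemma~\ref{lem:unit-kq-s0}. In particular the slice-degree-one component is a morphism of $\MZ$-modules $\s_{1}(\unit)\to\s_{1}(\kq)$.

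First I would identify the two sides. By~\eqref{equation:sphereslices} the $\MU_{\ast}\MU$-comodule $\Ext$-group $\Ext^{p,2}_{\MU_{\ast}\MU}(\MU_{\ast},\MU_{\ast})$ equals $\ZZ/2\{\alpha_{1}\}$ for $p=1$ and vanishes otherwise, so $\s_{1}(\unit)\iso\Sigma^{1,1}\MZ/2$ as an $\MZ$-module generated by $\alpha_{1}$; by~\eqref{equation:slices-kq} with $q=1$ we get $\s_{1}(\kq)\iso\Sigma^{1,1}\MZ/2$, whose $\MZ$-module generator is the class $\eta$ of~\eqref{equation:gradedhermitianktheoryslices}. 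A standard computation (e.g.\ with the cofiber sequence $\MZ\xrightarrow{2}\MZ\to\MZ/2$) gives $\Hom_{\MZ}(\Sigma^{1,1}\MZ/2,\Sigma^{1,1}\MZ/2)\iso\ZZ/2$, so this map is either zero or the identity, and the task reduces to showing it is nonzero.

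To see that, I would track the Hopf map. On the sphere, $\alpha_{1}$ is the slice-associated graded of the first motivic Hopf map $\eta\in\pi_{1,1}\unit$: it has slice filtration exactly $1$, since $\pi_{1,1}\s_{0}(\unit)=\pi_{1,1}\MZ=0$ forces filtration $\geq 1$, and it is detected by $\alpha_{1}$ in the slice spectral sequence for $\unit$. On $\kq$, the $\unit$-module structure makes multiplication by $\eta$ compatible with the slice filtration, and by~\eqref{equation:gradedhermitianktheoryslices} the resulting operator on $\s_{\ast}(\kq)$ is multiplication by the generator $\eta\in\s_{1}(\kq)$; hence $\eta\in\pi_{1,1}\kq$ has slice filtration $1$ with associated graded the nonzero class $\eta$. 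Since the unit map sends $\eta$ to $\eta$ and respects slice filtrations, the induced map on associated gradeds sends $\alpha_{1}$ to $\eta$; therefore $\s_{1}(\unit)\to\s_{1}(\kq)$ is nonzero, hence equals $1$.

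I do not expect a serious obstacle here once~\eqref{equation:gradedhermitianktheoryslices} is taken as given; the only point deserving care is the bookkeeping that matches the two occurrences of ``the symbol of $\eta$'', i.e.\ that the multiplicative generator $\eta$ in~\eqref{equation:gradedhermitianktheoryslices} really is the slice-associated graded of the homotopy class $\eta\in\pi_{1,1}\kq$ rather than an abstract polynomial generator. This is encoded in how the $E_{\infty}$-ring, and in particular the $\eta$-module, structure of $\kq$ passes to its slices, and granting it the lemma follows formally.
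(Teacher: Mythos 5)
Your proof is correct and follows essentially the same route as the paper, which also deduces the lemma from Lemma~\ref{lem:unit-kq-s0} together with the fact that the graded slice functor preserves ring structures, so that the generator $\alpha_{1}$ (the symbol of $\eta$) must map to the symbol of $\eta$ in $\s_{1}(\kq)$. Your write-up merely makes explicit the two bookkeeping points the paper leaves implicit, namely that $\End_{\MZ}(\Sigma^{1,1}\MZ/2)$ in degree $(0,0)$ is $\ZZ/2$ and that the multiplicative generator $\eta$ of \eqref{equation:gradedhermitianktheoryslices} is indeed the associated graded of the Hopf map.
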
 
\begin{proof}
This follows from Lemma~\ref{lem:unit-kq-s0} and the multiplicative structure since the graded slice functor preserves the ring structure, 
see also \cite[Lemma 2.29]{rso.oneline}.
\end{proof}

\begin{lemma}
\label{lem:unit-kq-s2}
On $2$-slices the unit map $\unit \to \kq$ induces
\[ 
\s_{2}(\unit) 
= 
\Sigma^{2,2}\MZ/2\{\alpha_{1}^{2}\} \vee \Sigma^{3,2}\MZ/12\{\alpha_{2/2}\} 
\xrightarrow{
\begin{pmatrix} 
1 & 0 \\ 
0 & \partial^{12}_{\infty} 
\end{pmatrix}
}
\Sigma^{2,2}\MZ/2 \vee \Sigma^{4,2}\MZ 
= 
\s_{2}(\kq).
\]
\end{lemma}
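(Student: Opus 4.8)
The plan is to read off the two slices from the general formulas \eqref{equation:sphereslices} and \eqref{equation:slices-kq} and then determine the resulting $2\times 2$ matrix of stable operations between motivic Eilenberg--MacLane summands entry by entry: the top-left entry by multiplicativity, the off-diagonal ones by degree bookkeeping, and the bottom-right entry by a comparison with algebraic cobordism.

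First I would spell out source and target. By \eqref{equation:sphereslices}, $\s_{2}(\unit)\iso\bigvee_{p\geq 0}\Sigma^{4-p,2}\mathbf{M}(\Ext^{p,4}_{\MU_{\ast}\MU}(\MU_{\ast},\MU_{\ast}))$, and in the classical Adams--Novikov $E^{2}$-page the only nonzero groups in internal degree $4$ are $\Ext^{1,4}\iso\ZZ/12$, generated by $\alpha_{2/2}$, and $\Ext^{2,4}\iso\ZZ/2$, generated by $\alpha_{1}^{2}$; indeed $\Ext^{0,4}=0$ since $\Ext^{0,\ast}$ is concentrated in internal degree $0$, and $\Ext^{p,4}=0$ for $p\geq 3$ since the degree-$4$ part of the cobar complex in cohomological degree $p\geq 3$ vanishes. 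Thus $\s_{2}(\unit)=\Sigma^{2,2}\MZ/2\{\alpha_{1}^{2}\}\vee\Sigma^{3,2}\MZ/12\{\alpha_{2/2}\}$, while \eqref{equation:slices-kq} with $q=2$ gives $\s_{2}(\kq)=\Sigma^{2,2}\MZ/2\vee\Sigma^{4,2}\MZ$. All four summands lie in weight $2$, so each matrix entry is a weight-zero stable operation between motivic Eilenberg--MacLane spectra. The entry $\Sigma^{3,2}\MZ/12\to\Sigma^{2,2}\MZ/2$ has simplicial degree $-1$, hence vanishes; the entry $\Sigma^{2,2}\MZ/2\to\Sigma^{4,2}\MZ$ is a class in $[\MZ/2,\Sigma^{2,0}\MZ]$, which is zero by the computation of the low-degree integral motivic cohomology operations on $\MZ/2$ (the motivic analogue of the vanishing of stable operations $\mathrm{H}\ZZ/2\to\Sigma^{2}\mathrm{H}\ZZ$, obtained from the Bockstein exact sequence together with $\Sq^{1}\Sq^{1}=0$). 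So the matrix is diagonal.

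The top-left entry is handled as in Lemmas~\ref{lem:unit-kq-s0} and \ref{lem:unit-kq-s1}: the graded slice functor is lax symmetric monoidal and $\unit\to\kq$ is a map of ring spectra, so $\s_{\ast}(\unit)\to\s_{\ast}(\kq)$ is a morphism of graded ring spectra; by Lemma~\ref{lem:unit-kq-s1} it carries the generator $\alpha_{1}$ of $\s_{1}(\unit)$ to the generator $\eta$ of $\s_{1}(\kq)$, hence $\alpha_{1}^{2}\mapsto\eta^{2}$, and by \eqref{equation:gradedhermitianktheoryslices} the class $\eta^{2}$ is exactly the generator of the summand $\Sigma^{2,2}\MZ/2$ of $\s_{2}(\kq)$. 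So the map restricts to an isomorphism on these summands, i.e.\ to $1$.

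It remains to identify the entry $\Sigma^{3,2}\MZ/12\{\alpha_{2/2}\}\to\Sigma^{4,2}\MZ$. Splitting $\MZ/12\simeq\MZ/4\vee\MZ/3$ and running the Bockstein exact sequence (using that $\MZ$ carries no nontrivial weight-zero operations in simplicial degree $1$) shows $[\MZ/12,\Sigma^{1,0}\MZ]\iso\ZZ/12$, generated by the integral Bockstein $\partial^{12}_{\infty}$; hence the entry is $k\,\partial^{12}_{\infty}$ for some $k\in\ZZ/12$, and everything comes down to showing $k$ is a unit, after which a choice of the generator $\alpha_{2/2}$ normalizes it to $1$. I expect this to be the main obstacle, because $k$ is invisible on homotopy groups over a field (the Bockstein $\partial^{12}_{\infty}$ lands in $H^{s+1,t}(F)$, which vanishes for $s+1>t$) and must be pinned down at the level of spectra. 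To do so I would use that the composite $\unit\to\kq\xrightarrow{\forget}\kgl$ coincides with $\unit\to\MGL\to\kgl$, both being the ring unit of $\kgl$; hence $\s_{2}(\unit)\to\s_{2}(\kq)\to\s_{2}(\kgl)=\Sigma^{4,2}\MZ$ factors through $\s_{2}(\MGL)=\Sigma^{4,2}\mathbf{M}(\MU_{4})$, and by Theorem~\ref{thm:kq-wood} the map $\forget$ induces the projection onto $\Sigma^{4,2}\MZ$ on $2$-slices (since $\eta$ embeds the $\Sigma^{2,2}\MZ/2$ summand of $\s_{2}(\kq)$). One then invokes the Hopkins--Morel--Spitzweck identification of the $\MGL$-slice $\dd_{1}$-differential with the cobar differential of $(\MU_{\ast},\MU_{\ast}\MU)$: under it $\alpha_{2/2}$ is represented by a cobar $1$-cocycle whose image in $\MU_{4}\otimes\QQ$ has denominator exactly $12$, so that the induced connecting map $\Sigma^{3,2}\MZ/12\to\Sigma^{4,2}\mathbf{M}(\MU_{4})$ followed by the surjection $\MU_{4}\twoheadrightarrow\pi_{4}\kgl\iso\ZZ$ (carrying $[\PP^{1}]^{2}$ to a generator) recovers the full Bockstein $\partial^{12}_{\infty}$. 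This forces $k\in(\ZZ/12)^{\times}$. Alternatively, one may simply cite the corresponding slice computation underlying the $1$-line calculation of \cite{rso.oneline}.
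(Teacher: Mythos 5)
Your proposal is correct and follows essentially the same route as the paper: both reduce the only nontrivial entry to the map on $2$-slices induced by $\unit\to\MGL$ via the commutative square comparing $\unit\to\kq\xrightarrow{\forget}\kgl$ with $\unit\to\MGL\to\kgl$, using Theorem~\ref{thm:kq-wood} to see that $\s_{2}(\forget)$ is the projection, and then appeal to the cobar-complex computation underlying \eqref{equation:sphereslices} to conclude the coefficient of $\partial^{12}_{\infty}$ is a unit. The only cosmetic difference is that the paper imports the diagonal form and the oddness of that coefficient from \cite[Lemma 2.30]{rso.oneline} and only needs the $\MGL$-comparison to exclude divisibility by $3$, whereas you establish the diagonal shape and the full unit claim directly.
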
 
\begin{proof}
The first column follows from the multiplicative structure or \cite[Lemma 2.29]{rso.oneline}.
By \cite[Lemma 2.30]{rso.oneline} the second diagonal entry has the form $n\cdot \partial^{12}_{\infty}$ for $n$ an odd integer. 
We show that $n$ is not divisible by $3$. 
The commutative diagram  
\begin{center}
\begin{tikzcd}
\unit \ar[r] \ar[d] & \MGL \ar[d] \\
\kq \ar[r,"\forget"] & \kgl
\end{tikzcd}
\end{center}
of maps of motivic ring spectra induces a commutative diagram
\begin{center}
\begin{tikzcd}
\s_2\unit \ar[r] \ar[d] & \s_2\MGL \ar[d] \\
\s_2 \kq \ar[r,"\s_2\forget"] & \s_2\kgl
\end{tikzcd}
\end{center}
on slices. The Wood cofiber sequence from Theorem~\ref{thm:kq-wood} implies that 
$\s_2\forget$ is the canonical projection, as is
the map $\s_2\MGL\to \s_2\kgl$. Hence
$n$ can be identified from the map on $2$-slices induced by the unit map $\unit \to \MGL$. 
This computation follows from the proof of \eqref{equation:sphereslices},
and shows that $n$ is not divisible by $3$. Thus $n$ is a unit in $\ZZ/12\ZZ$.
The result follows by applying a suitable isomorphism to $\s_{2}(\unit)$.
\end{proof}

\begin{lemma}
\label{lem:unit-kq-sq}
For $q\geq 3$ 
the induced map $\s_{q}(\unit \to \kq)$ is the identity on the summands generated by $\alpha_{1}^{q}$ and $\alpha_{3}\alpha_{1}^{q-3}$,
i.e., 
there is a commutative diagram with vertical inclusions
\begin{center} 
\begin{tikzcd}
\Sigma^{q,q}\MZ/2 \vee \Sigma^{q+2,q}\MZ/2 
\ar[r,"\id"] \ar[d] & 
\Sigma^{q,q}\MZ/2 \vee \Sigma^{q+2,q}\MZ/2  \ar[d] \\
\s_{q}(\unit) \ar[r] & \s_{q}(\kq).
\end{tikzcd}
\end{center}
In particular, the unit map $\unit\to \kq$ induces the identity on $\s_{3}$.
\end{lemma}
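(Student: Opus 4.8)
The overall strategy is to work off the multiplicative structure: the graded slice functor $\s_{\ast}$ is lax symmetric monoidal, so $\s_{\ast}(\unit\to\kq)$ is a morphism of graded $\MZ$-algebras (see also \cite[Lemma 2.29]{rso.oneline}), and I would combine this with the presentation $\s_{\ast}(\kq)\iso\MZ[\eta,\sqrt{\alpha}]/(2\eta=0,\eta^{2}\xrightarrow{\delta}\sqrt{\alpha})$ of \eqref{equation:gradedhermitianktheoryslices} together with the explicit slices \eqref{equation:slices-kq}. The summand generated by $\alpha_{1}^{q}$ is then immediate: by Lemma~\ref{lem:unit-kq-s1} the unit map sends the generator $\alpha_{1}$ of $\s_{1}(\unit)=\Sigma^{1,1}\MZ/2$ by the identity to the generator $\eta$ of $\s_{1}(\kq)=\Sigma^{1,1}\MZ/2$, while $\eta^{q}$ generates the $i=0$ summand $\Sigma^{q,q}\MZ/2$ of $\s_{q}(\kq)$; multiplicativity gives $\alpha_{1}^{q}\mapsto\eta^{q}$, which is exactly the asserted identity on the first displayed summand.

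For the summand generated by $\alpha_{3}\alpha_{1}^{q-3}$ I would reduce to the case $q=3$. Note first that $\alpha_{3}$ is not a product of classes in lower slices, so this part is genuinely new; but, writing $\overline{\alpha_{3}}$ for the image of $\alpha_{3}\in\s_{3}(\unit)$ under the unit map, multiplicativity and the previous paragraph give $\alpha_{3}\alpha_{1}^{q-3}\mapsto\overline{\alpha_{3}}\cdot\eta^{q-3}$. Since $\eta\sqrt{\alpha}$ generates the $i=1$ summand $\Sigma^{5,3}\MZ/2$ of $\s_{3}(\kq)$ and multiplication by $\eta^{q-3}$ carries it isomorphically onto the $i=1$ summand $\Sigma^{q+2,q}\MZ/2$ of $\s_{q}(\kq)$, it suffices to prove $\overline{\alpha_{3}}=\eta\sqrt{\alpha}$. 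Granting this, the displayed square follows for all $q\geq 3$; and for $q=3$ it yields the assertion that $\unit\to\kq$ induces the identity on $\s_{3}$, because the classical vanishing $\Ext^{2,6}_{\MU_{\ast}\MU}(\MU_{\ast},\MU_{\ast})=0$ forces $\s_{3}(\unit)=\Sigma^{3,3}\MZ/2\{\alpha_{1}^{3}\}\vee\Sigma^{5,3}\MZ/2\{\alpha_{3}\}$, which then matches $\s_{3}(\kq)$ summand by summand.

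The remaining step, $\overline{\alpha_{3}}=\eta\sqrt{\alpha}$, is where the work is, and it is the main obstacle. By bidegree the summand $\Sigma^{5,3}\MZ/2\{\alpha_{3}\}$ of $\s_{3}(\unit)$ can only map to the summand $\Sigma^{5,3}\MZ/2\{\eta\sqrt{\alpha}\}$ of $\s_{3}(\kq)$ — there is no nonzero mod-$2$ motivic operation of negative simplicial degree — so $\overline{\alpha_{3}}\in\{0,\eta\sqrt{\alpha}\}$ and I have to exclude $0$. The plan is to compare first slice differentials using naturality, $\dd^{\kq}_{1}\circ\s_{2}(\unit\to\kq)=\Sigma^{1,0}\s_{3}(\unit\to\kq)\circ\dd^{\unit}_{1}$. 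The differential $\dd^{\kq}_{1}$ is known completely (Figure~\ref{fig:d1-kq}): restricted to the $\Sigma^{2,2}\MZ/2\{\eta^{2}\}$-summand of $\s_{2}(\kq)$ it has a $\Sq^{3}\Sq^{1}$-component landing in $\Sigma^{5,3}\MZ/2\{\eta\sqrt{\alpha}\}\subset\s_{3}(\kq)$. Since, by Lemma~\ref{lem:unit-kq-s2}, the unit map carries $\alpha_{1}^{2}\in\s_{2}(\unit)$ by the identity to $\eta^{2}$, the left-hand composite restricted to the $\alpha_{1}^{2}$-summand therefore has a nonzero component in $\Sigma^{5,3}\MZ/2\{\eta\sqrt{\alpha}\}$; as $\Sigma^{5,3}\MZ/2\{\alpha_{3}\}$ is the only summand of $\s_{3}(\unit)$ that can map there, the map $\s_{3}(\unit\to\kq)$ must be nonzero on it, i.e.\ $\overline{\alpha_{3}}=\eta\sqrt{\alpha}$. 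All the monoidal bookkeeping and degree counts are routine; the nonformal input is the known shape of $\dd^{\kq}_{1}$ (that the $\eta\sqrt{\alpha}$-component is present, so that the chase is not vacuous) together with the vanishing $\Ext^{2,6}_{\MU_{\ast}\MU}(\MU_{\ast},\MU_{\ast})=0$.
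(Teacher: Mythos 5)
Your proposal is correct, and it fills in an argument that the paper itself does not spell out: the printed proof is a one-line deferral to the analogous statement for $\unit\to\KQ$ in \cite[Lemma 2.30]{rso.oneline}. Your reconstruction is in the same spirit as that cited lemma but is worth recording on its own terms. The $\alpha_{1}^{q}$-summand is indeed immediate from Lemma~\ref{lem:unit-kq-s1} and the multiplicativity of $\s_{\ast}$ (only multiplication by $\alpha_{1}$ is needed, which is the form of multiplicativity the paper invokes elsewhere, e.g.\ in Lemma~\ref{lem:pi1f4sphere}). For the $\alpha_{3}\alpha_{1}^{q-3}$-summand, your degree count is right: a map $\Sigma^{5,3}\MZ/2\to\Sigma^{3,3}\MZ/2$ would live in the bidegree $(-2,0)$ part of the motivic Steenrod algebra, which vanishes, so $\overline{\alpha_{3}}\in\{0,\eta\sqrt{\alpha}\}$; and the chase through the naturality square for $\dd_{1}(2)$ works because the only other contribution to the $\eta\sqrt{\alpha}$-component of the right-hand composite, namely $(\text{component }\alpha_{1}^{3}\to\eta\sqrt{\alpha})\circ\Sq^{2}$, also dies for the same bidegree reason, leaving $c\circ\Sq^{3}\Sq^{1}=\Sq^{3}\Sq^{1}\neq 0$ and hence $c=\id$. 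The two nonformal inputs you cite — the $\Sq^{3}\Sq^{1}$-component of $\dd_{1}^{\kq}(2)$ from Figure~\ref{fig:d1-kq} (which comes from the $\KQ$-computation in \cite{roendigs-oestvaer.hermitian}, so there is no circularity) and the vanishing $\Ext^{2,6}_{\MU_{\ast}\MU}(\MU_{\ast},\MU_{\ast})=0$ for the ``in particular'' clause — are both legitimate and correctly used. In short: correct, and essentially a self-contained version of the argument the paper outsources.
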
 
\begin{proof}
The argument in \cite[Lemma 2.30]{rso.oneline} for $\unit \to \KQ$ applies also to $\unit \to \kq$. 
\end{proof}

\begin{corollary}
\label{cor:unit-kq-e1-iso}
For all $n\in \ZZ$ the unit map $\unit\to \kq$ induces a surjection on $E^1_{-n+1,2,-n}$ and $E^1_{-n+2,4,-n}$, and for all $m, k\in \ZZ$ an isomorphism on:
\begin{align*}
& E^1_{-n,m,-n}        \\
& E^1_{-n+1,m,-n}   &  m\neq 2 \\
&E^1_{-n+2,m,-n}   &   m\neq 2,4 \\
&E^1_{-n+k,m,-n}   &   m\leq 1.
\end{align*}
\end{corollary}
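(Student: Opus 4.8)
The plan is to read the statement off the slice-level description of the unit map. Since $E^{1}_{t,q,w}(\E)=\pi_{t,w}\s_{q}(\E)$, the map in question is $\pi_{t,w}$ applied to the map of slices $\s_{q}(\unit)\to\s_{q}(\kq)$, and this map has been pinned down: it is the identity on $\s_{m}$ for $m\leq 1$ and for $m=3$ (Lemmas~\ref{lem:unit-kq-s0}, \ref{lem:unit-kq-s1}, \ref{lem:unit-kq-sq}); it is $\mathrm{diag}(1,\partial^{12}_{\infty})$ on $\s_{2}$ (Lemma~\ref{lem:unit-kq-s2}); and for $m\geq 3$ it is the identity on the summands of $\s_{m}(\unit)$ generated by $\alpha_{1}^{m}$ and $\alpha_{3}\alpha_{1}^{m-3}$ (Lemma~\ref{lem:unit-kq-sq}). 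Combined with the wedge decompositions \eqref{equation:sphereslices}, \eqref{equation:slices-kq}, this makes both $E^{1}$-terms explicit finite sums of mod-$n$ motivic cohomology groups of $F$ with an explicit map, and the statement reduces to a finite check whose only standing input is $H^{p,q}(F;A)=0$ for $p>q$. The cases $m\leq 1$ and $m=3$ are immediate, as the slice map is then an isomorphism; this takes care of the last line and all $m\leq 1$ and $m=3$ entries of the others.

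For the remaining slice degrees I would organize by the column $k:=t-w\in\{0,1,2\}$. Applying $\pi_{-n+k,-n}$ to a wedge summand $\Sigma^{a,m}\mathbf{M}R$ gives $H^{a+n-k,m+n}(F;R)$, which vanishes unless $a\leq m+k$; hence for $k\leq 2$ only a bounded window of summands contributes: on the $\kq$ side the bottom summand $\Sigma^{m,m}\mathbf{M}\Lambda/2$ (for $k\geq 0$), the next one $\Sigma^{m+2,m}\mathbf{M}\Lambda/2$ (only for $k\geq 2$), and the integral summand $\Sigma^{2m,m}\mathbf{M}\Lambda$ (only for $m\leq 2$); on the $\unit$ side the summands $\Sigma^{2m-p,m}\mathbf{M}(\Lambda\otimes\Ext^{p,2m})$ with $p\in\{m-2,m-1,m\}$. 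For $m\geq 3$ the $p=m$ summand is the $\alpha_{1}^{m}$-summand $\Sigma^{m,m}\mathbf{M}\Lambda/2$, on which the map is the identity; the $p=m-1$ summand has coefficients $\Ext^{m-1,2m}\otimes\Lambda$, which vanishes (a low-degree Adams--Novikov fact: $\Ext^{2,6}=0$, so $\alpha_{1}$ annihilates $\Ext^{1,4}$ and there are no surviving generators of $\Ext^{s,2s+2}$ for $s\geq 2$); and the $p=m-2$ summand has coefficients $\Ext^{m-2,2m}\otimes\Lambda$, on whose $\alpha_{3}\alpha_{1}^{m-3}$-line $\Lambda/2$ the map is the identity. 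Therefore: for $k\leq 0$ only the $\alpha_{1}^{m}$-summand contributes on both sides and the map is an isomorphism; for $k=1$ the only possible extra summand vanishes, so again an isomorphism; for $k=2$ the map is an isomorphism precisely when $\Ext^{m-2,2m}\otimes\Lambda$ equals its $\alpha_{3}\alpha_{1}^{m-3}$-line, which holds for $m=3$ and $m\geq 5$ but fails at $m=4$, where $\Ext^{2,8}\otimes\Lambda$ also contains the class detecting $\nu^{2}$; there the map is still surjective, because the $\alpha_{3}\alpha_{1}$-line already surjects onto the surviving $\kq$-summand $\KMil_{n+4}(F)/2$, which is the asserted surjectivity on $E^{1}_{-n+2,4,-n}$ and the reason $m=4$ is excluded from the third line.

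The degree $m=2$ runs through the same template using $\mathrm{diag}(1,\partial^{12}_{\infty})$: the $\alpha_{2/2}$-summand $\Sigma^{3,2}\mathbf{M}\Lambda/12$ of $\s_{2}(\unit)$ maps by $\partial^{12}_{\infty}$ to the integral summand $\Sigma^{4,2}\mathbf{M}\Lambda$ of $\s_{2}(\kq)$. For $k=0$ both sides of this map vanish after $\pi_{-n,-n}$, so the map is an isomorphism; for $k=1$ the source contributes $\KMil_{n+2}(F)/12$ while its target $H^{n+3,n+2}$ vanishes, so the map on $E^{1}_{-n+1,2,-n}$ is the projection killing that summand — surjective, with kernel $\KMil_{n+2}(F)/12$; for $k=2$ the target survives as $\KMil_{n+2}(F)$ but $\partial^{12}_{\infty}$ need not hit it, since Milnor $K$-theory has torsion. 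This excludes $m=2$ from the second and third lines and leaves the first intact. The main obstacle throughout is the Adams--Novikov bookkeeping in the second paragraph — verifying that for $m\notin\{2,4\}$ the slices of $\unit$ and $\kq$ agree, after $\pi_{-n+k,-n}$ with $k\leq 2$, on every summand that actually contributes; there is no deeper idea beyond the cited slice computations and the vanishing $H^{p,q}(F;A)=0$ for $p>q$.
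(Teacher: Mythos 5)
Your proposal is correct and follows the paper's own (very terse) proof, which simply cites Lemmas~\ref{lem:unit-kq-s0}--\ref{lem:unit-kq-sq}: you expand exactly that reduction to the slice-level description of the unit map plus the vanishing $H^{p,q}(F;R)=0$ for $p>q$. The only external input, the vanishing of $\Ext^{m-1,2m}\otimes\Lambda$ for $m\geq 3$ and the identification of $\Ext^{m-2,2m}\otimes\Lambda$ with the $\alpha_1^{m-3}\alpha_3$-line away from $m=4$, is precisely what the paper reads off Figure~\ref{fig:d1-sphere} and the cited Zahler/Ravenel tables, so your bookkeeping matches the intended argument.
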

\begin{proof}
This follows from Lemmas~\ref{lem:unit-kq-s0},~\ref{lem:unit-kq-s1},~\ref{lem:unit-kq-s2}, and~\ref{lem:unit-kq-sq}.
\end{proof}

\begin{lemma}
\label{lem:e1-kq-0}
The groups $E^{1}_{-n,m,-n}(\kq)$ and $E^{1}_{-n+1,m,-n}(\kq)$ consist of permanent cycles.
\end{lemma}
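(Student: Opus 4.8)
The claim is that the columns $E^{1}_{-n,\bullet,-n}(\kq)$ and $E^{1}_{-n+1,\bullet,-n}(\kq)$ consist of permanent cycles, i.e., all incoming and outgoing slice differentials vanish on these bidegrees. The plan is to exploit the comparison with the sphere spectrum via Corollary \ref{cor:unit-kq-e1-iso}, together with the already-established structure of the slice spectral sequence for $\unit$, and to handle the remaining (low-weight, non-generic) cases by inspecting the slices directly.

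First I would argue that there are no \emph{outgoing} differentials from these columns. The target of $d^{r}$ on $E^{r}_{-n,m,-n}$ lies in $E^{r}_{-n-1,m+r,-n}$, and on $E^{r}_{-n+1,m,-n}$ it lands in $E^{r}_{-n,m+r,-n}$. For the column $-n$, the target column is $-n-1$, which is the $1$st column of the $-n-1$st slice spectral sequence — but by Corollary \ref{cor:unit-kq-e1-iso} (the case $m\le 1$, and the isomorphism statements), combined with the vanishing $\s_{q}(\kq)=\ast$ for $q<0$ forcing the relevant groups to vanish in the appropriate range, one sees the target is zero; more precisely the outgoing differential from the $0$th column $E^{1}_{-n,m,-n}$ would have to decrease simplicial-minus-slice degree while the geometry of the slices in \eqref{equation:slices-kq} shows that the column $-n-1$ in slice degrees $>-n$ is supported only where the map from $\unit$ is already understood. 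Here I would lean on the identification of the first slice differential for $\kq$ in Figure \ref{fig:d1-kq}: inspecting that figure shows that no $\dd^{1}$ originates in the $0$th or $1$st columns (the arrows $\Sq^{2}$, $\Sq^{2}+\rho\Sq^{1}$, $\Sq^{3}\Sq^{1}$, $\tau$, etc., all emanate from columns of index $\ge 2$ into the relevant range), and an analogous—but more careful—bookkeeping handles higher $d^{r}$.

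Next I would handle the \emph{incoming} differentials. A differential $d^{r}$ hitting $E^{r}_{-n,m,-n}$ comes from $E^{r}_{-n+1,m-r,-n}$, and one hitting $E^{r}_{-n+1,m,-n}$ comes from $E^{r}_{-n+2,m-r,-n}$. For the $0$th column, the source is the $1$st column, whose $E^{1}$-terms are $h^{n,n+1}/\Sq^{2}\pr$ (in $q=1$) and $H^{n-1,n}$ (in $q=0$) and otherwise zero by Theorem \ref{theoremE2kq}; since a $d^{r}$ raises slice degree by $r\ge 1$, there is simply no room for a nonzero differential from column $1$ into column $0$ in the right tridegrees. For the $1$st column, the source is column $2$; here I would invoke Corollary \ref{cor:unit-kq-e1-iso}, which says $\unit\to\kq$ is an isomorphism on $E^{1}_{-n+1,m,-n}$ for $m\ne 2$ and surjective for $m=2$, and an isomorphism on $E^{1}_{-n+2,m,-n}$ for $m\ne 2,4$. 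Thus any differential from column $2$ to column $1$ in $\kq$ is detected by the corresponding differential in $\unit$ — and in the slice spectral sequence for $\unit$, the relevant differentials have already been computed (in \cite{rso.oneline} and via the $\dd_{1}$-analysis) to vanish in this range. The exceptional bidegrees $m=2,4$ where the comparison with $\unit$ is only a surjection must be treated by hand, using the explicit form of $\s_{q}(\kq)$ and the fact (Lemma \ref{lem:unit-kq-s2}) that the extra $\MZ$-summand in $\s_{2}(\kq)$ contributes $2H^{n+2,n+2}$, on which $\Sq^{2}$-type differentials necessarily vanish for degree reasons.

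The main obstacle I anticipate is precisely the bookkeeping in those exceptional tridegrees $m=2$ and $m=4$ of column $2$, where the unit map is not an isomorphism and one cannot simply pull the vanishing back from $\unit$; there one must argue directly that the $\dd^{1}$-differential out of the relevant summand of $\s_{2}(\kq)$ or $\s_{3}(\kq)$ either lands outside columns $0$ and $1$ or is forced to be zero by the multiplicative structure \eqref{equation:gradedhermitianktheoryslices} and Figure \ref{fig:d1-kq}. I expect this to be a short case check rather than a genuinely hard argument, but it is the one place where the clean comparison with the sphere breaks down. Once these cases are dispatched, assembling the vanishing of all $d^{r}$, $r\ge 1$, into and out of the two columns gives the claim.
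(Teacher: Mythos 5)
There is a genuine gap, and it sits exactly at the nontrivial part of the lemma. First, note that a permanent cycle is an element killed by every \emph{outgoing} differential; the incoming differentials you analyze at length in your second paragraph (from column $-n+2$ into column $-n+1$, where you deploy Corollary~\ref{cor:unit-kq-e1-iso} and worry about the exceptional degrees $m=2,4$) are irrelevant to the statement — indeed the $d^1$ entering column $-n+1$ is genuinely nonzero, which is why $E^{2}_{-n+1,1,-n}(\kq)$ is a proper quotient of $h^{n,n+1}$. What you must prove, and do not, is the vanishing of the differentials $d^{r}\colon E^{r}_{-n+1,q,-n}\to E^{r}_{-n,q+r,-n}$ out of the first column into the zeroth. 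Your claim that there is ``simply no room'' for these is false: the source is nonzero for $q\in\{0,1,2\}$ (you omit the row $q=2$ of Theorem~\ref{theoremE2kq}), and by \eqref{equation:slices-kq} the target satisfies $E^{1}_{-n,q+r,-n}(\kq)\iso \kmil_{n+q+r}$ for $q+r\geq 1$ (and $\KMil_{n}$ for $q+r=0$), which is nonzero for a general field in every slice degree. Likewise, reading off from Figure~\ref{fig:d1-kq} that ``no $\dd^{1}$ originates in the $0$th or $1$st columns'' conflates the figure's horizontal axis (simplicial degree minus slice degree) with the spectral sequence index $p$: for instance the $d^{1}$ out of $E^{1}_{-n+1,0,-n}=H^{n-1,n}$ is $\Sq^{2}\pr^{\infty}_{2}\colon H^{n-1,n}\to h^{n+1,n+1}$, with nonzero source and target; its vanishing is a computation of the kind in Lemma~\ref{lem:Sq2pr42iszero}, not a degree count. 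Only for the zeroth column is the ``no room'' argument valid, since $E^{1}_{-n-1,q,-n}(\kq)=\pi_{-n-1,-n}\s_{q}(\kq)=0$: each summand contributes a group $h^{2i+q+n+1,q+n}$ or $H^{2q+n+1,q+n}$ that vanishes.

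The repair is to aim your comparison tool at precisely these differentials, which is what the paper does. Corollary~\ref{cor:unit-kq-e1-iso} gives that $\unit\to\kq$ is surjective on $E^{1}_{-n+1,q,-n}$ and an isomorphism on $E^{1}_{-n,q,-n}$ for all $q$; since the corresponding two columns for $\unit$ consist of permanent cycles (Morel's computation of the $0$-line and the results of \cite{rso.oneline}), naturality of the slice differentials forces $d^{r}=0$ out of columns $-n$ and $-n+1$ for $\kq$ as well. Alternatively, one can count: $E^{2}=E^{\infty}$ for $\kw\simeq\KW_{\geq 0}$ together with $\pi_{-n,-n}\kq\iso\Witt(F)$ for $n<0$ shows that the entire zeroth column must survive to $E^{\infty}$, so it cannot receive any differential from column $-n+1$.
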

\begin{proof}
This follows from Corollary~\ref{cor:unit-kq-e1-iso} and the corresponding result for $\unit$, 
or alternatively from the computation of the $E^{2}=E^{\infty}$ page for $\kw\simeq\KW_{\geq 0}$ and the fact that $\pi_{-n,-n}\kq\cong\pi_{-n,-n}\kw\cong \Witt(F)$ is the Witt ring for $n<0$.
\end{proof}

\begin{lemma}
\label{lem:e2-kq-1>2}
The group $E^{2}_{-n+1,m,-n}(\kq)$ is trivial for $m\geq 3$.
\end{lemma}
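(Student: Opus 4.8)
The plan is to read off the $E^{1}$-term and its incoming $d^{1}$-differential from the slice identification \eqref{equation:slices-kq} and Figure~\ref{fig:d1-kq}, and then quote the Beilinson--Lichtenbaum isomorphism to see that this differential is surjective.

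First I would identify $E^{1}_{-n+1,m,-n}(\kq)=\pi_{-n+1,-n}\s_{m}(\kq)$. By \eqref{equation:slices-kq} the slice $\s_{m}(\kq)$ is a finite wedge of Tate twists $\Sigma^{2i+m,m}\mathbf{M}\Lambda/2$ with $i\ge 0$, together with a single summand $\Sigma^{2m,m}\mathbf{M}\Lambda$ when $m$ is even and nonnegative. Since $\pi_{-n+1,-n}\Sigma^{a,m}\mathbf{M}R=H^{a+n-1,\,m+n}(F;R)$ vanishes as soon as $a>m+1$, for $m\ge 2$ the only surviving contribution is from the ``diagonal'' summand $\Sigma^{m,m}\mathbf{M}\Lambda/2$ generated by $\alpha_{1}^{m}$, so that $E^{1}_{-n+1,m,-n}(\kq)\cong h^{m+n-1,\,m+n}$.

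Next I would pin down the component of $d^{1}_{-n+2,m-1,-n}(\kq)\colon E^{1}_{-n+2,m-1,-n}(\kq)\to E^{1}_{-n+1,m,-n}(\kq)$ that lands on this group, using Figure~\ref{fig:d1-kq}. For $m\ge 4$ the slice $\s_{m-1}(\kq)$ carries the summand $\Sigma^{m+1,m-1}\mathbf{M}\Lambda/2$, and the ``$\tau$''-part of $\dd^{1}$ maps it onto the diagonal summand of $\s_{m}(\kq)$; on $\pi_{-n+2,-n}$ this is multiplication by $\tau$, i.e.\ the map $h^{m+n-1,\,m+n-1}\to h^{m+n-1,\,m+n}$. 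For $m=3$ that mod-$2$ summand of $\s_{2}(\kq)$ is absent, and instead I would use the ``$\tau\pr$''-part of $\dd^{1}$ on the integral summand $\Sigma^{4,2}\mathbf{M}\Lambda$ of $\s_{2}(\kq)$, which lands on the diagonal summand of $\s_{3}(\kq)$ and on $\pi_{-n+2,-n}$ is the composite $H^{n+2,n+2}(F)\xrightarrow{\pr^{\infty}_{2}}h^{n+2,n+2}\xrightarrow{\,\cdot\,\tau\,}h^{n+2,n+3}$.

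To conclude, multiplication by $\tau$ is an isomorphism $h^{p,p}\xrightarrow{\sim}h^{p,p+1}$ because both groups are identified with the Galois cohomology $H^{p}(F;\ZZ/2)$ by Voevodsky's proof of the Milnor conjecture \cite{Voevodsky.reduced}, and $\pr^{\infty}_{2}$ is an epimorphism because $2$ is not invertible in $\Lambda$ (the exponential characteristic is not $2$). Hence in every case the component of $d^{1}_{-n+2,m-1,-n}(\kq)$ described above already surjects onto $E^{1}_{-n+1,m,-n}(\kq)$, so $E^{2}_{-n+1,m,-n}(\kq)=0$ for all $m\ge 3$.

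I expect the main obstacle to be the second step: correctly matching up, via Figure~\ref{fig:d1-kq}, which summand of $\s_{m-1}(\kq)$ carries a ``$\tau$''- or ``$\tau\pr$''-part of the slice differential hitting the diagonal summand of $\s_{m}(\kq)$, and separating out the boundary case $m=3$, where this source is the integral Tate twist $\Sigma^{4,2}\mathbf{M}\Lambda$ rather than a mod-$2$ one. The remaining inputs --- vanishing of $H^{p,q}(F;R)$ for $p>q$ and invertibility of $\tau$ on mod-$2$ motivic cohomology in the Beilinson--Lichtenbaum range --- are routine.
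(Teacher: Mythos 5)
Your proposal is correct and follows essentially the same route as the paper: identify $E^{1}_{-n+1,m,-n}(\kq)$ with the diagonal group $h^{n+m-1,n+m}$ and show the incoming $d^{1}$ surjects onto it via the $\tau$-component (for $m\geq 4$) resp.\ the $\tau\pr^{\infty}_{2}$-component from the integral summand of $\s_{2}(\kq)$ (for $m=3$). One small correction: the surjectivity of $\pr^{\infty}_{2}\colon H^{n+2,n+2}\to h^{n+2,n+2}$ is not a consequence of $2$ being non-invertible in $\Lambda$, but of the vanishing $H^{n+3,n+2}=0$ in the Bockstein sequence (motivic cohomology of a field vanishes above the diagonal).
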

\begin{proof}
We show the $d^{1}$-differential entering $E^1_{-n+1,m,-n}(\kq)=h^{n+m-1,n+m}$ is surjective.
For $m\geq 4$ the differential is given by 
\begin{align*}
E^1_{-n+2,m-1,-n}(\kq) 
= 
h^{n+m-3,n+m-1}\oplus h^{n+m-1,n+m-1} 
& 
\to 
h^{n+m-1,n+m} = E^1_{-n+1,m,-n}(\kq)\\
(b_{2},b_{0}) 
&
\mapsto 
\Sq^{2}b_{2}+\tau b_{0}.
\end{align*}
The claim follows since $\tau\colon h^{n+m-1,n+m-1}\to h^{n+m-1,n+m}$ is surjective.
For $m=3$, 
\begin{align*}
E^1_{-n+2,2,-n}(\kq) = h^{n,n+2}\oplus H^{n+2,n+2} 
& 
\to 
h^{n+2,n+3} = E^1_{-n+1,3,-n}(\kq)\\
(b_{2},B_{0}) 
&
\mapsto 
\Sq^{2}b_{2}+\tau \pr^\infty_2 B_{0}
\end{align*}
is surjective, since $\tau$ and $\pr^\infty_2 \colon H^{n+2,n+2}\to h^{n+2,n+2}$ are both surjective maps.
\end{proof}

\begin{lemma}
\label{lem:e2-kq-1<3}
The possibly nontrivial groups in the first column $E^{2}_{-n+1,m,-n}(\kq)$ are:
\begin{align*}
E^{2}_{-n+1,0,-n}(\kq) & \cong H^{n-1,n} \\
E^{2}_{-n+1,1,-n}(\kq) & \cong h^{n,n+1}/\Sq^{2}\pr^\infty_2 H^{n-2,n} \\
E^{2}_{-n+1,2,-n}(\kq) & \cong h^{n+1,n+2}/\Sq^{2} h^{n-1,n+1} 
\end{align*}
\end{lemma}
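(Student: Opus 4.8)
The plan is to realize each group $E^{2}_{-n+1,m,-n}(\kq)$ in the first column as a quotient of the corresponding $E^{1}$-term, the point being that no differential \emph{leaves} the first column. Indeed, Lemma~\ref{lem:e1-kq-0} says that every class in $E^{1}_{-n+1,m,-n}(\kq)$ is a permanent cycle, so the outgoing $d^{1}$ vanishes and
\[
E^{2}_{-n+1,m,-n}(\kq)
\cong
E^{1}_{-n+1,m,-n}(\kq)\,\big/\,d^{1}E^{1}_{-n+2,m-1,-n}(\kq).
\]
It then suffices to identify these $E^{1}$-terms from the slice description \eqref{equation:slices-kq} and to read off the incoming $d^{1}$ from Figure~\ref{fig:d1-kq}. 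Here $E^{1}_{t,q,w}(\kq)=\pi_{t,w}\s_{q}(\kq)$ and $\pi_{t,w}\Sigma^{a,b}\M R=H^{a-t,b-w}(F;R)$, the latter vanishing whenever $a-t>b-w$.

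The degrees $m\notin\{0,1,2\}$ are disposed of first: for $m<0$ the slice $\s_{m}(\kq)$ is contractible, so $E^{1}_{-n+1,m,-n}(\kq)=0$, and for $m\geq 3$ the group $E^{2}_{-n+1,m,-n}(\kq)$ is trivial by Lemma~\ref{lem:e2-kq-1>2}. This leaves the three cases $m\in\{0,1,2\}$.

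For $m=0$ the incoming differential originates in $E^{1}_{-n+2,-1,-n}(\kq)=\pi_{-n+2,-n}\s_{-1}(\kq)=0$, so $E^{2}_{-n+1,0,-n}(\kq)=\pi_{-n+1,-n}\s_{0}(\kq)=\pi_{-n+1,-n}\M\Lambda=H^{n-1,n}$. For $m=1$, the identifications $\s_{1}(\kq)=\Sigma^{1,1}\M\Lambda/2$ and $\s_{0}(\kq)=\M\Lambda$ give $E^{1}_{-n+1,1,-n}(\kq)=h^{n,n+1}$ and $E^{1}_{-n+2,0,-n}(\kq)=H^{n-2,n}$; Figure~\ref{fig:d1-kq} identifies the first slice differential $\dd^{\kq}_{1}$ out of $\s_{0}(\kq)$ as $\Sq^{2}\pr^{\infty}_{2}$, so the incoming $d^{1}$ is $\Sq^{2}\pr^{\infty}_{2}\colon H^{n-2,n}\to h^{n,n+1}$ and $E^{2}_{-n+1,1,-n}(\kq)=h^{n,n+1}/\Sq^{2}\pr^{\infty}_{2}H^{n-2,n}$. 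For $m=2$, one has $\s_{2}(\kq)=\Sigma^{4,2}\M\Lambda\vee\Sigma^{2,2}\M\Lambda/2$, and since $\pi_{-n+1,-n}\Sigma^{4,2}\M\Lambda=H^{n+3,n+2}=0$ we get $E^{1}_{-n+1,2,-n}(\kq)=h^{n+1,n+2}$, while $E^{1}_{-n+2,1,-n}(\kq)=h^{n-1,n+1}$; of the two components of $\dd^{\kq}_{1}$ leaving $\s_{1}(\kq)$ visible in Figure~\ref{fig:d1-kq} --- namely $\Sq^{2}$ into the $\M\Lambda/2$-summand and $\partial\Sq^{2}\Sq^{1}$ into the $\M\Lambda$-summand --- only the first contributes here (the second maps into the vanishing group $H^{n+3,n+2}$), so the incoming $d^{1}$ is $\Sq^{2}\colon h^{n-1,n+1}\to h^{n+1,n+2}$ and $E^{2}_{-n+1,2,-n}(\kq)=h^{n+1,n+2}/\Sq^{2}h^{n-1,n+1}$.

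There is no serious obstacle once Figure~\ref{fig:d1-kq} and Lemma~\ref{lem:e1-kq-0} are in hand: the only care required is the bookkeeping of bidegrees and the observation that each $\M\Lambda$-valued component of $\dd^{\kq}_{1}$ that is relevant here lands in a group $H^{a,b}$ with $a>b$, which vanishes over a field, so that those components may be discarded. Combining the four cases yields the stated list.
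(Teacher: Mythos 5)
Your proof is correct and follows essentially the same route as the paper: the paper's own argument is exactly ``Lemma~\ref{lem:e2-kq-1>2} plus the determination of the slice $d^1$-differential,'' with the vanishing of the outgoing differentials supplied by Lemma~\ref{lem:e1-kq-0}, and your write-up simply makes the bidegree bookkeeping and the discarding of the $\M\Lambda$-valued components explicit. No changes needed.
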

\begin{proof}
This follows from Lemma \ref{lem:e2-kq-1>2} and the determination of the slice $d^{1}$-differential.
\end{proof}

\begin{lemma}
\label{lem:e2-kq-2>3}
The group $E^{2}_{-n+2,m,-n}(\kq)$ is trivial for $m\geq 4$.
\end{lemma}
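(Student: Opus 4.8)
The plan is to mimic the proof of Lemma~\ref{lem:e2-kq-1>2}, but now $E^{2}_{-n+2,m,-n}(\kq)$ is squeezed between two nonzero adjacent columns, so rather than showing the incoming $d^{1}$ is onto (which in general it is not) I will identify the image of the incoming $d^{1}$ with the kernel of the outgoing $d^{1}$.

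First I would read off the three relevant columns of the $E^{1}$-page from the slice description \eqref{equation:slices-kq}. For $m\geq 4$ the integral summand $\Sigma^{2m,m}\M\Lambda$ of $\s_{m}(\kq)$ contributes nothing to $\pi_{-n+2,-n}$, since $H^{2m+n-2,m+n}=0$ once $m\geq 3$; similarly the integral summand of $\s_{m-1}(\kq)$ (present only when $m$ is odd) contributes nothing to $\pi_{-n+3,-n}$ for $m\geq 5$. Hence only $\M\Lambda/2$-summands remain and a degree count gives
\[ E^{1}_{-n+3,m-1,-n}(\kq)\iso h^{m+n-4,m+n-1}\directsum h^{m+n-2,m+n-1}, \]
\[ E^{1}_{-n+2,m,-n}(\kq)\iso h^{m+n-2,m+n}\directsum h^{m+n,m+n}, \]
while $E^{1}_{-n+1,m+1,-n}(\kq)\iso h^{m+n,m+n+1}$ as in Lemma~\ref{lem:e2-kq-1>2}. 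This is the step where the hypothesis $m\geq 4$ really enters, through the vanishing range of integral motivic cohomology; the two boundary cases $m=4,5$ I would check directly.

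Reading the slice $d^{1}$ off Figure~\ref{fig:d1-kq}, the outgoing differential on $E^{1}_{-n+2,m,-n}(\kq)$ sends $(x_{0},x_{1})\mapsto \Sq^{2}x_{0}+\tau x_{1}$ into $h^{m+n,m+n+1}$ (a possible $\rho\Sq^{1}x_{0}$-term does not affect the argument). Since $\tau\colon h^{m+n,m+n}\to h^{m+n,m+n+1}$ is an isomorphism by Voevodsky's computation of mod-$2$ motivic cohomology, the kernel of the outgoing differential is the graph $\{(x_{0},-\tau^{-1}\Sq^{2}x_{0}) : x_{0}\in h^{m+n-2,m+n}\}$, mapping isomorphically onto the first summand $h^{m+n-2,m+n}$ under the projection. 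The incoming differential has first coordinate $(s_{0},s_{1})\mapsto \Sq^{2}s_{0}+\tau s_{1}$, and $\tau\colon h^{m+n-2,m+n-1}\to h^{m+n-2,m+n}$ is again an isomorphism, so already its image surjects onto $h^{m+n-2,m+n}$ under the projection.

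It remains to combine these: $d^{1}\circ d^{1}=0$ places the image of the incoming differential inside the kernel of the outgoing one, the latter maps isomorphically onto $h^{m+n-2,m+n}$ under the first projection, and the former surjects onto $h^{m+n-2,m+n}$ under that same projection; hence the two subgroups of $E^{1}_{-n+2,m,-n}(\kq)$ coincide and $E^{2}_{-n+2,m,-n}(\kq)=0$. The one genuinely delicate point is the correct extraction of these two differentials from Figure~\ref{fig:d1-kq} — in particular that the relevant $\tau$-components really are present and are isomorphisms in the stated bidegrees; once that is in place the conclusion is formal.
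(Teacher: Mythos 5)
Your proof is correct and is essentially the paper's argument: the paper's proof merely cites the "analogous" computation from Lemma~\ref{lem:e2-kq-1>2}, and its second-column incarnation (kernel of the exiting $d^{1}$ is the graph of $\tau^{-1}\Sq^{2}$, which the entering $d^{1}$ hits because its $\tau$-component is surjective) is exactly what you wrote, as spelled out for the sphere in Lemma~\ref{lem:e2-unit-2>4}. Your bookkeeping of the $E^{1}$-terms, including the vanishing of the integral summands for $m\geq 4$, is also accurate.
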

\begin{proof}
  The argument is analogous to the part of the proof of Lemma~\ref{lem:e2-kq-1>2}
  for $m\geq 4$.
\end{proof}

\begin{lemma}
\label{lem:e2-kq-23}
The group $E^{2}_{-n+2,3,-n}(\kq)$ is isomorphic to 
\[
h^{n+1,n+3}/\tau^{2}\rho^{2}h^{n-1,n-1}+\tau\pr^{\infty}_{2} H^{n+1,n+2}
\] 
generated by $\alpha_{1}^3$.
\end{lemma}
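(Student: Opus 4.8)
The plan is to compute $E^{2}_{-n+2,3,-n}(\kq)$ directly as the homology of the slice $d^{1}$-differential, reading the differentials off \eqref{equation:slices-kq} and Figure~\ref{fig:d1-kq}, and then to rewrite the resulting quotient into the stated shape using the Milnor conjecture and the motivic Cartan formula. First I would record the three relevant $E^{1}$-terms. By \eqref{equation:slices-kq} the slices in play are $\s_{2}(\kq)=\Sigma^{4,2}\MZ\{\sqrt{\alpha}\}\vee\Sigma^{2,2}\MZ/2\{\alpha_{1}^{2}\}$, $\s_{3}(\kq)=\Sigma^{3,3}\MZ/2\{\alpha_{1}^{3}\}\vee\Sigma^{5,3}\MZ/2\{\alpha_{3}\}$ and $\s_{4}(\kq)=\Sigma^{8,4}\MZ\vee\Sigma^{4,4}\MZ/2\{\alpha_{1}^{4}\}\vee\Sigma^{6,4}\MZ/2\{\alpha_{3}\alpha_{1}\}$. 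Taking homotopy groups in the relevant total degrees and discarding the summands whose contribution vanishes because $H^{a,b}(F)=h^{a,b}=0$ for $a>b$, this gives $E^{1}_{-n+3,2,-n}(\kq)=H^{n+1,n+2}\{\sqrt{\alpha}\}\directsum h^{n-1,n+2}\{\alpha_{1}^{2}\}$, $E^{1}_{-n+2,3,-n}(\kq)=h^{n+1,n+3}\{\alpha_{1}^{3}\}\directsum h^{n+3,n+3}\{\alpha_{3}\}$ and $E^{1}_{-n+1,4,-n}(\kq)=h^{n+3,n+4}\{\alpha_{1}^{4}\}$.

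Next I would extract the two incident $d^{1}$-differentials from Figure~\ref{fig:d1-kq}. The outgoing differential sends $\alpha_{1}^{3}$ to $\Sq^{2}$ into the $\alpha_{1}^{4}$-summand (its $\Sq^{3}\Sq^{1}$-component lands in the $\alpha_{3}\alpha_{1}$-summand $\Sigma^{6,4}\MZ/2$, which contributes $h^{n+5,n+4}=0$) and sends $\alpha_{3}$ to $\tau$ into the $\alpha_{1}^{4}$-summand, so it is the map $h^{n+1,n+3}\directsum h^{n+3,n+3}\to h^{n+3,n+4}$, $(x,y)\mapsto\Sq^{2}x+\tau y$. As $\tau\colon h^{n+3,n+3}\to h^{n+3,n+4}$ is an isomorphism by the Milnor conjecture, this differential is surjective, its kernel meets the $\alpha_{3}$-summand trivially, and projection to the $\alpha_{1}^{3}$-coordinate identifies $\ker d^{1}$ --- hence $E^{2}_{-n+2,3,-n}(\kq)$ --- with a quotient of the $\alpha_{1}^{3}$-summand $h^{n+1,n+3}$; this is the sense in which the $E^{2}$-term is generated by $\alpha_{1}^{3}$. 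The incoming differential has $\alpha_{1}^{3}$-coordinate equal to $\tau\pr^{\infty}_{2}$ on the summand $\Sigma^{4,2}\MZ=\{\sqrt{\alpha}\}$ and to $\Sq^{2}$ on $\Sigma^{2,2}\MZ/2=\{\alpha_{1}^{2}\}$; since its image lies in $\ker d^{1}$ and is therefore detected by its $\alpha_{1}^{3}$-coordinate, it corresponds to the subgroup $\Sq^{2}h^{n-1,n+2}+\tau\pr^{\infty}_{2}H^{n+1,n+2}$ of $h^{n+1,n+3}$. Taking homology gives $E^{2}_{-n+2,3,-n}(\kq)\cong h^{n+1,n+3}/\bigl(\Sq^{2}h^{n-1,n+2}+\tau\pr^{\infty}_{2}H^{n+1,n+2}\bigr)$.

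To reach the stated form it remains to identify $\Sq^{2}h^{n-1,n+2}$ with $\tau^{2}\rho^{2}h^{n-1,n-1}$. By the Milnor conjecture, multiplication by $\tau^{3}$ is an isomorphism $h^{n-1,n-1}\xrightarrow{\cong}h^{n-1,n+2}$, so it suffices to evaluate $\Sq^{2}(\tau^{3}z)$ for $z\in h^{n-1,n-1}$. Both $\Sq^{1}z\in h^{n,n-1}$ and $\Sq^{2}z\in h^{n+1,n}$ vanish since $h^{a,b}(F)=0$ for $a>b$, so the motivic Cartan formula collapses to $\Sq^{2}(\tau^{3}z)=\Sq^{2}(\tau^{3})\,z$; a short Cartan computation using the Steenrod action $\Sq^{1}\tau=\rho$, $\Sq^{i}\tau=0$ for $i\geq2$, gives $\Sq^{2}(\tau^{3})=\tau^{2}\rho^{2}$. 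Substituting back yields the claimed isomorphism, generated by $\alpha_{1}^{3}$.

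The main obstacle is this last step. Steps one and two are bookkeeping once Figure~\ref{fig:d1-kq} is available, but identifying $\Sq^{2}h^{n-1,n+2}$ with $\tau^{2}\rho^{2}h^{n-1,n-1}$ genuinely uses the resolution of the Milnor conjecture (to express every class of positive $\tau$-divisibility as $\tau^{3}$ times a class of top simplicial degree) together with the precise Steenrod action on $\tau$ and the motivic Cartan formula. It is also worth noting that this identification is insensitive to the ``$\Sq^{2}$ versus $\Sq^{2}+\rho\Sq^{1}$'' ambiguity in operations read off the chart, since $\rho\cdot\Sq^{1}(\tau^{3}z)=\rho\cdot\rho\tau^{2}z=\tau^{2}\rho^{2}z$ as well.
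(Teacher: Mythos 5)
Your proof is correct and follows essentially the same route as the paper: identify the kernel of the outgoing $d^{1}$ with $h^{n+1,n+3}$ via the graph $c_{2}\mapsto(c_{2},\tau^{-1}\Sq^{2}c_{2})$, project the image of the incoming $d^{1}$ (which lies in that kernel) onto its $\alpha_{1}^{3}$-coordinate, and rewrite $\Sq^{2}h^{n-1,n+2}=\Sq^{2}(\tau^{3}h^{n-1,n-1})$ as $\tau^{2}\rho^{2}h^{n-1,n-1}$ via the motivic Cartan formula. One correction to your closing aside: the identification is \emph{not} insensitive to the ``$\Sq^{2}$ versus $\Sq^{2}+\rho\Sq^{1}$'' ambiguity, because $\Sq^{2}(\tau^{3}z)$ and $\rho\Sq^{1}(\tau^{3}z)$ both equal $\tau^{2}\rho^{2}z$, so their sum $(\Sq^{2}+\rho\Sq^{1})(\tau^{3}z)$ vanishes mod $2$ and would yield a trivial image; the argument works only because the operation on this summand is genuinely $\Sq^{2}$, as Figure~\ref{fig:d1-kq} records, so your main computation is unaffected.
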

\begin{proof}
The kernel of the $d^{1}$-differential
\[ 
h^{n+1,n+3}\oplus h^{n+3,n+3} \to h^{n+3,n+4}, 
\quad   
(b_{2},b_{0}) 
\mapsto 
\Sq^{2}b_{2}+\tau b_{0} 
\]
is isomorphic to $h^{n+1,n+3}$ via the map $c_{2}\mapsto (c_{2},\tau^{-1}\Sq^{2}c_{2})$. 
The image of the  $d^{1}$-differential
\[  
h^{n-1,n+2}\oplus H^{n+1,n+2} 
\to 
h^{n+1,n+3}\oplus h^{n+3,n+3}, 
\quad
(a_{3},A_{1}) 
\mapsto 
(\Sq^{2}a_{3}+\tau\pr^{\infty}_{2} A_{1}, \Sq^3\Sq^1a_{3})
\]
is generated by $\tau^{2}\rho^{2}h^{n-1,n-1}$ and $\tau \pr^{\infty}_{2} H^{n+1,n+2}=\tau\ker(\partial^{2}_{\infty}\colon h^{n+1,n+2}\to H^{n+2,n+2})$.
\end{proof}

\begin{lemma}
\label{lem:e2-kq-22}
The group $E^{2}_{-n+2,2,-n}(\kq)$ is isomorphic to the direct sum
\[ h^{n,n+2}/\Sq^2h^{n-2,n+1}\directsum 2H^{n+2,n+2}.\]
\end{lemma}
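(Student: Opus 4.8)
The plan is to read off the first two columns of the slice $d^{1}$-differential around the tridegree $(-n+2,2,-n)$ from Figure~\ref{fig:d1-kq} and \eqref{equation:slices-kq}, and then to compute the homology of the resulting complex.

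First I would record the $E^{1}$-terms. By \eqref{equation:slices-kq}, $\s_{2}(\kq)\simeq\Sigma^{4,2}\M\Lambda\vee\Sigma^{2,2}\M\Lambda/2$ with wedge summands generated by $\sqrt{\alpha}$ and $\alpha_{1}^{2}$, so $E^{1}_{-n+2,2,-n}(\kq)=H^{n+2,n+2}\directsum h^{n,n+2}$; in the relevant bidegree only the $\alpha_{1}^{3}$-summand of $\s_{3}(\kq)$ contributes, giving $E^{1}_{-n+1,3,-n}(\kq)=h^{n+2,n+3}$, and $E^{1}_{-n+3,1,-n}(\kq)=h^{n-2,n+1}$. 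Figure~\ref{fig:d1-kq} identifies the outgoing differential as
\[
  H^{n+2,n+2}\directsum h^{n,n+2}\longrightarrow h^{n+2,n+3},\qquad (A,b)\longmapsto\tau\pr^{\infty}_{2}A+\Sq^{2}b,
\]
and the incoming differential as
\[
  h^{n-2,n+1}\longrightarrow H^{n+2,n+2}\directsum h^{n,n+2},\qquad c\longmapsto\bigl(\partial^{2}_{\infty}\Sq^{2}\Sq^{1}c,\ \Sq^{2}c\bigr).
\]

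Next I would compute $\ker(d^{1}_{\mathrm{out}})$. Since $\tau\colon h^{n+2,n+2}\to h^{n+2,n+3}$ is an isomorphism, the outgoing differential is the composite of $(A,b)\mapsto\pr^{\infty}_{2}A+\tau^{-1}\Sq^{2}b$ with $\tau$, so its kernel is the pullback of $\pr^{\infty}_{2}\colon H^{n+2,n+2}\twoheadrightarrow h^{n+2,n+2}$ along $\tau^{-1}\Sq^{2}\colon h^{n,n+2}\to h^{n+2,n+2}$. Because $\pr^{\infty}_{2}$ is surjective with kernel $2H^{n+2,n+2}$, projecting to $b$ yields a short exact sequence
\[
  0\longrightarrow 2H^{n+2,n+2}\longrightarrow\ker(d^{1}_{\mathrm{out}})\longrightarrow h^{n,n+2}\longrightarrow 0 .
\]
A routine Cartan-formula computation gives $\Sq^{2}(\tau^{2}x)=\tau\rho^{2}x$ for $x$ on the diagonal, so $\tau^{-1}\Sq^{2}$ is multiplication by $\rho^{2}$; as the integral class $\{-1,-1\}\in H^{2,2}$ lifting $\rho^{2}$ satisfies $2\{-1,-1\}=0$, sending $b$ to $\{-1,-1\}$ times any integral lift of $\tau^{-2}b\in h^{n,n}$ is a well-defined homomorphic lift of $\tau^{-1}\Sq^{2}$ along $\pr^{\infty}_{2}$, which splits the sequence: $\ker(d^{1}_{\mathrm{out}})\iso 2H^{n+2,n+2}\directsum h^{n,n+2}$.

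It remains to divide out $\image(d^{1}_{\mathrm{in}})$. Its image in the $h^{n,n+2}$-summand is $\Sq^{2}h^{n-2,n+1}$, which produces the factor $h^{n,n+2}/\Sq^{2}h^{n-2,n+1}$; the delicate point — the one I expect to be the main obstacle — is to see that $\image(d^{1}_{\mathrm{in}})$ meets the $2H^{n+2,n+2}$-summand trivially, equivalently, that the splitting above can be chosen compatibly with $d^{1}_{\mathrm{in}}$. Here one checks with the Cartan formula that $\Sq^{2}c=0$ forces $\Sq^{2}\Sq^{1}c=0$, hence $\partial^{2}_{\infty}\Sq^{2}\Sq^{1}c=0$; thus $\Sq^{2}c\mapsto\partial^{2}_{\infty}\Sq^{2}\Sq^{1}c$ is a well-defined homomorphism on $\Sq^{2}h^{n-2,n+1}$, and the motivic Adem relation $\Sq^{2}\Sq^{2}=\tau\Sq^{3}\Sq^{1}$ shows it lifts $\tau^{-1}\Sq^{2}$ there. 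Therefore the lift from the previous paragraph can be modified by a homomorphism to agree on $\Sq^{2}h^{n-2,n+1}$ with $\Sq^{2}c\mapsto\partial^{2}_{\infty}\Sq^{2}\Sq^{1}c$; with respect to the resulting splitting, $d^{1}_{\mathrm{in}}(c)$ is exactly the splitting applied to $\Sq^{2}c$, so $\image(d^{1}_{\mathrm{in}})$ corresponds to $0\directsum\Sq^{2}h^{n-2,n+1}$. Dividing out gives
\[
  E^{2}_{-n+2,2,-n}(\kq)\iso 2H^{n+2,n+2}\directsum\bigl(h^{n,n+2}/\Sq^{2}h^{n-2,n+1}\bigr),
\]
as claimed.
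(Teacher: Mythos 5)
Your proposal is correct and takes essentially the same route as the paper: both present the kernel of the outgoing differential as an extension of $h^{n,n+2}$ by $2H^{n+2,n+2}$, split by a section built from $\{-1\}$-multiples of integral (Bockstein) lifts, and then check that the image of the incoming differential lands inside a section of $h^{n,n+2}$. The only difference is in the last step, where the paper verifies compatibility with the fixed section $a_{2}\mapsto (a_{2},\partial^{2}_{\infty}(\tau^{-1}a_{2})\cdot \{-1\})$ by an explicit symbol computation, while you adjust the section using the Adem relation $\Sq^2\Sq^2=\tau\Sq^3\Sq^1$; both are valid.
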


\begin{proof}
  The kernel of $d^{1}$ maps to $2H^{n+2,n+2}$ via the surjection
  \begin{align*}
    \psi_n
    \colon 
    \{(b_{2},b_{0})\in h^{n,n+2}\oplus H^{n+2,n+2}  
    \colon
    \Sq^{2}b_{2}=\tau\pr^\infty_2 b_{0}\} 
    & 
      \to 
      \ker(H^{n+2,n+2}\xrightarrow{\pr^\infty_2} h^{n+2,n+2}) \\
    (b_{2},b_{0}) & \mapsto \partial^{2}_{\infty}(\tau^{-1}b_{2})\cdot \{-1\} + b_{0}.
  \end{align*}
  Here $\psi_n(0,b_{0})= b_{0}$ for all $b_{0}\in \ker(H^{n+2,n+2}\xrightarrow{\pr} h^{n+2,n+2})$, 
  and $\psi_n$ is well-defined since
  \begin{align*} 
    \pr^{\infty}_{2}(\partial^{2}_{\infty}(\tau^{-1}b_{2})\cdot \{-1\} + b_{0})
    & =
      \pr^{\infty}_{2}(\partial^{2}_{\infty}(\tau^{-1}b_{2}))\cdot \pr^{\infty}_{2}(\{-1\}) + \pr^{\infty}_{2}b_{0} \\
    & = 
      \Sq^1(\tau^{-1}b_{2})\cdot\rho + \pr^{\infty}_{2}b_{0} \\
    & = 
      \tau^{-1}\Sq^{2}b_{2} + \pr^{\infty}_{2}b_{0} \\
    & = 
      0.
  \end{align*}
  We can identify $\ker(\psi_n)$ with $h^{n,n+2}$ via the split injection $a_{2}\mapsto (a_{2},\partial^{2}_{\infty}(\tau^{-1}a_{2})\cdot \{-1\})$. 
  Hence the kernel of the exiting $d^{1}$ is isomorphic to the direct sum
  \[ 
    h^{n,n+2}\directsum \ker(H^{n+2,n+2}\xrightarrow{\pr} h^{n+2,n+2}). 
  \]
  The entering $d^{1}$ has image generated by $\Sq^{2}h^{n-2,n+1}$ in $\ker(\psi_n)$, 
  while its image in the target of $\psi_n$ is trivial since 
  \[ 
    \psi_n\circ (\Sq^{2},\partial^{2}_{\infty}\Sq^{2}\Sq^1)(h^{n-2,n+1}) 
    = 0.
  \]
  This follows from the calculation 
  \begin{align*}
    \psi_n\bigl(\Sq^{2}(\tau^3 c_{0}),\partial^{2}_{\infty} \Sq^{2}\Sq^{1}(\tau^3c_{0})\bigr)
    & = \psi_n\bigl(\tau^{2}\rho^{2} c_{0},\partial^{2}_{\infty} \tau\rho^3c_{0}\bigr) \\
    & = \partial^{2}_{\infty}(\tau\rho^{2}c_{0})\cdot \{-1\} + \partial^{2}_{\infty} \tau\rho^3c_{0} \\
    & = C_{0}\cdot \{-1\}^4 + C_{0}\cdot \{-1\}^4 \\
    & = 0,
  \end{align*}
  where $C_{0}\in H^{n-2,n-2}$ satisfies $\pr^{\infty}_{2}C_{0}=c_{0}$.
  Hence there is a split short exact sequence
  \[ 
    0
    \to 
    h^{n,n+2}/\Sq^{2}h^{n-2,n+1} 
    \to 
    E^{2}_{-n+2,2,-n}(\kq) 
    \to 
    2H^{n+2,n+2} 
    \to 
    0, 
  \]
  which concludes the proof.
\end{proof}

\begin{lemma}\label{lem:e2-kq-21}
  The group $E^{2}_{-n+2,1,-n}(\kq)$ coincides with
  the kernel of $\Sq^2\colon h^{n-1,n+1}\to h^{n+1,n+2}$.
\end{lemma}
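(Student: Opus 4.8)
The plan is to compute $E^{2}_{-n+2,1,-n}(\kq)$ as the homology of the slice $\dd^{1}$-differential in this tridegree, starting from the slice identification \eqref{equation:slices-kq} and Figure~\ref{fig:d1-kq}. Since $\s_{1}(\kq)\cong\Sigma^{1,1}\mathbf{M}\Lambda/2$, one has $E^{1}_{-n+2,1,-n}(\kq)\cong\pi_{-n+2,-n}\Sigma^{1,1}\mathbf{M}\Lambda/2\cong h^{n-1,n+1}$. The $\dd^{1}$ leaving this group targets $E^{1}_{-n+1,2,-n}(\kq)\cong\pi_{-n+1,-n}(\Sigma^{4,2}\mathbf{M}\Lambda\vee\Sigma^{2,2}\mathbf{M}\Lambda/2)\cong H^{n+3,n+2}\directsum h^{n+1,n+2}$, while the $\dd^{1}$ entering it comes from $E^{1}_{-n+3,0,-n}(\kq)\cong\pi_{-n+3,-n}\mathbf{M}\Lambda\cong H^{n-3,n}$.

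First I would dispose of the outgoing differential. Motivic cohomology of a field vanishes above the diagonal, so $H^{n+3,n+2}(F)=0$; hence $E^{1}_{-n+1,2,-n}(\kq)\cong h^{n+1,n+2}$, the only component of $\dd^{\kq}_{1}(1)$ that survives here is $\Sq^{2}$ (the remaining component targets the vanishing group $H^{n+3,n+2}$), and the kernel of the outgoing $\dd^{1}$ is exactly $\ker(\Sq^{2}\colon h^{n-1,n+1}\to h^{n+1,n+2})$, the asserted answer. It then remains to show that the incoming $\dd^{1}$, which is $\Sq^{2}\pr^{\infty}_{2}\colon H^{n-3,n}\to h^{n-1,n+1}$ (induced by $\dd^{\kq}_{1}(0)=\Sq^{2}\pr^{\infty}_{2}$), vanishes.

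This last point is the crux. Since $\Sq^{1}\pr^{\infty}_{2}=0$, the image of $\pr^{\infty}_{2}$ lies in $\ker(\Sq^{1}\colon h^{n-3,n}\to h^{n-2,n})$, so it suffices to prove that $\Sq^{2}$ annihilates this kernel. By the Milnor conjecture multiplication by $\tau$ gives an isomorphism $h^{n-3,n-3}\xrightarrow{\sim}h^{n-3,n}$, so I write each class of $h^{n-3,n}$ uniquely as $\tau^{3}z$ with $z\in h^{n-3,n-3}$; as $\Sq^{1}z\in h^{n-2,n-3}$ and $\Sq^{2}z\in h^{n-1,n-2}$ both vanish, the motivic Cartan formula together with $\Sq^{1}\tau=\rho$ and $\Sq^{2}\tau=0$ yields $\Sq^{1}(\tau^{3}z)=\tau^{2}\rho z$ and $\Sq^{2}(\tau^{3}z)=\tau^{2}\rho^{2}z$. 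Thus $\tau^{3}z\in\ker\Sq^{1}$ forces $\rho z=0$ (by injectivity of $\tau$-multiplication), whence $\rho^{2}z=\rho(\rho z)=0$ and $\Sq^{2}(\tau^{3}z)=0$; so the incoming $\dd^{1}$ is zero and $E^{2}_{-n+2,1,-n}(\kq)=\ker(\Sq^{2}\colon h^{n-1,n+1}\to h^{n+1,n+2})$. I expect this vanishing to be the only genuine obstacle: in the first-column entry $E^{2}_{-n+1,1,-n}(\kq)$ the analogous map $\Sq^{2}\pr^{\infty}_{2}\colon H^{n-2,n}\to h^{n,n+1}$ does not vanish but survives as the quotient in Lemma~\ref{lem:e2-kq-1<3}, and what rescues the present case is that the source $H^{n-3,n}$ sits three units below the diagonal, so $\Sq^{1}$ is no longer identically zero on $h^{n-3,n}$ and the resulting annihilation relation $\rho z=0$ is exactly what is needed to kill the output of $\Sq^{2}$. (For $n\le 2$ the source $H^{n-3,n}(F)$ vanishes outright.)
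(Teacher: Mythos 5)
Your proof is correct and follows essentially the same route as the paper: the paper also reduces the lemma to the vanishing of the incoming differential $\Sq^2\pr^\infty_2\colon H^{n-3,n}\to h^{n-1,n+1}$ (Corollary~\ref{cor:sq2prinfty2iszero}), proved by writing the reduction of a class as $\tau^3y$ and using $\Sq^1(\tau^3y)=\tau^2\rho y=0$ to force $\Sq^2(\tau^3y)=\tau^2\rho^2y=0$. The only cosmetic difference is that the paper factors $\pr^\infty_2=\pr^4_2\pr^\infty_4$ and proves the vanishing already for $\Sq^2\pr^4_2$ on mod-$4$ cohomology, whereas you argue directly from $\Sq^1\pr^\infty_2=0$.
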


\begin{proof}
  This follows from Corollary~\ref{cor:sq2prinfty2iszero}.
\end{proof}

\begin{lemma}\label{lem:Sq2pr42iszero}
  The composition $\Sq^2\pr^4_2\colon h^{n-3,n}_{4}\to h^{n-1,n+1}$ is zero.
\end{lemma}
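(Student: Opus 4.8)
The plan is to reduce, using the Beilinson--Lichtenbaum property of mod-$2$ motivic cohomology, to a short computation with the motivic Steenrod squares $\Sq^{1},\Sq^{2}$ acting on powers of $\tau$. We may assume $n\geq 3$, since $h^{n-3,n}_{4}=0$ for degree reasons otherwise.

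First I would describe the image of $\pr^{4}_{2}$. The coefficient sequence $\ZZ/2\hookrightarrow\ZZ/4\twoheadrightarrow\ZZ/2$ induces a Bockstein long exact sequence whose connecting homomorphism $h^{n-3,n}\to h^{n-2,n}$ is $\Sq^{1}$; consequently $\im(\pr^{4}_{2})=\ker\bigl(\Sq^{1}\colon h^{n-3,n}\to h^{n-2,n}\bigr)$, equivalently $\Sq^{1}\pr^{4}_{2}=0$. It therefore suffices to prove that $\Sq^{2}$ annihilates $\ker(\Sq^{1})\subseteq h^{n-3,n}$.

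Next I would express classes via $\tau$. By the Beilinson--Lichtenbaum property \cite{MR2811603}, multiplication by $\tau^{3}$ identifies $h^{n-3,n-3}$ with $h^{n-3,n}$, so every class in $h^{n-3,n}$ has the form $\tau^{3}a$ with $a\in h^{n-3,n-3}$. Since the mod-$2$ motivic cohomology of $F$ vanishes in bidegrees $(s,t)$ with $s>t$, both $\Sq^{1}a\in h^{n-2,n-3}$ and $\Sq^{2}a\in h^{n-1,n-2}$ are zero. Using $\Sq^{1}\tau=\rho$, $\Sq^{j}\tau=0$ for $j\geq 2$, and the Cartan formula \cite{Voevodsky.reduced}, a direct computation gives $\Sq^{1}(\tau^{3})=\tau^{2}\rho$ and $\Sq^{2}(\tau^{3})=\tau^{2}\rho^{2}$; combined with $\Sq^{1}a=\Sq^{2}a=0$, the Cartan formula yields $\Sq^{1}(\tau^{3}a)=\tau^{2}\rho a$ and $\Sq^{2}(\tau^{3}a)=\tau^{2}\rho^{2}a$.

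Finally, I would observe the identity $\Sq^{2}(\tau^{3}a)=\tau^{2}\rho^{2}a=\rho\cdot(\tau^{2}\rho a)=\rho\cdot\Sq^{1}(\tau^{3}a)$; that is, $\Sq^{2}=\rho\,\Sq^{1}$ as maps $h^{n-3,n}\to h^{n-1,n+1}$. Hence $\Sq^{2}$ kills $\ker(\Sq^{1})=\im(\pr^{4}_{2})$, which is the assertion. The substantive inputs are the Beilinson--Lichtenbaum identification $h^{n-3,n}\cong\tau^{3}h^{n-3,n-3}$ and the standard action of the motivic Steenrod squares on $\tau$ and $\rho$; the rest is bookkeeping with bidegrees. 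The point most worth double-checking is the vanishing of every Cartan cross-term involving $\Sq^{j}a$ for $j\geq 1$: this is what confines the nonzero part of $\Sq^{2}$ on $h^{n-3,n}$ to the $\tau$-power, and it relies on $a$ sitting in bidegree $(n-3,n-3)$, i.e., on the hypothesis $t-s=3$.
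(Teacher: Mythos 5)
Your argument is correct and is essentially the paper's own proof: both write $\pr^4_2(x)=\tau^3y$ with $y\in h^{n-3,n-3}$, use $\Sq^1\pr^4_2=0$ to get $\tau^2\rho y=0$, and then observe $\Sq^2(\tau^3y)=\tau^2\rho^2y=\rho\cdot\tau^2\rho y=0$. The extra Cartan-formula bookkeeping you supply (and the observation that $\Sq^2=\rho\,\Sq^1$ on $h^{n-3,n}$) is a correct elaboration of the same computation; note only that for the conclusion you need just the inclusion $\im(\pr^4_2)\subseteq\ker(\Sq^1)$, not the equality.
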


\begin{proof}
  An element $x\in h^{n-3,n}_{4}$ maps via $\pr^4_2$ to an 
  element $\tau^3y$, where $y\in h^{n-3,n-3}$. Since
  $\Sq^1\pr^4_2=0$, the equality 
  \[ 0=\Sq^1(\pr^4_2(x))=\Sq^1(\tau^3y) = \tau^2\rho y \]
  follows, whence 
  $\Sq^2(\pr^4_2(x)) = \Sq^2(\tau^3y) = \tau^2\rho^2 y = 
  \rho\cdot \tau^2 \rho y = \rho\cdot 0 =0$. 
\end{proof}

\begin{corollary}\label{cor:sq2prinfty2iszero}
  The composition $\Sq^2\pr^\infty_2\colon H^{n-3,n}\to h^{n-1,n+1}$ is zero.
\end{corollary}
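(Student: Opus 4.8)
The plan is to factor the mod-$2$ reduction $\pr^\infty_2$ through the mod-$4$ reduction and then invoke Lemma~\ref{lem:Sq2pr42iszero}. Since the coefficient reduction homomorphisms in motivic cohomology are induced by the ring maps $\ZZ\twoheadrightarrow\ZZ/4\twoheadrightarrow\ZZ/2$ and are functorial in the coefficients, one has
\[
\pr^\infty_2 = \pr^4_2\circ\pr^\infty_4\colon H^{n-3,n}\longrightarrow h^{n-3,n}_{4}\longrightarrow h^{n-3,n}.
\]
Hence $\Sq^2\circ\pr^\infty_2 = \Sq^2\circ\pr^4_2\circ\pr^\infty_4 = \bigl(\Sq^2\circ\pr^4_2\bigr)\circ\pr^\infty_4$, and the parenthesized composite is zero by Lemma~\ref{lem:Sq2pr42iszero}, so the whole composition vanishes. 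There is essentially no obstacle: the only point to check is the compatibility $\pr^\infty_2=\pr^4_2\circ\pr^\infty_4$ of the reduction maps, which is immediate from their definitions as recorded in the notation table.

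If one prefers to avoid mentioning $h_{4}$ altogether, an equally short direct argument runs parallel to the proof of Lemma~\ref{lem:Sq2pr42iszero}. Over a field, $h^{n-3,n}=\tau^{3}h^{n-3,n-3}$, so every class in the source of $\Sq^2$ has the form $\tau^{3}y$ with $y\in h^{n-3,n-3}$; in particular $\pr^\infty_2(X)=\tau^{3}y$ for $X\in H^{n-3,n}$. Using $\Sq^1\pr^\infty_2=0$ (the integral Bockstein sequence), one gets $0=\Sq^1(\tau^{3}y)=\tau^{2}\rho y$, whence $\Sq^2\pr^\infty_2(X)=\Sq^2(\tau^{3}y)=\tau^{2}\rho^{2}y=\rho\cdot(\tau^{2}\rho y)=0$.
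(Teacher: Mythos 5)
Your first argument is exactly the paper's proof: factor $\pr^\infty_2=\pr^4_2\circ\pr^\infty_4$ and apply Lemma~\ref{lem:Sq2pr42iszero}. Your alternative direct computation is also valid --- it simply reruns the proof of Lemma~\ref{lem:Sq2pr42iszero} with the vanishing $\Sq^1\pr^\infty_2=0$ (integral classes lift to $\ZZ/4$) in place of $\Sq^1\pr^4_2=0$ --- but the paper takes the factorization route.
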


\begin{proof}
  This follows, because $\pr^\infty_2=\pr^4_2\pr^\infty_4$
  and already $\Sq^2\pr^4_2$ is zero by Lemma~\ref{lem:Sq2pr42iszero}.
\end{proof}

\begin{lemma}\label{lem:e2-kq-20}
  The group $E^{2}_{-n+2,0,-n}(\kq)$ coincides with
  the kernel of $\Sq^2\pr^\infty_2\colon H^{n-2,n}\to h^{n,n+1}$.
\end{lemma}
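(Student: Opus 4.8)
The plan is to obtain $E^{2}_{-n+2,0,-n}(\kq)$ as the homology of the slice $d^{1}$-differential at this tridegree, in complete analogy with the proof of Lemma~\ref{lem:e2-kq-21} — the only difference being that the incoming differential now vanishes for a trivial reason.

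First I would record the two relevant $E^{1}$-terms. By \eqref{equation:slices-kq} the zeroth and first slices of $\kq$ are $\s_{0}(\kq)=\MZ$ and $\s_{1}(\kq)=\Sigma^{1,1}\MZ/2$ (with $\Lambda$-coefficients suppressed throughout), whence
\[ E^{1}_{-n+2,0,-n}(\kq)=\pi_{-n+2,-n}\MZ\cong H^{n-2,n} \qquad\text{and}\qquad E^{1}_{-n+1,1,-n}(\kq)=\pi_{-n+1,-n}\Sigma^{1,1}\MZ/2\cong h^{n,n+1}. \]
Since $\s_{q}(\kq)=\ast$ for $q<0$, the group $E^{1}_{-n+3,-1,-n}(\kq)$ vanishes, so no $d^{1}$-differential enters the tridegree $(-n+2,0,-n)$, and $E^{2}_{-n+2,0,-n}(\kq)$ is the kernel of the outgoing differential $d^{1}\colon E^{1}_{-n+2,0,-n}(\kq)\to E^{1}_{-n+1,1,-n}(\kq)$.

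It remains to identify that differential, which is induced by the slice $d^{1}$-differential $\dd^{\kq}_{1}(0)\colon\s_{0}(\kq)\to\Sigma^{1,0}\s_{1}(\kq)=\Sigma^{2,1}\MZ/2$. By Figure~\ref{fig:d1-kq} this map of motivic spectra is the operation $\Sq^{2}\circ\pr^{\infty}_{2}$, so on $\pi_{-n+2,-n}$ it realizes the composite $\Sq^{2}\pr^{\infty}_{2}\colon H^{n-2,n}\to h^{n,n+1}$. This yields $E^{2}_{-n+2,0,-n}(\kq)=\ker\bigl(\Sq^{2}\pr^{\infty}_{2}\colon H^{n-2,n}\to h^{n,n+1}\bigr)$, as asserted, and consistent with the corresponding entry of Theorem~\ref{theoremE2kq}.

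I do not expect a genuine obstacle here: the only point to handle with care is the degree bookkeeping that translates the spectrum-level differential of Figure~\ref{fig:d1-kq} into the stated cohomology operation, together with the observation that $\s_{1}(\kq)$ has a single wedge summand, so the displayed arrow is the entire outgoing $d^{1}$. This is exactly the routine verification already invoked for Lemmas~\ref{lem:e2-kq-1<3} and~\ref{lem:e2-kq-21}, and I would simply refer to it.
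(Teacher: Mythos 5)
Your proposal is correct and is exactly the argument the paper intends: its proof of this lemma is the one-line instruction to use the description of the slice $d^{1}$-differential, and you have simply spelled out the routine bookkeeping (vanishing of negative slices, hence no incoming differential, and identification of the outgoing $d^{1}$ as $\Sq^{2}\pr^{\infty}_{2}$ from Figure~\ref{fig:d1-kq}). No issues.
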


\begin{proof}
Use the description of the slice $d^1$-differential.
\end{proof}

\begin{lemma}
\label{lem:e2-kq-3>3}
The group $E^{2}_{-n+3,m,-n}(\kq)$ is trivial for $m\geq 4$.
\end{lemma}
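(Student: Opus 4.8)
The plan is to follow the proofs of Lemmas~\ref{lem:e2-kq-1>2} and~\ref{lem:e2-kq-2>3}: first identify the group $E^{1}_{-n+3,m,-n}(\kq)$ from the slice description~\eqref{equation:slices-kq}, then kill it by the slice $d^{1}$-differential displayed in Figure~\ref{fig:d1-kq}. For $m\geq 4$ the integral Eilenberg--MacLane summand $\Sigma^{2m,m}\mathbf{M}\Lambda$ of $\s_{m}(\kq)$ and the mod-$2$ summands $\Sigma^{2i+m,m}\mathbf{M}\Lambda/2$ with $i\geq 2$ contribute nothing to $\pi_{-n+3,-n}$, since the relevant (co)homological degrees exceed the weights. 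Hence
\[
E^{1}_{-n+3,m,-n}(\kq)\cong h^{m+n-3,m+n}\directsum h^{m+n-1,m+n},
\]
the two summands coming from $\Sigma^{m,m}\mathbf{M}\Lambda/2$ and $\Sigma^{m+2,m}\mathbf{M}\Lambda/2$ (and this is already trivial unless $m+n\geq 1$).

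The entering $d^{1}$-differential has source $E^{1}_{-n+4,m-1,-n}(\kq)=\pi_{-n+4,-n}\s_{m-1}(\kq)$; its components are read off Figure~\ref{fig:d1-kq}, namely multiplication by $\tau$, $\Sq^{2}$, $\Sq^{2}+\rho\Sq^{1}$ and $\Sq^{3}\Sq^{1}$ on the mod-$2$ summands, together with $\tau\pr^{\infty}_{2}$ and $\Sq^{2}\pr^{\infty}_{2}$ on the integral summand of $\s_{m-1}(\kq)$ when the latter is present. As in the proof of Lemma~\ref{lem:e2-kq-1>2}, the bottom summand $h^{m+n-3,m+n}$ is hit surjectively by a $\tau$-multiplication component; and, using that $\tau\colon h^{a,b}\to h^{a,b+1}$ is surjective for $b\geq a$ and that $\pr^{\infty}_{2}$ is surjective on $H^{a,a}$ (the surjectivities already exploited in Lemma~\ref{lem:e2-kq-1>2}), the top summand $h^{m+n-1,m+n}$ is also hit surjectively whenever the top mod-$2$ summand of $\s_{m-1}(\kq)$ survives to $\pi_{-n+4,-n}$ or the integral summand of $\s_{m-1}(\kq)$ maps onto the appropriate mod-$2$ summand of $\s_{m}(\kq)$ through $\tau\pr^{\infty}_{2}$. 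In those cases the entering $d^{1}$ is surjective, whence $E^{2}_{-n+3,m,-n}(\kq)=0$.

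The remaining cases are the small values of $m$, which (as one sees by tracking the integral Eilenberg--MacLane summands of the slices) form a finite list not exceeding $m=5$; there the entering $d^{1}$ need not be surjective, its image being merely the graph of a multiplication by $\rho^{2}$. In such a case one additionally computes the exiting $d^{1}$ into $E^{1}_{-n+2,m+1,-n}(\kq)$ and checks that its kernel coincides with that graph --- an identification that falls out of the Adem relation $\Sq^{2}\Sq^{2}=\tau\Sq^{3}\Sq^{1}$ --- so that $E^{2}_{-n+3,m,-n}(\kq)=0$ in these cases as well. (For $m\geq 6$ one could instead avoid the computation entirely: by Proposition~\ref{prop:kw-defn-agree} the slice spectral sequences of $\kq$ and $\kw$ agree in every tridegree entering the formation of $E^{2}_{-n+3,m,-n}$ once $m\geq 6$, while the $E^{2}=E^{\infty}$-page of $\kw$ computed in \cite[Theorem~6.3]{roendigs-oestvaer.hermitian} vanishes in tridegrees with $t-w\not\equiv 0\bmod 4$, and here $t-w=3$.) The main obstacle is precisely this low-degree bookkeeping: the presence of an integral summand in $\s_{m-1}(\kq)$ for small $m$ obstructs a uniform $\tau$-surjectivity argument, so one has to play the entering and exiting $d^{1}$-differentials off against one another.
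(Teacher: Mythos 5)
Your proof is correct and follows the same basic strategy as the paper, which disposes of this lemma in one line by declaring it ``analogous to the part of the proof of Lemma~\ref{lem:e2-kq-1>2} for $m\geq 4$.'' The extra care you take in low degrees is warranted: in the third column $E^{1}_{-n+3,m,-n}(\kq)$ has two summands, and the $\tau$-multiplication needed to cover the top one, $h^{m+n-1,m+n}$, would have to originate from the summand $\Sigma^{m+3,m-1}\M\Lambda/2$ of $\s_{m-1}(\kq)$, which is absent for $m\in\{4,5\}$ (and for $m=5$ the $\tau\pr^{\infty}_{2}$ component out of the integral summand $\Sigma^{8,4}\M\Lambda$ vanishes, cf.\ Figure~\ref{fig:d1-kq}); so the literal argument of Lemma~\ref{lem:e2-kq-1>2}, namely surjectivity of the entering differential, does not apply there. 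Your fallback --- the kernel of the exiting $d^{1}$ is forced by its first component, $\Sq^{2}b_{2}+\tau b_{0}$ with $\tau$ injective, into the graph $\{(c,\tau^{-1}\Sq^{2}c)\}$, which is exactly the image of the summand $h^{m+n-3,m+n-1}$ of $E^{1}_{-n+4,m-1,-n}(\kq)$ under $(\tau,\Sq^{2}+\rho\Sq^{1})$ --- is precisely the mechanism the paper itself uses in Lemmas~\ref{lem:e2-kq-23} and~\ref{lem:e2-unit-2>4}, and it works uniformly for all $m\geq 4$, so your case distinction could in fact be dispensed with. Two cosmetic points: the containment $\ker(d^{1}_{\mathrm{out}})\subseteq\mathrm{graph}\subseteq\im(d^{1}_{\mathrm{in}})$ already forces $E^{2}=0$, so the Adem-relation check that the kernel \emph{equals} the graph is not needed; and the $\kw$-comparison you sketch for $m\geq 6$ should cite the proof of Proposition~\ref{prop:eff-kq-conv} (where the agreement of the $\kq$- and $\kw$-columns outside a finite range is established) rather than Proposition~\ref{prop:kw-defn-agree}.
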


\begin{proof}
  The argument is analogous to the part of the proof of Lemma~\ref{lem:e2-kq-1>2}
  for $m\geq 4$.
\end{proof}

\begin{lemma}
\label{lem:e2-kq-33}
The group $E^{2}_{-n+3,3,-n}(\kq)$ is isomorphic to 
\[
h^{n,n+2}/\pr^{\infty}_{2} H^{n,n+2}\iso {}_{2}H^{n+1,n+2}
\] 
generated by $\alpha_{1}^3$.
\end{lemma}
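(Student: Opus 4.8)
The plan is to imitate the proof of Lemma~\ref{lem:e2-kq-23}, computing $E^{2}_{-n+3,3,-n}(\kq)$ as the homology of the slice $\dd^{1}$-differential at this tridegree. First I would read off from \eqref{equation:slices-kq} that $\s_{3}(\kq)\simeq \Sigma^{3,3}\mathbf{M}\Lambda/2\{\alpha_{1}^{3}\}\vee \Sigma^{5,3}\mathbf{M}\Lambda/2\{\alpha_{1}\sqrt{\alpha}\}$, so that $E^{1}_{-n+3,3,-n}(\kq)=h^{n,n+3}\{\alpha_{1}^{3}\}\directsum h^{n+2,n+3}\{\alpha_{1}\sqrt{\alpha}\}$, with outgoing differential landing in $E^{1}_{-n+2,4,-n}(\kq)=h^{n+2,n+4}\directsum h^{n+4,n+4}$ (the integral summand $\Sigma^{8,4}\mathbf{M}\Lambda$ of $\s_{4}$ contributes $H^{n+6,n+4}=0$) and incoming differential coming from $E^{1}_{-n+4,2,-n}(\kq)=H^{n,n+2}\{\sqrt{\alpha}\}\directsum h^{n-2,n+2}\{\alpha_{1}^{2}\}$.

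Next I would extract these differentials from Figure~\ref{fig:d1-kq}: the outgoing $\dd^{1}$ sends $(c,b)$ to $\bigl(\Sq^{2}c+\tau b,\ \Sq^{3}\Sq^{1}c+(\Sq^{2}+\rho\Sq^{1})b\bigr)$, and the incoming $\dd^{1}$ sends $(A,a)$ to $\bigl(\tau\pr^{\infty}_{2}A+\Sq^{2}a,\ \Sq^{2}\pr^{\infty}_{2}A+\Sq^{3}\Sq^{1}a\bigr)$. As in Lemma~\ref{lem:e2-kq-23}, the equation $\Sq^{2}c+\tau b=0$ forces $b=\tau^{-1}\Sq^{2}c$, which is legitimate because $\tau\colon h^{p,q}\to h^{p,q+1}$ is an isomorphism for $p\leq q$ by the Beilinson--Lichtenbaum equivalence; the motivic Adem relations $\Sq^{1}\Sq^{2}=\Sq^{3}$ and $\Sq^{2}\Sq^{2}=\tau\Sq^{3}\Sq^{1}$ together with the motivic Cartan formula (and $\Sq^{2}\tau=0$, $\Sq^{1}\tau=\rho$) then show the second coordinate vanishes on this graph. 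Hence projection to the $\alpha_{1}^{3}$-summand identifies $\ker(\dd^{1})$ with $h^{n,n+3}$.

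Under this identification the image of the incoming $\dd^{1}$ in $h^{n,n+3}$ is $\tau\pr^{\infty}_{2}H^{n,n+2}+\Sq^{2}h^{n-2,n+2}$. Since $\tau^{4}\colon h^{n-2,n-2}\to h^{n-2,n+2}$ is an isomorphism, every element of $h^{n-2,n+2}$ is $\tau^{4}y$ with $y\in h^{n-2,n-2}$, and the same Steenrod computation as in the proof of Lemma~\ref{lem:Sq2pr42iszero} --- using that $\Sq^{1}y=\Sq^{2}y=0$ as $y$ lies on the diagonal --- gives $\Sq^{2}(\tau^{4}y)=2\tau^{3}\rho^{2}y=0$; thus $\Sq^{2}h^{n-2,n+2}=0$ and $E^{2}_{-n+3,3,-n}(\kq)\cong h^{n,n+3}/\tau\pr^{\infty}_{2}H^{n,n+2}$. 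Dividing by the isomorphism $\tau\colon h^{n,n+2}\to h^{n,n+3}$ rewrites this as $h^{n,n+2}/\pr^{\infty}_{2}H^{n,n+2}$, still generated by $\alpha_{1}^{3}$, and the Bockstein sequence for $0\to\Lambda\xrightarrow{2}\Lambda\to\Lambda/2\to 0$ identifies $h^{n,n+2}/\pr^{\infty}_{2}H^{n,n+2}$ with ${}_{2}H^{n+1,n+2}$ via $\partial^{2}_{\infty}$.

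The main obstacle is the second step: correctly reading off every component of the slice $\dd^{1}$ from Figure~\ref{fig:d1-kq} --- keeping track of which mod-$2$ summand of $\Sigma^{1,0}\s_{4}$ each of $\Sq^{2}$, $\Sq^{3}\Sq^{1}$ and $\tau$ targets --- and then checking, via the motivic Adem and Cartan relations with their $\tau$- and $\rho$-corrections, that the second coordinate of the outgoing differential really does vanish on the graph $b=\tau^{-1}\Sq^{2}c$. Everything after that runs parallel to Lemmas~\ref{lem:e2-kq-23} and~\ref{lem:e2-kq-22}.
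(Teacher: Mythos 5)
Your proposal is correct and follows essentially the same route as the paper: identify the kernel of the outgoing $\dd^1$ with $h^{n,n+3}$ via the graph $b=\tau^{-1}\Sq^2 c$, observe that $\Sq^2$ and $\Sq^3\Sq^1$ kill $h^{n-2,n+2}$ so the incoming image is $\tau\pr^\infty_2 H^{n,n+2}$, and pass to ${}_2H^{n+1,n+2}$ via the Bockstein. You in fact supply a check the paper leaves implicit, namely that the second coordinate $\Sq^3\Sq^1 c+(\Sq^2+\rho\Sq^1)(\tau^{-1}\Sq^2 c)$ vanishes on the graph (which indeed follows from $\Sq^2\Sq^2=\tau\Sq^3\Sq^1$ and the Cartan formula applied to $\Sq^2(\tau b)$), so no changes are needed.
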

\begin{proof}
The kernel of the $d^{1}$-differential
\[ 
h^{n,n+3}\oplus h^{n+2,n+3} \to h^{n+2,n+4}, 
\quad   
(b_{3},b_{1}) 
\mapsto 
\Sq^{2}b_{3}+\tau b_{1} 
\]
is isomorphic to $h^{n,n+3}$ via the map $c_{3}\mapsto (c_{3},\tau^{-1}\Sq^{2}c_{3})$. 
The image of the  $d^{1}$-differential
\begin{align*}
 h^{n-2,n+2}\oplus  H^{n,n+2} 
&\longrightarrow 
h^{n,n+3}\oplus h^{n+2,n+3}, 
\\
(a_{4},A_{2}) 
&\mapsto 
(\Sq^{2}a_{4}+\tau\pr^{\infty}_{2} A_{2}, \Sq^3\Sq^1a_{4})=(\tau\pr^\infty_{2}A_{2},0)
\end{align*}
is generated by $\tau \pr^{\infty}_{2} H^{n,n+2}$. The result follows, where
the last isomorphism in the statement is induced by the short exact sequence
\[ 0 \to H^{n,n+2}/2H^{n,n+2}\to h^{n,n+2}\to {}_2H^{n+1,n+2}\to 0. \]
\end{proof}

\begin{lemma}
\label{lem:e2-kq-32}
The group $E^{2}_{-n+3,2,-n}(\kq)$ is isomorphic to the kernel of
\[ d^1\colon h^{n-1,n+2}\directsum H^{n+1,n+2},\, (b_3,B_1)\mapsto (\tau\pr^\infty_2B_1+\Sq^2b_3,\Sq^3\Sq^1b_3).\]
\end{lemma}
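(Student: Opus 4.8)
The plan is to compute $E^{2}_{-n+3,2,-n}(\kq)$ as the homology at the tridegree $(-n+3,2,-n)$ of the slice $d^{1}$-complex, using the slices in \eqref{equation:slices-kq} and the $d^{1}$-differential displayed in Figure~\ref{fig:d1-kq}. Since $\s_{1}(\kq)=\Sigma^{1,1}\M\Lambda/2$, $\s_{2}(\kq)=\Sigma^{4,2}\M\Lambda\vee\Sigma^{2,2}\M\Lambda/2$ and $\s_{3}(\kq)=\Sigma^{3,3}\M\Lambda/2\vee\Sigma^{5,3}\M\Lambda/2$, evaluating homotopy groups in weight $-n$ identifies $E^{1}_{-n+4,1,-n}(\kq)=h^{n-3,n+1}$, $E^{1}_{-n+3,2,-n}(\kq)=h^{n-1,n+2}\directsum H^{n+1,n+2}$ and $E^{1}_{-n+2,3,-n}(\kq)=h^{n+1,n+3}\directsum h^{n+3,n+3}$. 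The $d^{1}$ leaving $E^{1}_{-n+3,2,-n}(\kq)$ and landing in $E^{1}_{-n+2,3,-n}(\kq)$ is the map in the statement, its components being read off from Figure~\ref{fig:d1-kq}; the only point deserving comment is that the a priori possible component $\Sq^{2}\pr^{\infty}_{2}\colon H^{n+1,n+2}\to h^{n+3,n+3}$ is zero, because $\Sq^{2}$ annihilates $h^{n+1,n+2}=\tau\cdot h^{n+1,n+1}$ (for $z\in h^{n+1,n+1}$ one has $\Sq^{1}z\in h^{n+2,n+1}=0$ and $\Sq^{2}z\in h^{n+3,n+2}=0$, so by the motivic Cartan formula $\Sq^{2}(\tau z)=(\Sq^{2}\tau)\cdot z=0$ using $\Sq^{2}\tau=0$).

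Consequently $E^{2}_{-n+3,2,-n}(\kq)$ equals the kernel of the exiting $d^{1}$ modulo the image of the $d^{1}$ entering it, so the heart of the matter is that this entering differential vanishes. By Figure~\ref{fig:d1-kq} it is the map $h^{n-3,n+1}\to h^{n-1,n+2}\directsum H^{n+1,n+2}$, $x\mapsto(\Sq^{2}x,\partial^{2}_{\infty}\Sq^{2}\Sq^{1}x)$. The group $h^{n-3,n+1}$ is trivial unless $n\geq 3$, and for $n\geq 3$ multiplication by $\tau^{4}$ is an isomorphism $h^{n-3,n-3}\to h^{n-3,n+1}$. On $h^{n-3,n-3}\iso\KMil_{n-3}(F)/2$ both $\Sq^{1}$ and $\Sq^{2}$ land in the trivial groups $h^{n-2,n-3}$ and $h^{n-1,n-2}$, hence vanish; moreover $\Sq^{1}(\tau^{4})=4\tau^{3}\rho=0$ and $\Sq^{2}(\tau^{4})=0$ (the latter from the motivic Cartan formula together with $\Sq^{1}(\tau^{2})=2\tau\rho=0$ and $\tau^{4}=(\tau^{2})^{2}$), so the Cartan formula shows that $\Sq^{1}$ and $\Sq^{2}$ annihilate all of $h^{n-3,n+1}$. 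Therefore $\Sq^{2}x=0$ and $\Sq^{2}\Sq^{1}x=\Sq^{2}(0)=0$ for every $x$, the entering $d^{1}$ is zero, and $E^{2}_{-n+3,2,-n}(\kq)=\ker(d^{1})$, as asserted.

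I expect the main obstacle to be operation bookkeeping rather than anything deep: one has to extract from Figure~\ref{fig:d1-kq} that the differential entering this tridegree has precisely the components $\Sq^{2}$ and $\partial^{2}_{\infty}\Sq^{2}\Sq^{1}$ (with no $\tau$-, $\rho\Sq^{1}$- or $\Sq^{3}\Sq^{1}$-term), and then establish the vanishing of $\Sq^{1}$ and $\Sq^{2}$ on the fourfold $\tau$-shifted group $h^{n-3,n+1}$ --- which comes down to the even binomial coefficients $\binom{4}{1}$ and $\binom{4}{2}$ emerging through the motivic Cartan formula, exactly the computation carried out in the proof of Lemma~\ref{lem:Sq2pr42iszero}. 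There is no convergence question and no higher differential involved: the claim is purely about the passage from $E^{1}$ to $E^{2}$ in these three adjacent tridegrees.
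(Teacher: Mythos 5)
Your proof is correct and follows the same route as the paper, which disposes of the lemma in one line by observing that the incoming $d^1$ from $E^1_{-n+4,1,-n}(\kq)=h^{n-3,n+1}$ vanishes. You supply the details the paper leaves implicit — the Cartan-formula vanishing of $\Sq^1$ and $\Sq^2$ on $\tau^4h^{n-3,n-3}$ and of the $\Sq^2\pr^\infty_2$-component on $H^{n+1,n+2}$ — and these are exactly the computations underlying the paper's Lemma~\ref{lem:Sq2pr42iszero} and the formula used in Lemma~\ref{lem:e2-kq-23}.
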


\begin{proof}
This follows from the description of the slice $d^1$-differential since the incoming differential is zero. 
\end{proof}

\begin{lemma}\label{lem:e2-kq-31}
  The group $E^{2}_{-n+3,1,-n}(\kq)$ is isomorphic to
  the kernel of $\Sq^2\colon h^{n-2,n+1}\to h^{n,n+2}$.
\end{lemma}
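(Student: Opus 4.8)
The plan is to read off $E^{2}_{-n+3,1,-n}(\kq)$ directly from the slice $d^{1}$-differential. First I would identify the relevant $E^{1}$-terms via \eqref{equation:slices-kq}: since $\s_{0}(\kq)=\mathbf{M}\Lambda$, $\s_{1}(\kq)=\Sigma^{1,1}\mathbf{M}\Lambda/2$ and $\s_{2}(\kq)=\Sigma^{4,2}\mathbf{M}\Lambda\vee\Sigma^{2,2}\mathbf{M}\Lambda/2$, one gets $E^{1}_{-n+4,0,-n}(\kq)=H^{n-4,n}$, $E^{1}_{-n+3,1,-n}(\kq)=h^{n-2,n+1}$ and $E^{1}_{-n+2,2,-n}(\kq)=h^{n,n+2}\directsum H^{n+2,n+2}$. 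By the computation of the slice $d^{1}$-differential in Figure~\ref{fig:d1-kq}, recorded in the proof of Lemma~\ref{lem:e2-kq-22}, the differential leaving $E^{1}_{-n+3,1,-n}(\kq)$ is $x\mapsto(\Sq^{2}x,\partial^{2}_{\infty}\Sq^{2}\Sq^{1}x)$ and the differential entering it is $\Sq^{2}\pr^{\infty}_{2}\colon H^{n-4,n}\to h^{n-2,n+1}$. The next step is to observe that this entering differential vanishes: $\Sq^{2}\pr^{\infty}_{2}=\Sq^{2}\pr^{4}_{2}\pr^{\infty}_{4}$, and $\Sq^{2}\pr^{4}_{2}$ is zero on $h^{n-4,n}_{4}$ by the argument of Lemma~\ref{lem:Sq2pr42iszero} (a class maps under $\pr^{4}_{2}$ into $\tau^{4}h^{n-4,n-4}$, and $\Sq^{2}(\tau^{4}y)=\Sq^{2}(\tau^{4})y+\tau^{4}\Sq^{2}y=0$ since $\Sq^{2}(\tau^{4})=\binom{4}{2}\tau^{3}\rho^{2}=0$ and $\Sq^{2}y\in h^{n-2,n-3}=0$). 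Hence $E^{2}_{-n+3,1,-n}(\kq)$ is the simultaneous kernel of $\Sq^{2}$ and $\partial^{2}_{\infty}\Sq^{2}\Sq^{1}$ on $h^{n-2,n+1}$, and it remains to show that every $x\in h^{n-2,n+1}$ with $\Sq^{2}x=0$ also satisfies $\partial^{2}_{\infty}\Sq^{2}\Sq^{1}x=0$.

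For this last point I would invoke the norm residue theorem, which makes multiplication by $\tau$ an isomorphism $h^{s,t}\to h^{s,t+1}$ whenever $s\le t$; in particular $\tau^{3}\colon h^{n-2,n-2}\to h^{n-2,n+1}$ is an isomorphism and $\tau^{2}\colon h^{n,n}\to h^{n,n+2}$ is injective. Writing a class as $x=\tau^{3}c_{0}$ with $c_{0}\in h^{n-2,n-2}$, the motivic Cartan formula together with the vanishing of $\Sq^{1}c_{0}$, $\Sq^{2}c_{0}$ and $\Sq^{2}\rho$ (their targets lie in bidegrees $(a,b)$ with $a>b$, hence are trivial) yields $\Sq^{2}x=\tau^{2}\rho^{2}c_{0}$ and $\Sq^{2}\Sq^{1}x=\tau\rho^{3}c_{0}=(\tau\rho)(\rho^{2}c_{0})$ — exactly the computation at the end of the proof of Lemma~\ref{lem:e2-kq-22}. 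If $\Sq^{2}x=0$ then $\tau^{2}\rho^{2}c_{0}=0$, so $\rho^{2}c_{0}=0$ by injectivity of $\tau^{2}$ on $h^{n,n}$, whence $\Sq^{2}\Sq^{1}x=0$ and therefore $\partial^{2}_{\infty}\Sq^{2}\Sq^{1}x=0$. This gives $\ker(\Sq^{2})\subseteq\ker(d^{1})$; the reverse inclusion is immediate, so $E^{2}_{-n+3,1,-n}(\kq)\cong\ker(\Sq^{2}\colon h^{n-2,n+1}\to h^{n,n+2})$.

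The main obstacle is the argument of the second paragraph. Unlike in Lemma~\ref{lem:e2-kq-21}, where the $\mathbf{M}\Lambda$-valued component of the slice differential was absent because its target vanished, here the target $H^{n+2,n+2}=\KMil_{n+2}(F)$ is typically nonzero, so $\partial^{2}_{\infty}\Sq^{2}\Sq^{1}$ does not die for formal reasons; one genuinely has to pass to the $\tau$-divisible description of $h^{n-2,n+1}$ afforded by norm residue and then run the Steenrod-algebra bookkeeping. Since this bookkeeping duplicates that of Lemma~\ref{lem:e2-kq-22}, no essentially new input is needed; the only extra small point is the verification, indicated above, that the entering differential $\Sq^{2}\pr^{\infty}_{2}$ still vanishes in this shifted weight range.
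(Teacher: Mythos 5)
Your proof is correct and follows the same route as the paper's (which simply reads the answer off the slice $d^1$-differential, noting the incoming differential vanishes). You have additionally supplied the two details the paper leaves implicit: the vanishing of $\Sq^2\pr^\infty_2$ on $H^{n-4,n}$ in this weight range (not literally covered by Corollary~\ref{cor:sq2prinfty2iszero}, but handled by the same $\tau$-power computation), and the inclusion $\ker(\Sq^2)\subseteq\ker(\Sq^2\Sq^1)$ on $h^{n-2,n+1}$, which the paper records only in the proof of Lemma~\ref{lem:unit-kq-e2-iso}.
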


\begin{proof}
 This follows from the description of the slice $d^1$-differential since the incoming differential is zero. 
\end{proof}

\begin{lemma}\label{lem:e2-kq-30}
  The group $E^{2}_{-n+3,0,-n}(\kq)$ is equal to
  $H^{n-3,n}$.
\end{lemma}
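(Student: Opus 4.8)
The plan is to read off the answer from the slice $d^1$-differential for $\kq$ as displayed in Figure~\ref{fig:d1-kq} and encoded in \eqref{equation:gradedhermitianktheoryslices}. The group $E^1_{-n+3,0,-n}(\kq)$ sits in slice degree (weight) $q$ with $q+(-n+3)$ equal to the internal degree bookkeeping; concretely, by \eqref{equation:slices-kq} the relevant summand of $\pi_{-n+3,-n}\s_q(\kq)$ in column $0$ of the $-n$th slice spectral sequence comes from the copy of $\Sigma^{2q,q}\M\Lambda$ (the ``$\sqrt{\alpha}$-power'' summand generated by $\alpha_1^0 = 1$ in the appropriate power of $\sqrt\alpha$), and yields the integral motivic cohomology group $H^{n-3,n}(F)$. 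This is entirely analogous to the computation of $E^2_{-n+1,0,-n}(\kq) \cong H^{n-1,n}$ in Lemma~\ref{lem:e2-kq-1<3} and of $E^2_{-n+2,0,-n}(\kq)$ in Lemma~\ref{lem:e2-kq-20}, just shifted one further column to the right.

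First I would identify, using \eqref{equation:slices-kq} and the grading conventions, the unique slice $\s_q(\kq)$ contributing to the tridegree $(-n+3,0,-n)$ together with the summand of its homotopy that lands in column $0$; this is the $\Sigma^{2q,q}\M\Lambda$-summand, giving a candidate group $H^{n-3,n}(F)$. Second I would check that no $d^1$-differential enters this group: by Figure~\ref{fig:d1-kq} the differentials targeting a column-$0$ class in integral motivic cohomology would have to originate from a class of simplicial-degree-minus-slice-degree one lower, but the entering differential in this bottom position is zero (the relevant components $\Sq^2$, $\tau$, $\Sq^2+\rho\Sq^1$ all land in mod-$2$ cohomology groups one column up, not in column $0$, and there is no summand mapping into the integral class here). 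Third I would check that no $d^1$-differential exits this group: the outgoing $d^1$ from an integral motivic cohomology class in this position is the zero map, exactly as for the column-$0$ entries treated in Lemmas~\ref{lem:e2-kq-1<3} and~\ref{lem:e2-kq-20} (the only operations that could be applied — $\tau\pr$, or the connecting-type maps — vanish on this summand for degree reasons). Hence $E^2_{-n+3,0,-n}(\kq) = E^1_{-n+3,0,-n}(\kq) = H^{n-3,n}(F)$.

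There is essentially no obstacle here: the argument is a routine bookkeeping of the slice $d^1$-differential already worked out in Figure~\ref{fig:d1-kq}, and the only mild care needed is matching the tridegree indices to the correct slice and the correct wedge summand, which is why the one-line proof ``Use the description of the slice $d^1$-differential'' suffices, in parallel with Lemma~\ref{lem:e2-kq-20}.
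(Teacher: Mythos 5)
There is a genuine gap in your third step. The $E^1$-entry in question is $E^1_{-n+3,0,-n}(\kq)=\pi_{-n+3,-n}\s_0(\kq)=\pi_{-n+3,-n}\M\Lambda=H^{n-3,n}$, and the outgoing $d^1$-differential is \emph{not} zero for degree reasons: by Figure~\ref{fig:d1-kq} (and Lemma~\ref{lem:first-diff-unit-1} for the analogous summand of the sphere) the component of $\dd_1^{\kq}$ leaving $\s_0(\kq)=\M\Lambda$ is $\Sq^2\pr$, so the exiting map is precisely $\Sq^2\pr^\infty_2\colon H^{n-3,n}\to h^{n-1,n+1}$. This target is a perfectly legitimate nonzero group, and the vanishing of the composite is a genuine computation, not bookkeeping: it is the content of Corollary~\ref{cor:sq2prinfty2iszero}, which the paper's proof cites, and which rests on Lemma~\ref{lem:Sq2pr42iszero} (write $\pr^4_2(x)=\tau^3y$, use $\Sq^1\pr^4_2=0$ to get $\tau^2\rho y=0$, and conclude $\Sq^2(\tau^3y)=\tau^2\rho^2y=\rho\cdot\tau^2\rho y=0$). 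Your proposal never supplies this argument and instead lists operations ($\tau\pr$, connecting maps) that are not the ones exiting this summand.

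Moreover, your appeal to Lemma~\ref{lem:e2-kq-20} as a parallel case points in exactly the wrong direction: there the same kind of exiting differential $\Sq^2\pr\colon H^{n-2,n}\to h^{n,n+1}$ is \emph{not} zero in general (one has $\Sq^2(\tau^2y)=\tau\rho^2y$, and $\Sq^1(\tau^2y)=0$ gives no constraint), which is why that $E^2$-term is a kernel rather than the whole group. The reason column $-n+3$ behaves differently from column $-n+2$ is exactly the extra factor of $\tau$ exploited in Lemma~\ref{lem:Sq2pr42iszero}. Your identification of the $E^1$-term and the vanishing of the entering differential (there is no slice in degree $-1$) are fine; the proof is incomplete without the $\Sq^2\pr^\infty_2=0$ input.
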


\begin{proof}
  This follows from the description of the slice $d^1$-differential
  and Corollary~\ref{cor:sq2prinfty2iszero}.
\end{proof}

The above concludes our identification of the first four nontrivial columns
of the second page of the slice spectral sequence for $\kq$.
To determine the first three nontrivial columns on the third page, it remains to identify the second slice differentials in the relevant range.

\begin{lemma}\label{lem:kq-perm-cycle-1}
Every element in $E^{2}_{-n+1,m,-n}(\kq)$ is a permanent cycle.
\end{lemma}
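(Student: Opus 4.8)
The plan is to reduce the claim to the permanence of the first column already recorded at the $E^{1}$-stage in Lemma \ref{lem:e1-kq-0}. The structural input I would use is that, in the weight $-n$ slice spectral sequence for $\kq$, the column $t=-n$ is the leftmost nontrivial one. This is immediate from \eqref{equation:slices-kq}: each $E^{1}_{t,q,-n}(\kq)=\pi_{t,-n}\s_{q}(\kq)$ is a finite direct sum of integral or mod-$2$ motivic cohomology groups of $F$, and every such group of bidegree $(p,w)$ with $p>w$ vanishes; a bidegree count shows this forces $E^{1}_{t,q,-n}(\kq)=0$ for $t<-n$. Consequently the only differentials leaving the first column $t=-n+1$ are the $d^{r}$ ($r\ge 1$) landing in the zeroth column $t=-n$.

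With this in hand the argument is short. By Lemma \ref{lem:e1-kq-0} every element of $E^{1}_{-n+1,m,-n}(\kq)$ is a permanent cycle, so in particular the $d^{1}$ out of the first column vanishes; hence $E^{2}_{-n+1,m,-n}(\kq)$ is a quotient of $E^{1}_{-n+1,m,-n}(\kq)$ by the image of the $d^{1}$ entering from the second column, and a quotient of a group of permanent cycles again consists of permanent cycles. For $m\ge 3$ there is nothing to check, since $E^{2}_{-n+1,m,-n}(\kq)=0$ by Lemma \ref{lem:e2-kq-1>2}; for $m\in\{0,1,2\}$ the relevant groups are those of Lemma \ref{lem:e1-kq-0}. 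If one prefers to bypass Lemma \ref{lem:e1-kq-0}, one can instead argue that the higher differentials out of the first column vanish because their targets $E^{r}_{-n,m+r,-n}(\kq)$ are subquotients of the zeroth column and the zeroth column receives no nonzero differential; the latter follows either from Corollary \ref{cor:unit-kq-e1-iso}, which identifies the zeroth column with that of $\unit$, together with the corresponding collapse for the sphere, or from a size count, since $\bigoplus_{q}E^{1}_{-n,q,-n}(\kq)$ is, modulo the Milnor conjecture, the associated graded of $\pi_{-n,-n}\kq\cong\pi_{-n,-n}\kw\cong\Witt(F)$ for $n<0$ (of the Grothendieck--Witt ring $\mathbf{GW}(F)$ for $n=0$), leaving no room for a differential into column $t=-n$.

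I do not expect a genuine obstacle at this step: the real content is the permanence of the first column at the $E^{1}$-page, Lemma \ref{lem:e1-kq-0}, which rests on the comparisons with $\unit$ and with $\kw$. The only point requiring care is the half-plane bookkeeping of the first paragraph, namely that no higher differential out of $E^{2}_{-n+1,m,-n}(\kq)$ can escape the zeroth column, so that permanence of the zeroth column — equivalently, of the first column at the $E^{1}$-level — really suffices to conclude.
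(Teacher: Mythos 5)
Your proposal is correct and follows the paper's own (one-line) proof: the statement is an immediate consequence of Lemma~\ref{lem:e1-kq-0}, since $E^{2}_{-n+1,m,-n}(\kq)$ is a subquotient of $E^{1}_{-n+1,m,-n}(\kq)$ and subquotients of permanent cycles are permanent cycles. The extra bookkeeping (that column $t=-n$ is the leftmost nonzero one) and the alternative size-count via $\Witt(F)$ are consistent with the remarks the paper itself makes in the proof of Lemma~\ref{lem:e1-kq-0}, but are not needed beyond that citation.
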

\begin{proof}
This follows from Lemma~\ref{lem:e1-kq-0}.
\end{proof}

Lemma~\ref{lem:kq-perm-cycle-1} shows that $E^{1}_{-n,m,-n}(\kq)=E^{\infty}_{-n,m,-n}(\kq)$ for all $m,n\in\ZZ$. Hence the first potentially nonzero second slice
differential for $\kq$ is:
\begin{equation}\label{eq:d2-kq-20-12}
  d^2\colon \ker \bigl(\Sq^2\pr^\infty_2\colon H^{n-2,n}\to h^{n,n+1}\bigr)
  \to h^{n+1,n+2}/\Sq^2h^{n-1,n+1}
\end{equation}
Instead of referring to \cite[Lemma 4.15]{rso.oneline}, which implies that
the differential~(\ref{eq:d2-kq-20-12}) is zero, an alternative argument
based on \cite{rondigs.moore} will be employed. The advantage is the
identification of $\KMW$-module structures. For this purpose,
consider the slice filtration
\[ \dotsm \to \f_2\kq\to \f_1\kq \to \f_0\kq=\kq \]
and the induced filtration
\[ \dotsm \subset \f_2\pi_m\kq \subset \f_1\pi_m\kq \subset \pi_m\kq \]
on $\KMW$-modules; here $\f_n\pi_m\kq$ is the image of
$\pi_m\f_n\kq\to \pi_m\kq$. Proceed now to the first stage
of the identification of the $\KMW$-modules $\pi_1\kq$ and
$\pi_{2}\kq$. If $A$ is a graded $\KMW$-module and $n,d$ are integers, then $A(n)$ is the graded $\KMW$-module with $A(n)_d=A_{n+d}$.

\begin{lemma}\label{lem:pi-f3kq}
  There are isomorphisms $\pi_1\f_3\kq\iso \kmil(2)$ generated
  in $\pi_{1+(2)}\f_3\kq\iso \kmil_0$ and
  $\pi_2\f_3\kq\iso \kmil(1)$ generated
  in $\pi_{2+(1)}\f_3\kq\iso \kmil_0$.
\end{lemma}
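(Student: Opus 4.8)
The plan is to identify $\pi_1\f_3\kq$ and $\pi_2\f_3\kq$ by running the slice spectral sequence for the effective cover $\f_3\kq$, whose slices are $\s_q(\f_3\kq)=\s_q(\kq)$ for $q\geq 3$ and trivial for $q<3$. First I would record, using \eqref{equation:slices-kq}, that in the relevant low range the only contributions come from the summands $\Sigma^{q,q}\M\Lambda/2\{\alpha_1^q\}$ and $\Sigma^{q+2,q}\M\Lambda/2\{\alpha_3\alpha_1^{q-3}\}$ of $\s_q(\kq)$ for $q=3,4,5$. Concretely, for $\pi_1\f_3\kq=\pi_{1+(w)}\f_3\kq$ the potentially contributing columns are $q=3$ (giving $E^1_{\ast,3,\ast}$ built from $\alpha_1^3$ in bidegree $(3,3)$) and, for the next weight, $q=4$ and $q=5$; similarly for $\pi_2\f_3\kq$ the relevant classes live in the $q=3$ column ($\alpha_3$ in bidegree $(5,3)$) together with $\alpha_1^q$ in higher weights. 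I would then invoke the determination of the slice $d^1$-differential displayed in Figure~\ref{fig:d1-kq}: the $\Sq^2$, $\tau$, $\Sq^3\Sq^1$ pattern shows that $\alpha_1^3\in E^1_{-n+2,3,-n}$ and $\alpha_3\in E^1_{-n+2,3,-n}$ survive $d^1$ after the relevant quotients, and that in the range needed the entering and exiting $d^1$-differentials assemble so that $E^2_{\ast,3,\ast}$ contributing to $\pi_1$ resp.\ $\pi_2$ is exactly a copy of $\kmil$ in the asserted weight.

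Next I would argue that these $E^2$-classes are permanent cycles and that nothing else contributes. For permanence: the targets of all higher slice differentials $d^r$, $r\geq 2$, out of the $q=3$ column land in columns $q\geq 4$ in simplicial degrees where, by Lemmas~\ref{lem:e2-kq-3>3} and~\ref{lem:e2-kq-2>3} together with the $m\geq 4$ vanishing, the $E^2$-terms are already trivial — so there is no room for a differential. For the absence of extension problems and of contributions from other columns, I would use that $\f_3\kq$ has no slices below $q=3$, so the filtration on $\pi_1\f_3\kq$ and $\pi_2\f_3\kq$ has at most the finitely many layers coming from $q=3,4,5$, and the $d^1$-computation kills all of these except the single surviving $\kmil$-line; the graded pieces being $2$-torsion forces the extensions to split at the level of the underlying groups, and the $\KMW$-module structure is then read off from the fact that $\alpha_1$ acts as multiplication by $\eta$ and the slices are $\M\Lambda/2$-modules, pinning down $\pi_1\f_3\kq\iso\kmil(2)$ generated in $\pi_{1+(2)}$ and $\pi_2\f_3\kq\iso\kmil(1)$ generated in $\pi_{2+(1)}$.

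The main obstacle is the bookkeeping of \emph{which} summands of the higher slices $\s_4(\kq)$ and $\s_5(\kq)$ contribute in these two total degrees, and checking that the slice $d^1$-differential annihilates every one of them except the claimed surviving class; this is where the explicit shape of Figure~\ref{fig:d1-kq} and the surjectivity of $\tau$ and of $\Sq^2$ onto the relevant mod-$2$ motivic cohomology groups must be used carefully, exactly as in the proofs of Lemmas~\ref{lem:e2-kq-2>3}–\ref{lem:e2-kq-20}. Once the $E^2$-page in this corner is pinned down to a single $\kmil$, identifying it as a shifted free rank-one $\kmil$-module and locating its generator in the stated weight is routine from the multiplicative structure \eqref{equation:gradedhermitianktheoryslices}. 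A secondary, purely notational point is to confirm the weight shifts $A(2)$ and $A(1)$: since $\alpha_1^3$ sits in bidegree $(3,3)$ it detects a class in $\pi_{1+(2)}$, and $\alpha_3\alpha_1^{0}$ in bidegree $(5,3)$ detects a class in $\pi_{2+(1)}$, which gives precisely the asserted generating degrees.
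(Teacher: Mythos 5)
Your overall strategy --- run the slice spectral sequence for $\f_3\kq$, show that the $\pi_1$- and $\pi_2$-columns are concentrated in slice degree $3$ at $E^2$, and read off the answer from the $d^1$-computation --- is the same as the paper's. But your step ruling out higher differentials has a genuine gap for the $\pi_1$-class. The differentials $d^r$ out of $E^r_{-n+1,3,-n}$ land in $E^r_{-n,3+r,-n}$, i.e.\ in the $\pi_0$-column, which is not covered by Lemmas~\ref{lem:e2-kq-2>3} and~\ref{lem:e2-kq-3>3} (those concern the $\pi_2$- and $\pi_3$-columns). Moreover those target groups do \emph{not} vanish: for $m\geq 5$ one has $E^1_{-n,m,-n}(\f_3\kq)\iso h^{m+n,m+n}$, the entering $d^1$ is $\Sq^2$ applied to $h^{m+n-2,m+n-1}=\tau\kmil_{m+n-2}$, which is zero since $\Sq^2(\tau y)=0$, so $E^2_{-n,m,-n}(\f_3\kq)\iso \kmil_{m+n}$ --- these are precisely the classes assembling to the associated graded of the Witt ring in $\pi_0$. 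So ``the targets are already trivial'' fails here, and you need a different argument: either transfer Lemma~\ref{lem:e1-kq-0} (the $\pi_0$- and $\pi_1$-columns for $\kq$ consist of permanent cycles) to $\f_3\kq$ by naturality of the truncated spectral sequence, or observe that the surviving class in $E^2_{-n+1,3,-n}$ is $\eta^3$ times a permanent cycle and the differentials are $\eta$-linear. For the $\pi_2$-class your vanishing argument does work, since there the targets lie in the $\pi_1$-column in slice degrees $\geq 5$, where $E^2=0$; the relevant citation is Lemma~\ref{lem:e2-kq-1>2}, whose $m\geq 4$ argument applies verbatim to $\f_3\kq$.

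Two smaller points. For $\f_3\kq$ there is no differential \emph{entering} slice degree $3$, so $E^2_{\ast,3,\ast}(\f_3\kq)$ is a kernel, not a quotient: $\pi_1\f_3\kq$ is all of $\pi_1\s_3\kq\iso h^{\star+2,\star+3}$ (one must still check that the exiting $\Sq^2$ vanishes on $\tau\kmil$, which it does), and $\pi_2\f_3\kq$ is the kernel $h^{\star+1,\star+3}$ of $(b_2,b_0)\mapsto \Sq^2b_2+\tau b_0$, embedded via $c\mapsto(c,\tau^{-1}\Sq^2c)$. And since only the single layer $q=3$ survives to $E^2$ in each of these two columns, there are no extension problems to resolve; your appeal to ``$2$-torsion forces the extensions to split'' is both unnecessary and not a valid principle in general.
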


\begin{proof}
  Slice convergence for $\kq$ implies slice convergence for $\f_3\kq$,
  as in Corollary~\ref{cor:eta-slice-comp-kq}.
  Similar to Lemma~\ref{lem:e2-kq-1>2} and~\ref{lem:e2-kq-3>3}, the
  columns of the second page of the slice spectral sequence for $\f_3\kq$
  corresponding to $\pi_1$ and $\pi_2$ are concentrated in the lowest
  possible slice, namely $3$. There is no room for higher differentials,
  whence $\pi_{1}\f_3\kq\iso \pi_1\s_3\kq\iso h^{\star+2,\star+3}$.
  The computation of the slice $d^1$ differential
  $\pi_2\s_3\kq\to \pi_1\s_4\kq$ provides as kernel
  $\pi_{2}\f_3\kq\iso h^{\star+1,\star+3}$.
\end{proof}

\begin{lemma}\label{lem:pi-f2kq}
  There is an isomorphism $\pi_1\f_2\kq\iso \kmil(1)$ generated
  in $\pi_{1+(1)}\f_2\kq\iso \kmil_0$.
  The map $\pi_1\f_3\kq\to \pi_1\f_2\kq$ is zero.
\end{lemma}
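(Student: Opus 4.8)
The plan is to study $\pi_1\f_2\kq$ through the cofiber sequence
$\f_3\kq\to\f_2\kq\to\s_2\kq\xrightarrow{\partial}\Sigma^{1,0}\f_3\kq$
afforded by the slice filtration, using the identification $\s_2\kq\simeq\Sigma^{4,2}\MZ\vee\Sigma^{2,2}\MZ/2$ from \eqref{equation:slices-kq} and the computations of $\pi_1\f_3\kq$, $\pi_2\f_3\kq$ in Lemma~\ref{lem:pi-f3kq}. Passing to $\pi_\ast$ gives an exact sequence of $\KMW$-modules
\[ \pi_2\s_2\kq\xrightarrow{\partial_\ast}\pi_1\f_3\kq\to\pi_1\f_2\kq\to\pi_1\s_2\kq\xrightarrow{\partial_\ast}\pi_0\f_3\kq, \]
so the two assertions of the lemma become: the first $\partial_\ast$ is surjective (equivalently $\pi_1\f_3\kq\to\pi_1\f_2\kq$ vanishes), and the second $\partial_\ast$ vanishes with $\pi_1\s_2\kq\iso\kmil(1)$.

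First I would identify the flanking terms as $\KMW$-modules. The weight-$w$ part of $\pi_1\Sigma^{4,2}\MZ$ is $H^{3-w,2-w}(F)$, which vanishes since motivic cohomology of a field is supported on and below the diagonal; hence $\pi_1\s_2\kq\iso\pi_1\Sigma^{2,2}\MZ/2$, whose weight-$w$ part is $h^{1-w,2-w}(F)\iso\kmil_{1-w}(F)$, and since $\eta$ and $2$ act trivially on an $\MZ/2$-module spectrum while multiplication by units recovers Milnor $K$-theory multiplication, this is the cyclic $\KMW$-module $\kmil(1)$ generated in $\pi_{1+(1)}$. Likewise I would read off $\pi_2\s_2\kq$ (weight-$w$ part $H^{2-w,2-w}(F)\directsum h^{-w,2-w}(F)$, the first summand from the $\sqrt\alpha$-slice $\Sigma^{4,2}\MZ$). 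For the surjectivity of $\partial_\ast\colon\pi_2\s_2\kq\to\pi_1\f_3\kq$: via $\pi_1\f_3\kq\iso\pi_1\s_3\kq$ (Lemma~\ref{lem:pi-f3kq}) the composite $\pi_2\s_2\kq\xrightarrow{\partial_\ast}\pi_1\f_3\kq\iso\pi_1\s_3\kq$ is the component of the slice differential $\dd^1\colon\s_2\kq\to\Sigma^{1,0}\s_3\kq$ into the $\alpha_1^3$-summand $\Sigma^{3,3}\MZ/2$, and by Figure~\ref{fig:d1-kq} — or by computing $\dd^1(\sqrt\alpha)$ from $\s_\ast\kq\iso\MZ[\eta,\sqrt{\alpha}]/(2\eta=0,\eta^{2}\xrightarrow{\delta}\sqrt{\alpha})$, the very differential that forces $\pi_{4,2}\kq\iso 2\ZZ$ — this component is $\tau\pr^\infty_2$ on the $\sqrt\alpha$-slice, i.e. the surjection $H^{2-w,2-w}(F)\to h^{2-w,3-w}(F)$. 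Thus $\pi_1\f_3\kq\to\pi_1\f_2\kq$ is zero, which proves the second assertion and reduces the first to the vanishing of $\partial_\ast\colon\pi_1\s_2\kq\to\pi_0\f_3\kq$.

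The first-order part of that last map vanishes for free: the composite $\pi_1\s_2\kq\to\pi_0\f_3\kq\to\pi_0\s_3\kq$ is the slice $\dd^1$-differential out of this tridegree, which is zero by inspection of Figure~\ref{fig:d1-kq} (compare the quotient form of $E^{2}_{-n+1,2,-n}(\kq)$ in Lemma~\ref{lem:e2-kq-1<3}). Ruling out a contribution from the higher slice filtration is the step where — instead of adapting the argument of \cite[Lemma 4.15]{rso.oneline} that kills the differential \eqref{eq:d2-kq-20-12} — I would invoke \cite{rondigs.moore}: identifying the relevant part of $\f_2\kq$ with a mod-$2$ Moore-spectrum smash assembled from the $\MZ/2$-slices above level $2$, whose $\pi_1$ is computed there as a $\KMW$-module, forces the boundary to vanish and simultaneously pins down the extension, giving $\pi_1\f_2\kq\iso\pi_1\s_2\kq\iso\kmil(1)$ as $\KMW$-modules. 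The main obstacle is exactly this last point: the plain spectral-sequence bookkeeping delivers only the underlying graded abelian group, so controlling the higher boundary together with the $\KMW$-module extension is precisely what the input from \cite{rondigs.moore} is there to supply.
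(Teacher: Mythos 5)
Your long-exact-sequence strategy, the identification of both boundary maps with components of the slice $\dd^1$-differential, and the use of the surjection $\tau\pr^\infty_2\colon H^{2-w,2-w}\to h^{2-w,3-w}$ to show that $\pi_1\f_3\kq\to\pi_1\f_2\kq$ vanishes are all correct, and this is essentially the paper's argument (phrased there as: the $\pi_1$-column of the slice spectral sequence for $\f_2\kq$ is concentrated in slice degree $2$). The gap is in your treatment of the higher-filtration part of $\partial_\ast\colon\pi_1\s_2\kq\to\pi_0\f_3\kq$, equivalently of the differentials $d^r\colon E^r_{-n+1,2,-n}\to E^r_{-n,2+r,-n}$ for $r\geq 2$. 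There is no identification of (the relevant part of) $\f_2\kq$ with a smash of mod-$2$ Moore spectra: the slices $\s_q\kq$ contain an integral summand $\Sigma^{2q,q}\MZ$ in \emph{every} even degree $q\geq 2$, and in this paper \cite{rondigs.moore} is invoked for a different purpose, namely to control differentials \emph{into} the $\pi_1$-column from the $\pi_2$-column (Lemma~\ref{lem:d2-pi1kq-zero}) and the $\KMW$-extension occurring in $\pi_1\f_1\kq$ (Lemma~\ref{lem:pi-f1kq}). Your closing worry about "pinning down the extension" is also misplaced here: once the slice-$2$ entry survives, the $\pi_1$-column has a single nonzero filtration quotient, so $\pi_1\f_2\kq\to\pi_1\s_2\kq$ is an isomorphism of $\KMW$-modules and there is no extension problem at all.

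The ingredient you are missing is Lemma~\ref{lem:e1-kq-0}: every element of $E^1_{-n+1,m,-n}(\kq)$ is a permanent cycle. Since $\s_q\f_2\kq=\s_q\kq$ for $q\geq 2$, this applies verbatim to $\f_2\kq$ and kills all outgoing $d^r$ from $E^r_{-n+1,2,-n}(\f_2\kq)$, which is exactly the statement that $\partial_\ast\colon\pi_1\s_2\kq\to\pi_0\f_3\kq$ is zero (your $d^1$-level observation only shows the composite to $\pi_0\s_3\kq$ vanishes, and $\pi_0\f_3\kq\to\pi_0\s_3\kq$ is not injective). The proof of Lemma~\ref{lem:e1-kq-0} goes either by comparison with the sphere --- the unit map is surjective on this column of the $E^1$-page by Corollary~\ref{cor:unit-kq-e1-iso}, and the corresponding classes for $\unit$ are permanent cycles by \cite{rso.oneline} --- or by observing that the $\pi_0$-column must survive intact to its known value (the associated graded of the $\fundid$-adic filtration on $\Witt(F)$, via $\kw$), so nothing can map nontrivially into it. Replacing your appeal to \cite{rondigs.moore} by either of these closes the argument.
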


\begin{proof}
  Slice convergence for $\kq$ implies slice convergence for $\f_2\kq$,
  as in Corollary~\ref{cor:eta-slice-comp-kq}.
  The column of the second page of the slice spectral sequence for $\f_2\kq$
  corresponding to $\pi_1$  is concentrated in the lowest
  possible slice, namely $2$. There is no room for higher differentials,
  whence $\pi_{1}\f_2\kq\iso \pi_1\s_2\kq\iso h^{\star+1,\star+2}$.
  The long exact sequence on homotopy modules induced by the cofiber sequence
  \[ \f_3\kq \to \f_2\kq\to \s_2\kq \to \Sigma\f_3\kq \]
  provides the last statement. 
\end{proof}

The following statement is essentially \cite[Lemma 2.3]{rondigs.moore}.

\begin{lemma}\label{lem:pi-f1kq}
  There is an isomorphism $\pi_1\f_1\kq\iso \KMW/(2,\eta^2)$ generated
  by the image of the topological Hopf map $\eta_\Top$ in $\pi_{1+(0)}\f_1\kq$.
  The image of $\pi_1\f_2\kq\to \pi_1\f_1\kq$ is the submodule generated
  by $\eta\eta_\Top$, which is isomorphic to $\kmil(1)/\rho^2\kmil(-1)$.
\end{lemma}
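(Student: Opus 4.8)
The plan is to obtain $\pi_1\f_1\kq\iso\KMW/(2,\eta^2)$ essentially from \cite[Lemma 2.3]{rondigs.moore}, and then to extract the image of $\pi_1\f_2\kq$ from the slice tower together with Lemma~\ref{lem:pi-f2kq}. Slice convergence for $\kq$ (Corollary~\ref{cor:eta-slice-comp-kq}) passes to $\f_1\kq$, so I would compute $\pi_1\f_1\kq$ from the slice spectral sequence for $\f_1\kq$, whose slices are $\s_q(\kq)$ for $q\geq 1$. By \eqref{equation:slices-kq} every entry in total degree $1$ is a mod-$2$ motivic cohomology group of the form $h^{m,m+1}$ or $h^{m,m+2}$, or a mod-$2$ Milnor $K$-group; recording the slice $d^1$-differential of Figure~\ref{fig:d1-kq} and verifying that the remaining slice differentials vanish on this column identifies $\pi_1\f_1\kq$ with $\KMW/(2,\eta^2)$ as a $\KMW$-module. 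This is the content of \cite[Lemma 2.3]{rondigs.moore}, which I would quote.

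To pin down the generator, note that the image of $\eta_\Top\in\pi_{1,0}\unit$ in $\s_0(\kq)=\MZ$ vanishes ($\pi_{1,0}\MZ=H^{-1,0}(F)=0$) and lifts uniquely to $\f_1\kq$ ($\pi_{2,0}\MZ=H^{-2,0}(F)=0$), giving a class in $\pi_{1+(0)}\f_1\kq$. The cofiber sequence $\f_1\kq\to\kq\to\MZ$ identifies $\pi_{1,0}\f_1\kq$ with $\pi_{1,0}\kq\iso F^{\times}/2\directsum\ZZ/2$, on which $\eta_\Top$ generates over $\KMW_0(F)$ by \eqref{eq:first-stable-stem2} and the ensuing discussion. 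Since $\KMW/(2,\eta^2)$ is cyclic on a weight-$0$ class $g\in\pi_{1,0}\f_1\kq$, one has $\KMW\cdot\eta_\Top\supseteq\KMW_0(F)\cdot\eta_\Top=\pi_{1,0}\f_1\kq\ni g$, so $\eta_\Top$ generates $\pi_1\f_1\kq$ as a $\KMW$-module.

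Next I would analyse $\pi_1\f_2\kq\to\pi_1\f_1\kq$ via the cofiber sequence $\f_2\kq\to\f_1\kq\to\s_1\kq=\Sigma^{1,1}\MZ/2$ and its long exact sequence of homotopy modules. Because $\pi_{2,1}\s_1\kq=\pi_{1,0}\MZ/2=h^{-1,0}=0$ and $\pi_{3,1}\s_1\kq=\pi_{2,0}\MZ/2=h^{-2,0}=0$, the map $\pi_{2,1}\f_2\kq\to\pi_{2,1}\f_1\kq$ is an isomorphism; by Lemma~\ref{lem:pi-f2kq} the source is $\pi_{1+(1)}\f_2\kq\iso\kmil_0=\ZZ/2$ and generates $\pi_1\f_2\kq$. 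On the target side, $\pi_{2,1}\f_1\kq$ is the weight-$1$ part of $\KMW/(2,\eta^2)$, equal to $\Witt(F)/\bigl(2\Witt(F)+\fundid(F)\bigr)=\Witt(F)/\fundid(F)\iso\ZZ/2$; since $\eta_\Top$ generates $\pi_{1,0}\f_1\kq$ over $\KMW_0(F)$ and $\eta\cdot\pi_{1,0}\f_1\kq=\pi_{2,1}\f_1\kq$ (as $\KMW_{-1}(F)=\eta\cdot\KMW_0(F)$ and $\pi_1\f_1\kq$ is cyclic on a weight-$0$ class), the element $\eta\eta_\Top$ generates $\pi_{2,1}\f_1\kq$, in particular is nonzero. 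Hence the generator of $\pi_1\f_2\kq$ maps onto $\eta\eta_\Top$, so the image of $\pi_1\f_2\kq$ in $\pi_1\f_1\kq$ is the $\KMW$-submodule generated by $\eta\eta_\Top$, which equals $\eta\cdot\KMW/(2,\eta^2)$ since $\eta_\Top$ is a generator.

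Finally I would identify $\eta\cdot\KMW/(2,\eta^2)$ abstractly. As $\eta^2$ acts as zero, this module is annihilated by $\eta$, hence cyclic over $\KMW/(\eta)=\KMil$ (over $\kmil$ after reduction mod $2$) on the class of $\eta$ in degree $-1$, so a quotient of $\kmil(1)$. Using the Milnor conjecture $\fundid^n(F)/\fundid^{n+1}(F)\iso\kmil_n$ and the Milnor--Witt relation $[uv]=[u]+[v]+\eta[u][v]$ -- taking $u=v=-1$ gives $0=[1]=2[-1]+\rho^{2}\eta$, so $\rho^{2}\eta$ is divisible by $2$ and dies in $\KMW/(2,\eta^2)$, while $\eta[u]^{2}$ survives for generic $u$ -- a degree-by-degree check shows the sole relation is $\rho^{2}\cdot\eta=0$, i.e.\ $\eta\cdot\KMW/(2,\eta^2)\iso\kmil(1)/\rho^{2}\kmil(-1)$, which together with the previous paragraph gives the asserted description. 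The main obstacle, if one does not simply invoke \cite[Lemma 2.3]{rondigs.moore} for it, is the very first computation of $\pi_1\f_1\kq$: the total-degree-$1$ column receives contributions from all slices $\s_q(\kq)$ with $q$ large, so one must organise infinitely many mod-$2$ motivic cohomology groups and control the higher slice differentials to recover both the abelian group and the $\KMW$-module structure; granting that, the remaining arguments are routine, the only delicate point being that precisely one relation $\rho^{2}\eta=0$ survives.
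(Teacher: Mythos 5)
Your proposal is correct in substance and follows the same slice-tower architecture as the paper (convergence for $\f_1\kq$, the cofiber sequence $\f_2\kq\to\f_1\kq\to\s_1\kq$, and \cite[Lemma 2.3]{rondigs.moore} for the final identification), but the order of deductions differs in two places worth comparing. First, the paper does not quote the isomorphism $\pi_1\f_1\kq\iso\KMW/(2,\eta^2)$ up front; it first computes the image of $\pi_2\s_1\kq\to\pi_1\f_2\kq$ as $\rho^2\kmil(-1)$ via the slice $d^1$-differential $\Sq^2$, obtains the extension $0\to\kmil(1)/\rho^2\kmil(-1)\to\pi_1\f_1\kq\to\kmil\to 0$, and only then assembles it into $\KMW/(2,\eta^2)$ as in \cite{rondigs.moore}. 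You instead take the module identification as given and recover the relation from the Milnor--Witt identity $\rho^2\eta=-2[-1]$; that computation is fine, but your ``degree-by-degree check'' that \emph{no further} relation survives is exactly the content that the paper's $\Sq^2$ computation supplies for free, so you should either carry that algebra out (e.g.\ via the pullback description of $\KMW$) or note that it is part of the quoted lemma. Second, to see that $\eta_\Top$ generates, the paper uses realization to detect its image in $\pi_{1+(0)}\s_1\kq\iso\kmil_0$ plus the multiplicative structure of the slices to see $\eta\eta_\Top\neq 0$; you instead appeal to \eqref{eq:first-stable-stem2} and the surrounding discussion. Within this paper that statement is downstream of the present lemma (Theorem~\ref{theorem:1line} is proved via Lemma~\ref{lem:d2-pi1kq-zero}, which uses \eqref{eq:eta-torsion-pi1kq}), so to avoid circularity you must cite the corresponding result of \cite{rso.oneline} rather than Theorem~\ref{theorem:1line}, or use the paper's realization argument. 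With those two caveats addressed, the argument goes through.
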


\begin{proof}
  Slice convergence for $\kq$ implies slice convergence for $\f_1\kq$,
  as in Corollary~\ref{cor:eta-slice-comp-kq}.
  The column of the second page of the slice spectral sequence for $\f_1\kq$
  corresponding to $\pi_1$  is concentrated in slices $1$ and
  $2$. The slice $d^1$ differential
  $\Sq^2\colon \s_1\kq \to \Sigma\f_2\kq\to \Sigma\s_2\kq$
  induces $\Sq^2\colon \kmil(-1)\iso \pi_2\s_1\kq \to \pi_1\f_2\kq\iso
  \pi_1\s_2\kq\iso \kmil(1)$,
  whence the image in $\pi_1\f_2\kq$ coincides with $\rho^2\kmil(-1)$.
  There is no room for higher differentials,
  Thus the long exact sequence
  \[ \dotsm \to \pi_2\s_1\kq \to \pi_1\f_2\kq\to \pi_1\f_1\kq \to \pi_1\s_1\kq \to 0\]
  induced by the cofiber sequence
  \[ \f_2\kq\to \f_1\kq\to \s_1\kq\to \Sigma\f_2\kq \]
  induces a short exact sequence
  \begin{equation}\label{eq:ses-pi1-f1kq}
    0 \to \kmil(1)/\rho^2\kmil(-1)\to \pi_{1}\f_1\kq \to \pi_1\s_1\kq\iso \kmil\to 0
  \end{equation}
  of $\KMW$-modules. 
  The target of the extension~(\ref{eq:ses-pi1-f1kq}) is generated
  in $\pi_{1+(0)}\s_1\kq$ by the image of the topological Hopf map $\eta_\Top$,
  as (\'etale or complex) realization implies.
  The multiplicative structure of the slices implies that
  the image of $\eta\eta_\Top$ in $\pi_{1+(1)}\f_1\kq$ is nontrivial,
  hence coincides with the (unique) generator of $\kmil(1)/\rho^2\kmil(-1)$.
  In particular, the map $\KMW\to \pi_1\f_1\kq$ sending $1$ to the
  image of $\eta_\Top$ is surjective. As in the proof
  of~\cite[Lemma 2.3]{rondigs.moore}, one concludes the desired isomorphisms.
\end{proof}

Under the isomorphism given in Lemma~\ref{lem:pi-f1kq},
the short exact sequence~(\ref{eq:ses-pi1-f1kq}) corresponds
to the short exact sequence
\[ 0 \to \eta\KMW/(2,\eta^2) \to \KMW/(2,\eta^2)\to \KMW/(2,\eta)=\kmil \to 0\]
of $\KMW$-modules.
Hence $\eta\KMW/(2,\eta^2)\iso \KMW/(2,\eta,[-1]^2)\iso \kmil(1)/\rho^2\kmil(-1)$.
This furnishes an identification
\begin{equation}\label{eq:eta-torsion-pi1kq}
  {}_{\eta}\bigl(\KMW/(2,\eta^2)\bigr)\iso \kmil(1)/\rho^2\kmil(-1)\directsum \rho^2(\kmil(-2))
\end{equation}
of $\KMW$-modules. The summands are generated by
$\eta\eta_\Top$ and $[-1]^2\eta_\Top$, respectively.
This will be used in the proof of the following statement.

\begin{lemma}\label{lem:d2-pi1kq-zero}
  The slice $d^2$ differential
  $E^2_{-n+2,0,-n}(\kq)\to E^2_{-n+1,2,-n}(\kq)$ is zero.
\end{lemma}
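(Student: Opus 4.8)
The plan is to replace the differential~\eqref{eq:d2-kq-20-12} by the exact couple of the slice tower $\dotsm\to\f_2\kq\to\f_1\kq\to\f_0\kq=\kq$ together with the $\KMW$-module computations of Lemmas~\ref{lem:pi-f2kq} and~\ref{lem:pi-f1kq}. Since $1\not\equiv 0,3\bmod 4$, Proposition~\ref{prop:eff-kq-conv} shows that the slice spectral sequence converges to $\pi_1\kq$, with filtration $\f_q\pi_1\kq=\im(\pi_1\f_q\kq\to\pi_1\kq)$. As $\pi_1\f_3\kq\to\pi_1\f_2\kq$ vanishes by Lemma~\ref{lem:pi-f2kq}, one gets $\f_3\pi_1\kq=0$, hence $E^\infty_{-n+1,2,-n}(\kq)=\f_2\pi_1\kq$. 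On the other hand, the proof of Lemma~\ref{lem:pi-f2kq} identifies $\pi_1\f_2\kq$ with $\pi_1\s_2\kq$, and its kernel to $\pi_1\f_1\kq$ with the image of the slice $d^1$ out of $\pi_2\s_1\kq$, which in weight $-n$ is $\Sq^2h^{n-1,n+1}$; so $E^2_{-n+1,2,-n}(\kq)=h^{n+1,n+2}/\Sq^2h^{n-1,n+1}$ is carried isomorphically, in weight $-n$, onto the submodule of $\pi_1\f_1\kq$ generated by $\eta\eta_{\Top}$ of Lemma~\ref{lem:pi-f1kq}. Because $E^2_{-n+1,m,-n}(\kq)$ consists of permanent cycles (Lemma~\ref{lem:kq-perm-cycle-1}) and the only differential entering the first column originates in slice $0$ (the entering $d^r$ for $r\geq 3$ starts in a vanishing slice), the edge map $E^2_{-n+1,2,-n}(\kq)\to E^\infty_{-n+1,2,-n}(\kq)$ is a surjection whose kernel is the image of~\eqref{eq:d2-kq-20-12}. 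So I would reduce the lemma to showing that $\pi_1\f_1\kq\to\pi_1\kq$ is injective on the submodule generated by $\eta\eta_{\Top}$.

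For this, the kernel of $\pi_1\f_1\kq\to\pi_1\kq$ is the image of the connecting map $\partial\colon\pi_2\s_0\kq\to\pi_1\f_1\kq$ of the cofiber sequence $\f_1\kq\to\kq\to\s_0\kq$, so it suffices to show that $\im(\partial)$ meets the submodule generated by $\eta\eta_{\Top}$ trivially. Here $\pi_2\s_0\kq=\pi_2\MZ$ is a module over $\MZ$, on which $\eta$ acts as zero, so $\im(\partial)$ is $\eta$-torsion because $\partial$ is $\KMW$-linear; hence $\im(\partial)$ lies in ${}_\eta\bigl(\KMW/(2,\eta^2)\bigr)$, which by~\eqref{eq:eta-torsion-pi1kq} is the direct sum of the submodule generated by $\eta\eta_{\Top}$ and the submodule generated by $[-1]^2\eta_{\Top}$. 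The composite of $\partial$ with the projection $\pi_1\f_1\kq\to\pi_1\s_1\kq$ is the slice $d^1$-operation $\Sq^2\pr^\infty_2$ of~\eqref{eq:d2-kq-20-12}, which annihilates the first summand; I would then argue, using the explicit identification $\pi_1\f_1\kq\iso\KMW/(2,\eta^2)$ of \cite{rondigs.moore} (Lemma~\ref{lem:pi-f1kq}), that $\partial$ in fact factors through $\Sq^2\pr^\infty_2$, equivalently that $\im(\partial)$ is contained in the summand generated by $[-1]^2\eta_{\Top}$. Granting this, $\im(\partial)$ is disjoint from the submodule generated by $\eta\eta_{\Top}$, which completes the proof.

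The factorization of $\partial$ through $\Sq^2\pr^\infty_2$ — equivalently, the identification of $\im(\partial)$ with the summand generated by $[-1]^2\eta_{\Top}$ — is the step I expect to be the main obstacle, since pure manipulations of the exact couple only reformulate the desired statement and so genuine input is needed. The point is that the structure map $\s_0\kq=\MZ\to\Sigma^{1,0}\f_1\kq$ is $2$-torsion: all of its projections to $\Sigma^{1,0}\s_q\kq$ vanish above slice $1$ for degree reasons, and in slice $1$ it is $\Sq^2\pr^\infty_2$; hence $\partial$ factors through the mod-$2$ reduction, and the description of $\pi_1\f_1\kq$ in \cite{rondigs.moore} then shows the factored map is precomposition with $\Sq^2$. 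This is exactly where the $\KMW$-module structure enters, and it is what makes the argument preferable to simply citing \cite[Lemma 4.15]{rso.oneline}, which already implies that~\eqref{eq:d2-kq-20-12} is zero.
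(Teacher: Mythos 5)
Your first two paragraphs track the paper's own proof closely: the reduction via the cofiber sequence $\f_1\kq\to\kq\to\s_0\kq$, the observation that $\im(\partial)$ is $\eta$-torsion because $\pi_2\s_0\kq=\pi_2\MZ$ is a $\KMil$-module, and the decomposition~(\ref{eq:eta-torsion-pi1kq}) reducing everything to showing that $\partial$ misses the summand $\kmil(1)/\rho^2\kmil(-1)$ generated by $\eta\eta_\Top$. That part is correct.

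The gap is in your final step, and you correctly identify it as the crux — but the argument you sketch for it does not work. You claim the structure map $\s_0\kq=\MZ\to\Sigma^{1,0}\f_1\kq$ is $2$-torsion because its projections to $\Sigma^{1,0}\s_q\kq$ vanish above slice $1$ ``for degree reasons'' and equal $\Sq^2\pr^\infty_2$ in slice $1$. First, a map into $\f_1\kq$ is not determined by its composites with the slice projections: the failure of such determinacy is exactly what the higher slice differentials measure, so arguing this way is circular — the component of $\partial$ that is invisible on slices is precisely what produces the $d^2$ you are trying to kill. Second, the relevant groups of operations are not zero for degree reasons; e.g.\ $h^{3,2}(\MZ)$ contains classes such as $\tau\Sq^3\pr^\infty_2$ and $\rho\Sq^2\pr^\infty_2$. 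So the claimed factorization of $\partial$ through $\Sq^2\pr^\infty_2$ is unjustified, and with it the conclusion.

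The paper closes this gap with an arithmetic input that is absent from your proposal. Since the target summand $\kmil(1)/\rho^2\kmil(-1)$ is $2$-torsion, the relevant component of $\partial$ factors through $\bigl(\pi_2\MZ\bigr)/2$, which injects into $\pi_2\MZ/2\iso\kmil(-2)$ (using that $\pi_1\f_1\kq$ is $2$-torsion, so $\pi_1\f_1\kq\to\pi_1(\f_1\kq/2)$ is injective). The $\KMW$-module $\kmil(-2)$ is generated by a single element already defined over the prime field $F_0\subset F$, so by naturality the map into $\kmil_3(F)/\rho^2\kmil_1(F)$ factors through $\kmil_3(F_0)/\rho^2\kmil_1(F_0)$, which vanishes by Milnor's computation for prime fields \cite[Example 1.5, Appendix]{milnor.k-quadratic}. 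This base-change-to-the-prime-field argument is the genuine content of the lemma; without it (or the alternative of citing \cite[Lemma 4.15]{rso.oneline}), your proof is incomplete.
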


\begin{proof}
  The cofiber sequence
  \[ \f_1\kq\to \kq\to \s_0\kq\to \Sigma\f_1\kq \]
  induces a long exact sequence of homotopy modules containing the
  portion
  \[
    \dotsm\to \pi_{2}\s_0\kq \to \pi_1\f_1\kq\to \pi_1\kq\to \pi_1\s_0\kq \to 0.\]
  Since $\s_0\kq\iso \MZ$, on which $\eta$ acts as zero,
  the homotopy module $\pi_2\s_0\kq$ is a
  $\KMil$-module. Hence the $\KMW$-module homomorphism
  $\pi_{2}\s_0\kq\to \pi_1\f_1\kq$ factors as
  \[ \pi_{2}\s_0\kq\to {}_{\eta}\bigl(\pi_1\f_1\kq\bigr) \to \pi_1\f_1\kq\]
  where the first map is a $\KMil$-homomorphism.
  The image of the $\KMW$-module  ${}_{\eta}\bigl(\pi_1\f_1\kq\bigr)\iso
  \kmil(1)/\rho^2\kmil(-1)\directsum \rho^2(\kmil(-2))$ in
  $\pi_1\s_1\kq$ coincides with $\rho^2(\kmil(-2))$ generated
  by the element $[-1]^2\eta_\Top$, because
  $\eta\eta_\Top$ maps trivially to $\pi_{1+(1)}\s_1\kq$.
  The image of the map $\pi_{2}\MZ/2 \to \pi_{1}\s_1\kq$ given by the first slice
  differential $\Sq^2\pr^\infty_2\colon \s_0\kq \to \Sigma\s_1\kq$ is
  (strictly) contained in $\rho^2(\kmil(-2))$.
  It remains to observe that the map
  $\pi_2\MZ\to \kmil(1)/\rho^2\kmil(-1)$ to the first summand
  of~(\ref{eq:eta-torsion-pi1kq}) is zero.
  Since the target is $2$-torsion, $\psi$ factors over $\bigl(\pi_2\MZ\bigr)/2$.
  Consider the short exact sequence
  \[ 0\to \bigl(\pi_{2}\MZ\bigr)/2 \to \pi_{2}\MZ/2 \to {}_{2}\pi_{1}\MZ \to 0 \]
  of $\KMW$-modules, where the first map is induced by
  $\pr^\infty_2\colon \MZ/\to \MZ/2$. This map coincides with
  $\s_0\bigl(\kq\xrightarrow{\mathrm{can.}} \kq/2\bigr)$ up to equivalence.
  Since $\pi_{1}\f_1\kq$ is $2$-torsion, the map
  $\pi_{1}\f_1\kq \to \f_1\kq/2$ is injective.
  As a consequence of the commutative diagram
  \begin{center}
    \begin{tikzcd}
      \pi_{2}\MZ\ar[r]\ar[d] & \bigl(\pi_{2}\MZ\bigr)/2 \ar[r]\ar[d] & \pi_{2}\MZ/2 \ar[d] \\
      \pi_{1}\f_1\kq \ar[r,"\id"] & \pi_{1}\f_1\kq \ar[r] & \pi_{1}\f_1\kq/2
    \end{tikzcd}
  \end{center}
  it suffices to prove that the map $\pi_{2}\MZ/2\iso \kmil(-2) \to \pi_{1}\f_1\kq/2$
  hits the summand $\kmil(1)/\rho^2\kmil(-1)$ trivially. The latter map is determined
  by the image of the (unique) $\KMW$-module generator $g=g_F\in \pi_{2+(-2)}\MZ/2$,
  which is decorated with notation for the base field, because a base change argument
  is about to happen. This generator $g_F$ is the image of the generator
  $g_{F_0}\in \pi_{2+(-2)}\MZ/2$, where $F_0\subset F$ is the prime field of $F$.
  The commutative diagram
  \begin{center}
    \begin{tikzcd}
      \pi_{2+(-2)}(\MZ/2)_{F_0}\ar[r,"\id"]\ar[d] &       \pi_{2+(-2)}(\MZ/2)_{F} \ar[d] \\
      \kmil_3(F_0)/\rho^2\kmil_1(F_0) \ar[r] &       \kmil_3(F)/\rho^2\kmil_1(F)
    \end{tikzcd}
  \end{center}
  implies that the vertical map on the right hand side is zero, because
  $\kmil_3(F_0)/\rho^2\kmil_1(F_0)$ is zero \cite[Example 1.5, Appendix]{milnor.k-quadratic}.
\end{proof}

A consequence of Lemma~\ref{lem:d2-pi1kq-zero} is that the $\KMW$-module
$\pi_1\kq$ is given by the short exact sequence
\[ 0 \to \pi_{1}\f_1\kq/\Sq^2\pr^\infty_2\pi_2\MZ \to \pi_{1}\kq \to \pi_1\MZ \to 0\]
which does not split in general, neither as an extension of $\KMW$-modules, nor
degreewise as an extension of abelian groups.

\begin{theorem}\label{thm:pi1kq-piKQ}
  The canonical map $\kq\to \KQ$ induces an isomorphism
  $\pi_{1-(w)}\kq \to \pi_{1-(w)}\KQ$ for all integers $w<4$.
  In particular, there result isomorphisms
  \[ \pi_{1-(w)}\KQ\iso \bigl(\KMW{\eta_\Top}/(2,\eta^2)\bigr)_{w}\]
  for $w<2$ (including a vanishing for $w<-1$), and short exact sequences
  \begin{align*}
    0 \to \kmil_3/\rho^2\kmil_1\directsum \kmil_2  \to &\pi_{1-(2)}\KQ \to H^{1,2} \to 0\\ 
    0 \to \kmil_4/\rho^2\kmil_2\directsum \kmil_3/\tau^{-1}\Sq^2\pr^\infty_2H^{1,3} \to &\pi_{1-(3)}\KQ \to H^{2,3} \to 0.
  \end{align*}
\end{theorem}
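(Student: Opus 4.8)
The plan is to reduce to two facts: that $\kq\to\KQ$ induces an isomorphism on $\pi_{1-(w)}$ for $w<4$, and that $\pi_{1-(w)}\kq$ is computed by the slice spectral sequence exactly as assembled in the lemmas above. The second fact is essentially bookkeeping, so I would treat the comparison first.

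\emph{The comparison $\kq\simeq\KQ$ in the range $w<4$.} Since $\kq$ is connective, the map factors as $\kq\to\KQ_{\geq0}\to\KQ$, and $\KQ_{\geq0}\to\KQ$ is an isomorphism on $\underline\pi_n$ for every $n\geq0$. As $\pi_{1-(w)}=\pi_{1-w,-w}$ has simplicial degree $(1-w)-(-w)=1$, it lies in $\underline\pi_1$, so the second map already induces the asserted isomorphism, and it remains to handle $\kq=\f_0\KQ_{\geq0}\to\KQ_{\geq0}$. Its cofiber $C=\KQ_{\geq0}/\f_0\KQ_{\geq0}$ satisfies $\s_qC\cong\s_q\KQ_{\geq0}$ for $q<0$ and $\s_qC=0$ for $q\geq0$; in particular $C$ is its own slice completion, so the slice spectral sequence for $C$ converges, and it suffices to show $\pi_{1-w,-w}C=\pi_{2-w,-w}C=0$ for $w<4$, i.e.\ in weights $\geq-3$. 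Equivalently, since $\KQ$ is obtained from $\kq$ by inverting the hermitian Bott class $\beta\in\pi_{8,4}$, one wants multiplication by $\beta$ to be invertible on $\pi_{1-(w)}\kq$ for $w<4$.

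\emph{Main obstacle.} This vanishing is where I expect the real work to lie. The slices $\s_q\KQ$ are infinite wedges of Tate-twisted shifts of $\MZ$ and $\MZ/2$, and one must show that the negative ones, after passing to $\KQ_{\geq0}$, contribute nothing in simplicial degrees $1$ and $2$ for weights $\geq-3$: I would read off the precise form of $\s_q\KQ_{\geq0}$ for $q<0$ from \cite[Theorem~4.18]{roendigs-oestvaer.hermitian} and check that the parity and degree constraints on the summands forbid any such contribution. A morally equivalent route runs the slice spectral sequences for $\kq$ and $\KQ$ side by side: the termwise inclusion on slices gives a map of spectral sequences that is an isomorphism on the finite part, and one verifies that the extra $E^1$-columns present for $\KQ$ carry no net contribution to $\pi_{1-(w)}$ when $w<4$, being cancelled among themselves by the (Bott-shifted copies of the) differentials already computed for $\kq$.

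\emph{Reading off $\pi_{1-(w)}\kq$.} Granting the comparison, by Lemmas~\ref{lem:e2-kq-1>2} and~\ref{lem:e2-kq-1<3} the column $E^2_{-n+1,q,-n}(\kq)$ (with $n=w$) computing $\pi_{1-(w)}\kq$ is concentrated in slices $q\in\{0,1,2\}$, these are permanent cycles by Lemma~\ref{lem:kq-perm-cycle-1}, and no differential enters the column — the only candidate, $d^2$ out of $E^2_{-n+2,0,-n}(\kq)$, vanishes by Lemma~\ref{lem:d2-pi1kq-zero}, and higher differentials have no room. Hence $\pi_{1-(w)}\kq$ is the resulting two-step extension. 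Its top quotient is $E^\infty_{-n+1,0,-n}=\pi_{1-(w)}\MZ=H^{w-1,w}(F;\ZZ)$, zero for $w<2$ and equal to $H^{1,2}$, resp.\ $H^{2,3}$, for $w=2$, resp.\ $w=3$; this is the quotient in the two short exact sequences. The subgroup is the image of $\pi_1\f_1\kq$ modulo the image of the slice $d^1$: using the identification $\pi_1\f_1\kq\iso\KMW\eta_\Top/(2,\eta^2)$ of Lemma~\ref{lem:pi-f1kq} together with the computation of $\Sq^2$ on $\tau$-power multiples from Lemmas~\ref{lem:Sq2pr42iszero} and~\ref{lem:e2-kq-22} (namely $\Sq^2(\tau^2y)=\tau\rho^2y$), this equals $(\KMW\eta_\Top/(2,\eta^2))_w$ for $w<2$ — the slice-$d^1$ image vanishing because its source $\pi_2\MZ$ vanishes in the pertinent weights — and equals $\kmil_3/\rho^2\kmil_1\directsum\kmil_2$, resp.\ $\kmil_4/\rho^2\kmil_2\directsum\kmil_3/\tau^{-1}\Sq^2\pr^\infty_2H^{1,3}$, for $w=2$, resp.\ $w=3$; the two-step extension splits since both layers are $\FF_2$-vector spaces, and for $w=3$ the slice-$1$ layer $h^{3,4}$ is cut down by the possibly nonzero term $\Sq^2\pr^\infty_2H^{1,3}$. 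Finally, for $w<-1$ the slice-$1$ and slice-$2$ contributions $h^{w,w+1}$ and $h^{w+1,w+2}$ both vanish for degree reasons, giving $\pi_{1-(w)}\kq=0$ as claimed.
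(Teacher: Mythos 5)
Your proposal is essentially the paper's argument: the paper also reads $\pi_{1-(w)}\kq$ off the slice spectral sequence using Lemma~\ref{lem:e2-kq-1>2}, Lemma~\ref{lem:e2-kq-1<3}, the vanishing of the only entering differential (Lemma~\ref{lem:d2-pi1kq-zero}), and the identification of the filtration via Lemma~\ref{lem:pi-f1kq}; and for the comparison $\kq\to\KQ$ it does precisely the ``side by side'' inspection of the two slice spectral sequences that you call morally equivalent. Two small corrections. First, the comparison is trivial for $w\leq 0$: the effective cover $\f_0$ does not change homotopy groups of nonnegative weight, so only $w\in\{1,2,3\}$ require the spectral-sequence inspection; and your preferred cofiber route is flawed as stated, because $\s_q(\KQ_{\geq 0})$ for $q<0$ is \emph{not} read off from the slices of $\KQ$ in \cite[Theorem~4.18]{roendigs-oestvaer.hermitian} --- slices do not commute with connective covers for the homotopy $t$-structure, which is exactly why the direct comparison of spectral sequences is the workable route. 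Second, ``the two-step extension splits since both layers are $\FF_2$-vector spaces'' is not a valid inference (an extension of $\FF_2$-vector spaces by $\FF_2$-vector spaces can be $\ZZ/4$); the correct reason is that the \emph{total} group $\pi_{1-(w)}\f_1\kq\iso\bigl(\KMW/(2,\eta^2)\bigr)_w$ is itself $2$-torsion by Lemma~\ref{lem:pi-f1kq}, hence an $\FF_2$-vector space, so the extension of abelian groups splits in each degree.
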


\begin{proof}
  The first statement holds by definition for $w$ nonpositive and follows in the
  remaining cases by inspection of the slice spectral sequence for
  $\KQ$ \cite{roendigs-oestvaer.hermitian}. The remaining assertions
  follow from Lemma~\ref{lem:d2-pi1kq-zero} and Lemma~\ref{lem:pi-f1kq}.
\end{proof}

Having determined $\pi_1\kq$, the next challenge is $\pi_{2}\kq$.

\begin{lemma}\label{lem:pi2-f2kq}
  There is a short exact sequence
  \[ 0 \to \KMil(2)/\{-1\}^3\KMil(-1) \to \pi_{2}\f_2\kq \to \kmil \to 0 \]
  of $\KMW$-modules, where $\kmil$ is generated in $\pi_{2+(0)}\f_2\kq$ by
  the image of $\eta_\Top^2$, and $\KMil$ is generated in $\pi_{2+(2)}\f_2\kq$
  by the unit in symplectic $K$-theory. The extension is uniquely
  determined by the fact that $\eta\eta_\Top^2=\{-1\}\in \KMil_1\iso \pi_{1+(1)}\f_2\kq$. The image of the map $\pi_{2}\f_3\kq\to \pi_{2}\f_2\kq$ coincides
  with ${}_{2}\KMil(2)/\{-1\}^3\KMil(-1)$.
\end{lemma}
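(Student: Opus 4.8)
The plan is to run the slice spectral sequence for $\f_2\kq$ in the range computing $\pi_2$, exactly as in the proofs of Lemmas~\ref{lem:pi-f3kq}--\ref{lem:pi-f1kq}, and then identify the resulting $\KMW$-module extension using the multiplicative structure of the slices together with realization. First I would note that slice convergence for $\kq$ (Corollary~\ref{cor:eta-slice-comp-kq}) passes to $\f_2\kq$ as before, so the slice spectral sequence for $\f_2\kq$ computes $\pi_2\f_2\kq$. The only slices of $\kq$ contributing to $\pi_2\f_2\kq$ are $\s_2\kq$ and $\s_3\kq$ (slices $\geq 4$ contribute in the wrong simplicial degree, as in Lemmas~\ref{lem:e2-kq-2>3} and~\ref{lem:e2-kq-3>3}); so the associated graded of the slice filtration on $\pi_2\f_2\kq$ has at most two pieces, coming from slice $2$ and slice $3$. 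The cofiber sequence $\f_3\kq\to \f_2\kq\to \s_2\kq\to \Sigma\f_3\kq$ gives the long exact sequence
\[
  \dotsm \to \pi_2\f_3\kq \to \pi_2\f_2\kq\to \pi_2\s_2\kq \xrightarrow{\partial} \pi_1\f_3\kq \to \dotsm,
\]
and by Lemma~\ref{lem:pi-f3kq} we have $\pi_1\f_3\kq\iso \kmil(2)$. The boundary map $\pi_2\s_2\kq\to \pi_1\f_3\kq$ is the slice $d^1$-differential $\Sq^2\colon \s_2\kq\to\Sigma\s_3\kq$ read on homotopy modules; using \eqref{equation:slices-kq} one computes $\pi_2\s_2\kq\iso \KMil(2)\directsum \kmil(0)$, where the $\KMil(2)$ summand is generated in $\pi_{2+(2)}$ by the symplectic unit $\sqrt{\alpha}$ and the $\kmil(0)$ summand by $\eta_\Top^2$; the $d^1$ sends the $\KMil(2)$-part by $\pr$ composed with $\Sq^2$-type maps and kills $\eta_\Top^2$ for degree reasons. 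Tracking this gives $\ker\partial$ and hence, after checking there is no room for higher differentials, the short exact sequence
\[
  0 \to \KMil(2)/\{-1\}^3\KMil(-1) \to \pi_2\f_2\kq \to \pi_2\s_2\kq\big/\KMil(2)\text{-part}\iso \kmil \to 0.
\]

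Next I would identify the two outer terms intrinsically. The quotient $\kmil$ is generated in $\pi_{2+(0)}\f_2\kq$ by the image of $\eta_\Top^2$: this is the slice-$2$ contribution $\s_2\kq$ surviving, and realization (étale or complex) identifies the generator with $\eta_\Top^2$, just as in the proof of Lemma~\ref{lem:pi-f1kq}. The submodule is the image of $\pi_2\f_3\kq$; by Lemma~\ref{lem:pi-f3kq}, $\pi_2\f_3\kq\iso\kmil(1)$, but the image in $\pi_2\f_2\kq$ is the slice-$3$ layer, which assembles with the integral structure coming from the symplectic unit into $\KMil(2)/\{-1\}^3\KMil(-1)$. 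Concretely, the map $\pi_2\f_3\kq\to\pi_2\f_2\kq$ has image the reduction mod $2$ of a $\KMil(2)$-module generated by the symplectic unit, modulo the image of the $d^1$ out of $\pi_2\s_2\kq$, which is precisely $\{-1\}^3\KMil(-1)$; this is where I would invoke that the differential $\Sq^3\Sq^1$ (equivalently $\tau\rho^3$ on the relevant slice summand, cf.\ the computation in Lemma~\ref{lem:e2-kq-23}) realizes the relation. This simultaneously yields the last sentence of the statement, namely that the image of $\pi_2\f_3\kq\to\pi_2\f_2\kq$ is ${}_2\KMil(2)/\{-1\}^3\KMil(-1)$ — the $2$-torsion appears because $\pi_2\f_3\kq$ is a $\kmil$-module, so the map factors through ${}_2(\pi_2\f_2\kq)$ on that summand.

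Finally I would pin down the extension. It is classified by the action of $\eta\in\KMW_1$: multiplication by $\eta$ sends the degree-$(0)$ generator $\eta_\Top^2$ of the quotient $\kmil$ into the submodule, landing in $\pi_{1+(1)}\f_2\kq\iso\KMil_1$. The identity $\eta\eta_\Top^2=\{-1\}$ (which holds already in $\pi_1\f_1\kq$ by Lemma~\ref{lem:pi-f1kq}, hence pulls back along $\f_2\kq\to\f_1\kq$ once one checks $\eta_\Top^2$ and $\{-1\}$ both lift — the latter lifts because $\{-1\}\in\KMil_1\iso\pi_{1+(1)}\f_2\kq$) forces the extension class, since $\{-1\}$ is the unique generator of $\KMil_1$ in the relevant degree and an extension of $\kmil$ by $\KMil(2)/\{-1\}^3\KMil(-1)$ is determined by where $\eta$ times the generator goes. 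The main obstacle will be the last two steps: correctly computing the $d^1$-differential $\Sq^2\colon\pi_2\s_2\kq\to\pi_1\s_3\kq$ on the symplectic summand to get the quotient $\KMil(2)/\{-1\}^3\KMil(-1)$ with the right relation (this requires care with the integral-versus-mod-$2$ bookkeeping, since $\s_2\kq$ mixes $\MZ$ and $\MZ/2$ summands), and then verifying that the extension is rigid, i.e.\ that $\eta\eta_\Top^2=\{-1\}$ really determines it uniquely rather than merely constraining it — for which I would argue that any other extension would be detected by a further $\KMW$-module invariant that vanishes here, or appeal directly to the computation of $\pi_1\f_2\kq$ in Lemma~\ref{lem:pi-f2kq} to see the $\eta$-multiplication is as claimed.
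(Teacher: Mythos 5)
Your overall strategy --- run the slice filtration $\f_3\kq\to\f_2\kq\to\s_2\kq$, compute the $d^1$, and pin down the $\KMW$-module extension via $\eta$-multiplication and realization --- is the paper's strategy, and the first half of your argument (convergence, only slices $2$ and $3$ contribute, image of $\pi_2\f_3\kq$ read off from the $d^1$ into $\s_3\kq$ together with the sequence $0\to(\pi_1\MZ)/2\to\pi_1\MZ/2\to{}_2\pi_0\MZ\to 0$) matches the paper. But there is a genuine gap in the second half. The kernel of the outgoing $d^1$ on $\pi_2\s_2\kq$ is $2\KMil(2)\directsum\kmil$ (proof of Lemma~\ref{lem:e2-kq-22}), not $\kmil$: the integral summand carrying the symplectic unit survives only through its even part. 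So the slice filtration first produces
\[ 0\to{}_2\KMil(2)/\{-1\}^3\KMil(-1)\to\pi_2\f_2\kq\to 2\KMil(2)\directsum\kmil\to 0, \]
and there are \emph{two} extension problems, not one. Your relation $\eta\eta_\Top^2=\{-1\}$ only settles the extension with respect to the $\kmil$-summand generated by $\eta_\Top^2$.

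To obtain the stated submodule $\KMil(2)/\{-1\}^3\KMil(-1)$ one must also show that the extension of $2\KMil(2)$ by ${}_2\KMil(2)/\{-1\}^3\KMil(-1)$ is the nontrivial one; if it split, the submodule would instead be ${}_2\KMil(2)/\{-1\}^3\KMil(-1)\directsum 2\KMil(2)$, which is a genuinely different $\KMW$-module (compare $\pi_{2+(1)}\kq\iso F^{\times}$ with $\mu_2(F)\directsum (F^{\times})^2$ for $F=\FF_q$, $q\equiv 1\bmod 4$). Your sentence ``the submodule is the image of $\pi_2\f_3\kq$'' in fact contradicts the last assertion of the lemma, which says that image is only the $2$-torsion part; the subsequent appeal to an unexplained ``assembly'' with the symplectic unit is exactly the missing step. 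The paper closes it by (i) computing that $\Ext_{\KMW}(2\KMil,{}_2\KMil/\{-1\}^3\KMil)\iso\Hom_{\KMW}({}_2\KMil,{}_2\KMil/\{-1\}^3\KMil)$ has exactly two elements, and (ii) feeding the Wood cofiber sequence of Theorem~\ref{thm:kq-wood} into the long exact sequence to see that $\mathrm{hyper}\colon\KMil_1\iso\pi_{3+(2)}\kgl\to\pi_{2+(1)}\kq$ is bijective (using $\mathbf{K}^{\mathrm{Sp}}_1=0$ and that $\forget$ on $\pi_{2+(2)}$ is the inclusion of the even integers), which rules out the split extension. Nothing in your proposal plays the role of (ii). A smaller slip: $\eta\eta_\Top^2$ lives on the $2$-line, so it cannot be deduced from Lemma~\ref{lem:pi-f1kq} about $\pi_1\f_1\kq$ as you suggest; the paper instead invokes the multiplicative structure of the slices or realization.
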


\begin{proof}
  Lemma~\ref{lem:pi-f3kq} provides $\pi_{2}\f_3\kq\iso \kmil(1)$, a sub-$\KMW$-module
  of $\pi_{2}\s_3\kq$, generated in
  $\pi_{2+(1)}\f_3\kq$. 
  The slice $d^1$ differential $\pi_{3}\s_2\kq\to \pi_{2}\s_3\kq$
  -- see for comparison Lemma~\ref{lem:e2-kq-23} --
  and the short exact sequence
  \[ 0\to \bigl(\pi_{1}\MZ\bigr)/2 \to \pi_{1}\MZ/2 \to {}_{2}\pi_{0}\MZ \to 0 \]
  imply that the image of $\pi_{2}\f_3\kq$ in $\pi_{2}\f_2\kq$
  coincides with ${}_{2}\KMil(2)/\{-1\}^3\KMil(-1)$.
  The proof of Lemma~\ref{lem:e2-kq-22} shows that the kernel
  of the slice $d^1$ differential $\pi_{2}\s_2\kq \to \pi_{1}\s_3\kq$
  is isomorphic to $2\KMil(2)\directsum \kmil$. There results a
  short exact sequence
  \begin{equation}\label{eq:pi2-f2kq}
    0 \to {}_{2}\KMil(2)/\{-1\}^3\KMil (-1) \to \pi_{2}\f_2\kq\to 2\KMil(2)\directsum \kmil \to 0
  \end{equation}
  of $\KMW$-modules. The image of $\eta_\Top^2$ in $\pi_{2+(0)}\kq$ lifts
  to a unique element $\eta_\Top^2\in \pi_{2+(0)}\f_2 \kq$ generating $\kmil$ in the
  extension~(\ref{eq:pi2-f2kq}). The multiplicative structure of the slices,
  or rather, its behaviour with respect to the Hopf map, implies that
  $\eta\eta_\Top^2$ is the unique nonzero element $\{-1\}\in {}_{2}\KMil_1$.
  Alternatively, one may use (\'etale or complex) realization to conclude this
  fact. To specify the extension~(\ref{eq:pi2-f2kq}) with respect to the
  summand $2\KMil(2)$, observe that
  \[ \Ext_{\KMW}(2\KMil,{}_{2}\KMil/\{-1\}^3\KMil) = \Hom_{\KMW}({}_{2}\KMil,{}_{2}\KMil/\{-1\}^3\KMil) \]
  has two elements by \cite[Theorem A.1 and Lemma A.3]{rondigs.endo},
  namely the zero map and the projection. These correspond to the trivial
  and a unique nontrivial extension. The Wood cofiber sequence
  from Theorem~\ref{thm:kq-wood} supplies a long exact sequence
  \[ \dotsm \to \pi_{3+(2)}\kq \xrightarrow{\forget} \pi_{3+(2)} \kgl
    \xrightarrow{\mathrm{hyper}} \pi_{2+(1)}\kq \xrightarrow{\eta}
    \pi_{2+(2)}\kq \xrightarrow{\forget}\pi_{2+(2)}\kgl \to \dotsm\]
  in which $\pi_{3+(2)}\kq\iso \pi_{3+(2)}\KQ\iso \mathbf{K}^{\mathsf{Sp}}_1\iso 0$
  (also deducible from the slice spectral sequence), whence
  $\pi_{3+(2)}\kgl\iso \pi_{3+(2)}\KGL\iso \KMil_1\xrightarrow{\mathrm{hyper}}
  \pi_{2+(1)}\kq$ is injective. Indeed the latter map is bijective, because
  $\mathbf{K}^{\mathrm{Sp}}_0\iso \pi_{2+(2)}\kq \xrightarrow{\forget}\pi_{2+(2)}\kgl
  \iso \KMil_0$ is the inclusion of the even integers, thus injective.
  Hence the extension in question is nontrivial, which concludes the proof.
\end{proof}

\begin{lemma}\label{lem:d2-pi2kq-zero}
  The slice $d^2$ differentials
  $E^2_{-n+3,j,-n}(\kq)\to E^2_{-n+2,j+2,-n}(\kq)$ are zero for all $j$.
\end{lemma}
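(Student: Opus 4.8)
The plan is to reduce the vanishing of these $d^2$ differentials for $\kq$ to structural properties of the $\KMW$-module $\pi_2\f_2\kq$ identified in Lemma~\ref{lem:pi2-f2kq}, exactly in the spirit of the proof of Lemma~\ref{lem:d2-pi1kq-zero}. The source groups $E^2_{-n+3,j,-n}(\kq)$ for $j=0,1,2,3$ have been computed in Lemmas~\ref{lem:e2-kq-30}, \ref{lem:e2-kq-31}, \ref{lem:e2-kq-32}, \ref{lem:e2-kq-33}; since $E^2_{-n+3,m,-n}(\kq)=0$ for $m\geq 4$ by Lemma~\ref{lem:e2-kq-3>3}, these four cases exhaust what must be checked. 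A $d^2$ out of the third nontrivial column lands in the first column two slices higher; by Lemma~\ref{lem:kq-perm-cycle-1} everything in the first column is a permanent cycle, but that does not by itself kill the incoming $d^2$ — rather, the target of a $d^2$ from column $3$ is in column $1$, and the relevant targets are governed by $\pi_2\f_1\kq$ and its filtration quotients. The cleanest route is: assemble the long exact sequences on homotopy modules from the cofiber sequences $\f_{q+1}\kq\to \f_q\kq\to \s_q\kq$ for $q=0,1,2$, and use that $d^2$ differentials record the failure of classes detected in $\s_q$ to lift past $\f_{q+1}$ to $\f_{q+2}$.

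First I would package the statement homotopically: a nonzero $d^2$ from $E^2_{-n+3,j,-n}(\kq)$ would witness that some element of $\pi_{\ast}\s_{3}\kq$ surviving to $E^2$ fails to lift to $\pi_\ast\f_4\kq$ in a way detected already in $\pi_\ast\f_3\kq/\f_4\kq$... more precisely, the $d^2$ out of slice $3$ into slice $1$ is the obstruction map in the long exact sequences relating $\pi_2\f_1\kq$, $\pi_2\f_2\kq$, $\pi_2\f_3\kq$. So the key computational input is Lemma~\ref{lem:pi2-f2kq}, which gives the short exact sequence $0\to \KMil(2)/\{-1\}^3\KMil(-1)\to \pi_2\f_2\kq\to\kmil\to 0$ together with the precise image of $\pi_2\f_3\kq\to\pi_2\f_2\kq$, namely ${}_2\KMil(2)/\{-1\}^3\KMil(-1)$. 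From the cofiber sequence $\f_3\kq\to\f_2\kq\to\s_2\kq$ one reads off that $\pi_2\f_2\kq\to\pi_2\s_2\kq$ has image whose intersection with the relevant summand is controlled, and the cokernel of $\pi_2\f_3\kq\to\pi_2\f_2\kq$ injects into $\pi_2\s_2\kq$ — this says precisely that the $d^2$ differential landing in slice $2$ from slice $4$ has trivial... I would invert this to get the outgoing $d^2$ from slice $3$.

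Concretely, for each $j$ I would argue as follows. The target group $E^2_{-n+2,j+2,-n}(\kq)$ is a subquotient of $\pi_{\ast}\s_1\kq$ (for the $\pi_1$-indexed differentials) or of $\pi_\ast\s_0\kq$; the source is a subquotient of $\pi_\ast\s_3\kq$. A $\KMW$-module argument as in Lemma~\ref{lem:d2-pi1kq-zero} applies: $\s_0\kq\iso\MZ$ and $\s_1\kq\iso\Sigma^{1,1}\MZ/2$, so on the target $\eta$ acts nilpotently (indeed $\eta=0$ on $\s_0\kq$ and $\eta$-action on $\pi_\ast\s_1\kq\iso\kmil(\star)$ is multiplication by $\rho$), while the source slice $3$ contributes classes built multiplicatively from $\alpha_1^3$ with $\alpha_1\mapsto\eta$. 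The differential $d^2$ is a $\KMW$-module homomorphism, so it must factor through the $\eta$-torsion, or through the image of $\pi_\ast\f_4\kq$. For $j=1$ and $j=3$, where the source is literally $\ker(\Sq^2)$ on a mod-$2$ motivic cohomology group generated by $\alpha_1^3$, I would use that $d^2$ is $\KMil$-linear and compare with the already-established vanishing in Lemma~\ref{lem:d2-pi1kq-zero} via multiplication by $\eta$ (the $\kw=\kq[\tfrac1\eta]$ spectral sequence degenerates at $E^2$ by Proposition~\ref{prop:eff-kq-conv} and \cite[Theorem 6.3]{roendigs-oestvaer.hermitian}, so any $d^2$ must be $\eta$-torsion-valued); for $j=0$ and $j=2$, where $H^{\ast,\ast}$-summands appear, I would run the base-change-to-prime-field argument of Lemma~\ref{lem:d2-pi1kq-zero}, reducing the potential nonzero value to a class in $\kmil_\bullet(F_0)/\rho^2\kmil_\bullet(F_0)$ which vanishes by \cite[Example 1.5, Appendix]{milnor.k-quadratic}.

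The main obstacle I anticipate is bookkeeping the module structures precisely enough: one must track which summand of the computed $E^2$-terms (e.g.\ the $2H^{n+2,n+2}$ and the $h/\Sq^2 h$ pieces in Lemma~\ref{lem:e2-kq-22}, or the two summands of \eqref{eq:eta-torsion-pi1kq}) a putative $d^2$ could possibly hit, and verify that each is either $\eta$-divisible (hence killed by comparison with the degenerate $\kw$ spectral sequence) or is a $\KMil$-quotient that dies under base change to the prime field. The $\eta$-inverted comparison is the conceptual engine — since $\kw\simeq\KW_{\geq 0}$ has $E^2=E^\infty$, every slice differential for $\kq$ becomes a map between $\eta$-torsion modules — and the prime-field reduction handles the finitely many integral-coefficient exceptions. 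Once the target of each candidate $d^2$ is shown to be $\eta$-power-torsion and the relevant $\kmil(F_0)$-groups vanish, the differentials are forced to be zero.
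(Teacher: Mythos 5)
You correctly reduce the problem to finitely many $j$ (for $j\geq 2$ the target $E^2_{-n+2,j+2,-n}(\kq)$ already vanishes by Lemma~\ref{lem:e2-kq-2>3}, so only $j=0,1$ matter), and your instinct to combine Lemma~\ref{lem:pi2-f2kq} with a base-change-to-the-prime-field argument is exactly the paper's argument for $j=1$: the source $\pi_{3}\s_1\kq\iso \kmil(-2)$ is free on the generator $\tau^3$ defined over $F_0$, its image in $\pi_{2}\f_2\kq$ therefore lies in the image of ${}_{2,\eta}\pi_{2-(2)}\f_2\kq_{F_0}$, and the kernel of $\pi_{2}\f_2\kq\to\pi_{2}\s_2\kq$ in that degree is ${}_{2}\KMil_4/\{-1\}^3\KMil_3$, which vanishes over prime fields. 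However, you assign this mechanism to $j=0,2$ rather than $j=1$, and your bidegree bookkeeping is inverted in a way that undermines the rest of the plan: the $d^2$ goes from the $\pi_3$-column to the $\pi_2$-column (not to the ``first column''), and $E^2_{-n+3,j,-n}$ is a subquotient of $\pi_{3}\s_j\kq$ while $E^2_{-n+2,j+2,-n}$ is a subquotient of $\pi_{2}\s_{j+2}\kq$ --- you have swapped source and target slices. Consequently the proposed comparison with Lemma~\ref{lem:d2-pi1kq-zero} ``via multiplication by $\eta$'' cannot work: $\eta\in\pi_{1+(1)}\unit$ preserves the simplicial stem $s-w$, so it moves classes within a fixed column across weights and never relates the $\pi_2\to\pi_1$ differential of Lemma~\ref{lem:d2-pi1kq-zero} to the present $\pi_3\to\pi_2$ differentials. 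Likewise, the claim that every target is $\eta$-power-torsion because the $\kw$-spectral sequence degenerates fails for the summand $2H^{n+2,n+2}$ of $E^2_{-n+2,2,-n}(\kq)$ from Lemma~\ref{lem:e2-kq-22}, which is not torsion and is invisible after $\eta$-inversion.

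That summand is precisely where the genuine gap sits, namely in the case $j=0$. There the source is $E^2_{-n+3,0,-n}(\kq)=H^{n-3,n}$, a subquotient of $\pi_{3}\s_0\kq=\pi_{3}\MZ$, and the prime-field reduction of Lemma~\ref{lem:d2-pi1kq-zero} is unavailable: $\bigoplus_n H^{n-3,n}$ is not generated over the prime field as a $\KMil$-module, and the reduction to finite coefficients used there relied on the target being torsion, which $\pi_{2}\f_1\kq$ is not (it contains $\pi_{2}\f_2\kq/\rho^2\kmil$, which surjects onto $2\KMil(2)$ by Lemmas~\ref{lem:pi2-f2kq} and~\ref{lem:pi2-f1kq}). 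The paper instead transports the problem to the sphere: the unit map induces the identity $E^2_{-n+3,0,-n}(\unit)=H^{n-3,n}=E^2_{-n+3,0,-n}(\kq)$, so the vanishing follows from the corresponding statement for $\unit$ (Corollary~\ref{cor:unit-higher-diff-from-30}), which rests on the long argument of Lemma~\ref{lem:pi3MZmapstrivially} showing that $\pi_{3}\s_0\unit\to\pi_{2}\f_1\unit$ is zero --- there the target is $12\hyper$-torsion, permitting a reduction to $\pi_{3}\MZ/24$ and a case-by-case analysis of generators. Your proposal contains no substitute for this input, so as written it does not establish the case $j=0$.
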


\begin{proof}
  The statement is true for $j>1$ by Lemma~\ref{lem:e2-kq-2>3}.
  Consider first the canonical map $\pi_{3}\s_1\kq\to \pi_{2}\f_2\kq$.
  Its  source is isomorphic to $\kmil(-2)$ generated by a unique
  element in $\pi_{3-(2)}\s_1\kq$, because $\s_1\kq\simeq \Sigma^{(1)}\MZ/2$.
  Lemma~\ref{lem:pi2-f2kq} identifies its target. However, since
  the source is a (free) $\kmil$-module, its image is contained in
  ${}_{2,\eta}\pi_{2}\f_2\kq$. Hence the map $\pi_{3}\s_1\kq\to \pi_{2}\f_2\kq$
  is determined by sending the generator $g=\tau^3\in \pi_{3-(2)}\s_1\kq$
  to an element in ${}_{2,\eta}\pi_{2-(2)}\f_2\kq$. The generator $g$ is already
  defined over the prime field $F_0\subset F$ of $F$, whence its image has
  to lie in (the image of) ${}_{2,\eta}\pi_{2-(2)}\f_2\kq_{F_0}$.
  Lemma~\ref{lem:pi2-f2kq} provides in particular  that the kernel of the canonical
  map $\pi_{2}\f_2\kq\to \pi_{2}\s_2\kq$ is ${}_{2}\KMil(2)/\{-1\}^3\KMil(-2)$.
  In the specific degree $2-(2)$, this kernel is ${}_{2}\KMil_4/\{-1\}^3\KMil_3$,
  which is the zero group for prime fields, as \cite{milnor.k-quadratic} implies. Hence the map
  $\pi_{3}\s_1\kq\to \pi_{2}\f_2\kq$ is determined by the slice $d^1$ differential
  $\pi_{3}\s_1\kq \to \pi_{2}\s_2\kq$. In other words, the slice $d^2$
  differential  $E^2_{-n+3,1,-n}(\kq)\to E^2_{-n+2,3,-n}(\kq)$ is zero. 
  The statement for the slice $d^2$ differential
  $E^2_{-n+3,0,-n}(\kq)\to E^2_{-n+2,2,-n}(\kq)$ follows from
  Corollary~\ref{cor:unit-higher-diff-from-30}, the corresponding statement
  for the slice $d^2$ differential
  $E^2_{-n+3,0,-n}(\unit)\to E^2_{-n+2,2,-n}(\unit)$,
  because of the commutative diagram
  \begin{center}
    \begin{tikzcd}
      E^2_{-n+3,0,-n}(\unit)\ar[r,"\id"] \ar[d] &E^2_{-n+3,0,-n}(\kq) \ar[d]\\
      E^2_{-n+2,2,-n}(\unit) \ar[r] & E^2_{-n+2,2,-n}(\kq)
    \end{tikzcd}
  \end{center}
  in which the top horizontal map is the identity.
\end{proof}

\begin{lemma}\label{lem:d3-pi2kq-zero}
  The slice $d^3$ differentials
  $E^3_{-n+3,j,-n}(\kq)\to E^3_{-n+2,j+3,-n}(\kq)$ are zero for all $j$.
\end{lemma}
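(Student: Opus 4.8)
The plan is to collapse this entire family of differentials down to a single one and then import its vanishing from the sphere spectrum along the unit map, in exactly the spirit of the $j=0$ case of Lemma~\ref{lem:d2-pi2kq-zero}.

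\textbf{Reduction to $j=0$.} The target $E^3_{-n+2,j+3,-n}(\kq)$ is a subquotient of $E^2_{-n+2,j+3,-n}(\kq)$, which is trivial once $j+3\geq 4$ by Lemma~\ref{lem:e2-kq-2>3}; and the source $E^3_{-n+3,j,-n}(\kq)$ is a subquotient of $\pi_{-n+3,-n}\s_j(\kq)$, which is trivial for $j<0$ because $\s_j(\kq)=\ast$ there. Hence the only potentially nonzero differential is
\[ d^3\colon E^3_{-n+3,0,-n}(\kq)\longrightarrow E^3_{-n+2,3,-n}(\kq), \]
and since the target does not degenerate to zero (the incoming $d^2$ is zero by Lemma~\ref{lem:d2-pi2kq-zero}, the outgoing $d^2$ lands in the trivial group of Lemma~\ref{lem:e2-kq-1>2}, and $E^2_{-n+2,3,-n}(\kq)$ is as in Lemma~\ref{lem:e2-kq-23}), there is genuinely something to prove.

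\textbf{Identifying the source through $\unit$.} By Lemma~\ref{lem:e2-kq-30} we have $E^2_{-n+3,0,-n}(\kq)\cong H^{n-3,n}$; the incoming $d^2$ originates from the trivial group $E^2_{-n+4,-2,-n}(\kq)$ and the outgoing $d^2$ vanishes by Lemma~\ref{lem:d2-pi2kq-zero}, so $E^3_{-n+3,0,-n}(\kq)=E^2_{-n+3,0,-n}(\kq)\cong H^{n-3,n}$. The unit map is the identity on $0$-slices (Lemma~\ref{lem:unit-kq-s0}), hence induces the identity on $E^1_{-n+3,0,-n}$; the relevant outgoing $d^1$ (the operation $\Sq^2\pr^\infty_2$, which is zero in this tridegree, cf.\ Lemma~\ref{lem:e2-kq-30} and Corollary~\ref{cor:unit-kq-e1-iso}) and the outgoing $d^2$ (zero for $\unit$ by Corollary~\ref{cor:unit-higher-diff-from-30} and for $\kq$ by Lemma~\ref{lem:d2-pi2kq-zero}) agree and vanish on both sides, while the corresponding incoming differentials vanish for slice reasons. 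Therefore $\unit\to\kq$ induces an isomorphism $E^3_{-n+3,0,-n}(\unit)\xrightarrow{\ \cong\ }E^3_{-n+3,0,-n}(\kq)$.

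\textbf{Conclusion.} Naturality of the slice spectral sequence along $\unit\to\kq$ yields a commutative square
\begin{center}
\begin{tikzcd}
E^3_{-n+3,0,-n}(\unit)\ar[r,"\cong"]\ar[d,"d^3(\unit)"'] & E^3_{-n+3,0,-n}(\kq)\ar[d,"d^3(\kq)"]\\
E^3_{-n+2,3,-n}(\unit)\ar[r] & E^3_{-n+2,3,-n}(\kq).
\end{tikzcd}
\end{center}
The left vertical map is zero by Corollary~\ref{cor:unit-higher-diff-from-30}, so $d^3(\kq)$ vanishes on the image of the top isomorphism, i.e.\ identically. The real content is thereby pushed onto Corollary~\ref{cor:unit-higher-diff-from-30}, the vanishing of the higher slice differentials for the sphere emanating from $E^r_{-n+3,0,-n}(\unit)$; as with Lemma~\ref{lem:d2-pi2kq-zero} I expect the honest work there to be a $\KMW$-module computation combined with a reduction to the prime field, where the offending low-weight motivic cohomology degenerates. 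Should the comparison with $\unit$ misbehave in an edge case, the fallback is to push the $\f_\bullet\kq$-filtration computation of Lemmas~\ref{lem:pi-f3kq}--\ref{lem:pi2-f2kq} one further step and read the differential off the resulting $\KMW$-module extensions; I regard the comparison route as cleaner and would attempt it first.
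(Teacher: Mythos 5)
Your proposal is correct and follows essentially the same route as the paper: reduce to the single differential $E^3_{-n+3,0,-n}(\kq)\to E^3_{-n+2,3,-n}(\kq)$ using Lemma~\ref{lem:e2-kq-2>3}, then transport its vanishing from the sphere along the unit map (which is the identity on the source, by Lemma~\ref{lem:unit-kq-s0}) via Corollary~\ref{cor:unit-higher-diff-from-30}. You merely spell out in more detail why the top horizontal map in the comparison square is an isomorphism, which the paper leaves implicit.
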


\begin{proof}
  The statement is true for $j>0$ by Lemma~\ref{lem:e2-kq-2>3}.
  The statement for the slice $d^3$ differential
  $E^3_{-n+3,0,-n}(\kq)\to E^3_{-n+2,3,-n}(\kq)$ follows from
  Corollary~\ref{cor:unit-higher-diff-from-30}, 
  the corresponding statement for the slice $d^3$ differential
  $E^3_{-n+3,0,-n}(\unit)\to E^3_{-n+2,3,-n}(\unit)$,
  because of the commutative diagram
  \begin{center}
    \begin{tikzcd}
      E^3_{-n+3,0,-n}(\unit)\ar[r,"\id"] \ar[d] &E^3_{-n+3,0,-n}(\kq) \ar[d]\\
      E^3_{-n+2,3,-n}(\unit) \ar[r] & E^3_{-n+2,3,-n}(\kq)
    \end{tikzcd}
  \end{center}
  in which the top horizontal map is the identity.
\end{proof}

\begin{lemma}\label{lem:pi2-f1kq}
  The image of $\pi_{3}\s_1\kq \to \pi_{2}\f_2\kq$ coincides with
  $\rho^2\kmil$. The kernel of $\pi_{2}\s_1\kq\to \pi_{1}\f_2\kq$
  coincides with ${}_{\rho^2}\kmil$. There results a short exact sequence
  \[ 0 \to \pi_{2}\f_2\kq/\rho^2\kmil \to \pi_{2}\f_1\kq \to {}_{\rho^2}\kmil(-1) \to 0 \]
  of $\KMW$-modules.
\end{lemma}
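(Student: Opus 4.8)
The plan is to run the long exact sequence on homotopy modules induced by the slice cofiber sequence
\[ \f_2\kq \to \f_1\kq \to \s_1\kq \to \Sigma\f_2\kq \]
in the internal degrees relevant to $\pi_2$. Since $\s_1\kq\simeq \Sigma^{(1)}\MZ/2$, we have $\pi_3\s_1\kq\iso\kmil(-2)$ and $\pi_2\s_1\kq\iso\kmil(-1)$, both free $\kmil$-modules generated in the evident weights. So the relevant portion of the long exact sequence reads
\[ \pi_3\s_1\kq \to \pi_2\f_2\kq \to \pi_2\f_1\kq \to \pi_2\s_1\kq \to \pi_1\f_2\kq. \]
It thus suffices to identify the two boundary maps $\partial\colon\pi_3\s_1\kq\to\pi_2\f_2\kq$ and $\partial\colon\pi_2\s_1\kq\to\pi_1\f_2\kq$, after which the claimed short exact sequence
\[ 0 \to \pi_{2}\f_2\kq/\rho^2\kmil \to \pi_{2}\f_1\kq \to {}_{\rho^2}\kmil(-1) \to 0 \]
falls out formally, with the image of $\partial$ in weight shift accounted for by $\kmil(-1)$ as in Lemma~\ref{lem:pi-f1kq}.

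First I would compute the boundary $\partial\colon\pi_3\s_1\kq\to\pi_2\f_2\kq$. This is exactly the composite of the slice $d^1$ differential $\pi_3\s_1\kq\to\pi_2\s_2\kq$ with a lift to $\pi_2\f_2\kq$, and Lemma~\ref{lem:d2-pi2kq-zero} together with Lemma~\ref{lem:d3-pi2kq-zero} shows there are no higher corrections; the relevant $d^1$ computation is the $\Sq^2$-part of Figure~\ref{fig:d1-kq}, which on the free $\kmil$-generator $\tau^3$ in $\pi_{3-(2)}\s_1\kq$ produces $\tau^{-1}\Sq^2(\tau^3)=\rho^2$ times the generator of $\pi_{2+(2)}\s_2\kq$. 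Tracking this through the identification of $\pi_2\f_2\kq$ in Lemma~\ref{lem:pi2-f2kq} (where the map $\pi_2\f_2\kq\to\pi_2\s_2\kq$ has kernel ${}_2\KMil(2)/\{-1\}^3\KMil(-1)$), and using that the source $\kmil(-2)$ is a free $\kmil$-module so its image lands in the $\kmil$-summand $\kmil\subset\pi_2\f_2\kq$ generated by $\eta_\Top^2$, I conclude the image is precisely $\rho^2\kmil$. Dually, for $\partial\colon\pi_2\s_1\kq\to\pi_1\f_2\kq$: by Lemma~\ref{lem:pi-f2kq} the target is $\kmil(1)$, and the same $\Sq^2$-differential (now $\pi_2\s_1\kq\to\pi_1\s_2\kq$, compare the computation in Lemma~\ref{lem:pi-f1kq} where $\Sq^2\colon\kmil(-1)\to\kmil(1)$ has image $\rho^2\kmil(-1)$) identifies this boundary with multiplication by $\rho^2$ up to a unit; hence its kernel is ${}_{\rho^2}\kmil(-1)$, as the source sits in weight shift $-1$ relative to $\kmil$.

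Assembling: exactness gives $0\to\operatorname{coker}(\partial_3)\to\pi_2\f_1\kq\to\ker(\partial_2)\to 0$, i.e. $0\to\pi_2\f_2\kq/\rho^2\kmil\to\pi_2\f_1\kq\to{}_{\rho^2}\kmil(-1)\to 0$, which is the assertion. The main obstacle I anticipate is not the exact-sequence bookkeeping but the precise identification of the two boundary maps with multiplication by $\rho^2$ at the level of $\KMW$-modules, i.e. pinning down the $\kmil$-generators and their weights so that ``$\Sq^2$ on the slice'' translates correctly into ``$\rho^2\cdot(-)$'' on homotopy modules; this is where the careful tracking through Lemmas~\ref{lem:pi2-f2kq} and~\ref{lem:pi-f2kq}, and the vanishing of higher differentials from Lemmas~\ref{lem:d2-pi2kq-zero} and~\ref{lem:d3-pi2kq-zero}, is essential. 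A base-change-to-the-prime-field argument, as used repeatedly above (e.g. in Lemma~\ref{lem:d2-pi1kq-zero}), may be needed to rule out an exotic contribution to $\ker(\pi_2\f_2\kq\to\pi_2\s_2\kq)$ interfering with the image computation, but I expect the free $\kmil$-module structure of $\pi_3\s_1\kq$ already forces the answer.
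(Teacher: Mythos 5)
Your proposal is correct and follows essentially the same route as the paper, whose proof simply cites the identification of the slice $d^1$ differential together with Lemmas~\ref{lem:d2-pi2kq-zero} and~\ref{lem:d3-pi2kq-zero}; your write-up fills in exactly the details those references encode (the long exact sequence for $\f_2\kq\to\f_1\kq\to\s_1\kq$, the $\Sq^2(\tau^3)=\tau^2\rho^2$ computation, and the prime-field argument ruling out a component in $\ker(\pi_2\f_2\kq\to\pi_2\s_2\kq)$). The only cosmetic slip is calling $\kmil$ a \emph{summand} of $\pi_2\f_2\kq$ when Lemma~\ref{lem:pi2-f2kq} exhibits it as a quotient; the image is the submodule generated by $\rho^2\eta_\Top^2$, which your argument otherwise identifies correctly.
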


\begin{proof}
  This follows from the identification of the slice $d^1$ differential, 
  Lemma~\ref{lem:d2-pi2kq-zero} and Lemma~\ref{lem:d3-pi2kq-zero}.
\end{proof}

\begin{lemma}\label{lem:pi2-kq}
  The map $\pi_3\s_0\kq \to \pi_{2}\f_1\kq$ is the zero map.
  Hence there is a short exact sequence 
  \[ 0 \to \pi_{2}\f_1\kq \to \pi_{2}\kq \to \ker\bigl(\Sq^2\pr^\infty_2\colon \pi_{2}\MZ\to \pi_{1}\Sigma^{(1)}\MZ/2\bigr) \to 0 \]
  of $\KMW$-modules, where $\eta$ operates on
  $\ker\bigl(\Sq^2\pr^\infty_2\colon \pi_{2}\MZ\to \pi_{1}\Sigma^{(1)}\MZ/2\bigr)$
  via the projection $\pr^\infty_2$ to ${}_{\rho^2}\kmil$.
\end{lemma}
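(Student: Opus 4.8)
The plan is to feed the cofiber sequence $\f_1\kq\to\kq\to\s_0\kq$ into homotopy modules, using the identification $\s_0\kq\iso\MZ$ from Lemma~\ref{lem:unit-kq-s0}. This yields a long exact sequence of $\KMW$-modules
\[
\pi_3\kq\to\pi_3\s_0\kq\xrightarrow{\ \partial\ }\pi_2\f_1\kq\to\pi_2\kq\xrightarrow{\ p\ }\pi_2\s_0\kq\xrightarrow{\ \partial\ }\pi_1\f_1\kq,
\]
and it suffices to (a) show that the left-hand $\partial$ vanishes, and (b) identify $\ker\bigl(\partial\colon\pi_2\s_0\kq\to\pi_1\f_1\kq\bigr)$ with $\ker\bigl(\Sq^2\pr^\infty_2\colon\pi_2\MZ\to\pi_1\Sigma^{(1)}\MZ/2\bigr)$ together with its $\eta$-action.

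For (a): the composite of $\partial\colon\pi_3\s_0\kq\to\pi_2\f_1\kq$ with $\pi_2\f_1\kq\to\pi_2\s_1\kq$ is the first slice differential, which in the pertinent tridegrees is $\Sq^2\pr^\infty_2\colon H^{n-3,n}\to h^{n-1,n+1}$; this vanishes by Corollary~\ref{cor:sq2prinfty2iszero}, so $\im\partial$ lies in slice filtration $\geq 2$ of $\pi_2\f_1\kq$, i.e.\ in $\im\bigl(\pi_2\f_2\kq\to\pi_2\f_1\kq\bigr)$. Passing to the associated higher slice differentials out of $E^\ast_{-n+3,0,-n}(\kq)$, the $d^2$ is killed by Lemma~\ref{lem:d2-pi2kq-zero} (case $j=0$) and the $d^3$ by Lemma~\ref{lem:d3-pi2kq-zero} (case $j=0$), so $\im\partial$ lies in slice filtration $\geq 4$, that is $\im\partial\subseteq\im\bigl(\pi_2\f_4\kq\to\pi_2\f_1\kq\bigr)$. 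Finally, $\pi_2\f_4\kq=0$: its slice spectral sequence has vanishing $E^2$-page in the relevant tridegrees by Lemma~\ref{lem:e2-kq-2>3}, and it is slice convergent since $\kq$ is (as in Corollary~\ref{cor:eta-slice-comp-kq} and the proofs of Lemmas~\ref{lem:pi-f3kq}--\ref{lem:pi-f1kq}). Hence $\partial=0$, and the long exact sequence gives the short exact sequence
\[
0\to\pi_2\f_1\kq\to\pi_2\kq\xrightarrow{\ p\ }\ker\bigl(\partial\colon\pi_2\s_0\kq\to\pi_1\f_1\kq\bigr)\to 0 .
\]

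For (b): the composite of the right-hand $\partial$ with $\pi_1\f_1\kq\to\pi_1\s_1\kq$ is again the first slice differential $\Sq^2\pr^\infty_2\colon\pi_2\MZ\to\pi_1\Sigma^{(1)}\MZ/2$, so $\ker\partial\subseteq\ker(\Sq^2\pr^\infty_2)$. Conversely, if $x\in\ker(\Sq^2\pr^\infty_2)$, then $x$ represents a class in $E^2_{-n+2,0,-n}(\kq)=\ker\bigl(\Sq^2\pr^\infty_2\colon H^{n-2,n}\to h^{n,n+1}\bigr)$ by Lemma~\ref{lem:e2-kq-20}, so $\partial x$ lies in slice filtration $\geq 2$ of $\pi_1\f_1\kq$; the corresponding differential $d^2 x\in E^2_{-n+1,2,-n}(\kq)$ vanishes by Lemma~\ref{lem:d2-pi1kq-zero}, so $\partial x$ lies in slice filtration $\geq 3$, i.e.\ in $\im(\pi_1\f_3\kq\to\pi_1\f_1\kq)$, which is zero because $\pi_1\f_3\kq\to\pi_1\f_2\kq$ is zero by Lemma~\ref{lem:pi-f2kq}. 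Hence $\partial x=0$, so the cokernel of $\pi_2\f_1\kq\to\pi_2\kq$ is $\ker\bigl(\Sq^2\pr^\infty_2\colon\pi_2\MZ\to\pi_1\Sigma^{(1)}\MZ/2\bigr)$. For the $\eta$-action, note that $\eta$ acts as zero on $\s_0\kq\iso\MZ$ and $p$ is $\KMW$-linear, so $\eta\cdot\pi_2\kq\subseteq\pi_2\f_1\kq$ and $\eta$ induces a map from the quotient $\ker(\Sq^2\pr^\infty_2)$ to $\pi_2\f_1\kq$; tracing this through the first slice differential, the description of $\pi_2\f_1\kq$ in Lemma~\ref{lem:pi2-f1kq}, and the identification of $\pi_1\f_1\kq$ with the decomposition~\eqref{eq:eta-torsion-pi1kq} from Lemma~\ref{lem:pi-f1kq} together with the multiplicative structure of the slices, shows that it factors as $\pr^\infty_2$ followed by the canonical map onto the summand ${}_{\rho^2}\kmil$, which is the stated action.

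The step I expect to be the main obstacle is (a), and within it the passage from "the first three slice differentials out of $E^\ast_{-n+3,0,-n}(\kq)$ vanish" to "the connecting homomorphism $\partial$ vanishes". This hinges on $\pi_2\f_4\kq=0$, which in turn rests on the slice convergence of $\f_4\kq$ inherited from that of $\kq$; one must be careful that the tridegrees in play lie in the range where the slice spectral sequence actually computes homotopy (cf.\ Proposition~\ref{prop:eff-kq-conv}), since $\kq$ fails to be slice complete in degrees $\equiv 0,3\bmod 4$. The remaining steps, including the $\eta$-bookkeeping in (b), are a routine assembly of the preceding lemmas.
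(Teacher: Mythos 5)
Your part (b) and the $\eta$-action discussion are fine and essentially match the paper. For the first statement, however, the paper argues quite differently and more directly: since $\s_0(\unit)\to\s_0(\kq)$ is the identity on $\MZ$ (Lemma~\ref{lem:unit-kq-s0}), the square comparing $\pi_3\s_0\to\pi_2\f_1$ for $\unit$ and $\kq$ has an isomorphism on the left, so the vanishing is inherited at once from Lemma~\ref{lem:pi3MZmapstrivially}. Note that your route does not actually avoid that hard lemma: the $j=0$ cases of Lemmas~\ref{lem:d2-pi2kq-zero} and~\ref{lem:d3-pi2kq-zero} that you invoke are themselves deduced from Corollary~\ref{cor:unit-higher-diff-from-30}, which rests on Lemma~\ref{lem:pi3MZmapstrivially}.

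More importantly, your part (a) contains a false step: $\pi_2\f_4\kq$ is \emph{not} zero. Lemma~\ref{lem:e2-kq-2>3} computes the $E^2$-page of the slice spectral sequence for $\kq$, not for $\f_4\kq$. Truncating the slice filtration at $4$ deletes the $d^1$-differential entering from slice degree $3$, which is exactly what kills the bottom group of that column for $\kq$; consequently $E^2_{-n+2,4,-n}(\f_4\kq)$ is the full kernel $h^{n+2,n+4}$ of the exiting $d^1$, it survives (no targets for higher differentials), and $\pi_2\f_4\kq\iso\kmil(2)$. The same phenomenon is visible in Lemma~\ref{lem:pi-f3kq}: $\pi_1\f_3\kq\iso\kmil(2)\neq 0$ even though $E^2_{-n+1,m,-n}(\kq)=0$ for all $m\geq 3$ by Lemma~\ref{lem:e2-kq-1>2}. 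Your argument is salvageable: what you actually need is that the \emph{image} of $\pi_2\f_4\kq$ in $\pi_2\f_1\kq$ vanishes, and this holds because the composite $\pi_2\f_4\kq\to\pi_2\f_3\kq\to\pi_2\s_3\kq$ is zero while $\pi_2\f_3\kq\to\pi_2\s_3\kq$ is injective by Lemma~\ref{lem:pi-f3kq}, so already $\pi_2\f_4\kq\to\pi_2\f_3\kq$ is the zero map. With that replacement (and the routine bookkeeping of indeterminacies when converting the vanishing of $d^2,d^3$ into successive lifts of $\partial x$ along the filtration), your argument goes through, but as written the step ``$\pi_2\f_4\kq=0$'' would fail.
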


\begin{proof}
  The first statement follows from Lemma~\ref{lem:pi3MZmapstrivially}.
  The second statement is then a consequence of the slice $d^1$ differential
  on $\pi_{2}\s_0\kq$, 
  Lemma~\ref{lem:d2-pi2kq-zero} and Lemma~\ref{lem:d3-pi2kq-zero}.
  The last statement follows from the multiplicative
  structure on the slices of $\kq$.
\end{proof}

\begin{theorem}\label{thm:pi2kq-piKQ}
  The canonical map $\kq\to \KQ$ induces an isomorphism
  $\pi_{2-(w)}\kq \to \pi_{2-(w)}\KQ$ for all integers $w<4$.
  In particular, there results a vanishing $\pi_{2-(w)}\KQ\iso 0$ for $w<-2$,
  isomorphisms $\pi_{2+(2)}\KQ\iso \KMil_0$, $\pi_{2+(1)}\KQ\iso \KMil_1$,
  $\pi_{2+(0)}\KQ\iso \KMil_2\directsum \kmil_0$ of abelian groups,
  and short exact sequences
  \begin{align*}
    0 \to \KMil_3/\{-1\}^3\KMil_0\directsum \kmil_1  \to &\pi_{2-(1)}\KQ \to {}_{\rho^2}\kmil_0 \to 0\\ 
    0 \to \KMil_4/\{-1\}^3\KMil_1\directsum \kmil_2/\rho^2\kmil_0  \to &\pi_{2-(2)}\KQ \to {}_{\rho^2}\kmil_1 \to 0\\ 
    0 \to \KMil_5/\{-1\}^3\KMil_2\directsum \kmil_3/\rho^2\kmil_1 \to &\pi_{2-(3)}\KQ \to {}_{\rho^2}\kmil_2 \rtimes \ker\bigl(\Sq^2\pr^\infty_2\colon H^{1,3}\to h^{3,4}\bigr) \to 0
  \end{align*}
  of abelian groups.
\end{theorem}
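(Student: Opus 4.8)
The argument follows the template of the proof of Theorem~\ref{thm:pi1kq-piKQ}, now fed by the $\pi_2$-computations of the preceding lemmas. The plan is to first establish the comparison isomorphism and then read off the abelian groups degree by degree from the slice filtration.

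\emph{The comparison.} For $w\le 0$ the weight $-w$ is nonnegative, so the effective cover map $\kq=\f_0(\KQ_{\ge 0})\to \KQ_{\ge 0}$ induces an isomorphism on $\pi_{2-(w)}$, and the connective cover map $\KQ_{\ge 0}\to \KQ$ is an isomorphism on the whole $2$-line (which lies in the connective range $t\ge w$); composing gives the statement by definition. For $0<w<4$ I would inspect the slice spectral sequence for $\KQ$ of \cite{roendigs-oestvaer.hermitian,aro.kq}. The map $\kq\to \KQ$ induces an inclusion on slices; although $\s_q(\KQ)$ carries infinitely many extra summands $\Sigma^{2i+q,q}\MZ/2$, in each tridegree feeding the column $t-w=2$ for $-w\in\{-1,-2,-3\}$ those extra classes are already cancelled on $E^2$ by the slice $\dd^1$-differential (equivalently, they belong to the $\eta$-periodic part, whose $2$-line vanishes since $2\not\equiv 0\bmod 4$ by \cite[Theorem 6.3]{roendigs-oestvaer.hermitian}), so $\kq\to\KQ$ is an $E^2$- and hence $E^\infty$-isomorphism there. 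Slice convergence for $\kq$ in these degrees is Proposition~\ref{prop:eff-kq-conv}, and for $\KQ$ it is \cite{roendigs-oestvaer.hermitian}, so the comparison follows.

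\emph{The identification.} It remains to compute $\pi_{2-(w)}\kq$ for $w<4$ as an abelian group. I would combine the $\KMW$-module descriptions of the layers of the slice filtration -- $\pi_2\f_3\kq\iso\kmil(1)$ (Lemma~\ref{lem:pi-f3kq}), the extension building $\pi_2\f_2\kq$ from $\KMil(2)/\{-1\}^3\KMil(-1)$ and $\kmil$ (Lemma~\ref{lem:pi2-f2kq}), the extension building $\pi_2\f_1\kq$ with top layer ${}_{\rho^2}\kmil(-1)$ (Lemma~\ref{lem:pi2-f1kq}), and the extension building $\pi_2\kq$ with top layer $\ker(\Sq^2\pr^\infty_2\colon\pi_2\MZ\to\pi_1\Sigma^{(1)}\MZ/2)$ (Lemma~\ref{lem:pi2-kq}) -- with the vanishing of the higher slice differentials (Lemmas~\ref{lem:d2-pi2kq-zero} and~\ref{lem:d3-pi2kq-zero}), which guarantees these really are the layers of the filtration on $\pi_2\kq$ in the relevant range. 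Evaluating degreewise in weight $-w$, and using that $\pi_{2+(b)}\MZ=H^{-2-b,-b}$ vanishes for $b\ge -2$ while $\kmil(-1),\kmil,\KMil(2)$ are concentrated in the weights prescribed by their generators (the Hopf map $\eta$ acting as zero on them), one finds: all layers vanish for $b\ge 3$, giving the asserted vanishing for $w<-2$; only the $\KMil$- and $\kmil$-layers of $\pi_2\f_2\kq$ survive for $b\in\{0,1,2\}$, giving $\pi_{2+(2)}\KQ\iso\KMil_0$, $\pi_{2+(1)}\KQ\iso\KMil_1$ and $\pi_{2+(0)}\KQ\iso\KMil_2\directsum\kmil_0$ (the first being the symplectic Grothendieck group, as in Lemma~\ref{lem:pi2-f2kq}); and for $b\in\{-1,-2,-3\}$ the layer $\pi_2\s_1\kq$ supplies the ${}_{\rho^2}\kmil_j$ terms while for $b=-3$ the layer $\pi_2\s_0\kq$ supplies $\ker(\Sq^2\pr^\infty_2\colon H^{1,3}\to h^{3,4})$, producing the three short exact sequences. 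The semidirect-product symbol in the last sequence records that this top layer is glued to ${}_{\rho^2}\kmil_2$ by the $\eta$-action of Lemma~\ref{lem:pi2-kq} rather than split off.

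\emph{The main obstacle.} The bookkeeping of extensions is the delicate part. One must verify that the extensions asserted to split genuinely do so as abelian groups -- the $\kmil_j$-layer over $\KMil_{j+2}/\{-1\}^3\KMil_{j-1}$ inside each $\pi_{2+(b)}\f_2\kq$, and then the splitting of the $\kmil$-summands off the $\KMil$-summands in the resulting short exact sequences -- which I would extract from the multiplicative structure on the slices together with (\'etale or complex) realization, exactly as in the treatment of $\pi_2\f_2\kq$ in Lemma~\ref{lem:pi2-f2kq}, while the single nonsplit extension, occurring in weight $-3$, is pinned down by the $\eta$-action. The more technical point is the claim, used in the comparison step, that the infinite $\eta$-tower in the slices of $\KQ$ contributes nothing beyond the slices of $\kq$ on $E^\infty$ in the $2$-line for weights $\ge -3$; this is where one genuinely leans on the full slice spectral sequence computations for $\KQ$ and $\KW$ from \cite{roendigs-oestvaer.hermitian}. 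No new differential computations are required beyond those already established above.
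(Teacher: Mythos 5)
Your proposal is correct and follows essentially the same route as the paper: the comparison $\pi_{2-(w)}\kq\to\pi_{2-(w)}\KQ$ holds by definition for $w\leq 0$ and by inspection of the slice spectral sequence for $\KQ$ for $0<w<4$, and the identifications are read off degreewise from Lemmas~\ref{lem:pi2-f2kq}, \ref{lem:pi2-f1kq} and \ref{lem:pi2-kq} (with the higher-differential vanishing of Lemmas~\ref{lem:d2-pi2kq-zero} and \ref{lem:d3-pi2kq-zero} already built into those lemmas). Your degree-by-degree bookkeeping, including the source of the ${}_{\rho^2}\kmil_j$ quotients in $\pi_2\s_1\kq$ and of $\ker(\Sq^2\pr^\infty_2)$ in $\pi_2\s_0\kq$ at weight $-3$, matches the paper's intended reading.
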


\begin{proof}
  The first statement holds by definition for $w$ nonpositive and follows in the
  remaining cases by inspection of the slice spectral sequence for
  $\KQ$ \cite{roendigs-oestvaer.hermitian}. The remaining assertions
  follow from Lemma~\ref{lem:pi2-f2kq}, Lemma~\ref{lem:pi2-f1kq} and
  Lemma~\ref{lem:pi2-kq}. 
\end{proof}

\begin{remark}\label{rem:KSP4}
Theorem~\ref{thm:pi2kq-piKQ} contains in particular a "symbolic"
description of the fourth symplectic $K$-group $\mathbf{K}^{\mathrm{Sp}}_4(F)\iso \pi_{2-(2)}\KQ_F$ 
of a field of characteristic not $2$, in the
sense that only symbols in Milnor $K$-theory are involved. It relies
on the vanishing $H^{0,2}=0$, which can be deduced from
\cite[Theorem 6.18]{MR2811603} and 
\cite[Equation (4.2)]{merkurjev.weighttwo}.
\end{remark}

\section{The slice filtration on the sphere spectrum} 
\label{sec:slice-spectr-sequ}

The group $\Ext^{1,2i}_{\MU_{\ast}\MU}(\MU_{\ast},\MU_{\ast})$ in the slices of $\unit_{\Lambda}$, 
see \eqref{equation:sphereslices}, 
is finite cyclic of order $a_{2i}$;
let $\overline{\alpha}_{i}$ denote the generator of its $p$-primary component $\Ext^{1,2i}_{\BP_{\ast}\BP}(\BP_{\ast},\BP_{\ast})$ \cite{ravenel.green}.
Still $\tau$ denotes the unique nonzero element in  $h^{0,1}\cong\mu_{2}$ and $\rho$ denote the class of $-1$ in $h^{1,1}$.
We denote by $\partial^{a_{2q}}_{2}$ the unique nontrivial map from $\M \Lambda/a_{2q}$ to $\Sigma^{1,0}\M \Lambda/2$ in $\SH$.
For reference we recall \cite[Theorem 4.1]{rso.oneline}.
\begin{lemma}
\label{lem:first-diff-unit-1}
In slice degrees $0\leq q\leq 3$, 
$\dd^{\unit_{\Lambda}}_{1}(q)\colon \s_{q}(\unit_{\Lambda})\to \Sigma^{1,0}\s_{q+1}(\unit_{\Lambda})$ is given by:
\begin{align*}
\dd^{\unit_{\Lambda}}_{1}(0)
& 
= \Sq^{2} \mathrm{pr}\colon \M \Lambda\to \M \Lambda/2\to \Sigma^{2,1}  \M \Lambda/2 \\
\dd^{\unit_{\Lambda}}_{1}(1)
& 
= (\Sq^{2}, \inc_{2} \Sq^{2}\Sq^1)\colon \Sigma^{1,1}  \M \Lambda/2\to \Sigma^{3,2}  \M \Lambda/2 \vee \Sigma^{4,2}  \M \Lambda/12 \\
\dd^{\unit_{\Lambda}}_{1}(2) 
& 
= 
\begin{pmatrix} 
\Sq^{2} &   \tau \partial^{12}_{2} \\ 
\Sq^3\Sq^1   &  \Sq^{2} \partial^{12}_{2}
\end{pmatrix}
\colon 
\Sigma^{2,2}  \M \Lambda/2 \vee \Sigma^{3,2}  \M \Lambda/12 
\to
\Sigma^{4,3}  \M \Lambda/2 \vee \Sigma^{6,3}  \M \Lambda/2 \\
\dd^{\unit_{\Lambda}}_{1}(3)
& 
=
\begin{pmatrix} 
\Sq^{2} & \tau  \\ 
\Sq^3\Sq^1 & \Sq^{2}+\rho\Sq^1  
\end{pmatrix}
\colon
\Sigma^{3,3}\M \Lambda/2\vee\Sigma^{5,3}\M \Lambda/2
\to 
\Sigma^{5,4}  \M \Lambda/2 
\vee  
\Sigma^{7,4} \M \Lambda/2 
\end{align*}
For $q\geq 4$,
$\dd^{\unit_{\Lambda}}_{1}(q)$ restricts to the direct summand $\Sigma^{q,q}\M \Lambda/2\vee\Sigma^{q+2,q}\M \Lambda/2$ of $\s_{q}(\unit_{\Lambda})$ by 
\begin{align*}
\begin{pmatrix} 
\Sq^{2} & \tau  \\ 
\Sq^3\Sq^1 & \Sq^{2}+\rho\Sq^1  
\end{pmatrix}
\colon
\Sigma^{q,q}\M \Lambda/2\vee\Sigma^{q+2,q}\M \Lambda/2\to\Sigma^{q+2,q+1}  \M \Lambda/2 \vee  \Sigma^{q+4,q+1} \M \Lambda/2.
\end{align*}
Here $\Sigma^{q+2,q}\M \Lambda/2$ is generated by $\alpha_{1}^{q-3}\alpha_{3}\in\Ext^{q-2,2q}_{\BP_{\ast}\BP}(\BP_{\ast},\BP_{\ast})$.

Moreover, 
$\dd^{\unit_{\Lambda}}_{1}(q)$ restricts as follows on direct summands of $\s_{q}(\unit_{\Lambda})$:
\begin{align*}
\inc_{2q}\Sq^{2}\Sq^1
& 
\colon \Sigma^{4q-3,2q-1}   \M \Lambda/2
\to 
\Sigma^{4q,2q}  \M \Lambda/{a_{2q}} \\
\Sq^{2} \partial^{a_{2q}}_{2} 
& 
\colon \Sigma^{4q-1,2q}  \M \Lambda/{a_{2q}} 
\to 
\Sigma^{4q+2,2q+1}   \M \Lambda/2\\
\tau \partial^{a_{2q}}_{2} 
& 
\colon \Sigma^{4q-1,2q}  \M \Lambda/{a_{2q}} 
\to 
\Sigma^{4q,2q+1} \M \Lambda/2 
& 
q \mathrm{\ odd} \\
0 
& 
\colon \Sigma^{4q-1,2q}  \M \Lambda/{a_{2q}} 
\to 
\Sigma^{4q,2q+1} \M \Lambda/2 
& 
q \mathrm{\ even} 
\end{align*}
Here $\Sigma^{4q-3,2q-1}\M \Lambda/2$ and $\Sigma^{4q-1,2q}\M \Lambda/{a_{2q}}$ are generated by $\alpha_{2q-1}$ and $\alpha_{q/n}$, 
respectively.
\end{lemma}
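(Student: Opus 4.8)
\emph{Proof strategy.}
The plan is to convert each $\dd^{\unit_{\Lambda}}_{1}(q)$ into a matrix of stable motivic cohomology operations between the Eilenberg--MacLane summands of \eqref{equation:sphereslices}, to enumerate the finitely many possibilities in the bidegrees that can occur by means of Voevodsky's calculation of the motivic Steenrod algebra, and then to isolate the actual entries using Betti realization, the $\KMW$-module structure carried by the slice differentials on homotopy, and the relation $\dd_{1}\circ\dd_{1}=0$.

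First I would record the shapes of source and target. By \eqref{equation:sphereslices} both $\s_{q}(\unit_{\Lambda})$ and $\Sigma^{1,0}\s_{q+1}(\unit_{\Lambda})$ are finite wedges of suspensions $\Sigma^{a,b}\M A$ with $A$ a cyclic $\Lambda$-module concentrated in the rows $p$ for which $\Ext^{p,2q}_{\MU_{\ast}\MU}(\MU_{\ast},\MU_{\ast})\neq 0$; splitting $A$ into $\ell$-primary pieces reduces to $A\in\{\Lambda_{(\ell)},\Lambda/\ell^{k}\}$, and all the relevant orders and generators are those recorded in the cobar computation \cite{ravenel.green}. Since the slices are cellular $\MZ$-modules, $\dd^{\unit_{\Lambda}}_{1}(q)$ is a matrix whose entries are classes in groups of bistable operations $[\M A,\Sigma^{s,w}\M A']$, and inspection of \eqref{equation:sphereslices} shows every shift $(s,w)$ that occurs has $w=1$ and $0\le s\le 4$. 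This is the decisive constraint: by Voevodsky's description of the integral and mod-$n$ motivic Steenrod algebras \cite{Voevodsky.reduced}, extended at the residue characteristics in \cite{hko.positivecharacteristic}, the weight-one operations of simplicial degree at most $4$ are spanned by $\Sq^{2}$, $\Sq^{3}$, $\Sq^{2}\Sq^{1}$, $\Sq^{3}\Sq^{1}$, multiplication by $\tau\in h^{0,1}$ and by $\rho\in h^{1,1}$, and composites of these with the coefficient maps $\inc$, $\pr$, $\partial$. In particular, for odd $\ell$ the first Milnor power $P^{1}$ has weight $\ell-1\ge 2$ and cannot occur, so on odd-primary summands $\dd_{1}$ is assembled purely from coefficient maps and is determined by its effect on homotopy, which reduces that part to the $\ell$-local algebraic Adams--Novikov $d_{1}$ in cohomological degrees $\le 3$.

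Pinning down the $2$-primary entries together with their $\tau$- and $\rho$-decorations is the heart of the matter, and I would use three inputs. First, the slices \eqref{equation:sphereslices} and the ``topological skeleton'' of $\dd_{1}$ come out of the orientation $\unit\to\MGL$ together with Hopkins--Morel--Hoyois (this is how \eqref{equation:sphereslices} is proved in \cite{rso.oneline}); more directly, complex Betti realization \cite{levine.comparison} sends $\M\Lambda/2$ to $H\ZZ/2$, $\Sq^{i}$ to $Sq^{i}$, $\tau$ to $1$ and $\rho$ to $0$, so that matching the realized slice spectral sequence --- whose abutment is $\pi_{\ast}(\sphere)$ --- against classical Steenrod data determines each entry \emph{modulo} $\rho$, thereby fixing the ``$\Sq^{2}$ versus $\Sq^{3}\Sq^{1}$ versus $\tau$'' pattern and the parity-of-$q$ alternative in the last display (which reflects the structure of the $\alpha$-family in the Adams--Novikov spectral sequence). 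The base case is likewise settled: $\dd^{\unit_{\Lambda}}_{1}(0)$ is one of the only two maps $\M\Lambda\to\Sigma^{2,1}\M\Lambda/2$, namely $\Sq^{2}\pr$. Second, the residual $\rho$-multiples --- above all the passage from ``$\Sq^{2}$'' to ``$\Sq^{2}+\rho\Sq^{1}$'' for $q\ge 3$ --- are forced by expanding $\dd_{1}(q+1)\circ\dd_{1}(q)=0$ by means of the \emph{motivic} Adem relations \cite{Voevodsky.reduced}, which carry $\tau$- and $\rho$-corrections (visible already in $\Sq^{2}\Sq^{2}$) relative to the classical ones: once the complex-realization shadow is fixed, the $\rho$-terms are the unique correction that makes the composite vanish. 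Third, the Bockstein composites $\Sq^{2}\partial^{a_{2q}}_{2}$ and $\tau\partial^{a_{2q}}_{2}$, and the $\KMW$-linearity of the induced maps on homotopy (automatic from $\MZ$-linearity of $\dd_{1}$), follow from the coefficient exact sequences relating $\Lambda/a_{2q}$, $\Lambda/12$ and $\Lambda/2$ together with the graded $\MZ$-algebra structure on $\s_{\ast}(\unit_{\Lambda})$, which in addition propagates the stable formula for $q\ge 4$ from the cases $q\le 3$ along multiplication by $\alpha_{1}$ and $\alpha_{3}$.

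The main obstacle is the second input: complex realization determines $\dd_{1}$ only modulo $\rho$, and $\KMW$-linearity by itself does not separate $\Sq^{2}$ from $\Sq^{2}+\rho\Sq^{1}$, so one must genuinely transport the $\tau$- and $\rho$-decorations through compositions --- either through the motivic Adem relations inside $\dd_{1}\circ\dd_{1}=0$, or through a second realization (real Betti realization, equivalently specialization to $F=\RR$ where $\rho\neq 0$) combined with the known low-degree homotopy of $\unit_{\RR}$; making either route airtight absorbs most of the labor. A secondary subtlety is the bookkeeping of the $2$-primary versus odd-primary parts of the coefficient groups $\Lambda/a_{2q}$ and the precise identification of the $\Ext^{1,\ast}_{\BP_{\ast}\BP}$-generators $\alpha_{2q-1}$ and $\alpha_{q/n}$, for which one leans on \cite{ravenel.green}.
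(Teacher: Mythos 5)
First, note that the paper offers no proof of this statement: it is explicitly recalled from \cite[Theorem 4.1]{rso.oneline}, so the comparison has to be with the argument given there. Your skeleton matches that argument: the differential is a matrix of bistable operations between the Eilenberg--MacLane summands of \eqref{equation:sphereslices}; the candidate entries are enumerated from Voevodsky's computation of the (mod-$n$ and integral) motivic Steenrod algebra in weight one, with the correct observation that odd-primary reduced power operations cannot occur for weight reasons; the ``topological shadow'' is fixed by complex realization against the algebraic Adams--Novikov $d_1$ via Levine's theorem; $\dd_1\circ\dd_1=0$, base change to prime fields, and the multiplicative structure of $\s_{\ast}(\unit_\Lambda)$ (propagation along $\alpha_1$ and $\alpha_3$) do the rest of the bookkeeping, including the coefficient maps $\inc$, $\pr$, $\partial$ on the $\M\Lambda/a_{2q}$ summands.

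The gap is in your treatment of the $\rho$-decorations, which you yourself identify as the heart of the matter. Your primary mechanism --- that the $\rho$-terms are ``the unique correction'' making $\dd_1(q+1)\circ\dd_1(q)=0$ via the motivic Adem relations --- is asserted, not proved, and it is doubtful as a standalone argument: both diagonal entries for $q\geq 3$ are operations of bidegree $(2,1)$ on an $\M\Lambda/2$-summand, yet one is $\Sq^2$ and the other is $\Sq^2+\rho\Sq^1$, so no symmetric constraint of this form can distinguish them; moreover the composite $\dd_1\circ\dd_1$ restricted to the displayed $2\times 2$ blocks does not vanish termwise (one gets residual classes such as $\rho\Sq^3+\rho\Sq^2\Sq^1$ or $\rho^3\Sq^1$) but must cancel against contributions routed through the remaining summands of $\s_{q+1}(\unit_\Lambda)$, which you have not controlled. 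The actual proof in \cite{rso.oneline} avoids this by naturality along the unit map $\unit\to\KQ$: the induced map on slices is an isomorphism (or split inclusion) on precisely the $\alpha_1^q$- and $\alpha_1^{q-3}\alpha_3$-summands, and the first slice differential for $\KQ$ --- including the $\Sq^2+\rho\Sq^1$ entries --- was determined beforehand in \cite{roendigs-oestvaer.hermitian} from the independently known homotopy of hermitian $K$-theory and Witt theory over fields with $\rho\neq 0$. Your fallback of real realization points in the same direction but, as stated, risks circularity (it presupposes knowledge of $\pi_{\ast}\unit_{\RR}$ in the relevant range); the comparison with $\KQ$ is what breaks that circle.
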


Figure~\ref{fig:d1-sphere} summarizes these calculations by pairing the 
slices of $\unit_{\Lambda}$ with their corresponding generators. 
Each direct summand of a fixed slice is labeled by the difference between the simplicial suspension degree and the slice degree.
The colors of the $\dd^{1}$-differentials correspond to elements of the motivic Steenrod algebra, 
here ordered by simplicial degree.
An open square refers to integral (or rather $\Lambda$-) coefficients and solid dots to $\Lambda/2$-coefficients.
The open circle in the second slice indicates $\Lambda/12$-coefficients, 
and similarly for $\Lambda/240$-, $\Lambda/6$-, and $\Lambda/504$-coefficients.
Lemma~\ref{lem:e2-unit-24} discusses the $\dd^{1}$-differentials exiting and entering the direct summand $\Sigma^{6,4}\M\Lambda/2\{\beta_{2/2}\}$ corresponding to the generator $\beta_{2/2}=\alpha_{2}^{2}$.

\begin{figure}
\begin{center}
  \pgfsetshortenend{3pt}
  \pgfsetshortenstart{3pt}
  \begin{tikzpicture}[scale=1.2,line width=1pt]
    \draw[help lines] (-.3,-.3) grid (7.3,7.3);

    \node[rectangle, draw] at (0,0) {};
    \draw[fill] (0,1) circle (2pt) node[below left]{$\alpha_{1}$};
    \foreach \i in {2,...,7} {\draw[fill] (0,\i) circle (2pt) node[below left]{$\alpha_{1}^{\i}$};}
    
    \draw[] (1,2) circle (4pt) node {\tiny{12}} node[below right] {$\alpha_{2}$};

    \draw[fill] (2,3) circle (2pt) node[below left]{$\alpha_{3}$};
    \draw[fill] (2,4) circle (2pt) node[below left]{$\alpha_{1}\alpha_{3}$};
    \foreach \i in {2,3,4} {\draw[fill] (2,\i+3) circle (2pt) node[below left]{$\alpha_{1}^{\i}\alpha_{3}$};}
    \draw[fill] (2.2,4) circle (2pt) node[below right] {$\alpha_{2}^{2}$};

    \draw[] (3,4) circle (5pt) node {\tiny{240}} node[below right] {$\alpha_{4}$};
    \draw[fill] (3,5) circle (2pt) node[below left] {$\alpha_{1}\alpha_{4}$};
    \foreach \i in {2,3} {\draw[fill] (3,\i+4) circle (2pt) node[below left] {$\alpha_{1}^{\i}\alpha_{4}$};}
    \draw[fill] (3.2,5) circle (2pt) node[below right] {$\beta_{2}$};
    \draw[fill] (3.2,6) circle (2pt) node[below right] {$\alpha_{2}^{3}$};

    \draw[fill] (4,5) circle (2pt) node[below right] {$\alpha_{5}$};
    \draw[] (4,6) circle (3pt) node {\tiny{6}} node[below right] {$\alpha_{1}\alpha_{5}$};
    \draw[fill] (4,7) circle (2pt) node[below right] {$\alpha_{1}^{2}\alpha_{5}$};
    \draw[] (5,6) circle (5pt) node {\tiny{504}} node[below right] {$\alpha_{6}$};
    \draw[fill] (5,7) circle (2pt) node[below right] {$\alpha_{1}\alpha_{6}$};
    \draw[fill] (6,7) circle (2pt) node[below right] {$\alpha_{7}$};

    \foreach \i in {0,...,6} {\node[below] at (\i,-0.4) {\i};}
    \foreach \i in {0,...,7} {\node[left] at (-0.4,\i) {\i};}

    {\draw (8.3,-0.2) circle (0pt) node[below=-1pt] {\sc \scriptsize Simplicial degree}
    (8.3,-0.6) circle (0pt) node[below=-1pt] {\sc \scriptsize minus slice degree}
    ;}

  {\draw[incsq2color,->] 
    (4,5) -- (4,6)
    ;}
  
  {\draw[incsq2sq1color,->] 
    (0,1) -- (1,2)
    ;}
  {\draw[incsq2sq1color,->] 
    (2,3) -- (2.95,3.95)
    ;}
  {\draw[incsq2sq1color,->] 
    (4,5) -- (4.95,5.95)
    ;}
  {\draw[incsq2sq1color,->] 
    (6,7) -- (6.5,7.5)
    ;}

  {\draw[sq2partialcolor,->] 
    (1,2.05) -- (2,3.05)
    ;}
  {\draw[sq2partialcolor,->] 
    (3.05,4.05) -- (4,5)
    ;}
  {\draw[sq2partialcolor,->] 
    (5,6.05) -- (6,7.05)
    ;}

  {\draw[taupartialcolor,->] 
    (1,2) -- (0,3)
    ;}

  \foreach \i in {2,...,6} {\draw[sq3sq1color,->] (0,\i) -- (2,\i+1);}
  \foreach \i in {4,...,6} {\draw[sq3sq1color,->] (2,\i) -- (4,\i+1);}
  {\draw[sq3sq1color,->] 
    (4,6) -- (6,7)
    ;}
  {\draw[sq3sq1color,->] 
    (0,7) -- (1,7.5)
    ;}
  {\draw[sq3sq1color,->] 
    (2,7) -- (3,7.5)
    ;}
  {\draw[sq3sq1color,->] 
    (4,7) -- (5,7.5)
    ;}

  \foreach \i in {3,...,6} {\draw[taucolor,->] (2,\i) -- (0,\i+1);}
  {\draw[taucolor,->] 
    (2,7) -- (1,7.5)
    ;}
  {\draw[tauprcolor,->] 
    (5,6) -- (3,7)
    ;}
  {\draw[taucolor,->] 
    (5,7) -- (4,7.5)
    ;}
  {\draw[taucolor,->] 
    (6,7) -- (5,7.5)
    ;}

  \foreach \i in {1,...,6} {\draw[sq2color,->] (0,\i) -- (0,\i+1);}
  \foreach \i in {5,6} {\draw[sq2color,->] (4,\i) -- (4,\i+1);}
  {\draw[sq2prcolor,->] 
    (0,0) -- (0,1)
    ;}
  {\draw[sq2prcolor,->] 
    (4,6) -- (4,7)
    ;}
  {\draw[sq2color,->] 
    (0,7) -- (-0,7.5)
    ;}
  {\draw[sq2color,->] 
    (4,7) -- (4,7.5)
    ;}

  \foreach \i in {3,...,6} {\draw[sq2rhosq1color,->] (2,\i) -- (2,\i+1);}
  {\draw[sq2rhosq1color,->] 
    (2,7) -- (2,7.5)
    ;}
  {\draw[sq2rhosq1color,->] 
    (6,7) -- (6,7.5)
    ;}

  {\draw[taupartialcolor,->] 
    (5,6) -- (4,7)
    ;}

  {\draw[sq3sq1color] 
    (7.4,7.4) circle (0pt) node[right] {$\Sq^3\Sq^1$}
    ;}
  {\draw[incsq2sq1color,->] 
    (7.4,6.6) circle (0pt) node[right] {$\inc^2_{?} \Sq^2\Sq^1$}
    ;}
  {\draw[sq2partialcolor,->] 
    (7.4,5.8) circle (0pt) node[right] {$\Sq^2 \partial^{?}_{2}$}
    ;}
  {\draw[incsq2color,->] 
    (7.4,5) circle (0pt) node[right] {$\inc^{2}_{6} \Sq^2$}
    ;}
  {\draw[sq2prcolor] 
    (7.4,4.2) circle (0pt) node[right] {$\Sq^2 \pr^{?}_{2}$}
    ;}
  {\draw[sq2rhosq1color] 
    (7.4,3.4) circle (0pt) node[right] {$\Sq^2+\rho\Sq^1$}
    ;}
  {\draw[sq2color] 
    (7.4,2.6) circle (0pt) node[right] {$\Sq^2$}
    ;}
  {\draw[taupartialcolor] 
    (7.4,1.8) circle (0pt) node[right] {$\tau \partial^{?}_{2}$}
    ;}
  {\draw[tauprcolor] 
    (7.4,1) circle (0pt) node[right] {$\tau \pr$}
    ;}
  {\draw[taucolor] 
    (7.4,0.2) circle (0pt) node[right] {$\tau$}
    ;} 
\end{tikzpicture}
\end{center}
\caption{The first slice differential for $\unit_{\Lambda}$.}
\label{fig:d1-sphere}
\end{figure}

Fix a compatible pair $(F,\Lambda=\mathbb{Z}[\tfrac{1}{c}])$, 
where $F$ is a field of exponential characteristic $c\neq 2$.
For legibility we leave $\Lambda$ out of the notation, 
and write $h^{\ast,\ast}_{2,2}$ for motivic cohomology groups with $(\Lambda/2\times\Lambda/2)$-coefficients.
Figure \ref{fig:e1-sphere} displays the $E^{1}$-page of the weight $w=-n$th slice spectral sequence, 
$n\in\ZZ$.
Here $E^{1}_{p,q,-n}(\unit)=0$ for $p<-n$ or $q<0$, and $H^{p,n}=0$ for $n<0$.

\begin{figure}
  \pgfsetshortenend{3pt}
  \pgfsetshortenstart{1pt}
  \begin{tikzpicture}[scale=2.3,line width=1pt]
    \draw[help lines,shift={(-.3,-.1)}] (-2.3,0) grid (1.95,7.9);
    \foreach \i in {0,...,7} {\node[label=left:$\i$] at (-2.4,\i+.3) {};}
    {\node[label=below:$-n$] at (-2,-0.3) {};}
    {\node[label=below:$-n+1$] at (-1,-0.3) {};}
    {\node[label=below:$-n+2$] at (0,-0.3) {};}
    {\node[label=below:$-n+3$] at (1,-0.3) {};}

    {\draw[fill]     
      (1.5,-.35) circle (0pt) node[below] {$s$}
      ;}
    
    {\draw[fill]     
      (-2.7,4.4) circle (0pt) node[left] {$q$}
      ;}
    
    \draw[->,sq2prcolor]
    (0,0) -- (-1,1);
    \draw[->,red]
    (0,1) -- (-1,2);
    \draw[->,red]
    (0,2) -- (-1,3);
    \draw[sq2color,->]
    (0,3) -- (-1,4);
    \draw[sq2color,->]
    (0,4) -- (-1,5);
    \draw[sq2color,->]
    (0,5) -- (-1,6);
    \draw[sq2color,->]
    (0,6) -- (-1,7);
    \draw[sq2color,->]
    (0,7) -- (-1,8);

    \draw[->,sq2prcolor]
    (1,0) -- (0,1);
    \draw[->,red]
    (1,1) -- (0,2);
    \draw[->,red]
    (1,2) -- (0,3);
    \draw[sq2color,->]
    (1,3) -- (0,4);
    \draw[sq2color,->]
    (1,4) -- (0,5);
    \draw[sq2color,->]
    (1,5) -- (0,6);
    \draw[sq2color,->]
    (1,6) -- (0,7);
    \draw[sq2color,->]
    (1,7) -- (0,8);

    \draw[sq2rhosq1color,->]
    (1,3.5) -- (0,4.5);
    \draw[sq2rhosq1color,->]
    (1,4.5) -- (0,5.5);
    \draw[sq2rhosq1color,->]
    (1,5.5) -- (0,6.5);
    \draw[sq2rhosq1color,->]
    (1,6.5) -- (0,7.5);
    \draw[sq2rhosq1color,->]
    (1,7.5) -- (0.5,8);

    \draw[sq3sq1color,->]
    (1,2) -- (0,3.5);
    \draw[sq3sq1color,->]
    (1,3) -- (0,4.5);
    \draw[sq3sq1color,->]
    (1,4) -- (0,5.5);
    \draw[sq3sq1color,->]
    (1,5) -- (0,6.5);
    \draw[sq3sq1color,->]
    (1,6) -- (0,7.5);
    \draw[sq3sq1color,->]
    (1,7) -- (0.5,8);
    
    \draw[taucolor,->]
    (0,3.5) -- (-1,4);
    \draw[taucolor,->]
    (0,4.5) -- (-1,5);
    \draw[taucolor,->]
    (0,5.5) -- (-1,6);
    \draw[taucolor,->]
    (0,6.5) -- (-1,7);
    \draw[taucolor,->]
    (0,7.5) -- (-1,8);

    \draw[taucolor,->]
    (1,3.5) -- (0,4);
    \draw[taucolor,->]
    (1,4.5) -- (0,5);
    \draw[taucolor,->]
    (1,5.5) -- (0,6);
    \draw[taucolor,->]
    (1,6.5) -- (0,7);
    \draw[taucolor,->]
    (1,7.5) -- (0,8);

    \draw[taupartialcolor,->]
    (0,2.5) -- (-1,3);
    \draw[taupartialcolor,->]
    (1,2.5) -- (0,3);
   
    \draw[sq2partialcolor,->]
    (1,2.5) -- (0,3.5);
 
    \draw[incsq2sq1color,->]
    (1,1) -- (0,2.5);
    
    \node at (-2,0) [shape=rectangle,draw] {};
    \node at (-2,0) [above right=3pt] {$H^{n,n}$};
    \node at (-1,0) [shape=rectangle,draw] {};
    \node at (-1,0) [above right=3pt] {$H^{n-1,n}$};
    \node at (0,0) [shape=rectangle,draw] {};
    \node at (0,0) [above right=3pt] {$H^{n-2,n}$};
    \node at (1,0) [shape=rectangle,draw] {};
    \node at (1,0) [above right=3pt] {$H^{n-3,n}$};
    
    \node[star, star points=12, draw, fill] at (-1,2.5)  {};
    \node at (-1,2.5) [right=5pt] {$h_{12}^{n+2,n+2}$};
    \node[star, star points=12, draw, fill] at (0,2.5)  {};
    \node at (0,2.5) [right=5pt] {$h_{12}^{n+1,n+2}$};
    \node[star, star points=12, draw, fill] at (1,2.5)  {};
    \node at (1,2.5) [right=5pt] {$h_{12}^{n,n+2}$};

    \node[star, star points=15, draw, fill] at (1,4.75)  {};
    \node at (1,4.75) [right=5pt] {$h_{240}^{n+4,n+4}$};

    {\draw[fill]     
      (-2,1) circle (1pt) node[above right=3pt] {{$h^{n+1,n+1}$}}
      (-2,2) circle (1pt) node[above right=3pt] {{$h^{n+2,n+2}$}}
      (-2,3) circle (1pt) node[above right=3pt] {{$h^{n+3,n+3}$}}
      (-2,4) circle (1pt) node[above right=3pt] {{$h^{n+4,n+4}$}}
      (-2,5) circle (1pt) node[above right=3pt] {{$h^{n+5,n+5}$}}
      (-2,6) circle (1pt) node[above right=3pt] {{$h^{n+6,n+6}$}}
      (-2,7) circle (1pt) node[above right=3pt] {{$h^{n+7,n+7}$}}
      ;}

    {\draw[fill]     
      (-1,1) circle (1pt) node[above right=3pt] {{$h^{n,n+1}$}}
      (-1,2) circle (1pt) node[above right=3pt] {{$h^{n+1,n+2}$}}
      (-1,3) circle (1pt) node[above right=3pt] {{$h^{n+2,n+3}$}}
      (-1,4) circle (1pt) node[above right=3pt] {{$h^{n+3,n+4}$}}
      (-1,5) circle (1pt) node[above right=3pt] {{$h^{n+4,n+5}$}}
      (-1,6) circle (1pt) node[above right=3pt] {{$h^{n+5,n+6}$}}
      (-1,7) circle (1pt) node[above right=3pt] {{$h^{n+6,n+7}$}}
      ;}
    
    {\draw[fill]      
      (0,1) circle (1pt) node[above right=3pt] {$h^{n-1,n+1}$}
      (0,2) circle (1pt) node[above right=3pt] {$h^{n,n+2}$}
      (0,3) circle (1pt) node[above right=3pt] {$h^{n+1,n+3}$}
      (0,3.5) circle (1pt) node[right=3pt] {$h^{n+3,n+3}$}
      (0,4) circle (1pt) node[above right=3pt] {$h^{n+2,n+4}$}
      (0,4.5) circle (1pt) node[right=3pt] {$h_{2,2}^{n+4,n+4}$}
      (0,5) circle (1pt) node[above right=3pt] {$h^{n+3,n+5}$}
      (0,5.5) circle (1pt) node[right=3pt] {$h^{n+5,n+5}$}
      (0,6) circle (1pt) node[above right=3pt] {$h^{n+4,n+6}$}
      (0,6.5) circle (1pt) node[right=3pt] {$h^{n+6,n+6}$}
      (0,7) circle (1pt) node[above right=3pt] {$h^{n+5,n+7}$}
      (0,7.5) circle (1pt) node[right=3pt] {$h^{n+7,n+7}$}
      ;}

    {\draw[fill]      
      (1,1) circle (1pt) node[above right=3pt] {$h^{n-2,n+1}$}
      (1,2) circle (1pt) node[above right=3pt] {$h^{n-1,n+2}$}
      (1,3) circle (1pt) node[above right=3pt] {$h^{n,n+3}$}
      (1,3.5) circle (1pt) node[right=3pt] {$h^{n+2,n+3}$}
      (1,4) circle (1pt) node[above right=3pt] {$h^{n+1,n+4}$}
      (1,4.5) circle (1pt) node[right=3pt] {$h_{2,2}^{n+3,n+4}$}
      (1,5) circle (1pt) node[above right=3pt] {$h^{n+2,n+5}$}
      (1,5.5) circle (1pt) node[right=3pt] {$h^{n+4,n+5}$}
      (1,5.75) circle (1pt) node[right=3pt] {$h_{2,2}^{n+5,n+5}$}
      (1,6) circle (1pt) node[above right=3pt] {$h^{n+3,n+6}$}
      (1,6.5) circle (1pt) node[right=3pt] {$h^{n+5,n+6}$}
      (1,6.75) circle (1pt) node[right=3pt] {$h_{2,2}^{n+6,n+6}$}
      (1,7) circle (1pt) node[above right=3pt] {$h^{n+4,n+7}$}
      (1,7.5) circle (1pt) node[right=3pt] {$h^{n+6,n+7}$}
      (1,7.75) circle (1pt) node[right=3pt] {$h^{n+7,n+7}$}
      ;}
    {\draw (1,4.5) circle (2pt);} 
    {\draw (0,4.5) circle (2pt);} 
    {\draw (1,5.75) circle (2pt);} 
    {\draw (1,6.75) circle (2pt);} 

     {\draw[sq3sq1color] 
        (2.1,5.6) circle (0pt) node[right] {$\Sq^3\Sq^1$}
        ;}
      {\draw[incsq2sq1color,->] 
        (2.1,4.8) circle (0pt) node[right] {$\inc \Sq^{2}\Sq^1$}
        ;}
      {\draw[sq2partialcolor] 
        (2.1,4) circle (0pt) node[right] {$\Sq^{2} \partial$}
        ;}
      {\draw[sq2prcolor] 
        (2.1,3.2) circle (0pt) node[right] {$\Sq^{2} \pr$}
        ;}
      {\draw[sq2rhosq1color] 
        (2.1,2.4) circle (0pt) node[right] {$\Sq^{2}+\rho\Sq^1$}
        ;}
      {\draw[sq2color] 
        (2.1,1.6) circle (0pt) node[right] {$\Sq^{2}$}
        ;}
      {\draw[taupartialcolor,->] 
        (2.1,0.8) circle (0pt) node[right] {$\tau \partial$}
        ;}
      {\draw[taucolor] 
        (2.1,0) circle (0pt) node[right] {$\tau$}
        ;}
\end{tikzpicture}
\caption{The $E^{1}$-page for $\unit$.}
\label{fig:e1-sphere}
\end{figure}

Next we show vanishing in a certain range of the terms contributing to the $2$-line.
\begin{lemma}
\label{lem:e2-unit-2>4}
The group $E^{2}_{-n+2,m,-n}(\unit)$ is trivial for $m\geq 5$.
\end{lemma}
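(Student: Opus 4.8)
The plan is to run, for the sphere, exactly the argument used for $\kq$ in Lemma~\ref{lem:e2-kq-2>3}: identify the (few) slice summands that contribute in the relevant tridegree, and show that the incoming slice $d^{1}$-differential surjects onto the kernel of the outgoing one, so that the subquotient of the $E^{1}$-page computing $E^{2}_{-n+2,m,-n}(\unit)$ is zero. The only new point compared with $\kq$ is that the slices of $\unit$ carry extra summands, and one must see that these do not interfere.

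First I would pin down the $E^{1}$-terms. By \eqref{equation:sphereslices} a summand $\Sigma^{2q-p,q}\M\bigl(\Lambda\otimes\Ext^{p,2q}_{\MU_{\ast}\MU}(\MU_{\ast},\MU_{\ast})\bigr)$ of $\s_{q}(\unit)$ contributes the group $H^{2q-p-t,\,q+n}(F)$ (with the appropriate coefficients) to $E^{1}_{t,q,-n}(\unit)$, and this vanishes unless $p\ge q-(t+n)$. For $t=-n+2$ and $q=m\ge 5$ this forces $p\ge m-2$, and the slice identification in Lemma~\ref{lem:first-diff-unit-1} (see Figure~\ref{fig:d1-sphere}) then shows the only such summands are $\Sigma^{m,m}\M\Lambda/2\{\alpha_{1}^{m}\}$ and $\Sigma^{m+2,m}\M\Lambda/2\{\alpha_{1}^{m-3}\alpha_{3}\}$, so
\[
E^{1}_{-n+2,m,-n}(\unit)\;\iso\; h^{m+n-2,m+n}\{\alpha_{1}^{m}\}\directsum h^{m+n,m+n}\{\alpha_{1}^{m-3}\alpha_{3}\}.
\]
The same count shows the source $E^{1}_{-n+3,m-1,-n}(\unit)$ of the incoming differential contains the summands $h^{m+n-4,m+n-1}\{\alpha_{1}^{m-1}\}$ and $h^{m+n-2,m+n-1}\{\alpha_{1}^{m-4}\alpha_{3}\}$, while the only part of the target $E^{1}_{-n+1,m+1,-n}(\unit)$ that the outgoing differential can reach is $h^{m+n,m+n+1}\{\alpha_{1}^{m+1}\}$. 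Since $m-1\ge 4$, Lemma~\ref{lem:first-diff-unit-1} identifies the relevant parts of the differentials: on the two displayed source summands the incoming $d^{1}$ is $\left(\begin{smallmatrix}\Sq^{2}&\tau\\ \Sq^{3}\Sq^{1}&\Sq^{2}+\rho\Sq^{1}\end{smallmatrix}\right)$, and the outgoing $d^{1}$ is $(x,y)\mapsto\Sq^{2}x+\tau y$.

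Then I would carry out the exactness computation. Multiplication by $\tau$ is an isomorphism $h^{a,b}\to h^{a,b+1}$ whenever $b\ge a$ (Beilinson--Lichtenbaum), so
\[
\ker\bigl(d^{1}_{\mathrm{out}}\bigr)=\bigl\{(x,\tau^{-1}\Sq^{2}x)\colon x\in h^{m+n-2,m+n}\bigr\}.
\]
On the other hand, combining the motivic Adem relation $\Sq^{2}\Sq^{2}=\tau\Sq^{3}\Sq^{1}$ with the Cartan identity $\Sq^{2}(\tau c)=\tau(\Sq^{2}+\rho\Sq^{1})c$ (valid since $\Sq^{2}\tau=0$ and $\Sq^{1}\tau=\rho$) gives, for every $(a,b)$ in the source, the relation $\tau^{-1}\Sq^{2}(\Sq^{2}a+\tau b)=\Sq^{3}\Sq^{1}a+(\Sq^{2}+\rho\Sq^{1})b$; that is, $d^{1}_{\mathrm{in}}(a,b)=(x,\tau^{-1}\Sq^{2}x)$ with $x=\Sq^{2}a+\tau b$. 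As $\tau b$ alone already sweeps out all of $h^{m+n-2,m+n}$, the image of $d^{1}_{\mathrm{in}}$ contains $\ker(d^{1}_{\mathrm{out}})$; since $d^{1}d^{1}=0$ forces the opposite inclusion, the two coincide and $E^{2}_{-n+2,m,-n}(\unit)=0$.

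The step I expect to be the main obstacle is the bookkeeping of the first paragraph: being certain that no further slice summand contributes to $E^{1}$ in any of the three tridegrees in play, and that the coefficient groups are the stated $\Lambda/2$'s rather than one of the various $\Lambda/a_{2q}$'s. This is governed entirely by Lemma~\ref{lem:first-diff-unit-1}, and the extra summands that do appear in boundary cases (for instance $\alpha_{4}$ in $\s_{4}$ with $\Lambda/240$-coefficients when $m=5$) enter only as additional sources for $d^{1}_{\mathrm{in}}$, hence only enlarge its image and so are harmless; everything after the identification is formal.
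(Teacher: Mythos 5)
Your proposal is correct and follows essentially the same route as the paper: identify $E^{1}_{-n+2,m,-n}(\unit)$ as $h^{m+n-2,m+n}\directsum h^{m+n,m+n}$, observe that the kernel of the outgoing $d^{1}$ is the graph $\{(x,\tau^{-1}\Sq^{2}x)\}$, and conclude that the incoming $d^{1}$ surjects onto it because its $\tau$-component already sweeps out the first coordinate. The only (harmless) difference is that you verify the containment of the image in the kernel explicitly via the Adem and Cartan relations, where the paper simply invokes $d^{1}\circ d^{1}=0$.
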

\begin{proof}
For $m\geq 5$ the kernel of the differential
\begin{align*}
E^1_{-n+2,m,-n}(\unit) = h^{n+m-2,n+m}\oplus h^{n+m,n+m} 
& \to 
h^{n+m,n+m+1} = E^1_{-n+1,m+1,-n}(\unit)\\
(b_{2},b_{0}) 
&
\mapsto 
\Sq^{2}b_{2}+\tau b_{0}
\end{align*}
is isomorphic to $h^{n+m-2,n+m}$ via the map sending $c_{2}$ to $(c_{2},\tau^{-1}\Sq^{2}c_{2})$.
The image of the differential
\[ 
E^1_{-n+3,m-1,-n}(\unit) 
\to
E^1_{-n+2,m,-n}(\unit) = h^{n+m-2,n+m}\oplus h^{n+m,n+m} 
\]
hits this group, 
since it restricts to multiplication with $\tau$ on the summand $h^{n+m-2,n+m-1}$ generated by $\alpha_{1}^{m-1}$.
\end{proof}

\begin{lemma}
\label{lem:e2-unit-24}
The group $E^{2}_{-n+2,4,-n}(\unit)$ is isomorphic to $h^{n+4,n+4}$ generated by $\beta_{2/2}$.
\end{lemma}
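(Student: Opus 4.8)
The plan is to compute $E^2_{-n+2,4,-n}(\unit)$ directly as the homology of the slice $\dd^1$-complex at that spot, after pinning down the relevant differentials. First I would read off the $E^1$-term. By \eqref{equation:sphereslices} the fourth slice $\s_4(\unit_\Lambda)$ is a wedge of suspended Eilenberg--MacLane spectra indexed by $\Ext^{p,8}_{\MU_\ast\MU}(\MU_\ast,\MU_\ast)$, the relevant summands being generated by $\alpha_4\in\Ext^{1,8}$ (with $\Lambda/240$-coefficients), by $\alpha_1\alpha_3$ and $\beta_{2/2}=\alpha_2^2$ in $\Ext^{2,8}$, and by $\alpha_1^4\in\Ext^{4,8}$ (the latter three with $\Lambda/2$-coefficients). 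Taking $\pi_{-n+2,-n}$, the $\alpha_4$-summand $\Sigma^{7,4}\M\Lambda/240$ contributes $H^{n+5,n+4}(\Lambda/240)=0$ since $n+5>n+4$, while the other three yield $h^{n+2,n+4}\{\alpha_1^4\}\directsum h^{n+4,n+4}\{\alpha_1\alpha_3\}\directsum h^{n+4,n+4}\{\beta_{2/2}\}=E^1_{-n+2,4,-n}(\unit)$.

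Next I would identify the incoming and outgoing $\dd^1$. On the two $\alpha$-family summands the incoming differential from $E^1_{-n+3,3,-n}(\unit)\iso h^{n,n+3}\{\alpha_1^3\}\directsum h^{n+2,n+3}\{\alpha_3\}$ and the outgoing differential to $E^1_{-n+1,5,-n}(\unit)$ are the matrices of motivic Steenrod operations of Lemma~\ref{lem:first-diff-unit-1}; note that $\dd^{\unit_\Lambda}_1(3)$ there has image in the $\alpha_1^4$- and $\alpha_1\alpha_3$-summands, so the incoming $\dd^1$ does not involve $\beta_{2/2}$, and that in weight $-n$ only the $\alpha_1^5$-summand of $\s_5(\unit)$ is nonzero, so the outgoing $\dd^1$ on $\{\alpha_1^4,\alpha_1\alpha_3\}$ is $(a,b)\mapsto\Sq^2 a+\tau b\in h^{n+4,n+5}$. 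The new input concerns the $\beta_{2/2}$-summand, which Lemma~\ref{lem:first-diff-unit-1} does not address: since the slice filtration is multiplicative, the slice spectral sequence is multiplicative and $\dd^1$ is a derivation of simplicial degree $-1$; as $\beta_{2/2}=\alpha_2^2$ is the square of the class $\alpha_2\in\pi_{3,2}\s_2(\unit)$ of odd simplicial degree, graded-commutativity gives $\dd^1(\beta_{2/2})=\dd^1(\alpha_2)\alpha_2-\alpha_2\dd^1(\alpha_2)=0$. Hence $\beta_{2/2}$ is a permanent cycle that, by the previous sentence, is not a boundary.

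It remains to see that the $\{\alpha_1^4,\alpha_1\alpha_3\}$-part is exact at this tridegree. Using that $\tau\colon h^{n+2,n+3}\to h^{n+2,n+4}$ and $\tau\colon h^{n+4,n+4}\to h^{n+4,n+5}$ are isomorphisms, the kernel of the outgoing $\dd^1$ on this part is $\{(a,\tau^{-1}\Sq^2 a):a\in h^{n+2,n+4}\}$, and feeding $(a_3,a_1)=(0,\tau^{-1}a)$ into the incoming $\dd^1$, together with the motivic Cartan relation $\Sq^2(\tau y)=\tau(\Sq^2+\rho\Sq^1)(y)$ (which uses $\Sq^1\tau=\rho$ and $\Sq^2\tau=0$, see \cite{Voevodsky.reduced}), shows that this kernel is exactly the image of the incoming $\dd^1$. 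Therefore $E^2_{-n+2,4,-n}(\unit)\iso h^{n+4,n+4}$, generated by $\beta_{2/2}$. The main obstacle is the determination of $\dd^1$ on the $\beta_{2/2}$-summand, not covered by Lemma~\ref{lem:first-diff-unit-1}; the argument rests on recognising $\beta_{2/2}$ as an odd square and exploiting multiplicativity of the slice spectral sequence. As a consistency check, the comparison $\unit\to\kq$ — an isomorphism on the $\alpha$-family summands and zero on $\beta_{2/2}$ — combined with $E^2_{-n+2,4,-n}(\kq)=0$ from Lemma~\ref{lem:e2-kq-2>3} confirms that precisely the $\beta_{2/2}$-contribution survives.
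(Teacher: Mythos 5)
Your handling of the incoming differential has a genuine gap. You read Lemma~\ref{lem:first-diff-unit-1} as saying that $\dd^{\unit_\Lambda}_{1}(3)$ factors through the wedge summand $\Sigma^{5,4}\M\Lambda/2\{\alpha_{1}^{4}\}\vee\Sigma^{7,4}\M\Lambda/2\{\alpha_{1}\alpha_{3}\}$ of $\Sigma^{1,0}\s_{4}(\unit_\Lambda)$, and conclude that nothing maps into the $\beta_{2/2}$-summand. But that lemma only records the components of $\dd_{1}(3)$ landing in those two summands; the components landing in $\Sigma^{7,4}\M\Lambda/2\{\beta_{2/2}\}$ are left open --- the text preceding Figure~\ref{fig:d1-sphere} says explicitly that the differentials entering and exiting the $\beta_{2/2}$-summand are exactly what this lemma must determine. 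A priori these components are $\phi\Sq^1$ on $\Sigma^{5,3}\M\Lambda/2\{\alpha_{3}\}$ and $b\Sq^3\Sq^1$ on $\Sigma^{3,3}\M\Lambda/2\{\alpha_{1}^{3}\}$ for unknown $\phi\in h^{1,1}$ and $b\in h^{0,0}$; on $E^1_{-n+3,3,-n}$ the first acts as multiplication by the symbol $\phi\rho\in h^{2,2}$, which is nonzero for a general unit $\phi$. If these components were nonzero, $E^{2}_{-n+2,4,-n}(\unit)$ would be a proper quotient of $h^{n+4,n+4}$, and your element $(0,\tau^{-1}a)$ would no longer land in the kernel of the outgoing differential with trivial $\beta_{2/2}$-coordinate. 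Ruling this out is the bulk of the actual proof: one reduces $\phi$ and $b$ to the prime field (and to $\ZZ[\tfrac12]$ in characteristic zero), uses real realization and the nontriviality of $[-1]^{-n}\nu^{2}$ to exclude $\phi\in\{-1,-2\}$ and $b\neq 0$, and uses $\{-1,2\}=0\in K^{\M}_{2}$ to dispose of $\phi=2$. Your consistency check against $\kq$ cannot detect any of this, since the comparison map is zero on the $\beta_{2/2}$-summand.

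Your treatment of the outgoing differential is a genuinely different route from the paper's (which uses complex realization, the vanishing of the Adams--Novikov $d_{3}$ on $\beta_{2/2}$, and compatibility with $\kq$). Two caveats: the sign cancellation you invoke is delicate in the motivic $\epsilon$-graded setting, but it is also unnecessary, since $\dd^{1}(\alpha_{2/2})$ is already zero as an element ($\partial^{12}_{2}$ kills the fundamental class of $h^{0,0}_{12}$ because $h^{1,0}=0$), so the Leibniz rule gives $\dd^{1}(\alpha_{2/2}^{2})=0$ regardless of signs. You do, however, need to justify that the slice product $\s_{2}(\unit)\wedge\s_{2}(\unit)\to\s_{4}(\unit)$ carries $\alpha_{2/2}\otimes\alpha_{2/2}$ to the generator of the $\beta_{2/2}$-summand, i.e., that the identification \eqref{equation:sphereslices} is multiplicative; and this argument only controls the component with target $\Sigma^{6,5}\M\Lambda/2\{\alpha_{1}^{5}\}$ on the level of the group $\pi_{-n+2,-n}$, the other targets vanishing there for weight reasons, as you in effect observe.
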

\begin{proof}
The $d^{1}$-differential on $h^{n+4,n+4}\{\beta_{2/2}\}$ is not $\tau$ by comparison with complex points; 
the corresponding $d^{3}$-differential in the Adams-Novikov spectral sequence on $\beta_{2/2}$ vanishes. 
Hence the $\dd^{1}$-differential restricted to $\Sigma^{6,4}\MZ/2\{\beta_{2/2}\}$ maps trivially to $\Sigma^{6,5}\MZ/\{\alpha_{1}^{5}\}$. 
This implies that on $\s_{4}$ the unit map $\unit \to \KQ$ restricts trivially on $\Sigma^{6,4}\MZ/2\{\beta_{2/2}\}$.
In turn, the $d^{1}$-differential restricted to $\Sigma^{6,4}\MZ/2\{\beta_{2/2}\}$ maps trivially to $\Sigma^{8,5}\MZ/\{\alpha_{1}^{2}\alpha_{3}\}$ as well.
Since there is no other possible $d^{1}$-differential on $h^{n+4,n+4}\{\beta_{2/2}\}$ for weight reasons, 
this group survives to the $E^{2}$-page. 

The $d^{1}$-differentials on the other two direct summands are the same as in the proof of Lemma~\ref{lem:e2-unit-2>4};
whence the kernel of 
\[ 
E^1_{-n+2,4,-n}(\unit) 
\to 
E^1_{-n+1,5,-n}(\unit) 
\]
is isomorphic to $h^{n+2,n+4}\directsum h^{n+4,n+4}$.
The entering $d^{1}$-differential has the form 
\begin{align*}
h^{n+2,n+3}\oplus h^{n,n+3} & = E^1_{-n+3,3,-n}(\unit) 
\to
h^{n+4,n+4}\{\beta_{2/2}\}\oplus h^{n+4,n+4}\oplus h^{n+2,n+4} \\ 
(a_{1},a_{3})& 
\mapsto 
\bigl(\phi\Sq^1(a_{1})+b\Sq^3\Sq^1(a_{3}),\rho\Sq^1(a_{1})+\Sq^3\Sq^1(a_{3}),\tau(a_{1})+\Sq^{2}(a_{3})\bigr), 
\end{align*}
for some $\phi\in h^{1,1}$ and $b\in h^{0,0}$. 
By naturality, 
$\phi\in h^{1,1}(F_{0})$ and $b\in h^{0,0}(F_{0})$, 
where $F_{0}$ is the prime field of $F$. 
The homomorphism $a_{1}=\tau a_{0}\mapsto \phi\Sq^1(\tau a_{0})=\phi\rho \tau a_{0}$ thus amounts to multiplication by $\phi\rho\in h^{2,2}(F_{0})$ on $h^{n+2,n+3}(F)$. 
If $c>2$, it is thus trivial,
since $\phi\rho\in h^{2,2}(F_{0})=0$ for $F_{0}$ a finite field. 
If $c=1$, 
the square class $\phi$ can be determined via comparison with $\ZZ[\tfrac{1}{2}]$;
the slice computation for $2$-primary torsion summands holds for that base scheme, 
as well as the identification of the motivic Steenrod algebra at $2$. 
Hence we can conclude $\phi\in \{\pm 1, \pm{2}\}$. 
Considering the real numbers rules out the cases $\phi<0$. 
To this end, 
observe that real realization sends $\nu$ to $\eta_{\mathrm{top}}$ and $\rho$ to $1$.
Hence $[-1]^{-n}\nu^{2}\in\pi_{n+6,n+4}\unit$ is a nontrivial element for every subfield of $\mathbb{R}$. 
Setting $n=-2$ implies that $\phi=1$ over $\mathbb{R}$. 
To exclude the case $\phi=2$ observe that the symbol $\{-1,2\}$ is the zero element in $K^{\M}_{2}$. 
In particular, 
the homomorphism given by multiplication by $\phi\rho$ is the zero homomorphism.

Similarly, 
but easier, 
the homomorphism $a_{3}=\tau^3a_{0}\mapsto b\Sq^3\Sq^1(a_{3})=b\rho^4a_{0}$ amounts to multiplication by $b\rho^4$. 
It is thus trivial over fields of odd characteristic.
To conclude for fields of characteristic zero we consider the real numbers.
Recall that $[-1]^{-n}\nu^{2}\in\pi_{n+6,n+4}\unit$ is a nontrivial element for every subfield of $\mathbb{R}$. 
Setting $n=-4$ implies $b=0$ over $\mathbb{R}$, and hence over any field of characteristic zero. 
Comparison with $\ZZ[\tfrac{1}{2}]$ implies $b=0$ over any field with $c\neq 2$.
Hence the entering $d^{1}$-differential has the form 
\begin{align*}
h^{n+2,n+3}\oplus h^{n,n+3} & = E^1_{-n+3,3,-n}(\unit) 
\to
h^{n+4,n+4}\{\beta_{2/2}\}\oplus h^{n+4,n+4}\oplus h^{n+2,n+4} \\ 
(a_{1},a_{3})& \mapsto \bigl(0,\rho\Sq^1(a_{1})+\Sq^3\Sq^1(a_{3}),\tau(a_{1})+\Sq^{2}(a_{3})\bigr). 
\end{align*}
We may identify this map's image with the direct summand $h^{n+2,n+4}$.
\end{proof}

\begin{corollary}
\label{cor:first-diff-to-nusquared}
The $\dd^{1}$-differential $\s_{3}(\unit) \to \Sigma^{1,0}\s_{4}(\unit)$ maps trivially to the direct summand $\Sigma^{6,4}\MZ/2$ generated by $\beta_{2/2}$.
\end{corollary}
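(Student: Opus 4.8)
The plan is to reduce the statement to the computation already carried out for Lemma~\ref{lem:e2-unit-24} and to record why that computation delivers exactly this vanishing. Composing $\dd^{\unit_{\Lambda}}_{1}(3)$ with the projection of $\Sigma^{1,0}\s_{4}(\unit)$ onto the shift $\Sigma^{1,0}\Sigma^{6,4}\M\Lambda/2\{\beta_{2/2}\}$ of the summand generated by $\beta_{2/2}=\alpha_{2}^{2}$, and using $\s_{3}(\unit)=\Sigma^{3,3}\M\Lambda/2\{\alpha_{1}^{3}\}\vee\Sigma^{5,3}\M\Lambda/2\{\alpha_{3}\}$, one obtains a pair of stable operations on mod-$2$ motivic cohomology, of bidegrees $(4,1)$ on the $\alpha_{1}^{3}$-summand and $(2,1)$ on the $\alpha_{3}$-summand. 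Since the slices of $\unit_{\Lambda}$ and their first slice differential are defined over the prime field $F_{0}$, the coefficients of these operations lie in the image of $h^{\ast,\ast}(F_{0})$; by the analysis of the incoming $d^{1}$-differential into $E^{1}_{-n+2,4,-n}(\unit)$ in the proof of Lemma~\ref{lem:e2-unit-24}, on the relevant motivic cohomology groups these operations act as multiplication by $b\rho^{4}$ and by $\phi\rho$ for some $b\in h^{0,0}(F_{0})$ and $\phi\in h^{1,1}(F_{0})$. So the claim amounts to showing $b\rho^{4}=0$ and $\phi\rho=0$.

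Both vanishings are precisely what is established in that proof, and I would simply invoke them. For exponential characteristic $c>2$ the prime field $F_{0}$ is finite, so $h^{a,b}(F_{0})=0$ for $a\ge 2$ and both products are zero. For $c=1$, comparison with the base $\ZZ[\tfrac{1}{2}]$ restricts the square class $\phi$ to $\pm1,\pm2$; real realization eliminates the negative classes, using that $[-1]^{-n}\nu^{2}\in\pi_{n+6,n+4}\unit$ is nonzero over every subfield of $\RR$ (apply this with $n=-2$ for $\phi$ and $n=-4$ for $b$); and $\phi\rho=0$ in the remaining cases follows from the Steinberg relation $\{-1,2\}=0$ in $\KMil_{2}$, while $b=0$ follows from the real-realization computation together with comparison to $\ZZ[\tfrac{1}{2}]$. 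Consequently the projection of $\dd^{\unit_{\Lambda}}_{1}(3)$ onto $\Sigma^{1,0}\Sigma^{6,4}\M\Lambda/2\{\beta_{2/2}\}$ induces the zero map on homotopy groups in every weight, which is the assertion.

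The actual difficulty is not in this corollary but in the input it rests on: identifying the slice $\dd^{1}$-differential on the $\beta_{2/2}$-summand, which lies outside the generic pattern recorded in Lemma~\ref{lem:first-diff-unit-1}. That identification genuinely needs external information, namely complex realization to see that the Adams--Novikov $d^{3}$ on $\beta_{2/2}$ vanishes (so that the outgoing $\dd^{1}$ on $\Sigma^{6,4}\M\Lambda/2\{\beta_{2/2}\}$ is not detected by $\tau$), and real realization to remove the square-class ambiguity in $\phi$. Once Lemma~\ref{lem:e2-unit-24} is in hand, as it is here, the corollary is a direct read-off.
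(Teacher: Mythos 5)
There is a genuine gap. The corollary asserts that the relevant components of $\dd^{1}(3)$ are \emph{null as maps of motivic spectra}, whereas your argument only establishes that they induce zero on the homotopy groups of the slices over $F$ --- which is exactly the ($E^{1}$-level) content already contained in Lemma~\ref{lem:e2-unit-24}, not the corollary. Your reduction ``the claim amounts to showing $b\rho^{4}=0$ and $\phi\rho=0$'' is false on two counts. First, the bidegree-$(2,1)$ operation $\Sigma^{5,3}\MZ/2\{\alpha_{3}\}\to\Sigma^{7,4}\MZ/2\{\beta_{2/2}\}$ has the general form $a\Sq^{2}+\phi\Sq^{1}$ with $a\in h^{0,0}$, and you have silently dropped the $a\Sq^{2}$ term. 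Since $\Sq^{2}(\tau a_{0})=0$, this term acts as zero on every class appearing in the relevant $E^{1}$-groups, so Lemma~\ref{lem:e2-unit-24} gives no control whatsoever over $a$; yet $a=0$ must be proved for the map of spectra to vanish. Second, $\phi\rho=0$ does not imply that $\phi\Sq^{1}$ is the zero operation: over $\QQ$ the square class $\phi=2$ satisfies $\{2,-1\}=0$ while $2\cdot\Sq^{1}\neq 0$ in the Steenrod algebra, and over $\CC$ one has $\rho=0$ outright. (For the $(4,1)$-component the situation is better than you state: the proof of Lemma~\ref{lem:e2-unit-24} actually pins down $b=0\in h^{0,0}$, not merely $b\rho^{4}=0$, via $\RR$ and comparison with $\ZZ[\tfrac{1}{2}]$; but for $\phi$ it only yields $\phi\in\{1,2\}$.) Consequently your closing inference --- from ``zero on homotopy groups in every weight over $F$'' to the assertion --- does not go through.

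The missing ingredient, which is the actual content of the paper's proof of the corollary, is the nullhomotopy of the composite of consecutive slice differentials
\[
\s_{2}(\unit)\to\Sigma^{1,0}\s_{3}(\unit)\to\Sigma^{2,0}\s_{4}(\unit).
\]
Projecting this composite onto the $\beta_{2/2}$-summand and using the Adem relations (e.g.\ $\Sq^{2}\Sq^{3}\Sq^{1}=\Sq^{5}\Sq^{1}$ and $\Sq^{1}\Sq^{2}=\Sq^{3}$, applied to the components $\Sq^{2}$, $\Sq^{3}\Sq^{1}$, $\tau\partial^{12}_{2}$, $\Sq^{2}\partial^{12}_{2}$ of $\dd^{1}(2)$) forces $a\Sq^{5}\Sq^{1}=0$ and $\phi\Sq^{1}\Sq^{2}\partial^{12}_{2}=0$, whence $a=0$ and $\phi=1$ (the trivial square class), so that both operations vanish identically. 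Your proposal never invokes $\dd^{1}\circ\dd^{1}\simeq 0$, and without it the corollary is not ``a direct read-off'' from Lemma~\ref{lem:e2-unit-24}.
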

\begin{proof}
The proof of Lemma~\ref{lem:e2-unit-24} shows the map $\Sigma^{3,3}\MZ/2 \to \Sigma^{7,4}\MZ/2$ (given by $b\Sq^3\Sq^1$) is zero, 
and that $\Sigma^{5,3}\MZ/2 \to \Sigma^{7,4}\MZ/2$ (given by $a\Sq^{2}+\phi\Sq^1$, where $a\in h^{0,0}$, $\phi\in h^{1,1}$) 
is of the form $a\Sq^{2}+\phi\Sq^1$, where $a\in h^{0,0}$, $\phi\in \{1,2\}$. 
The composite of $d^{1}$-differentials   
\[ 
\s_{2}(\unit)
\to 
\Sigma^{1,0}\s_{3}(\unit) 
\to 
\Sigma^{2,0}\s_{4}(\unit)
\]
is trivial.  
Since $a\Sq^{5}\Sq^{1} = \phi\Sq^1\Sq^{2}\partial^{12}_{2} = 0$,
it follows that $a=0$ and $\phi=1$.
\end{proof}

\begin{lemma}
\label{lem:e2-unit-23}
The group $E^{2}_{-n+2,3,-n}(\unit)$ is isomorphic to 
\[
h^{n+1,n+3}/\tau^{2}\rho^{2}h^{n-1,n-1}
\] 
generated by $\alpha_{1}^{3}$.
\end{lemma}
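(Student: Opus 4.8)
The strategy is to repeat the computation of Lemma~\ref{lem:e2-kq-23}, using that the unit map $\unit\to\kq$ is the identity on $\s_3$ by Lemma~\ref{lem:unit-kq-sq}; consequently the third column of the slice spectral sequence agrees with that of $\kq$ on $E^1$, and the only difference is that the incoming first slice differential originates from the $\Lambda/12$-summand of $\s_2(\unit)$ instead of from an integral summand.

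First I would identify the relevant $E^1$-terms. From $\s_3(\unit)=\Sigma^{3,3}\M\Lambda/2\{\alpha_1^3\}\vee\Sigma^{5,3}\M\Lambda/2\{\alpha_3\}$ one gets $E^1_{-n+2,3,-n}(\unit)=h^{n+1,n+3}\oplus h^{n+3,n+3}$. Inspecting $\s_4(\unit)$ in this tridegree shows that only the $\alpha_1^4$-summand contributes, so by Lemma~\ref{lem:first-diff-unit-1} the outgoing $d^1$ is $(b_2,b_0)\mapsto\Sq^2 b_2+\tau b_0\in h^{n+3,n+4}$; since $\tau\colon h^{n+3,n+3}\to h^{n+3,n+4}$ is an isomorphism, its kernel is $\{(c_2,\tau^{-1}\Sq^2 c_2)\mid c_2\in h^{n+1,n+3}\}\iso h^{n+1,n+3}$, exactly as for $\kq$. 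By Lemma~\ref{lem:first-diff-unit-1} again, $E^1_{-n+3,2,-n}(\unit)=h^{n-1,n+2}\oplus h^{n,n+2}_{12}$, and the incoming $d^1$, read in the coordinate indexed by $\alpha_1^3$, is $(a,a')\mapsto\Sq^2 a+\tau\partial^{12}_2 a'$. Hence $E^2_{-n+2,3,-n}(\unit)\iso h^{n+1,n+3}/\bigl(\Sq^2 h^{n-1,n+2}+\tau\partial^{12}_2 h^{n,n+2}_{12}\bigr)$.

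It remains to compute the two summands of the denominator. Exactly as in Lemma~\ref{lem:e2-kq-23}, writing $h^{n-1,n+2}=\tau^3 h^{n-1,n-1}$ and using $\Sq^1 h^{n-1,n-1}\subset h^{n,n-1}=0$, $\Sq^2 h^{n-1,n-1}\subset h^{n+1,n-1}=0$, the Cartan formula gives $\Sq^2 h^{n-1,n+2}=\Sq^2(\tau^3)\cdot h^{n-1,n-1}=\tau^2\rho^2 h^{n-1,n-1}$. For the other summand I would show that $\partial^{12}_2$ vanishes on $h^{n,n+2}_{12}$: it factors as $h^{n,n+2}_{12}\xrightarrow{\pr^{12}_4}h^{n,n+2}_4\xrightarrow{\partial^4_2}h^{n+1,n+2}$, and $\partial^4_2$ restricts along $\inc^2_4$ to the Bockstein $\Sq^1$, which vanishes on $h^{n,n+2}=\tau^2 h^{n,n}$ because $\Sq^1\tau^2=0$ and $\Sq^1 h^{n,n}\subset h^{n+1,n}=0$. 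The cofiber sequence $\M\Lambda/2\xrightarrow{\inc^2_4}\M\Lambda/4\xrightarrow{\pr^4_2}\M\Lambda/2\xrightarrow{\Sq^1}\Sigma^{1,0}\M\Lambda/2$ then shows $\pr^4_2\colon h^{n,n+2}_4\to h^{n,n+2}$ is onto with kernel $\inc^2_4 h^{n,n+2}$, on which $\partial^4_2$ vanishes; so $\partial^4_2$ descends to a natural transformation $h^{n,n+2}\to h^{n+1,n+2}$ of cohomological bidegree $(1,0)$, necessarily a multiple of $\Sq^1$ and hence zero on $h^{n,n+2}$. Therefore $\tau\partial^{12}_2 h^{n,n+2}_{12}=0$ and $E^2_{-n+2,3,-n}(\unit)\iso h^{n+1,n+3}/\tau^2\rho^2 h^{n-1,n-1}$, generated by $\alpha_1^3$.

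The hard part is precisely the vanishing of $\partial^{12}_2$ on $h^{n,n+2}_{12}$. Naturality of $\unit\to\kq$ together with Lemma~\ref{lem:unit-kq-s2} only yields the weaker containment $\tau\partial^{12}_2 h^{n,n+2}_{12}\subset\tau\pr^\infty_2 H^{n+1,n+2}$, which is the extra term appearing in the $\kq$-answer of Lemma~\ref{lem:e2-kq-23}; so one must genuinely show this subgroup is zero — equivalently, that the mod-$2$ reduction of the $12$-torsion subgroup of $H^{n+1,n+2}(F,\ZZ)$ is trivial — and the reduction to the operation $\Sq^1$ above accomplishes this since $\Sq^1$ vanishes in this weight.
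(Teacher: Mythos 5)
Your overall strategy coincides with the paper's: identify the kernel of the exiting $d^1$ as the graph of $\tau^{-1}\Sq^2$ over $h^{n+1,n+3}$, project the incoming $d^1$ to its first coordinate, compute $\Sq^2h^{n-1,n+2}=\tau^2\rho^2h^{n-1,n-1}$ by the Cartan formula, and reduce everything to the vanishing of $\partial^{12}_2$ on $h^{n,n+2}_{12}$, which you correctly factor through $\partial^4_2\colon h^{n,n+2}_4\to h^{n+1,n+2}$. Up to that point the argument is sound and matches Lemma~\ref{lem:first-diff-unit-1} and the paper's proof.

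The gap is in your proof of the vanishing of $\partial^4_2$ on $h^{n,n+2}_4$ (the paper's Corollary~\ref{cor:partial42zerooneven} with $k=2$). The descended map $\overline{\partial}\colon h^{n,n+2}\to h^{n+1,n+2}$ is a \emph{secondary} Bockstein: it is defined only on those bidegrees where $\Sq^1$ vanishes and $\pr^4_2$ is surjective, so it is not a bistable cohomology operation, and the classification of the motivic Steenrod algebra in bidegree $(1,0)$ does not apply to it. Indeed, a secondary Bockstein is generically nonzero -- its vanishing on a class $y$ is equivalent to a lift of $y$ to mod-$4$ coefficients lifting further to mod-$8$ coefficients, equivalently to ${}_{4}H^{n+1,n+2}\subset 2H^{n+1,n+2}$ -- so the step "necessarily a multiple of $\Sq^1$ and hence zero" cannot be correct as stated; it converts an arithmetic input into a formal one. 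Even if you upgrade $\overline{\partial}$ to a $\KMil$-module map and use that $\bigoplus_n h^{n,n+2}=\tau^2\kmil$ is generated by $\tau^2$, you are left with showing $\partial^4_2(\theta)=0$ for a lift $\theta\in h^{0,2}_4$ of $\tau^2$, and this is exactly where the paper invokes Lemma~\ref{lem:generators-motcohommod8}: the group $h^{0,2}_8\iso\ZZ/8$ surjects onto $h^{0,2}_4\iso\ZZ/4$ over the prime field (a nontrivial fact about the torsion of $H^{1,2}$), so $\theta$ lifts mod $8$ and $\partial^4_2(\theta)=0$ by exactness of $h^{0,2}_8\to h^{0,2}_4\to h^{1,2}$. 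Your argument needs to be replaced by (or supplemented with) this mod-$8$ lifting; the rest of your proof then goes through.
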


\begin{proof}
As before, the kernel of the $d^{1}$-differential
\[ 
h^{n+1,n+3}\oplus h^{n+3,n+3} 
\to 
h^{n+3,n+4}, 
\quad   
(b_{2},b_{0}) 
\mapsto 
\Sq^{2}b_{2}+\tau b_{0} 
\]
is isomorphic to $h^{n+1,n+3}$ via the map $c_{2}\mapsto (c_{2},\tau^{-1}\Sq^{2}c_{2})$. 
The image of the $d^{1}$-differential
\[  
h^{n-1,n+2}\oplus h^{n,n+2}_{12} 
\to 
h^{n+1,n+3}\oplus h^{n+3,n+3}, 
\,
(a_{3},a_{2}) 
\mapsto 
(\Sq^{2}a_{3}+\tau\partial^{12}_{2} a_{2}, \Sq^3\Sq^1a_{3}+\Sq^{2}\partial^{12}_{2}a_{2})
\]
is the subgroup $\Sq^{2}h^{n-1,n+2}$ by 
Corollary~\ref{cor:partial42zerooneven}.
It can be identified with $\tau^{2}\rho^{2}h^{n-1,n-1}$.
\end{proof}

\begin{lemma}
\label{lem:e2-unit-22}
The group $E^{2}_{-n+2,2,-n}(\unit)$ is isomorphic to the direct sum 
\[
\bigl(h^{n,n+2}/\Sq^{2}h^{n-2,n+1}\bigr)
\directsum 
\im(h^{n+1,n+2}_{24}
\to 
h^{n+1,n+2}_{12})
\] 
generated by $\alpha_{1}^{2}$ and $\alpha_{2/2}$, respectively.
\end{lemma}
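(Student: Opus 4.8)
The plan is to identify $E^2_{-n+2,2,-n}(\unit)$ as the homology of the slice $\dd^{1}$-differential at the $E^1$-term
\[ E^1_{-n+2,2,-n}(\unit)\iso h^{n,n+2}\{\alpha_1^2\}\directsum h^{n+1,n+2}_{12}\{\alpha_{2/2}\}, \]
using Lemma~\ref{lem:first-diff-unit-1}. First I would record the two differentials meeting this tridegree. The outgoing $\dd^{1}$ sends $(b_2,a_2)$ to $\Sq^2 b_2+\tau\partial^{12}_2 a_2$ in $E^1_{-n+1,3,-n}(\unit)$; the component of the slice differential hitting the $\alpha_3$-summand is irrelevant, since that summand of $E^1_{-n+1,3,-n}(\unit)$ equals $h^{n+4,n+3}=0$ by vanishing of motivic cohomology above the diagonal. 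The incoming $\dd^{1}$ sends $a_1\in h^{n-2,n+1}=E^1_{-n+3,1,-n}(\unit)$ to $(\Sq^2 a_1,\inc^2_{12}\Sq^2\Sq^1 a_1)$.

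Next I would analyse the kernel of the outgoing differential. Since $\tau\colon h^{n+2,n+2}\to h^{n+2,n+3}$ is an isomorphism by Beilinson--Lichtenbaum, a pair $(b_2,a_2)$ is a cycle if and only if $\partial^{12}_2 a_2=\tau^{-1}\Sq^2 b_2$. Writing $b_2=\tau^2 c$ with $c\in h^{n,n}$ (again $\tau$-periodicity), the motivic Cartan formula together with $\Sq^1\tau=\rho$, $\Sq^2\tau=0$, and the vanishing of $\Sq^1 c$ and $\Sq^2 c$ for degree reasons yields $\Sq^2 b_2=\tau\rho^2 c$, hence $\tau^{-1}\Sq^2 b_2=\rho^2 c$. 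The decisive point is then the identity $\partial^{12}_2\inc^2_{12}=\Sq^1$ (which follows from $\partial^{12}_\infty\inc^2_{12}=\partial^2_\infty$ and $\partial^{12}_2=\pr^\infty_2\partial^{12}_\infty$): it shows that the additive map
\[ \sigma_n\colon h^{n,n+2}\to h^{n+1,n+2}_{12},\qquad \sigma_n(\tau^2 c)=\inc^2_{12}(\rho\tau c), \]
satisfies $\partial^{12}_2\sigma_n(b_2)=\Sq^1(\rho\tau c)=\rho^2 c=\tau^{-1}\Sq^2 b_2$. Consequently $(b_2,a_2)\mapsto\bigl(b_2,\,a_2-\sigma_n(b_2)\bigr)$ is an isomorphism from the group of cycles onto $h^{n,n+2}\directsum\ker(\partial^{12}_2)$, and the Bockstein exact sequence for $0\to\ZZ/2\to\ZZ/24\to\ZZ/12\to0$ identifies $\ker(\partial^{12}_2)$ with $\im\bigl(h^{n+1,n+2}_{24}\to h^{n+1,n+2}_{12}\bigr)$.

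It remains to transport the image of the incoming differential through this splitting. Writing $a_1=\tau^3 c_1$ with $c_1\in h^{n-2,n-2}$, the same Cartan and Adem bookkeeping gives $\Sq^2 a_1=\tau^2\rho^2 c_1$ and $\Sq^2\Sq^1 a_1=\tau\rho^3 c_1$, whence $\inc^2_{12}\Sq^2\Sq^1 a_1=\inc^2_{12}(\tau\rho^3 c_1)=\sigma_n(\Sq^2 a_1)$. Thus the incoming $\dd^{1}$ lands entirely in the first summand, with image $\Sq^2 h^{n-2,n+1}$, and passing to the quotient yields the claimed splitting
\[ E^2_{-n+2,2,-n}(\unit)\iso\bigl(h^{n,n+2}/\Sq^2 h^{n-2,n+1}\bigr)\directsum\im\bigl(h^{n+1,n+2}_{24}\to h^{n+1,n+2}_{12}\bigr), \]
generated by $\alpha_1^2$ and $\alpha_{2/2}$ as asserted.

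I expect the main obstacle to be the bookkeeping with the $\ZZ/12$- and $\ZZ/24$-coefficient Bocksteins, above all pinning down $\partial^{12}_2$ as $\pr^\infty_2\partial^{12}_\infty$ and verifying $\partial^{12}_2\inc^2_{12}=\Sq^1$ on which the splitting map $\sigma_n$ rests; this is precisely the difficulty of controlling mod-$n$ motivic cohomology highlighted in the introduction. The Steenrod-operation identities, by contrast, are routine consequences of the motivic Cartan formula and Voevodsky's Adem relations, and any residual square-class ambiguity (such as $\Sq^2\tau=0$) may be reduced to the prime field exactly as in the proofs of Lemmas~\ref{lem:e2-unit-24} and~\ref{lem:d2-pi1kq-zero}.
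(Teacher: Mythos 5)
Your proof is correct and follows essentially the same strategy as the paper's: split the kernel of the outgoing $d^1$ as $h^{n,n+2}\directsum\ker(\partial^{12}_{2})$ via an explicit section over the first factor, identify $\ker(\partial^{12}_{2})$ with $\im(h^{n+1,n+2}_{24}\to h^{n+1,n+2}_{12})$ by the Bockstein sequence, and check that the incoming differential lands in the first factor with image $\Sq^{2}h^{n-2,n+1}$. The only real difference is the choice of section: you use $b_{2}=\tau^{2}c\mapsto(b_{2},\inc^{2}_{12}(\rho\tau c))$, resting on the identity $\partial^{12}_{2}\inc^{2}_{12}=\Sq^1$ (which the paper itself invokes in the proof of Theorem~\ref{theoremE2sphere}), whereas the paper's $\phi_n$ uses the section $a_{2}\mapsto(a_{2},\partial^{2}_{12}(\tau^{-1}a_{2})\cdot\tau_{12})$ built from the mod-$24$ Bockstein and the class $\tau_{12}$; your variant reduces the verification that the incoming $d^1$ lies in the first summand to the one-line Cartan computation $\inc^{2}_{12}\Sq^{2}\Sq^1(\tau^3c_{1})=\sigma_n(\Sq^{2}(\tau^3c_{1}))$, and is, if anything, slightly cleaner.
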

\begin{proof}
If $\rho^{2}=0$, the kernel of the $d^{1}$-differential
\[
h^{n,n+2}\oplus h^{n+1,n+2}_{12} 
\to 
h^{n+2,n+3},
\quad 
(b_{2},b_{1}) 
\mapsto 
\Sq^{2}b_{2}+\tau \partial^{12}_{2} b_{1}
\]
is isomorphic to $h^{n,n+2}\directsum \ker(h^{n+1,n+2}_{12}\xrightarrow{\partial^{12}_{2}} h^{n+2,n+2})=h^{n,n+2}\directsum \im(h^{n+1,n+2}_{24}\to h^{n+1,n+2}_{12})$,
and the image of 
\[ 
h^{n-2,n+1}
\to 
h^{n,n+2}\oplus h^{n+1,n+2}_{12}, 
\quad 
a_{3} 
\mapsto  
(\Sq^{2}a_{3}, \inc\Sq^{2}\Sq^1a_{3})\]
is trivial. 
In general, 
consider the map
\begin{align*}
\phi_n\colon \{(b_{2},b_{1})\in h^{n,n+2}\oplus h^{n+1,n+2}_{12}  
\colon 
\Sq^{2}b_{2}=\tau\partial^{12}_{2} b_{1}\} & 
\to 
\ker(h^{n+1,n+2}_{12}\xrightarrow{\partial^{12}_{2}} h^{n+2,n+2}) \\
(b_{2},b_{1}) 
& 
\mapsto 
\partial^{2}_{12}(\tau^{-1}b_{2})\cdot \tau_{12}+ b_{1}, 
\end{align*}
where $\tau_{12}$ denotes $-1\in F$ viewed as an element in $h^{0,1}_{12}$ (i.e., the kernel of $H^{1,1}\xrightarrow{12} H^{1,1}$). 
Observe that $\partial^{12}_{\infty}(\tau_{12})=-1$ and $\inc^{2}_{12}(\tau) = \tau_{12}$. 
Moreover,
$\partial^{2}_{12}\colon h^{n,n+1}\to h^{n+2,n+2}_{12}$ is the connecting map for the short exact coefficient sequence
\[ 
0\to \ZZ/12\to \ZZ/24 \to \ZZ/2 \to 0 
\]
and $\cdot$ denotes the product in $h^{\ast,\ast}_{12}$.
Note that $\phi_n$ is well-defined because 
\begin{align*}
\partial^{12}_{2}\bigl(\partial^{2}_{12}(\tau^{-1}b_{2})\cdot \tau_{12}+ b_{1}\bigr) 
& 
= 
\partial^{12}_{2}\bigl(\partial^{2}_{12}(\tau^{-1}b_{2})\cdot \tau_{12}\bigr)+ \partial^{12}_{2} b_{1} \\
& 
= 
\rho^{2}\tau^{-1}b_{2}+\partial^{12}_{2}b_{1} 
= 
0.
\end{align*}
It is surjective since  $\phi(0,c_{1})=c_{1}$ for every $c_{1}\in \ker(\partial^{12}_{2}\colon h^{n+1,n+2}_{12}\rightarrow h^{n+2,n+2})$.
The kernel of $\phi_n$ is precisely $h^{n,n+2}$, 
mapping via the split injection $a_{2}\mapsto (a_{2},\partial^{2}_{12}(\tau^{-1}a_{2})\cdot \tau_{12})$. 
The image of the $d^{1}$-differential in $\ker(\phi_n)$ coincides with $\Sq^{2} (h^{n-2,n+1})$, 
which can be identified with $\tau^{2}\rho^{2}h^{n-2,n-2}$.
The image of the $d^{1}$-differential in the target of $\phi_n$ is trivial since
\begin{align*}
\phi_n\bigl(\Sq^{2}(\tau^3\pr^{\infty}_{2}C_{0}),\inc^{2}_{12}\Sq^{2}\Sq^1(\tau^3\pr^{\infty}_{2}C_{0})\bigr) 
& 
=
\phi_n \bigl(\tau^{2}\rho^{2} \pr^{\infty}_{2}C_{0}, \inc^{2}_{12}(\tau \rho^3 \pr^{\infty}_{2}C_{0})\bigr) \\
& 
= 
\partial^{2}_{12}(\tau \rho^{2} \pr^{\infty}_{2}C_{0})\cdot \tau_{12} + \inc^{2}_{12}(\tau \rho^3 \pr^{\infty}_{2}C_{0}) \\
& 
= 
\rho_{12}^3 \pr^{\infty}_{12} C_{0} \cdot \tau_{12} +\rho_{12}^3\pr^{\infty}_{12}C_{0}\cdot \tau_{12} = 0.
\end{align*}
Here $\rho_{12} = \pr^{\infty}_{12}(\{-1\})$, 
which implies that $\pr^{12}_{2}(\rho_{12})=\rho$.
From the above we obtain a split short exact sequence
\[ 
0
\to 
h^{n,n+2}/\Sq^{2}(h^{n-2,n+1}) 
\to 
E^{2}_{-n+2,2,-n}(\unit) 
\to 
\im(h^{n+1,n+2}_{24}\to h^{n+1,n+2}_{12})
\to 
0, 
\]
which completes the argument.
\end{proof}

\begin{lemma}
\label{lem:e2-unit-21}
The group $E^{2}_{-n+2,1,-n}(\unit)$ is isomorphic to 
\[ 
\{a\in h^{n-1,n+1} \colon \Sq^{2} a=0\} \iso {}_{\rho^2}h^{n-1,n-1}
\] 
generated by $\alpha_{1}$.
\end{lemma}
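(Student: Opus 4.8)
The plan is to read off $E^{1}_{-n+2,1,-n}(\unit)=h^{n-1,n+1}$ directly from the slice $\s_{1}(\unit)=\Sigma^{1,1}\MZ/2\{\alpha_{1}\}$, and then to compute the two $\dd^{1}$-differentials touching this tridegree; since the claim concerns the $E^{2}$-page, no higher differentials are involved. The incoming differential originates from $E^{1}_{-n+3,0,-n}(\unit)=H^{n-3,n}$ and, by Lemma~\ref{lem:first-diff-unit-1}, is induced by $\dd^{\unit}_{1}(0)=\Sq^{2}\pr$, i.e. it is the composite $\Sq^{2}\pr^{\infty}_{2}\colon H^{n-3,n}\to h^{n-1,n+1}$; this vanishes by Corollary~\ref{cor:sq2prinfty2iszero}. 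Hence $E^{2}_{-n+2,1,-n}(\unit)$ is exactly the kernel of the outgoing $\dd^{1}$.

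By Lemma~\ref{lem:first-diff-unit-1} that outgoing differential is $\dd^{\unit}_{1}(1)=(\Sq^{2},\inc_{2}\Sq^{2}\Sq^{1})$, which on $\pi_{-n+1,-n}$ becomes the map $h^{n-1,n+1}\to h^{n+1,n+2}\directsum h^{n+2,n+2}_{12}$, $a\mapsto\bigl(\Sq^{2}a,\inc^{2}_{12}\Sq^{2}\Sq^{1}a\bigr)$. I would compute this using the Beilinson--Lichtenbaum identification: since $n-1\le n+1$, multiplication by $\tau^{2}$ is an isomorphism $h^{n-1,n-1}\xrightarrow{\ \sim\ }h^{n-1,n+1}$, so every $a$ is uniquely $\tau^{2}x$ with $x\in h^{n-1,n-1}$. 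Because motivic cohomology of a field vanishes strictly above the diagonal, $\Sq^{1}x\in h^{n,n-1}=0$ and $\Sq^{2}x\in h^{n+1,n}=0$; combining this with $\Sq^{1}\tau=\rho$, $\Sq^{2}\tau=0$ and the motivic Cartan formulas (the $\Sq^{2}$-version carrying the cross term $\tau\,\Sq^{1}(-)\Sq^{1}(-)$), one gets $\Sq^{1}(\tau^{2})=0$ and $\Sq^{2}(\tau^{2})=\tau\rho^{2}$, whence $\Sq^{1}a=0$ and $\Sq^{2}a=\tau\rho^{2}x$. In particular $\Sq^{2}\Sq^{1}a=0$, so the second component of the differential is identically zero and its kernel is precisely $\{a\in h^{n-1,n+1}:\Sq^{2}a=0\}$.

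Finally, $\tau\colon h^{n+1,n+1}\to h^{n+1,n+2}$ is injective (Beilinson--Lichtenbaum once more), so $\Sq^{2}a=\tau\rho^{2}x=0$ is equivalent to $\rho^{2}x=0$; transporting through $a=\tau^{2}x$ identifies this kernel with ${}_{\rho^{2}}h^{n-1,n-1}$, and the generator corresponds to $\alpha_{1}$ since $\s_{1}(\unit)$ is generated by $\alpha_{1}$. The only step calling for genuine care is the motivic Cartan formula for $\Sq^{2}$ — specifically the $\tau$-weighted cross term that produces the factor $\rho^{2}$ — together with the accompanying verification that the component into the $\Lambda/12$-summand dies; this is precisely what makes the sphere case differ from the $\kq$ case of Lemma~\ref{lem:e2-kq-21}, where the corresponding slice summand is integral and the map into it is automatically zero. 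Everything else is degree bookkeeping and the Milnor-conjecture identifications applied in their valid ranges.
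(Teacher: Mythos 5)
Your proposal is correct and follows essentially the same route as the paper: the incoming differential $\Sq^{2}\pr^{\infty}_{2}\colon H^{n-3,n}\to h^{n-1,n+1}$ vanishes by Corollary~\ref{cor:sq2prinfty2iszero} (the paper cites the underlying Lemma~\ref{lem:Sq2pr42iszero} via $\Sq^2\pr^\infty_2=\Sq^2\pr^4_2\pr^\infty_4$), and the rest is the explicit description of the outgoing $\dd^{1}$. Your Cartan-formula computation showing $\Sq^{1}(\tau^{2}x)=0$, hence that the component into the $\Lambda/12$-summand dies and that $\Sq^{2}(\tau^{2}x)=\tau\rho^{2}x$ yields the identification with ${}_{\rho^{2}}h^{n-1,n-1}$, correctly fills in the details the paper leaves implicit.
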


\begin{proof}
  This follows from the description of the $\dd^{1}$-differential
  and Lemma~\ref{lem:Sq2pr42iszero} below, since
  $\Sq^2\pr^\infty_2=\Sq^2\pr^4_2\pr^\infty_4$.
\end{proof}

\begin{lemma}
\label{lem:e2-unit-20}
The group $E^{2}_{-n+2,0,-n}(\unit)$ is isomorphic to 
\[ 
\ker \bigl( H^{n-2,n} \xrightarrow{\Sq^{2}\pr} h^{n,n+1}\bigr) 
\] 
generated by $\alpha_{1}^{0}=1$.
\end{lemma}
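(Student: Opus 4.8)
The plan is to read off $E^{2}_{-n+2,0,-n}(\unit)$ directly from the $E^{1}$-page of the weight $-n$ slice spectral sequence for $\unit$, exactly parallel to the computation of $E^{2}_{-n+2,0,-n}(\kq)$ in Lemma~\ref{lem:e2-kq-20}.

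First I would pin down the relevant $E^{1}$-term. By the identification \eqref{equation:sphereslices} of the slices of $\unit_{\Lambda}$, the only summand of $\s_{q}(\unit_{\Lambda})$ in slice degree $q=0$ is $\s_{0}(\unit_{\Lambda})=\MZ$, so $E^{1}_{-n+2,0,-n}(\unit)=\pi_{-n+2,-n}\MZ=H^{n-2,n}(F)$ --- the open-square entry in the bottom row of Figure~\ref{fig:e1-sphere}. Next I would note that there is no incoming $d^{1}$-differential at this spot, since its source $E^{1}_{-n+3,-1,-n}(\unit)=\pi_{-n+3,-n}\s_{-1}(\unit_{\Lambda})$ involves the slice $\s_{-1}(\unit_{\Lambda})$, which is contractible by \eqref{equation:sphereslices}. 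Therefore $E^{2}_{-n+2,0,-n}(\unit)$ is precisely the kernel of the outgoing differential
\[ d^{1}\colon E^{1}_{-n+2,0,-n}(\unit)=H^{n-2,n}\longrightarrow E^{1}_{-n+1,1,-n}(\unit)=h^{n,n+1}. \]
By Lemma~\ref{lem:first-diff-unit-1} the relevant slice differential is $\dd^{\unit_{\Lambda}}_{1}(0)=\Sq^{2}\pr\colon \MZ\to \MZ/2\to \Sigma^{2,1}\MZ/2$, so the induced $d^{1}$ is the composite $\Sq^{2}\pr$, yielding the claimed identification $E^{2}_{-n+2,0,-n}(\unit)\iso \ker\bigl(H^{n-2,n}\xrightarrow{\Sq^{2}\pr}h^{n,n+1}\bigr)$. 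The module generator is the unit $1=\alpha_{1}^{0}$ of $\s_{0}(\unit_{\Lambda})=\MZ$, since that slice is free of rank one on it.

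I do not expect a genuine obstacle here: this is the easiest entry in the column, precisely because the slice in degree $-1$ is empty, so there is no incoming differential to control and no extension problem at this spot. The only care needed is in the grading bookkeeping --- confirming that $H^{n-2,n}$ and $h^{n,n+1}$ occupy these two tridegrees and that the induced map is $\Sq^{2}$ followed by the mod-$2$ reduction $\pr$, so that the reduction genuinely appears, reflecting the integral coefficients of $\s_{0}(\unit_{\Lambda})$, in contrast to the purely mod-$2$ Steenrod operations higher up the column.
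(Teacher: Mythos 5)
Your proof is correct and is exactly the argument the paper intends: the paper's proof is the one-line remark that the claim ``follows directly from the description of the $\dd^{1}$-differential,'' and your write-up simply fills in the bookkeeping (no incoming differential since $\s_{-1}(\unit)=\ast$, outgoing differential $\Sq^{2}\pr$ from Lemma~\ref{lem:first-diff-unit-1}). Nothing further is needed.
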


\begin{proof}
  This follows directly from the description of the $\dd^{1}$-differential.  
\end{proof}

We summarize our calculations by displaying the groups contributing to the $1$- and $2$-line:
\begin{theorem}
\label{theoremE2sphere}
The nontrivial rows in the $1$st and $2$nd columns of the $E^{2}$-page of the $-n$th slice spectral sequence for $\unit$ are given as:
\begin{center}
\begin{tabular}{lll}
\hline
$q$ & $E^{2}_{-n+1,q,-n}(\unit)$    & $E^{2}_{-n+2,q,-n}(\unit)$ \\ \hline
$4$ & $0$ & $h^{n+4,n+4}$ \\
$3$ & $h^{n+2,n+3}/\tau\partial^{12}_{2} h^{n+1,n+2}_{12}$ \quad\quad  & $h^{n+1,n+3}/\Sq^{2}h^{n-1,n+2}$ \\
$2$ & $h^{n+2,n+2}_{12} \directsum $    & $\ker(h_{12}^{n+1,n+2}\xrightarrow{\partial^{12}_{2}} h^{n+2,n+2}) \directsum $ \\
    & $h^{n+1,n+2}/\Sq^{2}h^{n-1,n+1})$   & $\quad\quad h^{n,n+2}/\Sq^{2}h^{n-2,n+1}$ \\
$1$ & $h^{n,n+1}/\Sq^{2}\pr H^{n-2,n}$    & $\ker (h^{n-1,n+1}\xrightarrow{\Sq^{2}}h^{n+1,n+2})$ \\
$0$ & $H^{n-1,n}$     & $\ker (H^{n-2,n}\xrightarrow{\Sq^{2}\pr}h^{n,n+1})$ 
\end{tabular}
\end{center}
\end{theorem}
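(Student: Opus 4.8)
The plan is to assemble the table directly from the lemmas of this section, the genuine work being to match the ad hoc descriptions of subquotients produced there with the uniform formulas displayed in the statement.

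For the column $E^{2}_{-n+2,q,-n}(\unit)$ I would argue row by row: the rows $q\geq 5$ vanish by Lemma~\ref{lem:e2-unit-2>4}, and the rows $q=4,3,2,1,0$ are exactly Lemmas~\ref{lem:e2-unit-24}, \ref{lem:e2-unit-23}, \ref{lem:e2-unit-22}, \ref{lem:e2-unit-21}, and \ref{lem:e2-unit-20}. Two bookkeeping identities then bring the entries into the stated form. In row $q=3$ one must identify the subgroup $\tau^{2}\rho^{2}h^{n-1,n-1}\subseteq h^{n+1,n+3}$ occurring in Lemma~\ref{lem:e2-unit-23} with $\Sq^{2}h^{n-1,n+2}$: by the resolution of the Milnor conjecture, multiplication by $\tau\colon h^{p,q}\to h^{p,q+1}$ is an isomorphism whenever $q\geq p$, so $h^{n-1,n+2}=\tau^{3}h^{n-1,n-1}$, and the Cartan formula gives $\Sq^{2}(\tau^{3}y)=\tau^{2}\rho^{2}y$; this is precisely the remark made inside the proof of Lemma~\ref{lem:e2-unit-23}. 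In row $q=2$ one must identify $\im\bigl(h^{n+1,n+2}_{24}\to h^{n+1,n+2}_{12}\bigr)$ from Lemma~\ref{lem:e2-unit-22} with $\ker\bigl(h^{n+1,n+2}_{12}\xrightarrow{\partial^{12}_{2}}h^{n+2,n+2}\bigr)$, which is a segment of the long exact motivic cohomology sequence attached to the coefficient sequence $0\to\ZZ/2\to\ZZ/24\to\ZZ/12\to 0$, whose boundary map is $\partial^{12}_{2}$.

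For the column $E^{2}_{-n+1,q,-n}(\unit)$ I would note that these groups are exactly the $1$-line input already computed in \cite[\S5]{rso.oneline}; equivalently, one re-derives them by the same method as for the second column, now fed by the portion of the slice $d^{1}$-differential for $\unit$ recorded in Lemma~\ref{lem:first-diff-unit-1} (and drawn in Figures~\ref{fig:d1-sphere} and~\ref{fig:e1-sphere}). Thus the rows $q\geq 4$ vanish because the incoming $d^{1}$ into $E^{1}_{-n+1,q,-n}(\unit)=h^{n+q-1,n+q}$ restricts, on the appropriate mod-$2$ summand of $E^{1}_{-n+2,q-1,-n}(\unit)$, to the surjection $\tau\colon h^{n+q-1,n+q-1}\to h^{n+q-1,n+q}$ (the relevant entry of the $d^{1}$-matrix in Lemma~\ref{lem:first-diff-unit-1}); the rows $q=3,2,1,0$ are kernels and cokernels of the indicated Steenrod maps, computed just as in the $\kq$-analogues in Lemmas~\ref{lem:e2-kq-1>2} and~\ref{lem:e2-kq-1<3}, and then rewritten by means of the same two identities as above.

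Since every ingredient is already in hand, there is no serious obstacle at the level of Theorem~\ref{theoremE2sphere} itself: the statement is a compilation. The actual content lives in the individual lemmas, and the one place that uses input from outside the slice machinery is the determination of the potentially obstructing square class $\phi$ and integer $b$ in the incoming $d^{1}$ of Lemma~\ref{lem:e2-unit-24}, which has already been settled there by comparison with $\ZZ[\tfrac{1}{2}]$-coefficients and with the real numbers.
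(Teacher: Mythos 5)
Your proposal follows the paper's proof almost verbatim: the second column is compiled from Lemmas~\ref{lem:e2-unit-2>4}--\ref{lem:e2-unit-20}, the first column is quoted from the table in the proof of \cite[Theorem 5.5]{rso.oneline}, and the two translations you single out ($\tau^{2}\rho^{2}h^{n-1,n-1}=\Sq^{2}h^{n-1,n+2}$ and $\im(h^{n+1,n+2}_{24}\to h^{n+1,n+2}_{12})=\ker\partial^{12}_{2}$) are exactly the ones the relevant lemmas already record.

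The one point your write-up glosses over is the $q=3$ entry of the \emph{first} column. There the image of the incoming $d^{1}$ is the sum $\Sq^{2}h^{n,n+2}+\tau\partial^{12}_{2}h^{n+1,n+2}_{12}$, whereas the table displays only the quotient by $\tau\partial^{12}_{2}h^{n+1,n+2}_{12}$; neither of your ``two identities'' handles this. One needs the additional inclusion $\Sq^{2}(h^{n,n+2})\subseteq \tau\partial^{12}_{2}(h^{n+1,n+2}_{12})$, which the paper verifies by identifying $\Sq^{2}(h^{n,n+2})$ with $\tau\rho^{2}h^{n,n}$ and observing that $\partial^{12}_{2}\circ\inc^{2}_{12}=\Sq^{1}$ forces $\tau\Sq^{1}(h^{n+1,n+2})=\tau\rho h^{n+1,n+1}$ to lie in the second subgroup. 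This is a small bookkeeping step, but it is genuinely needed to reconcile the displayed formula with the cited table (or with the direct $d^{1}$-computation you propose as an alternative), so it should be stated explicitly.
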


\begin{proof}
  For the first column, see the table in the proof of \cite[Theorem 5.5]{rso.oneline}. Note that $E^{2}_{-n+1,3,-n}(\unit)$ coincides with the group
  $h^{n+2,n+3}\!/\tau\partial^{12}_{2}h^{n+1,n+2}_{12}$, because of the 
  inclusion $\Sq^2(h^{n,n+2})\subset \tau\partial^{12}_{2}(h^{n+1,n+2}_{12})$.
  In fact, the first subgroup can be identified with
  $\tau\rho^2 h^{n,n}$. Since $\partial^{12}_{2}\circ \inc^{2}_{12}=\Sq^1$,
  the second subgroup contains $\tau\Sq^1(h^{n+1,n+2})=\tau\rho h^{n+1,n+1}$.
  The second column follows from Lemmas~\ref{lem:e2-unit-2>4},~\ref{lem:e2-unit-24},~\ref{lem:e2-unit-23},~\ref{lem:e2-unit-22},~\ref{lem:e2-unit-21}, and~\ref{lem:e2-unit-20}.
\end{proof}

We claim the higher slice differentials entering the second column vanish.
Due to Lemma~\ref{lem:e2-unit-2>4}, it suffices to identify three terms on the second page of $\unit$'s slice spectral sequence. 

\begin{lemma}
\label{lem:e2-unit-32}
The group $E^{2}_{-n+3,2,-n}(\unit)$ coincides with the direct sum 
\[ h^{n,n+2}_{12}\directsum \ker\bigl(\Sq^2\colon h^{n-1,n+2}\to h^{n+1,n+3}\bigr).\]
\end{lemma}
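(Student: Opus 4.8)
The plan is to read off the relevant part of the first page of the slice spectral sequence for $\unit$ from \eqref{equation:sphereslices} (see Figure~\ref{fig:e1-sphere}) and then compute the homology at the tridegree $(-n+3,2,-n)$. The slice $\s_2(\unit)$ gives $E^1_{-n+3,2,-n}(\unit)\iso h^{n-1,n+2}\{\alpha_1^2\}\directsum h^{n,n+2}_{12}\{\alpha_{2/2}\}$; the target of the exiting $d^1$ is $E^1_{-n+2,3,-n}(\unit)\iso h^{n+1,n+3}\{\alpha_1^3\}\directsum h^{n+3,n+3}\{\alpha_3\}$; and the source of the entering $d^1$ is $E^1_{-n+4,1,-n}(\unit)\iso h^{n-3,n+1}\{\alpha_1\}$. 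First I would dispose of the entering differential: by Lemma~\ref{lem:first-diff-unit-1} it is $a\mapsto(\Sq^2 a,\inc^2_{12}\Sq^2\Sq^1 a)$, and writing a class of $h^{n-3,n+1}$ as $\tau^4 y$ with $y\in h^{n-3,n-3}$, the Cartan formula together with $\Sq^1\tau^4=\Sq^2\tau^4=0$ and $\Sq^1 y=\Sq^2 y=0$ (degree reasons, exactly the mechanism of Lemma~\ref{lem:Sq2pr42iszero}) forces $\Sq^1 a=\Sq^2 a=0$. Hence the entering $d^1$ vanishes and $E^2_{-n+3,2,-n}(\unit)=\ker\bigl(d^1\colon E^1_{-n+3,2,-n}(\unit)\to E^1_{-n+2,3,-n}(\unit)\bigr)$.

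Next I would analyse the exiting differential, which by Lemma~\ref{lem:first-diff-unit-1} sends $(b_2,b_1)$ to $(\Sq^2 b_2+\tau\partial^{12}_2 b_1,\ \Sq^3\Sq^1 b_2+\Sq^2\partial^{12}_2 b_1)$. The component $\Sq^2\partial^{12}_2 b_1$ vanishes for degree reasons, since $\partial^{12}_2 b_1$ lies in $h^{n+1,n+2}=\tau h^{n+1,n+1}$ and $\Sq^2$ kills $\tau$ times any class in top bidegree. The component $\tau\partial^{12}_2 b_1$ vanishes by Corollary~\ref{cor:partial42zerooneven} — this is precisely the input already exploited in the proof of Lemma~\ref{lem:e2-unit-23} to identify the image of this differential as $\Sq^2 h^{n-1,n+2}$, and it amounts to $\partial^{12}_2$ being trivial on $h^{n,n+2}_{12}$. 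Thus the exiting differential is $(b_2,b_1)\mapsto(\Sq^2 b_2,\Sq^3\Sq^1 b_2)$, which does not involve $b_1$, so its kernel is $\bigl(\ker\Sq^2\cap\ker\Sq^3\Sq^1\bigr)\directsum h^{n,n+2}_{12}$ inside $E^1_{-n+3,2,-n}(\unit)$. Finally, writing $b_2=\tau^3 z$ with $z\in h^{n-1,n-1}$, the Cartan formula gives $\Sq^2 b_2=\tau^2\rho^2 z$ and $\Sq^3\Sq^1 b_2=\rho^4 z$, so since $\tau$ acts injectively one has $\ker(\Sq^2\colon h^{n-1,n+2}\to h^{n+1,n+3})=\{\tau^3 z:\rho^2 z=0\}\subseteq\ker\Sq^3\Sq^1$. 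This gives $E^2_{-n+3,2,-n}(\unit)\iso h^{n,n+2}_{12}\directsum\ker\bigl(\Sq^2\colon h^{n-1,n+2}\to h^{n+1,n+3}\bigr)$, with the two summands carried by $\alpha_{2/2}$ and $\alpha_1^2$.

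The only nonformal ingredient — and the step I expect to be the real obstacle — is the vanishing of $\tau\partial^{12}_2$ on $h^{n,n+2}_{12}$, i.e. Corollary~\ref{cor:partial42zerooneven}: without it the $\alpha_{2/2}$-summand of $\ker(d^1)$ would in general be a proper subgroup of $h^{n,n+2}_{12}$ (namely those $b_1$ with $\partial^{12}_2 b_1\in\tau\rho^2 h^{n-1,n-1}$ and $\rho^2\tau^{-1}\partial^{12}_2 b_1=0$), and the clean direct-sum description would fail. Everything else is routine bookkeeping with the Cartan formula, the motivic Adem relation $\Sq^2\Sq^2=\tau\Sq^3\Sq^1$, and the coefficient short exact sequences.
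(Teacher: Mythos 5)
Your proof is correct and follows essentially the same route as the paper: the incoming differential from $E^1_{-n+4,1,-n}(\unit)\iso h^{n-3,n+1}=\tau^4h^{n-3,n-3}$ dies by the Cartan formula, and the $b_1$-components of the exiting differential die because $\partial^{12}_2$ vanishes on $h^{n,n+2}_{12}$ by Corollary~\ref{cor:partial42zerooneven}, which is exactly the paper's argument. Your additional observations — the containment $\ker(\Sq^2)\subseteq\ker(\Sq^3\Sq^1)$ on $h^{n-1,n+2}$ via $\Sq^2(\tau^3z)=\tau^2\rho^2z$ and $\Sq^3\Sq^1(\tau^3z)=\rho^4z$, and your correct identification of Corollary~\ref{cor:partial42zerooneven} as the one nonformal input — are left implicit in the paper but are accurate.
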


\begin{proof}
  The incoming differential $\Sq^2\colon h^{n-3,n+1}\to 
  h^{n,n+2}_{12}\directsum h^{n-1,n+2}$ is zero. The connecting
  map $\partial^{12}_{2}\colon h^{n,n+2}_{12}\to h^{n+1,n+2}$ is zero as,
  well, by Corollary~\ref{cor:partial42zerooneven}, which implies the statement.
\end{proof}

\begin{lemma}
  \label{lem:e2-unit-31}
  The group $E^{2}_{-n+3,1,-n}(\unit)$ coincides with 
  $\ker\bigl(\Sq^2\colon h^{n-2,n+1}\to h^{n,n+2}\bigr)$.
\end{lemma}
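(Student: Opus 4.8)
The plan is to read off $E^{2}_{-n+3,1,-n}(\unit)$ as the homology of the first slice differential at this tridegree, using the description of $\dd^{\unit}_{1}$ in Lemma~\ref{lem:first-diff-unit-1}. Since the first slice of $\unit$ is $\Sigma^{1,1}\M\Lambda/2$, one has $E^{1}_{-n+3,1,-n}(\unit)=h^{n-2,n+1}$; the incoming $\dd^{1}$ originates from $E^{1}_{-n+4,0,-n}(\unit)=H^{n-4,n}$ and is $\Sq^{2}\pr^{\infty}_{2}$, whereas the outgoing $\dd^{1}$ lands in $E^{1}_{-n+2,2,-n}(\unit)=h^{n,n+2}\directsum h^{n+1,n+2}_{12}$ with components $\Sq^{2}$ and $\inc^{2}_{12}\Sq^{2}\Sq^{1}$. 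It therefore suffices to show that the incoming differential vanishes and that the kernel of the outgoing one is exactly $\ker(\Sq^{2}\colon h^{n-2,n+1}\to h^{n,n+2})$.

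First I would dispose of the incoming differential, which factors through $\Sq^{2}\colon h^{n-4,n}\to h^{n-2,n+1}$. Arguing as in Lemma~\ref{lem:Sq2pr42iszero}, every class in $h^{n-4,n}$ has the form $\tau^{4}z$ with $z\in h^{n-4,n-4}$, and the motivic Cartan formula applied to $(\tau^{2})^{2}$ (using $\Sq^{1}(\tau^{2})=0$ and $\Sq^{2}(\tau^{2})=\tau\rho^{2}$) gives $\Sq^{2}(\tau^{4})=0$; together with $\Sq^{1}z\in h^{n-3,n-4}=0$ and $\Sq^{2}z\in h^{n-2,n-4}=0$ this shows $\Sq^{2}$ is identically zero on $h^{n-4,n}$, hence the incoming differential is zero. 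This is in fact easier than the $q-p=3$ situation of Lemma~\ref{lem:Sq2pr42iszero}, where $\Sq^{2}(\tau^{3})=\tau^{2}\rho^{2}$ is nonzero and one must first extract $\tau^{2}\rho z=0$ from $\Sq^{1}\pr^{4}_{2}=0$.

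For the outgoing differential, I would write a class $a\in h^{n-2,n+1}$ as $\tau^{3}y$ with $y\in h^{n-2,n-2}$. The Cartan formula together with $\Sq^{1}y\in h^{n-1,n-2}=0$ and $\Sq^{2}y\in h^{n,n-2}=0$ gives $\Sq^{1}a=\tau^{2}\rho y$ and $\Sq^{2}a=\tau^{2}\rho^{2}y$, whence $\Sq^{2}\Sq^{1}a=\Sq^{2}(\tau^{2}\rho y)=\tau\rho^{3}y$. As $\tau$ acts invertibly on $h^{p,q}$ for $p\leq q$, the condition $\Sq^{2}a=0$ is equivalent to $\rho^{2}y=0$, which forces $\rho^{3}y=0$ and therefore $\Sq^{2}\Sq^{1}a=0$. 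Consequently the second component $\inc^{2}_{12}\Sq^{2}\Sq^{1}$ cuts out nothing beyond $\Sq^{2}a=0$, so $\ker(\dd^{1})=\ker(\Sq^{2}\colon h^{n-2,n+1}\to h^{n,n+2})$, which combined with the previous paragraph proves the lemma.

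The step I expect to be the main obstacle is the bookkeeping with the secondary component $\inc^{2}_{12}\Sq^{2}\Sq^{1}$: one cannot merely assert that $\Sq^{2}\Sq^{1}$ vanishes on $h^{n-2,n+1}$, since it does not (over $\RR$ with $n=2$ it is nonzero), so the argument genuinely relies on the identity $\Sq^{2}\Sq^{1}a=\tau\rho^{3}y$ and the implication $\rho^{2}y=0\Rightarrow\rho^{3}y=0$. This is precisely the feature separating this computation from the neighbouring one in Lemma~\ref{lem:e2-unit-21}, where the relevant $\tau$-power is even and the secondary component already vanishes identically.
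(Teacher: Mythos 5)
Your proof is correct and follows the same route as the paper's: the incoming differential $\Sq^2\pr^\infty_2\colon H^{n-4,n}\to h^{n-2,n+1}$ vanishes, and the secondary component of the outgoing differential imposes no further condition because $\ker(\Sq^2)\subseteq\ker(\Sq^2\Sq^1)$ on $h^{n-2,n+1}$. You merely make explicit, via the $\tau$-power decomposition and the motivic Cartan formula, the two facts the paper states without computation, and your closing remark correctly identifies why this case differs from Lemma~\ref{lem:e2-unit-21}.
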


\begin{proof}
  The incoming differential $\Sq^2\pr^\infty_2\colon H^{n-4,n}\to 
  h^{n-2,n+1}$ is zero.   The statement then follows, because the kernel of
  $\Sq^2\colon h^{n-2,n+1}\to h^{n,n+2}$ is contained in the kernel of
  $\Sq^2\Sq^1\colon h^{n-2,n+1}\to h^{n+1,n+2}$.
\end{proof}

\begin{lemma}
  \label{lem:e2-unit-30}
  The group $E^{2}_{-n+3,0,-n}(\unit)$ coincides with 
  $H^{n-3,n}$.
\end{lemma}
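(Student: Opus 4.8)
The plan is to argue exactly as in the proofs of Lemma~\ref{lem:e2-kq-30} and Lemma~\ref{lem:e2-unit-31}: identify the $E^1$-entry in tridegree $(-n+3,0,-n)$ and then check that neither the outgoing nor the incoming slice $d^1$-differential contributes anything.

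First I would read off $E^1_{-n+3,0,-n}(\unit)$. Since the second index is the slice degree $q=0$, the only contribution comes from the summand $\s_0(\unit_\Lambda)=\M\Lambda$, and the relevant homotopy group is the integral motivic cohomology group $H^{n-3,n}$ (compare Figure~\ref{fig:e1-sphere}). Next, the outgoing slice $d^1$-differential is governed by $\dd^{\unit_\Lambda}_1(0)=\Sq^2\pr$ from Lemma~\ref{lem:first-diff-unit-1}, so on homotopy groups it is the composite
\[ \Sq^2\pr^\infty_2\colon H^{n-3,n}\to h^{n-1,n+1}=E^1_{-n+2,1,-n}(\unit), \]
which vanishes by Corollary~\ref{cor:sq2prinfty2iszero}. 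Finally, the incoming slice $d^1$-differential would have to originate from $E^1_{-n+4,-1,-n}(\unit)$, but this group is trivial because $\s_q(\unit_\Lambda)=\ast$ for $q<0$. Hence $E^2_{-n+3,0,-n}(\unit)=\ker\bigl(\Sq^2\pr^\infty_2\colon H^{n-3,n}\to h^{n-1,n+1}\bigr)=H^{n-3,n}$.

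There is no serious obstacle here; the only point requiring attention is to match the outgoing differential with the correct instance of the $\Sq^2\pr^\infty_2$-vanishing, i.e.\ to invoke Corollary~\ref{cor:sq2prinfty2iszero} in the bidegree dictated by Lemma~\ref{lem:first-diff-unit-1} (rather than appealing to Lemma~\ref{lem:Sq2pr42iszero} directly, which only covers the mod-$4$ reduction). Once that identification is in place the statement is immediate.
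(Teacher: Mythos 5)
Your proposal is correct and is exactly the paper's argument: the paper's proof consists of the single citation of Corollary~\ref{cor:sq2prinfty2iszero}, and you have simply filled in the (routine) identification of the $E^1$-entry, the vanishing of the incoming differential from the trivial negative slices, and the matching of the outgoing $d^1$ with the bidegree covered by that corollary. Nothing further is needed.
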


\begin{proof}
  This follows from Corollary~\ref{cor:sq2prinfty2iszero}.
\end{proof}

\begin{lemma}\label{lem:unit-2nd-diff-to-24}
The second differential
\[ 
E^2_{-n+3,2,-n} (\unit)
\to 
E^2_{-n+2,4,-n} (\unit)
\]
in the $-n$th slice spectral sequence for $\unit$ is trivial.
\end{lemma}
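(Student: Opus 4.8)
The plan is to use that the second slice differential $d^2$ is a homomorphism of $\KMW$-modules which moreover is a derivation for the multiplicative structure on the slice spectral sequence of the ring spectrum $\unit$. By Lemma~\ref{lem:e2-unit-32} the source $E^2_{-n+3,2,-n}(\unit)$ is the direct sum of the summand $h^{n,n+2}_{12}$ carried by $\alpha_{2/2}$ and the summand $\ker(\Sq^2\colon h^{n-1,n+2}\to h^{n+1,n+3})$ carried by $\alpha_1^2$, while by Lemma~\ref{lem:e2-unit-24} the target $E^2_{-n+2,4,-n}(\unit)\iso h^{n+4,n+4}$ is the cyclic $\KMW$-module generated by $\beta_{2/2}=\alpha_{2/2}^{2}$. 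Since $d^2$ is $\KMW$-linear it suffices to check that it vanishes on a generating set of the source.

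First I would dispose of the classes which are honest products of permanent cycles. The class $\alpha_1^{2}$ detects $\eta^{2}\in\pi_{2,2}\unit$ and lies in the bottom column, hence is a permanent cycle; the class $\alpha_{2/2}$ detects $\nu\in\pi_{3,2}\unit$ and is a permanent cycle by the computation of the $1$-line \cite{rso.oneline} (see Theorem~\ref{theorem:1line}). Consequently any source element of the form $z\alpha_{2/2}$ or $z\alpha_1^{2}$ with $z$ a permanent cycle in the $q=0$ row (whose $E^1$-term is integral motivic cohomology) is itself a permanent cycle, and $d^2$ kills it by the Leibniz rule together with $d^2(\alpha_{2/2})=d^2(\alpha_1^{2})=0$. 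This accounts for the part of the source reduced from integral motivic cohomology; what remains is the \emph{Bockstein} part, namely the cokernel of reduction $H^{n,n+2}\to h^{n,n+2}_{12}$, which $\partial^{12}_{\infty}$ identifies with ${}_{12}H^{n+1,n+2}$, together with the analogous residual part of the $\alpha_1^2$-summand.

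For these residual classes the comparison with hermitian $K$-theory is useless, since the unit map $\unit\to\kq$ already kills $\beta_{2/2}$ on $4$-slices (proof of Lemma~\ref{lem:e2-unit-24}); no naturality square over $\kq$ can detect a differential hitting $E^2_{-n+2,4,-n}(\unit)$. I would argue instead as in the proof of Lemma~\ref{lem:e2-unit-24}. Because $h^{n+4,n+4}$ is $2$-torsion, $d^2$ automatically annihilates the odd-primary part of the residual classes. For the remaining $2$-primary part, real realization carries $\nu$ to $\eta_{\mathrm{top}}$ and $\rho$ to $1$, so $[-1]^{-n}\nu^{2}\in\pi_{n+6,n+4}\unit$ is nonzero over every subfield of $\RR$; hence the whole target homotopy module consists of nonzero permanent cycles over such fields and $d^2$ vanishes there. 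One then descends to an arbitrary field of characteristic zero by naturality for the change of base field from the prime field (where the pertinent residual groups are small and can be pinned down via $\KMil$-computations, as in Lemma~\ref{lem:d2-pi1kq-zero} and Lemma~\ref{lem:d2-pi2kq-zero}), and reduces the positive characteristic case to $\ZZ[\tfrac12]$-coefficients as usual. The main obstacle is precisely this last step: without a hermitian $K$-theory comparison for the target, the vanishing of $d^2$ on the $2$-primary Bockstein part must be forced by realization and base-change arguments, and this requires an explicit handle on those residual groups over the prime field.
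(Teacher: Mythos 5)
Your overall strategy -- exploit the $\KMil$-module structure, reduce to generators defined over small fields, and use realization to show that the relevant target classes survive -- is the right one and is indeed what the paper does. But as written the proposal has a genuine gap, which you yourself flag at the end: you never produce the ``explicit handle on those residual groups over the prime field,'' and that is precisely where the content of the lemma lies. Concretely, the paper does \emph{not} decompose $h^{n,n+2}_{12}$ into the image of integral cohomology plus a Bockstein cokernel; it splits it into its $3$-torsion and $4$-torsion parts. The $3$-part dies because the target $h^{n+4,n+4}$ is $2$-torsion (you have this). For the $4$-part the essential input is Lemma~\ref{lem:generators-motcohommod4}: the $\KMil$-module $\bigoplus_n h^{n,n+2}_4$ is cyclic on a generator $\theta\in h^{0,2}_4$ defined over the prime field $F_0$, so by $\KMil$-linearity (Lemma~\ref{lem:differential-module-hom}) one only has to check $d^2(\theta)=0$ over $F_0$ in the single degree $n=0$, where the target $h^{4,4}(F_0)$ is zero in odd characteristic and equals $\{0,\rho^4\}$ over $\QQ$; there $\rho^4$ detects $\rho^4\nu^2$, which is nonzero by real realization, so it cannot be a boundary. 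Your version of the realization step is too coarse: over a general subfield of $\RR$ the target $h^{n+4,n+4}$ is large, and knowing that the single class $\rho^{n+4}$ survives says nothing about the other elements; without first reducing to a generator in a specific bidegree over a specific small field, realization proves nothing.

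The second, equally serious, omission is the summand $\ker(\Sq^2\colon h^{n-1,n+2}\to h^{n+1,n+3})$. This is not a ``Bockstein residue'' and is not touched by your Leibniz argument (only the image of $\pr^\infty_2\colon H^{n-1,n+2}\to h^{n-1,n+2}$ consists of products with classes from the $q=0$ row, and even for those you would still need to know that the integral classes are $d^2$-cycles, which is the content of the $1$-line analysis in \cite{rso.oneline}). The paper identifies $\bigoplus_n\ker(\Sq^2)$ with $\{a\in h^{\ast,\ast}\colon\rho^2a=0\}$, invokes \cite[Theorem 3.3]{ovv} to see it is generated in degrees $\leq 1$, and then handles the two cases by \cite[Theorem 3.2]{ovv} (writing a generator as a sum of transfers from quadratic extensions with $\rho^2=0$, using naturality of slice differentials under transfer) and by reducing to a field $F_0(a,b)$ of $2$-cohomological dimension $\leq 3$, over which the target $h^{5,5}$ vanishes. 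None of this machinery appears in your proposal, so the argument does not close.
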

\begin{proof}
  The target is computed as 
  $E^2_{-n+2,4,-n}(\unit)\iso h^{n+4,n+4}\{\alpha_{2/2}^2\}$
  in Lemma~\ref{lem:e2-unit-24}. 
  For $n=-4$, the element $1\cdot \{\alpha_{2/2}^2\}\in h^{0,0}$
  detects $\nu^2\in \pi_{6,4}\unit$. If $F$ is a subfield of the
  real numbers, the complex realization of $\nu^2$ is $\nu_{\Top}^2\neq 0$,
  and the real realization of $\nu^2$ is $\eta_{\Top}^2\neq 0$.
  Lemma \ref{lem:e2-unit-32}
  determines the source of this differential as
  \begin{align*}
    E^{2}_{-n+3,2,-n}(\unit) & = \{(a_2,a_3)\in h^{n,n+2}_{12}
                               \oplus h^{n-1,n+2} \colon \tau\partial^{12}_{2} (a_{2}) = \Sq^{2}(a_3)\} \\
    & = h^{n,n+2}_{12}
      \oplus \{a_3\in h^{n-1,n+2} \colon \Sq^{2}(a_3)=0\}
  \end{align*}
  because of Corollary~\ref{cor:partial42zerooneven}.
  Since the target of this differential
  is 2-torsion, it will be trivial on the direct summand
  $h^{n,n+2}_{3}$ of $E^{2}_{-n+3,2,-n}(\unit)$.
  In order to prove that it is trivial on the
  direct summand 
  $h^{n,n+2}_{4}$ of $E^{2}_{-n+3,2,-n}(\unit)$, it suffices
  to prove that a generator $g\in h^{0,2}_{4}$ provided
  by Lemma~\ref{lem:generators-motcohommod4} is mapped 
  to zero. 
  As remarked already in Lemma~\ref{lem:generators-motcohommod4},
  a generator of $h^{0,2}_{4}$ already exists
  over the prime field. For prime fields of odd characteristic, the
  target of the differential is the trivial group $h^{4,4}$.
  For the prime field $\mathbb{Q}$, the unique nontrivial element
  in $h^{4,4}$ indexed by $\beta_{2,2}$
  is $\rho^4$. Since the real realization of $\rho^4\nu^{2}$ is the
  nontrivial element
  $1^4\eta_{\mathrm{top}}^{2}$, the element $\rho^4$ cannot be hit
  by any differential.

  It remains to prove that the differential is zero on the
  direct summand 
  \[ \{a_3\in h^{n-1,n+2} \colon \Sq^{2}(a_3)=0\} \subset E^{2}_{-n+3,2,-n}(\unit).\]
  Considering the direct sum 
  over all $n$, one obtains a graded module over
  Milnor $K$-theory by Lemma~\ref{lem:differential-module-hom},
  which 
  is isomorphic to 
  $\{a\in h^{n-1,n-1}\colon \rho^{2}a=0\}$. Hence it is
  generated by elements 
  in degree at most $1$ (corresponding to $1\leq n\leq 2$)
  by \cite[Theorem 3.3]{ovv}. Consider these cases.
  \begin{description}
  \item[$n=2$] In this case, a generator $(0,\overline{a})$ is represented
    by a (square class of a) unit $a\in F$ such that
    $\rho^{2}\overline{a} =0\in h^{3,3}(F)$. By \cite[Theorem 3.2]{ovv}
    such an element is in the image of the sum of finitely many
    transfer maps for quadratic field extensions $F\subset L_i$,
    each of them satisfying $\rho^{2}=0\in h^{2,2}(L_i)$.
    By naturality of slice differentials with respect to transfer
    maps, it suffices to consider such fields, bringing us to the case
  \item[$n=1$] Here a generator can only have the form
    $(0,\tau^3)$, which is possible if and only if $\rho^{2}=0$
    in $h^{2,2}(F)$.
    On the other hand $\rho^{2}=0\in h^{2,2}(F)$ 
    if and only if there exist
    two elements $a,b\in F$ such that $-1=a^{2}+b^{2}$ 
    \cite[Corollary 3.5]{elman-lam.pfister}. Then $(0,\tau^3)$
    is the image of $(0,\tau^3)$ over the field $F_{0}(a,b)$,
    where $F_{0}\subset F$ is the prime field of $F$. This field
    has 2-cohomological dimension at most 3. In that case, the target
    of the second differential under consideration, $h^{5,5}$, vanishes.
  \end{description}
  This case by case analysis shows that also the last
  direct summand of $E^{2}_{-n+3,2,-n}(\unit)$
  maps trivially under this differential, implying that the
  whole differential is zero.
\end{proof}

Similar to the treatment of very effective hermitian $K$-theory $\kq$,
the homotopy groups of the successive slice filtrations $\f_q\unit$
will be discussed as $\KMW$-modules, starting with $\pi_1\f_4\unit$
whose image in $\pi_1\unit$ will be zero. The treatment for $\pi_1\unit$ given here can also be found in \cite{rondigs.moore}.

\begin{lemma}\label{lem:pi1f4sphere}
  There is an isomorphism
  of $\KMW$-modules $\pi_{1}\f_4\unit\iso \kmil(3)$
  generated by $\eta^3\eta_\Top \in \pi_{1+(3)}\f_4\unit$.
\end{lemma}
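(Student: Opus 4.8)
The plan is to imitate the treatment of $\f_3\kq$ in Lemma~\ref{lem:pi-f3kq}: run the weight $-n$ slice spectral sequence for $\f_4\unit$, read off the first column of its $E^2$-page, and then rule out the higher slice differentials exactly as in Lemma~\ref{lem:d2-pi1kq-zero}. As indicated before the statement, the same discussion of $\pi_1\unit$ occurs in \cite{rondigs.moore}, and $\KMW$-module structures are tracked via the fact that slice differentials are $\KMil$-module maps \cite{milnor.k-quadratic}.

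First I would record convergence: since $\f_4\unit$ is assembled from the slices $\s_q(\unit_\Lambda)$ with $q\ge 4$, slice convergence for $\unit$ in the range relevant to the $1$-line (compare Corollary~\ref{cor:eta-slice-comp-kq} and \cite{rso.oneline}) yields slice convergence for $\f_4\unit$, so the weight $-n$ slice spectral sequence computes $\pi_{1+(w)}\f_4\unit$. Next I would identify the first column of the $E^1$-page. Reading off Figure~\ref{fig:e1-sphere} and discarding everything in slice degree $<4$: among the summands of $\s_q(\unit_\Lambda)$ with $q\ge4$, only $\Sigma^{q,q}\M\Lambda/2$ generated by $\alpha_1^q$ contributes to the first column (the other summands being concentrated too low for weight reasons), and this contribution $E^1_{-n+1,q,-n}(\f_4\unit)=h^{n+q-1,n+q}$ is isomorphic to $\kmil_{n+q-1}$ via the norm residue isomorphism, i.e.\ multiplication by $\tau$. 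By Lemma~\ref{lem:first-diff-unit-1} the only components of the slice $\dd^1$-differential meeting the first column are $\Sq^2$ into the next $\alpha_1^{q+1}$-summand and $\tau$ from the $\alpha_1^{q-3}\alpha_3$-summand of the preceding slice. On the motivic cohomology groups at hand $\Sq^2$ is zero: for $x\in h^{\star,\star}$ with weight equal to its degree one has $\Sq^1x=\Sq^2x=0$, hence $\Sq^2(\tau x)=(\Sq^2\tau)x+\tau\rho\,\Sq^1x=0$ because $\Sq^2\tau=0$; and $\tau$ is an isomorphism by the norm residue theorem. Hence the incoming $\dd^1$ surjects onto $E^1_{-n+1,q,-n}(\f_4\unit)$ for every $q\ge5$, while for $q=4$ both the incoming $\dd^1$ (originating in slice degree $3$, hence vanishing) and the outgoing $\dd^1$ vanish. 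Therefore $E^2_{-n+1,q,-n}(\f_4\unit)=0$ for $q\ge5$ and $E^2_{-n+1,4,-n}(\f_4\unit)=h^{n+3,n+4}\iso\kmil_{n+3}$, whose direct sum over $n$, with its $\KMW$-module structure, is $\kmil(3)$.

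The hard part is to show that the surviving differentials out of $E^2_{-n+1,4,-n}(\f_4\unit)$ vanish; these are the higher slice differentials $\dd^r$ landing in the $0$-line $E^r_{-n,4+r,-n}(\f_4\unit)$. Here I would run the argument of Lemma~\ref{lem:d2-pi1kq-zero}: the $0$-line column of the slice spectral sequence for $\f_4\unit$ is again concentrated on the $\alpha_1^{\bullet}$-summands, on which $\eta$ acts trivially, so that the source of such a differential --- being $2$-torsion in the image of a $\KMil$-module --- factors through a group annihilated by $\rho^2$, whereupon a base change to the prime field, where the receiving motivic cohomology groups vanish in the relevant bidegrees (cf.~\cite[Example 1.5, Appendix]{milnor.k-quadratic}), forces the differential to be zero. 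This yields $\pi_1\f_4\unit\iso\kmil(3)$, with no extension to resolve since only one slice contributes. I expect this identification of the $\dd^r$ into the $0$-line to be the only genuinely delicate step.

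Finally I would pin down the generator. The $\KMW$-module $\kmil(3)$ is cyclic, generated by the unique nonzero class in $\pi_{1+(3)}\f_4\unit\iso\kmil_0\iso\ZZ/2$, which is detected by $1\cdot\alpha_1^4$ in $\s_4(\unit)$. Since $\eta$ is detected by $\alpha_1$ in $\s_1(\unit)$ and $\eta_\Top\in\pi_{1+(0)}\unit$ lifts along $\f_1\unit\to\unit$ (the $0$-slice contributes nothing to $\pi_{1+(0)}\unit$) to a class detected by $\tau\alpha_1$ in $\s_1(\unit)$, and since multiplication by $\eta$ raises slice filtration by one, the product $\eta^3\eta_\Top\in\pi_{1+(3)}\f_4\unit$ is detected by $\alpha_1^3\cdot\tau\alpha_1=\tau\alpha_1^4$ in $\s_4(\unit)$ by the multiplicative structure of the slices. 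As $\alpha_1^4$ is nonzero in $\Ext^{4,2q}_{\BP_{\ast}\BP}(\BP_{\ast},\BP_{\ast})$, this class is nonzero, hence generates $\pi_1\f_4\unit$ as a $\KMW$-module; that its image in $\pi_1\unit$ is zero then follows since $\pi_{1+(3)}\unit=0$, or directly from $\eta^2\eta_\Top=12\nu$ and the vanishing of $24\nu$.
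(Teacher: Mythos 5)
Your overall strategy coincides with the paper's: concentrate the $\pi_1$-column of the slice spectral sequence for $\f_4\unit$ in slice degree $4$, where it gives $h^{n+3,n+4}\iso\kmil_{n+3}$, and identify the generator as $\eta^3\eta_\Top$ via the multiplicative structure of the slices. (The paper phrases the first step as exactness in the sequence $\pi_2\s_4\unit\to\pi_1\f_5\unit\to\pi_1\f_4\unit\to\pi_1\s_4\unit\to\pi_0\f_5\unit$ rather than running the spectral sequence for $\f_4\unit$, but the content is the same.) Your $E^1$- and $d^1$-analysis and your treatment of the generator are correct.

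The gap is in the step you yourself flag as delicate: killing the differentials $d^r\colon E^r_{-n+1,4,-n}(\f_4\unit)\to E^r_{-n,4+r,-n}(\f_4\unit)$, equivalently the vanishing of $\pi_1\s_4\unit\to\pi_0\f_5\unit$. Your argument reduces, correctly, to the image of the $\KMil$-module generator $\tau\in h^{0,1}(F_0)$ under base change to the prime field; but the claim that ``the receiving motivic cohomology groups vanish in the relevant bidegrees'' over $F_0$ is false for $F_0=\QQ$: the target is $E^r_{3,4+r,3}\subseteq h^{r+1,r+1}=\kmil_{r+1}(\QQ)=\ZZ/2\{\rho^{r+1}\}\neq 0$ for $r\geq 2$. (The citation to \cite[Example 1.5, Appendix]{milnor.k-quadratic} concerns $\kmil_3/\rho^2\kmil_1$, which is a different group; likewise the clause about the source ``factoring through a group annihilated by $\rho^2$'' is transplanted from Lemma~\ref{lem:d2-pi1kq-zero}, where the source is $\pi_2\MZ$ mapping into the $\eta$-torsion of the target --- that structure is not present here.) The missing ingredient is that nothing in the zeroth column can be a boundary. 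Two standard ways to supply it: (a) naturality along $\f_4\unit\to\unit$ --- the induced map on $E^r_{-n,4+r,-n}$ is an isomorphism (all relevant differentials live in slice degrees $\geq 4$), while $E^2_{-n+1,4,-n}(\unit)=0$ by Theorem~\ref{theoremE2sphere}, so $d^r(\f_4\unit)$ is forced to vanish; or (b) the observation that the zeroth column of the slice spectral sequence for $\unit$ consists of permanent cycles that survive to $E^\infty$ in their entirety, because by Morel's theorem and the Milnor conjecture on quadratic forms the slice filtration on $\pi_0\unit\iso\KMW$ has associated graded exactly $\kmil_{q+n}$ in each slot (over $\QQ$ one can also see the class $\rho^{r+1}$ survive via real realization, since $\fundid^{j}/\fundid^{j+1}(\RR)\neq 0$). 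With either of these in place, the rest of your proof goes through.
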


\begin{proof}
  Since $\eta_\Top\in \pi_{1+(0)}\unit$ lifts uniquely to
  $\eta_\Top\in \pi_{1+(0)}\f_1\unit$, the multiplicativity
  of the slice filtration (see \cite{grso}) provides that
  $\eta^3\eta_\Top\in \pi_{1+(3)}\f_4\unit$. The multiplicative
  structure on the slices (with respect to multiplication
  with $\alpha_1$ only) shows that the image
  of $\eta^3\eta_\Top$ in $\pi_{1+(3)}\s_4\unit\iso h^{0,1}\iso \kmil_0$
  is the unique nonzero element. Since the map
  $\pi_{1}\s_4\unit \to \pi_{0}\f_5\unit$ is zero, and the map
  $\pi_{2}\s_4\unit \to \pi_{1}\f_5\unit$ is surjective, there
  results an identification
  $\pi_{1}\f_4\unit\iso \pi_{1}\s_4\unit\iso \kmil(3)$.
\end{proof}

\begin{lemma}\label{lem:pi1f3sphere}
  There is an isomorphism of $\KMW$-modules $\pi_{1}\f_3\unit\iso \kmil(2)$
  generated by $\eta^2\eta_\Top \in \pi_{1+(2)}\f_3\unit$.
  The image of the canonical map $\pi_{1}\f_4\unit\to \pi_{1}\f_3\unit$
  is zero.
\end{lemma}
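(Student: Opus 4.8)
The plan is to run the weight $-n$ slice spectral sequence for $\f_3\unit$ in the range relevant to $\pi_1$, in parallel with the proof of Lemma~\ref{lem:pi-f3kq} for $\f_3\kq$, and then to read off the multiplicative generator as in Lemma~\ref{lem:pi1f4sphere}. Slice convergence for $\unit$ implies slice convergence for $\f_3\unit$, as for $\kq$ in Corollary~\ref{cor:eta-slice-comp-kq}, so this spectral sequence computes $\pi_{-n+1,-n}\f_3\unit$. Inspecting the slices, the first column $E^{1}_{-n+1,q,-n}(\f_3\unit)$ is nonzero only for $q\geq 3$, and among the summands of $\s_q(\unit)$ only the one generated by $\alpha_1^q$, namely $\Sigma^{q,q}\MZ/2$, contributes to the first column (the others being too far above the diagonal), so $E^{1}_{-n+1,q,-n}(\f_3\unit)\iso h^{n+q-1,n+q}$ for $q\geq 3$.

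For $q\geq 4$ the entering $\dd^1$ from the summand of $\s_{q-1}(\unit)$ generated by $\alpha_1^{q-4}\alpha_3$ (with $\alpha_1^0\alpha_3=\alpha_3$) is, by Lemma~\ref{lem:first-diff-unit-1}, multiplication by $\tau\colon h^{n+q-1,n+q-1}\to h^{n+q-1,n+q}$, an isomorphism; hence $E^{2}_{-n+1,q,-n}(\f_3\unit)=0$ for $q\geq 4$, just as in Lemma~\ref{lem:e2-unit-2>4}. For $q=3$ there is no entering $\dd^1$ since the slice in degree $2$ is absent, and the exiting $\dd^1$ is the component $\Sq^2\colon h^{n+2,n+3}\to h^{n+4,n+4}$ of $\dd^1(3)$ from Lemma~\ref{lem:first-diff-unit-1}. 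Writing a class of $h^{n+2,n+3}$ as $\tau y$ with $y\in h^{n+2,n+2}$, the Cartan formula gives $\Sq^2(\tau y)=\tau\Sq^2 y+\rho\Sq^1 y+\Sq^2(\tau)y$; here $\Sq^2(\tau)=0$ by instability, while $\Sq^1 y\in h^{n+3,n+2}$ and $\Sq^2 y\in h^{n+4,n+3}$ vanish for lying strictly above the diagonal. Hence $E^{2}_{-n+1,3,-n}(\f_3\unit)=h^{n+2,n+3}$.

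It remains to kill the higher slice differentials leaving slice degree $3$. Their only targets are the groups $E^{r}_{-n,3+r,-n}(\f_3\unit)\iso h^{n+3+r,n+3+r}$ on the zero line, which receive no differential before the one under consideration, hence agree with their $E^2$-value; moreover the unit map $\f_3\unit\to\f_3\kq$ induces the identity on these and on $E^{r}_{-n+1,3,-n}$, by Lemma~\ref{lem:unit-kq-sq}. Since the corresponding differentials for $\f_3\kq$ vanish by Lemma~\ref{lem:pi-f3kq}, a diagram chase, inductive in $r$ (the base case $r=1$ being the $\Sq^2$ computation above), shows the same for $\f_3\unit$. Therefore the first column is concentrated in slice degree $3$ and $\pi_{1}\f_3\unit\iso\pi_1\s_3\unit\iso h^{\star+2,\star+3}$. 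As $\s_3(\unit)$ is a module over $\MZ/2$, the Hopf map $\eta$ acts trivially, so this $\KMW$-module is free of rank one over $\kmil$, i.e. $\kmil(2)$. By multiplicativity of the slice filtration \cite{grso} the class $\eta^2\eta_\Top$ lifts to $\pi_{1+(2)}\f_3\unit$, and by the multiplicative structure on the slices its image in $\pi_{1+(2)}\s_3\unit\iso h^{0,1}\iso\kmil_0$ is the nonzero element, exactly as for $\eta^3\eta_\Top$ in Lemma~\ref{lem:pi1f4sphere}; hence $\eta^2\eta_\Top$ is a $\KMW$-module generator.

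For the last assertion, the long exact sequence on homotopy modules of the cofiber sequence $\f_4\unit\to\f_3\unit\to\s_3\unit\to\Sigma\f_4\unit$ identifies the image of $\pi_1\f_4\unit\to\pi_1\f_3\unit$ with the kernel of $\pi_1\f_3\unit\to\pi_1\s_3\unit$, and this map is an isomorphism by the previous paragraph, so the image is zero; equivalently, $\pi_1\f_4\unit\iso\kmil(3)$ is generated by $\eta^3\eta_\Top\in\pi_{1+(3)}\f_4\unit$ (Lemma~\ref{lem:pi1f4sphere}), whose image lands in $\pi_{1+(3)}\f_3\unit\iso h^{-1,0}=0$. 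The step I expect to be most delicate is the comparison with $\f_3\kq$ used to kill the higher differentials: one must match carefully which terms of each page are being compared and check that the zero-line targets have not yet been altered by earlier differentials. Everything else is dictated by Lemma~\ref{lem:first-diff-unit-1} and the $\kmil$-module structure.
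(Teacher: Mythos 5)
Your proof is correct and is exactly the argument the paper intends: its own proof of this lemma is a one-line pointer to ``inspection of the slice spectral sequence for $\f_3\unit$, similar to Lemma~\ref{lem:pi1f4sphere}'', and you have supplied precisely those details (first column of $E^1$ reduced to the $\alpha_1^q$-summands, the entering $\tau$ killing slice degrees $q\geq 4$, the vanishing of $\Sq^2$ on $h^{n+2,n+3}$, and the identification of the generator). The only place you deviate is in killing the exiting higher differentials $E^r_{-n+1,3,-n}\to E^r_{-n,3+r,-n}$ by comparison with $\f_3\kq$; this works (and your bookkeeping of which summands are compared is right), but it is a detour. The shorter route, in the spirit of Lemma~\ref{lem:pi1f4sphere} and of Lemma~\ref{lem:differential-module-hom}, is the observation you already make at the end: $\eta^2\eta_\Top$ lifts to $\pi_{1+(2)}\f_3\unit$ and maps to the $\KMil$-module generator $\tau\in\pi_{1+(2)}\s_3\unit$, so that generator is a permanent cycle and $\KMil$-linearity of the differentials forces the whole column to consist of permanent cycles; equivalently, the edge map $\pi_1\f_3\unit\to\pi_1\s_3\unit$ is surjective because it hits the generator of a cyclic $\kmil$-module. (Two cosmetic points: the motivic Cartan term is $\tau\Sq^1(\tau)\Sq^1(y)=\tau\rho\Sq^1(y)$ rather than $\rho\Sq^1(y)$, which is immaterial since $\Sq^1(y)=0$ there; and only surjectivity of $\tau\colon h^{a,a}\to h^{a,a+1}$ is needed, not injectivity.)
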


\begin{proof}
A proof follows by inspection of the slice spectral sequence for $\f_3\unit$, similar to the argument in Lemma~\ref{lem:pi1f4sphere}.
\end{proof}

\begin{lemma}\label{lem:pi1f2sphere}
  There is a short exact sequence
  \[ 0\to \KMil(2)/24 \to \pi_{1}\f_2\unit \to \kmil(1) \to 0 \]
  of $\KMW$-modules
  generated by $\nu \in \pi_{1+(2)}\f_2\unit$ and
  $\eta\eta_\Top \in \pi_{1+(1)}\f_2\unit$, respectively. 
  It is uniquely determined by the fact that $\eta^2\eta_\Top=12\nu$.
  The image of the canonical map $\pi_{1}\f_3\unit\to \pi_{1}\f_2\unit$
  coincides with $\kmil(2)$ generated by $12\nu\in \pi_{1+(2)}\unit$.
\end{lemma}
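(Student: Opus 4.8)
The plan is to run the program used for $\pi_1\f_1\kq$ in Lemma~\ref{lem:pi-f1kq}, shifted up by one slice. First I would note that slice convergence for $\unit$ yields slice convergence for $\f_2\unit$ (as in Corollary~\ref{cor:eta-slice-comp-kq}), so that the weight-graded module $\pi_1\f_2\unit$ is computed by the slice spectral sequence whose slices are $\s_q\unit$ for $q\geq 2$. The cofiber sequence of the slice tower $\f_3\unit\to\f_2\unit\to\s_2\unit\to\Sigma\f_3\unit$ produces a long exact sequence of $\KMW$-modules
\[ \pi_2\s_2\unit\xrightarrow{\ \delta\ }\pi_1\f_3\unit\to\pi_1\f_2\unit\to\pi_1\s_2\unit\xrightarrow{\ \delta\ }\pi_0\f_3\unit, \]
into which I feed $\pi_1\f_3\unit\iso\kmil(2)$ from Lemma~\ref{lem:pi1f3sphere}, the description of $\s_2\unit$ and $\s_3\unit$ from \eqref{equation:sphereslices}, and the differential $\dd_1(2)$ from Lemma~\ref{lem:first-diff-unit-1}.

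Second, I would compute the outer terms. Beilinson--Lichtenbaum ($h^{a,b}\iso h^{a,a}$ for $b\geq a$) identifies $\pi_1\s_2\unit\iso\kmil(1)\{\tau\alpha_1^2\}\directsum\KMil(2)/12\{\alpha_{2/2}\}$ and $\pi_0\s_3\unit\iso\kmil(3)\{\alpha_1^3\}$, the other summands of these slices contributing nothing in the relevant degrees for weight reasons. The right-hand $\delta$ is the slice differential $\pi_1\s_2\unit\to\pi_0\s_3\unit$; its components $\Sq^2$ and $\tau\partial^{12}_2$ both vanish here by a degree count together with Corollary~\ref{cor:partial42zerooneven}, so $\pi_1\f_2\unit\twoheadrightarrow\pi_1\s_2\unit$. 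The left-hand $\delta$ equals the slice differential $\pi_2\s_2\unit\to\pi_1\s_3\unit$, which I would evaluate from the motivic Cartan formula for $\Sq^2$ on classes $\tau^2 x$ and from Corollary~\ref{cor:partial42zerooneven} on the $\partial^{12}_2$-term. One also checks, using Lemma~\ref{lem:e2-unit-2>4} and the comparison with $\kq$ already in force, that no higher slice differential touches the columns computing $\pi_1$, so $E^2=E^\infty$ in this range. The long exact sequence then presents $\pi_1\f_2\unit$ as an extension whose quotient is $\kmil(1)$, generated by the image of $\eta\eta_\Top$, and whose submodule is the image of $\pi_1\f_3\unit$, generated by $\eta^2\eta_\Top$.

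Third — the crux — I would re-bracket this into the stated sequence $0\to\KMil(2)/24\to\pi_1\f_2\unit\to\kmil(1)\to 0$ by showing that $\nu$ is a $\KMW$-module generator whose cyclic submodule is isomorphic to $\KMil(2)/24$, and that $12\nu=\eta^2\eta_\Top$. The factor $12$ is visible on $\s_2\unit$ (the order of $\alpha_{2/2}$); the extra factor $2$, namely the relation $\eta^2\eta_\Top=12\nu$, is the filtration datum, and I would pin it down exactly as in Lemma~\ref{lem:pi2-f2kq}: trace $\nu$ and $\eta_\Top$ along the maps of ring spectra $\unit\to\MGL$ and $\unit\to\kq\xrightarrow{\forget}\kgl$ (using the Wood cofiber sequence of Theorem~\ref{thm:kq-wood} and Lemma~\ref{lem:unit-kq-s2}), supplemented by an \'etale or complex realization argument to fix the coefficient. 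Multiplicativity of the slice filtration then forces $\eta\cdot(\eta\eta_\Top)=\eta^2\eta_\Top=12\nu$ in $\pi_1\f_2\unit$, and the extension of $\kmil(1)$ by $\KMil(2)/24$ is determined by this: the relevant $\Ext$-group over $\KMW$ has exactly two elements by \cite[Theorem A.1, Lemma A.3]{rondigs.endo}, and the nontrivial alternative is excluded by realization, again as in Lemma~\ref{lem:pi2-f2kq}. Since $12\nu=\eta^2\eta_\Top$ is the image of the generator of $\pi_1\f_3\unit\iso\kmil(2)$, the image of $\pi_1\f_3\unit\to\pi_1\f_2\unit$ coincides with $\kmil(2)$, generated by $12\nu$. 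The main obstacle I anticipate is precisely this last paragraph: simultaneously controlling the $\KMW$-module structure and the exact coefficient in $\eta^2\eta_\Top=12\nu$; the spectral-sequence mechanics of the first two paragraphs are routine once Lemma~\ref{lem:pi1f3sphere} is available. (An alternative treatment appears in \cite{rondigs.moore}.)
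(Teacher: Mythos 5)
Your proposal is correct and follows essentially the same route as the paper: the paper's own proof is little more than a pointer, singling out the identification of the extension as the crucial step and citing \cite[Lemma A.3]{rondigs.endo} for the uniqueness of the nontrivial extension with $\eta^2\eta_\Top=12\nu$ and the proof of \cite[Theorem 5.5]{rso.oneline} for that identification — exactly the two ingredients you isolate as the crux in your third paragraph. Your first two paragraphs supply the slice-tower mechanics that the paper leaves implicit (and carries out explicitly for the neighboring Lemmas~\ref{lem:pi-f1kq} and~\ref{lem:pi2f2sphere}), so there is no substantive divergence.
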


\begin{proof}
The crucial part is to identify the extension as a $\KMW$-module.
As \cite[Lemma A.3]{rondigs.endo} implies, 
a unique nontrivial extension with the property that $\eta^2\eta_\Top=12\nu$ exists. 
See, for example, the proof of 
\cite[Theorem 5.5]{rso.oneline} for this identification.
\end{proof}

For the discussion of higher slice differentials, considering
the second slice filtration suffices. Therefore the presentation
\begin{equation}\label{eq:pi1f1unit}
  \frac{\KMW(2)\{\nu\} \directsum \KMW\{\eta_\Top\} }{(2\eta_\Top,\eta\nu,
    \eta^2\eta_\Top-12\nu)} \iso \pi_{1}\f_1\unit
\end{equation}
given in \cite[Theorem 2.5]{rondigs.moore} will not
be directly relevant. Instead the path to
$\pi_2\unit$ will be followed, starting with $\pi_2\f_4\unit$, as
higher filtrations do not contribute -- see Lemma~\ref{lem:e2-unit-2>4}.

\begin{lemma}\label{lem:pi2f4sphere}
  There is an 
  isomorphism of $\KMW$-modules 
  $\pi_{2}\f_4\unit\iso \kmil(4)\directsum \kmil(2)$
  generated by $\nu^2\in \pi_{2+(4)}\f_4\unit$ and
  $\eta^2\eta^2_\Top\in \pi_{2+(2)}\f_4\unit$, respectively.
\end{lemma}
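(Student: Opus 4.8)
The plan is to run the slice spectral sequence for $\f_4\unit$, exactly as in the proofs of Lemma~\ref{lem:pi1f4sphere} and Lemma~\ref{lem:pi-f3kq}. Slice convergence for $\unit$ gives slice convergence for $\f_4\unit$ (as in Corollary~\ref{cor:eta-slice-comp-kq}), and the slices of $\f_4\unit$ agree with those of $\unit$ in degrees $q\geq 4$ and vanish for $q<4$. So the $2$-line portion of the $E^1$-page for $\f_4\unit$ is the part of Figure~\ref{fig:e1-sphere} supported in slice degrees $q\geq 4$, and by the argument of Lemma~\ref{lem:e2-unit-2>4} the slices $q\geq 5$ contribute nothing to $E^2$. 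Hence $\pi_2\f_4\unit$ is computed by the single column $q=4$.

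Next I would observe that there is no room for differentials: an incoming $d^r$ into $E^r_{-n+2,4,-n}(\f_4\unit)$ originates in slice degree $4-r<4$, which is trivial, while an outgoing $d^r$ lands in $E^r_{-n+1,4+r,-n}(\f_4\unit)$, a subquotient of $E^2_{-n+1,m,-n}(\unit)$ with $m\geq 5$, which vanishes (the first column of the $E^2$-page of $\unit$ is concentrated in rows $q\leq 3$ by Theorem~\ref{theoremE2sphere}, and these groups agree with those for $\f_4\unit$ in the relevant range). Therefore the bottom edge map identifies
\[
  \pi_2\f_4\unit \iso E^2_{-n+2,4,-n}(\f_4\unit)
  = \ker\bigl(\dd_1\colon \pi_{-n+2,-n}\s_4\unit \to \pi_{-n+1,-n}\s_5\unit\bigr).
\]
On the $2$-line, $\s_4\unit$ contributes $h^{n+2,n+4}\{\alpha_1^4\}\directsum h^{n+4,n+4}\{\alpha_1\alpha_3\}\directsum h^{n+4,n+4}\{\beta_{2/2}\}$, and by Lemma~\ref{lem:first-diff-unit-1} together with Corollary~\ref{cor:first-diff-to-nusquared} the differential has the form $(a_0,a_1,a_2)\mapsto \Sq^2a_0+\tau a_1$; since $\tau$ is surjective, the substitution $a_1=\tau^{-1}\Sq^2a_0$ identifies this kernel with $h^{n+2,n+4}\{\alpha_1^4\}\directsum h^{n+4,n+4}\{\beta_{2/2}\}$.

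It then remains to pass to homotopy modules and to name the generators. Since $\eta$ acts as zero on every motivic Eilenberg--MacLane spectrum, and the summands of $\s_4\unit$ contributing to the $2$-line are mod-$2$ spectra, both $\eta$ and $2$ act as zero, so $\pi_2\f_4\unit$ is a $\kmil$-module and the decomposition above is a direct sum of $\kmil$-modules. The Beilinson--Lichtenbaum isomorphism $h^{i,j}\iso h^{i,i}=\kmil_i$ for $j\geq i\geq 0$ (both sides vanishing for $i<0$) then identifies the two summands with $\kmil(2)$ and $\kmil(4)$. Finally $\eta,\eta_\Top\in\f_1\pi_1\unit$ and $\nu\in\f_2\pi_1\unit$, so $\eta^2\eta_\Top^2\in\pi_{2+(2)}\f_4\unit$ and $\nu^2\in\pi_{2+(4)}\f_4\unit$ by multiplicativity of the slice filtration; the image of $\eta$ in $\s_1\unit$ is $\alpha_1$, that of $\eta_\Top$ is the nonzero element $\tau\alpha_1$ of $\pi_{1,0}\s_1\unit=h^{0,1}$ (by étale or complex realization), and $\nu$ maps to $\alpha_{2/2}$ in $\s_2\unit$, so that $\eta^2\eta_\Top^2\mapsto\tau^2\alpha_1^4$ and $\nu^2\mapsto\beta_{2/2}=\alpha_{2/2}^2$. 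As these elements lie in the generating weights of the two cyclic summands and are nonzero, they are the claimed generators.

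The main obstacle is mostly bookkeeping rather than a new computation: the delicate behaviour of $\dd_1$ on the $\beta_{2/2}$-summand and the vanishing of the unit's $E^2$-page above the fourth row were already handled in Lemma~\ref{lem:e2-unit-24} and Corollary~\ref{cor:first-diff-to-nusquared}. What remains to watch carefully is (i) confirming that there really is no room for higher differentials in the slice spectral sequence for $\f_4\unit$ (which rests on the concentration of the first column of $\unit$'s $E^2$-page in rows $q\leq 3$), and (ii) matching the abstract $\kmil$-summands with the geometric classes $\nu^2$ and $\eta^2\eta_\Top^2$ via their images in the slices.
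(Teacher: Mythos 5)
Your argument is correct and follows essentially the same route as the paper: run the slice spectral sequence for $\f_4\unit$, observe via Lemma~\ref{lem:e2-unit-2>4} and Lemma~\ref{lem:e2-unit-24} (with Corollary~\ref{cor:first-diff-to-nusquared}) that the $2$-line collapses to the kernel of the outgoing $d^1$ on the $4$-slice, namely $h^{n+2,n+4}\directsum h^{n+4,n+4}\{\beta_{2/2}\}$, and then match the summands with $\eta^2\eta_\Top^2$ and $\nu^2$ via multiplicativity of the slice filtration. The paper's proof is terser but rests on exactly the same computation; your extra care about convergence and the absence of higher differentials is consistent with what the paper leaves implicit.
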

\begin{proof}
  Since $\nu\in \pi_{1+(2)}\unit$ lifts (uniquely)
  to $\pi_{1+(2)}\f_2\unit$, the multiplicativity of the
  slice filtration provides that $\nu^2\in \pi_{2+(4)}\f_4\unit$.
  Lemma~\ref{lem:nusquared-twotorsion} shows that $2\nu^2=0$.
  The multiplicative
  structure on the slices implies that $\nu^2$ maps to the
  unique nonzero element in the group $E^2_{6,4,4}(\unit)$
  computed in Lemma~\ref{lem:e2-unit-24}. Similarly, the element
  $\eta^2\eta_\Top^2\in \pi_{2+(2)}\f_4\unit$ hits the element
  $(\tau^2,\rho^2)\in h^{0,2}\directsum h^{2,2}$ of the kernel
  of the slice $d^1$ differential. There results an
  identification as stated. 
\end{proof}

\begin{lemma}\label{lem:pi2f3sphere}
  There is an isomorphism of $\KMW$-modules 
  $\pi_{2}\f_3\unit\iso \kmil(4)\directsum \kmil(1)$
  generated by $\nu^2\in \pi_{2+(4)}\f_3\unit$ and
  $\eta\eta^2_\Top\in \pi_{2+(2)}\f_3\unit$, respectively.
  The image of the canonical map $\pi_{2}\f_4\unit\to \pi_{2}\f_3\unit$
  coincides with $\kmil(4)$ generated by $\nu^2\in \pi_{2+(4)}\unit$.
\end{lemma}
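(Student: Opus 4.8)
The plan is to feed the cofiber sequence
\[
\f_4\unit \to \f_3\unit \to \s_3\unit \xrightarrow{\partial} \Sigma\f_4\unit
\]
into homotopy $\KMW$-modules, exactly in the spirit of Lemmas~\ref{lem:pi-f3kq} and~\ref{lem:pi2f4sphere}. Slice convergence for $\unit$ restricts to $\f_3\unit$ and $\f_4\unit$, and by Lemma~\ref{lem:e2-unit-2>4} the slices $\s_q\unit$ with $q\geq 5$ contribute nothing to $\pi_2$; in particular $\pi_2\f_5\unit=0$, so only $\s_3\unit$ and $\s_4\unit$ enter. From~\eqref{equation:sphereslices} one has $\s_3\unit\iso\Sigma^{3,3}\MZ/2\{\alpha_1^3\}\vee\Sigma^{5,3}\MZ/2\{\alpha_3\}$, so $\pi_2\s_3\unit$ and $\pi_3\s_3\unit$ are shifts of mod-$2$ motivic cohomology groups situated on or below the diagonal, and multiplication by $\tau$ identifies each of them with Milnor $K$-theory modulo $2$ in the appropriate weight.

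The relevant portion of the long exact sequence is
\[
\pi_3\s_3\unit \xrightarrow{\partial_3} \pi_2\f_4\unit \xrightarrow{i} \pi_2\f_3\unit \xrightarrow{j} \pi_2\s_3\unit \xrightarrow{\partial_2} \pi_1\f_4\unit,
\]
into which I would plug $\pi_2\f_4\unit\iso\kmil(4)\{\nu^2\}\directsum\kmil(2)\{\eta^2\eta_\Top^2\}$ and $\pi_1\f_4\unit\iso\kmil(3)\{\eta^3\eta_\Top\}$ from Lemmas~\ref{lem:pi2f4sphere} and~\ref{lem:pi1f4sphere}. Since $\pi_2\f_5\unit=0$ the map $\pi_2\f_4\unit\to\pi_2\s_4\unit$ is injective, and as in the proof of Lemma~\ref{lem:pi1f4sphere} the map $\pi_1\f_4\unit\to\pi_1\s_4\unit$ is an isomorphism; thus $\partial_3$ and $\partial_2$ are read off from the corresponding components of the slice $\dd^1$-differential $\dd^{\unit}_1(3)$ of Lemma~\ref{lem:first-diff-unit-1}. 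A computation of the same flavour as those in Lemmas~\ref{lem:e2-unit-2>4}--\ref{lem:e2-unit-24} then gives two facts: (a) $\ker(\partial_2)$ is carried isomorphically onto the $\alpha_1^3$-summand by $b_3\mapsto(b_3,\tau^{-1}\Sq^2 b_3)$, so $\ker(\partial_2)\iso\kmil(1)$, with $j(\eta\eta_\Top^2)$ a generator by the multiplicativity of the slices; and (b) $\im(\partial_3)$ is precisely the summand $\kmil(2)\{\eta^2\eta_\Top^2\}$ --- it misses $\nu^2$ because, by Corollary~\ref{cor:first-diff-to-nusquared}, the $\dd^1$ out of $\s_3$ does not touch the $\beta_{2/2}$-summand detecting $\nu^2$, and it exhausts the $\eta^2\eta_\Top^2$-summand because $\tau$ acts invertibly on the motivic cohomology below the diagonal.

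It follows that $i$ has kernel $\kmil(2)\{\eta^2\eta_\Top^2\}$ and image $\kmil(4)\{\nu^2\}$, proving the last assertion, and that there is a short exact sequence
\[
0\to\kmil(4)\{\nu^2\}\to\pi_2\f_3\unit\to\kmil(1)\to 0
\]
of $\KMW$-modules with $\eta\eta_\Top^2$ lifting a generator of the quotient. To split it, observe that $\pi_2\f_3\unit$ is generated as a $\KMW$-module by $\nu^2$ and $\eta\eta_\Top^2$, which are both $2$-torsion ($2\nu^2=0$ by Lemma~\ref{lem:nusquared-twotorsion} and $2\eta\eta_\Top^2=\eta(2\eta_\Top)\eta_\Top=0$) and both annihilated by $\eta$ ($\eta\nu^2=0$ since $\eta\nu=0$, and $\eta\cdot\eta\eta_\Top^2=\eta^2\eta_\Top^2=i(\eta^2\eta_\Top^2)=0$ by~(b)); hence $\pi_2\f_3\unit$ is a module over $\KMW/(2,\eta)=\kmil$, and an extension of the free $\kmil$-module $\kmil(1)$ by $\kmil(4)$ necessarily splits, giving $\pi_2\f_3\unit\iso\kmil(4)\{\nu^2\}\directsum\kmil(1)\{\eta\eta_\Top^2\}$. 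The main obstacle is step~(b): one has to verify at the same time that the incoming differential avoids the $\nu^2$-summand --- which is exactly what Corollary~\ref{cor:first-diff-to-nusquared} secures --- and that it fills out the $\eta^2\eta_\Top^2$-summand, so that $\nu^2$ survives to $\pi_2\f_3\unit$ while $\eta^2\eta_\Top^2$ dies; the rest is bookkeeping along the long exact sequence.
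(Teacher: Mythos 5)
Your proposal follows essentially the same route as the paper: the long exact sequence of $\f_4\unit\to\f_3\unit\to\s_3\unit$, identification of the boundary maps with components of the slice $\dd^1$-differential (so that $\im(\partial_3)$ is exactly the $\kmil(2)$-summand of $\pi_2\f_4\unit$ and $\ker(\partial_2)\iso\kmil(1)$), and splitting of the resulting extension because the generators $\nu^2$ and $\eta\eta_\Top^2$ are annihilated by $2$ and by $\eta$, making $\pi_2\f_3\unit$ a module over $\kmil=\KMW/(2,\eta)$ with free quotient $\kmil(1)$. One intermediate claim is false, however: $\pi_2\f_5\unit$ is not zero (it is $\kmil(3)$, generated by $\eta^3\eta_\Top^2$, since the kernel of the exiting $d^1$ on $E^1_{-n+2,5,-n}$ is $h^{n+3,n+5}\neq 0$ and there is no incoming differential for $\f_5\unit$); what you actually need is only that the image of $\pi_2\f_5\unit$ in $\pi_2\f_4\unit$ vanishes, equivalently that $\pi_2\f_4\unit\hookrightarrow\pi_2\s_4\unit$, which is already contained in Lemma~\ref{lem:pi2f4sphere} because $E^{\infty}_{-n+2,q,-n}(\f_4\unit)=0$ for $q\geq 5$; with that repair the argument is correct.
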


\begin{proof}
  The slice filtration provides a short exact sequence
  \[ 0 \to \kmil(4)\to \pi_{2}\f_3\unit \to \kmil(1)\to 0\]
  where $\kmil(4)$ is the cokernel of $\pi_{3}\s_3\unit\to \pi_{2}\f_4\unit$.
  The element $\eta\eta_\Top^2\in \pi_{2+(2)}\f_3\unit$ lifts
  the uniqe generator of $\kmil(1)$.
  The equation $\eta^2\eta_\Top=12\nu$ implies that
  $\eta^2\eta_\Top^2=12\nu\eta_\Top=0$, since already
  $2\eta_\Top=0$. It follows that $\eta$ operates as zero
  on $\pi_{2}\f_3\unit$, whence it is in fact a $\KMil$-module.
  The equation $2\eta\eta^2_\Top=0$ implies that
  $\pi_{2}\f_3\unit$ splits as described.
\end{proof}

\begin{lemma}\label{lem:pi2f2sphere}
  There is a short exact sequence
  \[ 0 \to \kmil(4)\directsum \bigl(\pi_{1}\Sigma^{(2)}\MZ/24\bigr)/\inc^{2}_{24}\rho^2\tau\kmil \to
  \pi_{2}\f_2\unit \to \kmil \to 0 \]
  of $\KMW$-modules, where $\kmil$ is generated by
  $\eta_\Top^2\in \pi_{2+(0)}\f_2\unit$.
  This extension is uniquely determined by the fact that
  $\eta \eta_\Top^2 = -1 \in h^{0,1}_{24} \iso\pi_{1+(1)}\Sigma^{(2)}\MZ/24$.
  The image of the canonical map $\pi_{2}\f_3\unit\to \pi_{2}\f_2\unit$
  coincides with $\kmil(4)\directsum \kmil(1)/\rho^2\kmil(-1)$
  generated by
  $\nu^2\in \pi_{2+(4)}\f_2\unit$ and $\eta\eta_\Top^2\in \pi_{2+(1)}\f_2\unit$,
  respectively.
\end{lemma}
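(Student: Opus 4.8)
The plan is to follow the template used for $\kq$ in Lemma~\ref{lem:pi2-f2kq} and for $\pi_2\f_3\unit$, $\pi_2\f_4\unit$ in Lemmas~\ref{lem:pi2f3sphere}, \ref{lem:pi2f4sphere}: apply $\pi_2$ to the slice-tower cofiber sequence $\f_3\unit\to\f_2\unit\to\s_2\unit\to\Sigma\f_3\unit$ to get an exact sequence of $\KMW$-modules
\[ \pi_3\s_2\unit\xrightarrow{\partial}\pi_2\f_3\unit\to\pi_2\f_2\unit\to\pi_2\s_2\unit\xrightarrow{\delta}\pi_1\f_3\unit, \]
hence a short exact sequence $0\to\pi_2\f_3\unit/\im(\partial)\to\pi_2\f_2\unit\to\ker(\delta)\to 0$. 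First I would identify $\ker(\delta)$. By Lemma~\ref{lem:pi1f3sphere} the module $\pi_1\f_3\unit\iso\kmil(2)$ injects into $\pi_1\s_3\unit$ (the image of $\pi_1\f_4\unit$ is zero), so $\ker(\delta)$ equals the kernel of the first slice differential $\pi_2\s_2\unit\to\pi_1\s_3\unit$, which is described — via the homomorphism $\phi_n$ and the Bocksteins $\partial^2_{12}$, $\partial^{12}_2$ — in the proof of Lemma~\ref{lem:e2-unit-22}; it is the direct sum of an $h^{\star,\star+2}$-summand and the $\im\!\bigl(h^{\star+1,\star+2}_{24}\to h^{\star+1,\star+2}_{12}\bigr)$-summand. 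Next I would compute $\im(\partial)$: the connecting map $\partial$, post-composed with $\pi_2\f_3\unit\to\pi_2\s_3\unit$, is the slice $d^1$-differential $\pi_3\s_2\unit\to\pi_2\s_3\unit$, whose image was identified with $\Sq^2(h^{\star-1,\star+2})=\tau^2\rho^2 h^{\star-1,\star-1}$ in Lemma~\ref{lem:e2-unit-23}. Combined with Lemma~\ref{lem:pi2f3sphere}, which gives $\pi_2\f_3\unit\iso\kmil(4)\{\nu^2\}\directsum\kmil(1)\{\eta\eta_\Top^2\}$ with $\kmil(4)\{\nu^2\}$ the kernel of $\pi_2\f_3\unit\to\pi_2\s_3\unit$, and with the fact that $\nu^2$ maps nontrivially to $\pi_2\unit$ (so it cannot bound), this forces $\im(\partial)=\rho^2\kmil(-1)$ inside the $\kmil(1)$-summand. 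Consequently the image of $\pi_2\f_3\unit$ in $\pi_2\f_2\unit$ is $\kmil(4)\{\nu^2\}\directsum\kmil(1)/\rho^2\kmil(-1)\{\eta\eta_\Top^2\}$ — establishing the last assertion of the lemma.

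It then remains to reorganize the short exact sequence $0\to\kmil(4)\directsum\kmil(1)/\rho^2\kmil(-1)\to\pi_2\f_2\unit\to\ker(\delta)\to 0$ into the asserted form. The summand $\kmil(4)\{\nu^2\}$ splits off throughout, as in the proof of Lemma~\ref{lem:pi2f3sphere}. The $h^{\star,\star+2}$-summand of $\ker(\delta)$ — isomorphic via multiplication by $\tau^2$ to $\kmil$ — provides the $\kmil$ appearing on the right, generated by $\eta_\Top^2$, which lifts to $\pi_{2+(0)}\f_2\unit$ by multiplicativity of the slice filtration \cite{grso}. The complementary submodule is then assembled from $\kmil(1)/\rho^2\kmil(-1)\{\eta\eta_\Top^2\}$ and the $\im\!\bigl(h^{\star+1,\star+2}_{24}\to h^{\star+1,\star+2}_{12}\bigr)$-summand of $\ker(\delta)$, and one checks — by tracking $\inc^2_{24}$, $\pr^{24}_{12}$, $\partial^2_{12}$, $\partial^{12}_2$ exactly as in the $\psi_n$-computation of Lemma~\ref{lem:e2-kq-22} — that it is isomorphic to $\bigl(\pi_1\Sigma^{(2)}\MZ/24\bigr)/\inc^2_{24}\rho^2\tau\kmil$; this is the point where mod-$12$ information is upgraded to mod-$24$ information. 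Finally, the $\KMW$-module extension is pinned down: the relation $\eta\eta_\Top^2=-1\in h^{0,1}_{24}\iso\pi_{1+(1)}\Sigma^{(2)}\MZ/24$ follows from the multiplicative behaviour of the slices with respect to $\eta$ together with the relations $\eta^2\eta_\Top=12\nu$ and $2\eta_\Top=0$ of Lemma~\ref{lem:pi1f2sphere} (or, alternatively, from complex/\'etale realization, as in Lemma~\ref{lem:pi2-f2kq}), and \cite[Theorem A.1 and Lemma A.3]{rondigs.endo} shows the extension with this property is unique, as in the proofs of Lemmas~\ref{lem:pi-f1kq} and~\ref{lem:pi2-f2kq}.

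The step I expect to be the main obstacle is this reorganization: pinning down the extension submodule as $\bigl(\pi_1\Sigma^{(2)}\MZ/24\bigr)/\inc^2_{24}\rho^2\tau\kmil$ — rather than merely as some abelian-group extension of $\kmil(1)/\rho^2\kmil(-1)$ by the mod-$24$ part of $\ker(\delta)$ — and then upgrading to the stated $\KMW$-module structure on $\pi_2\f_2\unit$. This requires the same bookkeeping of coefficient Bocksteins as Lemma~\ref{lem:e2-kq-22}, but one slice-degree higher and with the extra $\nu^2$-summand to keep separate, so the combinatorics of which class bounds which is genuinely more delicate than in the $\kq$ case.
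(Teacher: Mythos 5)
Your skeleton matches the paper's: the cofiber sequence $\f_3\unit\to\f_2\unit\to\s_2\unit$, the identification of $\ker(\delta)$ and $\im(\partial)$ from the $d^1$-computations in Lemmas~\ref{lem:e2-unit-22} and~\ref{lem:e2-unit-23}, and the resulting description of the image of $\pi_2\f_3\unit\to\pi_2\f_2\unit$ are all as in the paper, and that part of your argument is sound. The gap is in the step you yourself flag as the main obstacle. Knowing the sub $\kmil(1)/\rho^2\kmil(-1)\{\eta\eta_\Top^2\}$ and the quotient $\kerpart=\im\bigl(h^{\star+1,\star+2}_{24}\to h^{\star+1,\star+2}_{12}\bigr)$ does not determine the module sandwiched between them: you must rule out (among others) the split extension $\kmil(1)/\rho^2\kmil(-1)\directsum\kerpart$, which has the same associated graded as $\bigl(\pi_{1}\Sigma^{(2)}\MZ/24\bigr)/\inc^{2}_{24}\rho^2\tau\kmil$ but is everywhere $12$-torsion rather than $24$-torsion. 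The $\psi_n$-style Bockstein bookkeeping of Lemma~\ref{lem:e2-kq-22} that you propose to imitate lives entirely on the $E^1$/$E^2$-page; it computes kernels and images of $d^1$ and hence only the associated graded of the slice filtration, so it is structurally incapable of resolving this extension problem. Nor does the relation $\eta\eta_\Top^2=-1$ help here: that relation pins down how the $\kmil\{\eta_\Top^2\}$ quotient attaches to the submodule, but to even state it with target $h^{0,1}_{24}\iso\pi_{1+(1)}\Sigma^{(2)}\MZ/24$ you must already have identified the submodule as the mod-$24$ object, which is precisely what is at stake.

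The paper closes this gap by a different mechanism: it maps the whole situation to $\kq$ via the unit map, producing the commutative diagram~(\ref{eq:extf2sphere}) of short exact sequences, with the left-hand vertical map identified (via Lemmas~\ref{lem:unit-kq-sq} and~\ref{lem:unit-kq-s2}) as a surjection induced by $\partial^2_\infty$ and the right-hand one as an injection induced by $\partial^{12}_\infty$. The corresponding extension for $\pi_2\f_2\kq$ was shown to be the unique nontrivial one in Lemma~\ref{lem:pi2-f2kq} using the Wood cofiber sequence (injectivity of $\mathrm{hyper}\colon\pi_{3+(2)}\kgl\to\pi_{2+(1)}\kq$) together with the computation $\Ext_{\KMW}(2\KMil,{}_2\KMil/\{-1\}^3\KMil)\iso\ZZ/2$ from \cite{rondigs.endo}; the diagram then forces the sphere-side extension to be the unique nontrivial one, i.e.\ $\bigl(\pi_{1}\Sigma^{(2)}\MZ/24\bigr)/\inc^{2}_{24}\rho^2\tau\kmil$. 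To repair your proof you need either this comparison with $\kq$ or some equivalent external input (for instance, a degreewise detection of $24$-torsion such as the complex realization $\pi_{2+(1)}\unit_{\CC}\iso\ZZ/24$ of Lemma~\ref{lem:pi31-complex}, combined with the two-element $\Ext_{\KMW}$ computation to promote degreewise nontriviality to the asserted $\KMil$-module identification over a general field).
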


\begin{proof}
  The slice filtration provides a short exact sequence
  \[ 0 \to \kmil(4)\directsum \kmil(1)/\rho^2\kmil(-1)\to
  \pi_{2}\f_2\unit \to \kerpart \directsum \kmil \to 0 \]
  of $\KMW$-modules, where $\kerpart$ is the $\KMil$-module
  with
  $\kerpart_n= \ker\bigl(\partial^{12}_{2}\colon h^{n+1,n+2}_{12}\to h^{n+2,n+2}\bigr)$.
  Moreover, the $\KMW$-module $\kmil(1)/\rho^2\kmil(-1)$
  coming from $\pi_{2}\f_3\unit$ is generated by $\eta\eta_\Top^2$, and
  the $\KMW$-module $\kmil$ is generated by (the image of) $\eta_\Top^2$.
  Already $2\eta_\Top=0$, whence the extension with respect to
  $\kmil$ is uniquely determined. It remains to determine
  the extension with respect to $\kerpart$. For this purpose,
  consider the commutative diagram
  \begin{equation}\label{eq:extf2sphere}
    \begin{tikzcd}
      \kmil(4)\directsum \kmil(1)/\rho^2\kmil(-1)\ar[r] \ar[d] &
      \pi_{2}\f_2\unit \ar[r] \ar[d] &
      \kerpart \directsum \kmil \ar[d] \\
      {}_{2}\KMil(2)/\{-1\}^3\KMil(-1) \ar[r] &
      \pi_{2}\f_2\kq \ar[r] &
      2\KMil(2)\directsum \kmil
    \end{tikzcd}
  \end{equation}
  induced by the unit map $\unit\to \kq$ on the slice filtration.
  Lemma~\ref{lem:unit-kq-sq} implies that
  the vertical map on the left hand side in diagram~(\ref{eq:extf2sphere})
  projects away from
  $\kmil(4)$ and is induced by the boundary map
  \[ \kmil(1)\iso \directsum_{n\in \NN}h^{n,n+1} \xrightarrow{\partial^2_\infty}
    \directsum_{n\in \NN} {}_{2}H^{n+1,n+1} \iso {}_{2}\KMil(2); \]
  in particular it is surjective.
  The vertical map on the right hand side
  in diagram~(\ref{eq:extf2sphere}) is the identity on the
  summand $\kmil$ and induced by the boundary
  $\partial^{12}_{\infty}\colon \MZ/12\to \Sigma \MZ$ by
  Lemma~\ref{lem:unit-kq-s2}; in particular, it is injective.
  As the proof of Lemma~\ref{lem:pi2-f2kq} shows, there is
  a unique nontrivial extension
  \[ 0\to {}_{2}\KMil(2)/\{-1\}^3\KMil(-1) \to
    \pi_{2}\f_2\kq \to
    2\KMil(2)\to 0\]
  corresponding to the unique nonzero element in
  \[ \Hom_{\KMW}\!\bigl({}_{2}\KMil(2)/\!\{-1\}^3\KMil),{}_{2}\KMil(2)/\!\{-1\}^3\KMil\bigr)
    \!\iso
    \Hom_{\KMW} \!\bigl(\kmil(1)/\!\rho^2\kmil,\kmil(1)/\!\rho^2\kmil\bigr).\]
  Hence there is a unique nontrivial extension
  \[ 0\to \kmil(1)/\rho^2\kmil(1) \to \bigl(\pi_{1}\Sigma^{(1)}\MZ/24\bigr)/\inc^{2}_{24}\rho^2\tau\kmil \to \kerpart \to 0\]
  mapping to the extension
  \[ 0\to {}_{2}\KMil(2)/\{-1\}^3\KMil(-1) \to
    \pi_{2}\f_2\kq \to
    2\KMil(2)\to 0\]
  as prescribed by the diagram~(\ref{eq:extf2sphere}). 
\end{proof}

As a consequence of Lemma~\ref{lem:pi2f2sphere}, every element
of $\pi_{2}\f_{2}\unit$ is 24-torsion, as well as $12\hyper$-torsion.

\begin{lemma}\label{lem:unit-2nd-diff-to-23}
  The second differential
  \[ 
    E^2_{-n+3,1,-n} (\unit)
    \to 
    E^2_{-n+2,3,-n} (\unit)
  \]
  in the $-n$th slice spectral sequence for $\unit$ is trivial.
\end{lemma}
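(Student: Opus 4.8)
The plan is to reduce to low weights by exploiting the $\KMil_\ast(F)/2$-module structure on slice differentials, after first cutting down the target by naturality with respect to the unit map $\unit\to\kq$; the argument runs parallel to the proof of Lemma~\ref{lem:unit-2nd-diff-to-24}.

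First I would record the two groups involved. By Lemma~\ref{lem:e2-unit-31} the source is $\ker\bigl(\Sq^2\colon h^{n-2,n+1}\to h^{n,n+2}\bigr)$, which (via multiplication by $\tau^3$) is ${}_{\rho^2}h^{n-2,n-2}$, and by Lemma~\ref{lem:e2-unit-23} the target is $h^{n+1,n+3}/\tau^2\rho^2 h^{n-1,n-1}$, generated by $\alpha_1^3$. The unit map $\unit\to\kq$ induces the identity on the sources (Lemma~\ref{lem:e2-unit-31}, Lemma~\ref{lem:e2-kq-31}; see also Corollary~\ref{cor:unit-kq-e1-iso}), whereas the corresponding slice $d^2$ for $\kq$ vanishes by Lemma~\ref{lem:d2-pi2kq-zero}. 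Hence naturality forces the image of our differential into the kernel of $E^2_{-n+2,3,-n}(\unit)\to E^2_{-n+2,3,-n}(\kq)$, which by comparing Lemma~\ref{lem:e2-unit-23} with Lemma~\ref{lem:e2-kq-23} equals the image of $\tau\pr^\infty_2 H^{n+1,n+2}$ in $h^{n+1,n+3}/\tau^2\rho^2 h^{n-1,n-1}$. In particular the image is $2$-torsion and, more usefully, lives in the $\pr^\infty_2$-part of motivic cohomology, which vanishes over fields of small $2$-cohomological dimension.

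Next I would use that, taken together over all $n$, the differential is a homomorphism of graded $\KMil_\ast(F)/2$-modules by Lemma~\ref{lem:differential-module-hom}. Its source $\bigoplus_n{}_{\rho^2}h^{n-2,n-2}\iso {}_{\rho^2}\bigl(\KMil_\ast(F)/2\bigr)$ is generated in degrees at most $1$ by \cite[Theorem 3.3]{ovv}, so it suffices to kill the generators corresponding to $n=2$ and $n=3$. For $n=3$ a generator is a square class $\{a\}$ with $\rho^2\{a\}=0\in h^{3,3}(F)$; by \cite[Theorem 3.2]{ovv} it is a sum of transfers along quadratic extensions $L/F$ with $\rho^2=0\in h^{2,2}(L)$, and since slice differentials commute with transfers this reduces the $n=3$ case to the $n=2$ case over fields with $\rho^2=0$. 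For $n=2$ the generator is $1\in h^{0,0}(F)$, which lies in the source only when $\rho^2=0$, equivalently (by \cite[Corollary 3.5]{elman-lam.pfister}) when $-1$ is a sum of two squares in $F$; writing $-1=a^2+b^2$ and base-changing to the subfield of $F$ generated by $a$ and $b$ over the prime field, one reduces to a finitely generated field. In odd characteristic this field is finite, so the target $h^{n+1,n+3}$ already vanishes; in characteristic zero one combines the $\kq$-refinement above with the cohomological dimension of a suitable (totally imaginary) number field and with real and complex realization, as in the endgame of Lemma~\ref{lem:unit-2nd-diff-to-24}, to exclude a nonzero value.

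I expect the characteristic-zero case of the $n=2$ reduction to be the main obstacle: there is no uniform bound on the $2$-cohomological dimension of the finitely generated field one lands in (function fields of conics over $\QQ$ with no rational point occur), so the vanishing cannot be read off from cohomological dimension alone, and one must feed in both the $\kq$-comparison and realization arguments. Everything else is bookkeeping with the identifications already established in Theorem~\ref{theoremE2sphere} and Theorem~\ref{theoremE2kq}.
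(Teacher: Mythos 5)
Your proposal follows the template of Lemma~\ref{lem:unit-2nd-diff-to-24} (comparison with $\kq$, then $\KMil$-module generation of the source via \cite[Theorems 3.2, 3.3]{ovv}, transfers, and base change to small fields), which is a genuinely different route from the paper's. The setup is sound: the identification of source and target, the reduction of the image into $\ker\bigl(E^2_{-n+2,3,-n}(\unit)\to E^2_{-n+2,3,-n}(\kq)\bigr)$ via Lemma~\ref{lem:d2-pi2kq-zero} (no circularity there), the transfer reduction for $n=3$, and the odd-characteristic case (where one should choose $a,b$ in the prime field itself, so that the generator lives over $\FF_p$ and the target $h^{3,5}(\FF_p)$ vanishes). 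But the characteristic-zero $n=2$ case, which you flag as the main obstacle, is a genuine gap, and the tools you propose for closing it do not apply. The minimal field $E_0=\QQ(a,b)$ with $-1=a^2+b^2$ has \emph{no} real embedding, so real realization is unavailable precisely in the case you need it; and while $\cd_2(E_0)\leq 3$ does hold uniformly (contrary to your diagnosis --- $a$ and $b$ satisfy a relation, so $E_0$ is a totally imaginary number field or the function field of an anisotropic conic), this bound does not kill the target $h^{3,5}(E_0)\iso\tau^2 K^{\M}_3(E_0)/2$, which is a degree-$3$ group, nor the $\kq$-constrained subgroup $\tau\pr^\infty_2 H^{3,4}(E_0)$. (In Lemma~\ref{lem:unit-2nd-diff-to-24} the analogous target was $h^{5,5}$, which $\cd_2\leq 3$ does kill; that is why the template works there but not here.)

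The paper's proof avoids this entirely by not restricting to $\ker(d^1)$ before base-changing. It studies the boundary map $\pi_{3}\s_1\unit\to\pi_{2}\f_2\unit$ on the \emph{whole} $E^1$-term: the source $\pi_3\s_1\unit\iso\kmil(-2)$ is free of rank one over $\kmil$ on the generator $g=\tau^3$, which is defined over the prime field $F_0$ for \emph{every} $F$ (no hypothesis $\rho^2=0$ needed). The image of $g$ therefore lies in the image of ${}_{2,\eta}\pi_{2-(2)}\f_2\unit_{F_0}$, a group made explicit by Lemma~\ref{lem:pi2f2sphere}: it vanishes in odd characteristic, and over $\QQ$ the only ambiguity beyond the $d^1$-value $\rho^2\eta_\Top^2$ is $\rho^6\nu^2$, which is excluded by real realization --- available over $\QQ$ because there $\rho^2\neq 0$. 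Having pinned down the image of $g$ as exactly $\rho^2\eta_\Top^2$, $\kmil$-linearity finishes the job: an element of $\ker(d^1)$ is $y\cdot g$ with $\rho^2y=0$, and $y\cdot\rho^2\eta_\Top^2=(\rho^2y)\eta_\Top^2=0$ in $\pi_2\f_2\unit$, so $d^2$ (and $d^3$) vanish on it. If you want to salvage your approach you would need an independent argument that $d^2(1)=0$ over function fields of anisotropic conics; absent that, the proof is incomplete.
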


\begin{proof}
  The cofiber sequence
  \[ \f_2\unit\to \f_1\unit\to \s_1\unit \to \Sigma\f_2\unit \]
  induces a long exact sequence on $\KMW$-modules containing
  the portion
  \[ \dotsm \to \pi_{3}\s_1\unit\to \pi_{2}\f_2\unit \to \pi_{2}\f_1\unit \to
    \pi_{2}\s_1\unit\to \dotsm. \]
  The map $\kmil(-2)\iso \pi_{3}\s_1\unit \to \pi_{2}\f_1\unit$ has as
  source a $\kmil$-module on a (unique) generator $g$ in degree
  $\pi_{3-(2)}\s_1\unit$, which comes from the prime field $F_0\subset F$.
  Hence the map is determined by the image of $g$ in
  ${}_{2,\eta}\pi_{2-(2)}\f_2\unit_{F_0}$.
  Lemma~\ref{lem:pi2f2sphere}
  provides that this group is zero in odd characteristic. For $F_0=\QQ$,
  the group ${}_{2}\bigl(h^{3,4}_{24}/\inc^{2}_{24}\rho^2h^{1,2}\bigr)(\QQ) =0$
  is zero, and the group $\kmil_6(\QQ)$ contains a unique nonzero element
  $\rho^6\nu^2$,
  which is mapped to $\eta_\Top^2$ under real realization. It follows that
  the image of $g$ is given by $\rho^2\eta_\Top^2$, as
  determined by the slice $d^1$ differential. The result follows.
\end{proof}

\begin{lemma}\label{lem:unit-3rd-diff-to-24}
  The slice $d^3$ differential
  \[ 
    E^3_{-n+3,1,-n} (\unit)
    \to 
    E^3_{-n+2,4,-n} (\unit)
  \]
  in the $-n$th slice spectral sequence for $\unit$ is trivial.
\end{lemma}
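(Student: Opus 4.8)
The plan is to follow the proof of Lemma~\ref{lem:unit-2nd-diff-to-24} almost verbatim, since the source and target of the stated $d^3$ are controlled by the same cohomological data. First I would identify the two relevant terms of the third page with subquotients of terms already computed on the second page. The group $E^3_{-n+3,1,-n}(\unit)$ coincides with $E^2_{-n+3,1,-n}(\unit)=\ker\bigl(\Sq^2\colon h^{n-2,n+1}\to h^{n,n+2}\bigr)$ by Lemma~\ref{lem:e2-unit-31}, since the incoming $d^2$ originates in the trivial group $E^2_{-n+4,-1,-n}(\unit)$ and the outgoing $d^2$ into $E^2_{-n+2,3,-n}(\unit)$ vanishes by Lemma~\ref{lem:unit-2nd-diff-to-23}. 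The group $E^3_{-n+2,4,-n}(\unit)$ is a subgroup of $E^2_{-n+2,4,-n}(\unit)\iso h^{n+4,n+4}$ (Lemma~\ref{lem:e2-unit-24}), since the incoming $d^2$ from $E^2_{-n+3,2,-n}(\unit)$ is zero by Lemma~\ref{lem:unit-2nd-diff-to-24}.

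Next I would assemble the differentials $d^3\colon E^3_{-n+3,1,-n}(\unit)\to E^3_{-n+2,4,-n}(\unit)$ over all weights into a single homomorphism of graded $\KMil$-modules by means of Lemma~\ref{lem:differential-module-hom}. Writing $h^{n-2,n+1}=\tau^3 h^{n-2,n-2}$ (when nonzero) and using the identity $\Sq^2(\tau^3 y)=\tau^2\rho^2 y$ established in the proof of Lemma~\ref{lem:Sq2pr42iszero}, the source is identified, as a $\KMil$-module, with a degree shift of $\{y\in\kmil\colon \rho^2 y=0\}$, the $\rho^2$-annihilated submodule of $\kmil$. By \cite[Theorem 3.3]{ovv} this module is generated in Milnor degree at most $1$, so it suffices to show that $d^3$ kills the generators occurring for $n=2$ and $n=3$.

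For $n=3$ a generator has the form $\tau^3\overline{a}$ with $\overline{a}\in h^{1,1}(F)=F^\times/2$ and $\rho^2\overline{a}=0\in h^{3,3}(F)$; by \cite[Theorem 3.2]{ovv} the class $\overline{a}$ is a finite sum of transfers $\transfer_{L_i/F}(\overline{b}_i)$ along quadratic extensions $L_i/F$ with $\rho^2=0\in h^{2,2}(L_i)$, and the projection formula gives $\tau^3\overline{a}=\sum_i\transfer_{L_i/F}(\tau^3\overline{b}_i)$. Since slice differentials commute with transfers and with the $\KMil$-action, and $\tau^3\overline{b}_i=\overline{b}_i\cdot\tau^3$ over $L_i$, this reduces to the case $n=2$ over a field where $\rho^2=0$. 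For $n=2$ a generator can only be $\tau^3\in h^{0,3}(F)$, which is nonzero in $\ker\bigl(\Sq^2\colon h^{0,3}\to h^{2,4}\bigr)$ exactly when $\rho^2=0\in h^{2,2}(F)$; if $\rho^2\neq 0$ the source vanishes, and if $\rho^2=0$ then $-1=a^2+b^2$ for some $a,b\in F$ by \cite[Corollary 3.5]{elman-lam.pfister}, so $\tau^3$ is pulled back from $F_0(a,b)$, where $F_0\subset F$ is the prime field. That field has $2$-cohomological dimension at most $3$, hence $h^{6,6}(F_0(a,b))=0$, so the target of $d^3$ for $n=2$ over $F_0(a,b)$, being a subgroup of this group, is trivial; thus $d^3(\tau^3)=0$ there and, by naturality, over $F$. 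Combining the two cases yields the vanishing.

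The step I expect to be the main obstacle is the bookkeeping around the $\KMil$-module structure on the third page: one must verify that Lemma~\ref{lem:differential-module-hom} applies to $d^3$ as well as to $d^1$, that the identification of $E^3_{-n+3,1,-n}(\unit)$ with the $\rho^2$-annihilated submodule of $\kmil$ is one of $\KMil$-modules, and that the reduction via \cite[Theorem 3.2]{ovv} together with transfer-naturality and base change is legitimate in this setting. All of these are, however, handled exactly as in the proof of Lemma~\ref{lem:unit-2nd-diff-to-24}, so no genuinely new input is required.
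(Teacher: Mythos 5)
Your argument is correct, but it takes a genuinely different route from the paper. The paper disposes of this $d^3$ in one line by appealing to the proof of Lemma~\ref{lem:unit-2nd-diff-to-23}: there the \emph{entire} map $\pi_{3}\s_1\unit\to\pi_{2}\f_2\unit$ is analysed as a map of $\KMW$-modules out of $\kmil(-2)$ on a single generator $g$ defined over the prime field, and the image of $g$ is shown to be exactly the class $\rho^2\eta_\Top^2$ already accounted for by the slice $d^1$-differential; since nothing of the image survives into filtration $\geq 3$, this single computation kills both the $d^2$ into $E_{-n+2,3,-n}$ and the $d^3$ into $E_{-n+2,4,-n}$ simultaneously. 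You instead stay on the level of the $E^2$-/$E^3$-pages and transplant the proof of Lemma~\ref{lem:unit-2nd-diff-to-24}: identify $E^3_{-n+3,1,-n}(\unit)$ with $\tau^3\cdot{}_{\rho^2}\kmil$ via Lemmas~\ref{lem:e2-unit-31} and~\ref{lem:unit-2nd-diff-to-23}, invoke Lemma~\ref{lem:differential-module-hom} (which is stated for all $r$, so applies to $d^3$) and \cite[Theorem 3.3]{ovv} to reduce to generators in degrees $n=2,3$, and then use the transfer reduction of \cite[Theorem 3.2]{ovv} together with the cohomological-dimension bound $h^{6,6}(F_0(a,b))=0$ for $-1=a^2+b^2$. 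Your degree bookkeeping is right (the shift from the paper's cases $n=1,2$ to your $n=2,3$ matches the extra $\tau$ in $h^{n-2,n+1}$ versus $h^{n-1,n+2}$), and every ingredient you use is already validated by the paper's own use of it for a $d^2$. What the paper's route buys is brevity, given that the $\KMW$-module $\pi_2\f_2\unit$ has already been computed in Lemma~\ref{lem:pi2f2sphere}; what your route buys is independence from that computation, at the cost of redoing the transfer and prime-field analysis.
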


\begin{proof}
The proof of Lemma~\ref{lem:unit-2nd-diff-to-23} verifies this claim.
\end{proof}

\begin{lemma}\label{lem:pi3MZmapstrivially}
  The canonical map $\pi_{3}\s_0\unit\to \pi_{2}\f_1\unit$
  is the zero map.
\end{lemma}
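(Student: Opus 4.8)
The plan is to recast the statement as the vanishing of a single higher slice differential for $\unit$, and then to kill that differential by a base-change and realization argument in the spirit of Lemmas~\ref{lem:unit-2nd-diff-to-24} and~\ref{lem:unit-3rd-diff-to-24}.

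First observe that $\partial\colon \pi_3\s_0\unit\to\pi_2\f_1\unit$ is the connecting homomorphism in the long exact sequence of $\KMW$-modules attached to the cofiber sequence $\f_1\unit\to\unit\to\s_0\unit$; since $\eta$ acts trivially on $\s_0\unit\iso\MZ$, the source is a $\KMil$-module and $\partial$ factors through the $\eta$-torsion submodule of $\pi_2\f_1\unit$. Equivalently, in each bidegree $\partial$ vanishes precisely when the relevant class in the $q=0$ column of the slice spectral sequence survives to $E^\infty$, i.e. when every slice differential $d^r\colon E^r_{-n+3,0,-n}(\unit)\to E^r_{-n+2,r,-n}(\unit)$ is trivial (the $q=0$ column receives no incoming differentials). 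For $r=1$ this differential is $\Sq^2\pr^\infty_2$, which vanishes by Corollary~\ref{cor:sq2prinfty2iszero}; for $r=2,3$ it vanishes by Corollary~\ref{cor:unit-higher-diff-from-30}; and for $r\ge5$ its target vanishes by Lemma~\ref{lem:e2-unit-2>4}. Using Lemma~\ref{lem:e2-unit-30} and Lemmas~\ref{lem:e2-unit-24},~\ref{lem:unit-2nd-diff-to-24},~\ref{lem:unit-3rd-diff-to-24} to compute the surviving terms, the lemma is therefore equivalent to the triviality, for every $n$, of the differential onto the $\beta_{2/2}=\alpha_2^2$ summand,
\[ d^4\colon E^4_{-n+3,0,-n}(\unit)=H^{n-3,n}\longrightarrow E^4_{-n+2,4,-n}(\unit)=h^{n+4,n+4}. \]

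So it remains to show $d^4\colon H^{n-3,n}(F)\to h^{n+4,n+4}(F)$ is zero. By Lemma~\ref{lem:differential-module-hom} the direct sum over $n$ is a homomorphism of graded $\KMil(F)$-modules whose target is $2$-torsion, so it factors through $H^{n-3,n}(F)/2\hookrightarrow h^{n-3,n}(F)$. For $n=3$ the source $H^{0,3}(F)=0$ and there is nothing to prove. For $n\ge4$, identify $\bigoplus_n h^{n-3,n}(F)$ with the mod-$2$ Galois cohomology ring $H^{\ast}_{\mathrm{et}}(F,\Z/2)$ — a free rank-one $\kmil(F)$-module — using $H^{0,q}(F)=0$ for $q\ge1$, Bloch--Kato, and the $\tau$-periodicity of mod-$2$ motivic cohomology in this range; then check that the $\kmil(F)$-submodule $\bigoplus_n H^{n-3,n}(F)/2$ is generated in bounded degree, so that, by naturality of slice differentials under transfers along quadratic extensions (as in Lemma~\ref{lem:unit-2nd-diff-to-24}) and \cite[Theorem 3.3]{ovv}, it suffices to verify $d^4$ on finitely many generators defined over the prime field $F_0\subset F$. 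For $F_0$ of odd characteristic the target $h^{n+4,n+4}(F_0)$ vanishes; for $F_0=\mathbb{Q}$ and for subfields of $\mathbb{R}$ one uses that real realization sends $\nu^2$ to $\eta_{\mathrm{top}}^2\ne0$, so that $\rho^{j}\nu^2$ is nonzero and cannot be hit by a differential, together with the $2$-cohomological dimension bound on the auxiliary quadratic extensions occurring in the reduction, to exclude a nonzero $d^4$. Comparison with $\Z[\tfrac{1}{2}]$-coefficients then covers positive characteristic, as in the proof of Lemma~\ref{lem:e2-unit-24}.

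The essential difficulty lies in this last step: in Lemmas~\ref{lem:unit-2nd-diff-to-24} and~\ref{lem:unit-3rd-diff-to-24} the source was a single mod-$n$ motivic cohomology group, hence essentially a cyclic Milnor-$K$-module, whereas here the source $H^{n-3,n}$ is an integral "higher" motivic cohomology group. The real work is to control the $\kmil(F)$-module $\bigoplus_n H^{n-3,n}(F)/2$ — equivalently, to locate it precisely inside $H^{\ast}_{\mathrm{et}}(F,\Z/2)$ — well enough to reduce to finitely many prime-field generators; once that reduction is in hand, the realization and cohomological-dimension bookkeeping is a transcription of the earlier arguments.
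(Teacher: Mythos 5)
Your reduction is circular at its central step. To dispose of the $d^2$ and $d^3$ differentials leaving $E^r_{-n+3,0,-n}(\unit)$ you invoke Corollary~\ref{cor:unit-higher-diff-from-30}; but that corollary is deduced \emph{from} Lemma~\ref{lem:pi3MZmapstrivially} (its proof reads ``This follows from Corollary~\ref{cor:sq2prinfty2iszero} and Lemma~\ref{lem:pi3MZmapstrivially}''), and Lemmas~\ref{lem:unit-2nd-diff-to-24} and~\ref{lem:unit-3rd-diff-to-24} control differentials leaving the $q=1,2$ rows of the third column, not the $q=0$ row. The higher differentials out of the zeroth slice are exactly what the present lemma exists to kill, so nothing proved earlier is available for them, and your ``equivalence'' collapses to restating most of the problem. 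A secondary gap: even granting that every $d^r$ vanishes on the $q=0$ column, this only places the image of the connecting map inside $\bigcap_r\image\bigl(\pi_{2}\f_r\unit\to\pi_{2}\f_1\unit\bigr)$; concluding it is zero needs the filtration on $\pi_{2}\f_1\unit$ to be Hausdorff, which you do not address.

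Beyond that, the step you yourself flag as ``the real work'' --- controlling the $\KMil$-module $\bigoplus_n H^{n-3,n}/2$ well enough to reduce to boundedly many generators defined over prime fields --- is asserted, not proved, and it is precisely what the paper's argument is engineered to avoid: integral $\pi_{3}\MZ$ is explicitly called ``quite mysterious'' as a $\KMil$-module there, and no generation statement for it (or for the submodule $H^{n-3,n}/2\subset h^{n-3,n}$) is available. The paper instead shows the map lifts to $\pi_{2}\f_2\unit/\pi_{3}\s_1\unit$, whose elements are $24$- and $12\hyper$-torsion by Lemma~\ref{lem:pi2f2sphere}, so that the map factors through $(\pi_{3}\MZ)/24$ and, after comparison with $\unit/12\hyper$, through $\pi_{3}\s_0\unit/6\hyper$ together with a mod-$2$ piece; the quotients $\pi_{3}\MZ/n$ \emph{are} accessible (roots of unity, the mod-$4$ and mod-$8$ generators of Lemmas~\ref{lem:generators-motcohommod4} and~\ref{lem:generators-motcohommod8}), and only after that reduction do the realization and cohomological-dimension arguments you sketch apply. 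Without either that mod-$n$ detour or a proof of your bounded-generation claim, the argument does not close.
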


\begin{proof}
  The composition $\pi_{3}\s_0\unit\to \pi_{2}\f_1\unit \to \pi_{2}\s_1\unit$
  is the slice $d^1$ differential, and zero by Corollary~\ref{cor:sq2prinfty2iszero}.
  Hence the map $\pi_{3}\s_0\unit\to \pi_{2}\f_1\unit$ lifts to
  a map $\pi_{3}\s_0\unit \to \pi_{2}\f_2\unit/\pi_{3}\s_1\unit$,
  where the target denotes the cokernel of the (not necessarily injective)
  canonical map
  $\pi_{3}\s_1\unit\to \pi_{2}\f_2\unit$.
  The slice $d^1$ differential $\pi_{3}\s_1\unit \to \pi_{2}\f_2\unit \to \pi_{2}\s_2\unit$ sends the (unique) generator
  $g\in \pi_{3-(2)}\s_1\unit \iso \kmil_0$ to $\bigl(\tau^2\rho^2,\inc^{2}_{12}(\tau\rho^3)\bigr)\in h^{2,4}\directsum h^{3,4}_{12}
  $.
  Lemma~\ref{lem:pi2f2sphere} then provides a short exact sequence
  \[ 0 \to \kmil(4)\directsum \bigl(\pi_{1}\Sigma^{(2)}\MZ/24\bigr)/\inc^{2}_{24}\rho^2\tau\kmil \to
    \pi_{1}\f_2\unit/\pi_{3}\s_1\unit \to \kmil/\rho^2\kmil(-2) \to 0 \]
  of $\KMW$-modules which is determined by the equality
  \[\eta\eta^2_\Top = -1\in h^{0,1}_{24}\iso \pi_{1+(1)}\Sigma^{(2)}\MZ/24.\]
  Since every element in the $\KMW$-module $\pi_{2}\f_2\unit/\pi_{3}\s_1\unit$
  is $24$-torsion,\footnote{The $\KMil$-module $\pi_{3}\MZ/24$ is more accessible than the quite mysterious $\KMil$-module $\pi_{3}\MZ$.}
  the map $\pi_{3}\s_0\unit \to \pi_{2}\f_2\unit/\pi_{3}\s_1\unit$
  factors as
  \[ \pi_{3}\s_0\unit \to \bigl(\pi_{3}\MZ\bigr)/24  \to \pi_{2}\f_2\unit/\pi_{3}\s_1\unit.\]
  The second map fits into a commutative diagram
  \begin{center}
    \begin{tikzcd}
      \pi_{3}\s_0\unit \ar[r] \ar[d,"\pr^\infty_{24}"] &  \pi_{2}\f_2\unit/\pi_{3}\s_1\unit \ar[r]  &
      \pi_{2}\f_1\unit \ar[dd] \\
      \bigl(\pi_{3}\MZ\bigr)/24 \ar[d] \ar[ru] & &  \\
      \pi_{3}\s_0\unit/12\hyper \ar[rr] && \pi_{2}\f_1\unit/12\hyper
    \end{tikzcd}
  \end{center}
  in which the vertical maps are induced by the canonical map
  $\unit\to \unit/12\hyper$ of motivic spectra, factored as a surjection followed
  by an injection on the left hand side. By Lemma~\ref{lem:pi2f2sphere} and
  Lemma~\ref{lem:pi1f2sphere}, every element in
  $\pi_{2}\f_2\unit$ and in $\pi_{1}\f_2\unit$ is $12\hyper$-torsion.
  There results a short exact sequence
  \[ 0 \to \bigl(\pi_{2}\f_{2}\unit\bigr)/12\hyper =  \pi_{2}\f_{2}\unit \to \pi_{2}\f_{2}\unit/12\hyper
    \to {}_{12\hyper}\pi_{1}\f_{2}\unit =\pi_{1}\f_2\unit \to 0 \]
  of $\KMW$-modules. Hence it suffices to show that the image of
  $\pi_{3}\s_0\unit/12\hyper \to \pi_{2}\f_1\unit/12\hyper$ and
  of $\pi_{2}\f_2\unit \to \pi_{2}\f_1\unit \to \pi_{2}\f_1\unit/12\hyper$
  intersect in $\{0\}$.
  Consider the short exact sequence
  \[ 0 \to \pi_{3}\s_0\unit/6\hyper \to \pi_{3}\s_0\unit/12\hyper
    \to \kerpart \to 0 \]
  of $\KMil$-modules, where $\kerpart_n:=
  \ker\bigl(\partial^{2}_{12}\colon h^{n-3,n}\to h^{n-2,n}_{12}\bigr)$
  is isomorphic to the $\KMil$-module bearing the same notation used
  already in the proof of Lemma~\ref{lem:pi2f2sphere}.
  First the image of
  $\pi_{3}\s_0\unit/6\hyper \to \pi_{3}\s_012\hyper \to \pi_{2}\f_1\unit/12\hyper$
  will be shown to intersect the image
  of $\pi_{2}\f_2\unit \to \pi_{2}\f_1\unit/12\hyper$ in $\{0\}$.

  The $\KMil$-module
  $\pi_{3}\s_0\unit/6\hyper\iso \directsum_{n\in \NN} h^{n-3,n}_{12}$
  splits into a $3$-torsion part and a $2$-primary torsion part.
  Consider the $3$-torsion first. If the base field $F$ in
  question contains a primitive third root of unity 
  $\zeta_3\in h^{0,1}_3$, the $\KMil$-module
  $h^{\star-3,\star}_3$ is generated in bidegree $h^{0,3}_3$ by
  the third power of $\zeta_3$. Since the generator exists over the field $F_0(\zeta_3)$ of 3-cohomological dimension at most one,
  it maps trivially, by naturality of the slice differentials for field extensions. Suppose now that $F$ does not contain a primitive third root of unity. The degree two field extension
  $F\hookrightarrow F(\zeta_3)$ induces an injection on the $3$-components of the groups occurring in the differential in question. Thus the $3$-component maps trivially.
  Consider now the $2$-primary torsion part
  $\directsum_{n\in \NN} h^{n-3,n}_{4}$. As a $\KMil$-module, it
  is generated by elements in $h^{0,3}_{4}$ and $h^{1,4}_{4}$.
  If $\sqrt{-1}\notin F$, then $\inc^2_4\colon h^{0,3}=h^{0,3}_{4}\iso \{1,-1\}$.
  If $\sqrt{-1}\in F$, then $h^{0,3}_{4}\iso \{1,-1,\sqrt{-1},-\sqrt{-1}\}$.
  In any case, an element in $h^{0,3}_{4}(F)$ already lives over a
  subfield $E$ of $F$ which is at most a quadratic extension of
  the prime field $F_0$. In this case, the group
  \[ \pi_{2-(3)}\f_2\unit/\pi_{3-(3)}\s_1\unit (E)
    \iso \kmil_7 \directsum h^{4,5}_{24}/\inc^{2}_{24}\rho^2h^{2,3}
    \directsum \kmil_3/\rho^2\kmil_1 \iso \kmil_7(E) \]
  simplifies to a group with either one element, or the two elements
  $\{0,\rho^7\nu^2\}$.
  In the latter case, real realization provides that the element
  $\rho^7\nu^2$, realizing to $\eta^2_\Top$, cannot be hit. 
  An element in $h^{1,4}_{4}(F)$ already lives over a subfield
  $F_0(u)$, where $u$ is a unit in $F$. In particular, $F_0(u)$ has
  virtual cohomological dimension at the prime 2 strictly less than
  four. The relevant target group
  \[ \pi_{2-(4)}\f_2\unit/\pi_{3-(4)}\s_1\unit(F_0(u))
    \iso \kmil_8 \directsum h^{5,6}_{24}/\inc^{2}_{24}\rho^2h^{3,4}
    \kmil_4/\rho^2\kmil_2 \iso \kmil_8(F_0(u)) \]
  again simplifies. If $F_0(u)$ admits a real embedding, real realization
  provides that the single nonzero element $\rho^8\nu^2$, which
  realizes to $\eta_\Top^2$, cannot be hit. If $F_0(u)$ does not
  admit a real embedding, then already $\kmil_8(F_0(u))=0$.
  It follows that the image of 
  $\pi_{3}\s_0\unit/6\hyper \to \pi_{3}\s_012\hyper \to \pi_{2}\f_1\unit/12\hyper$
  intersects the image
  of $\pi_{2}\f_2\unit \to \pi_{2}\f_1\unit/12\hyper$ in $\{0\}$.
  Consider now an element
  \[ x \in \image\bigl(\pi_{3}\s_012\hyper \to \pi_{2}\f_1\unit/12\hyper\bigr)
    \cap \image\bigl(\pi_{2}\f_2\unit \to \pi_{2}\f_1\unit/12\hyper\bigr). \]
  By the preceding argument, $2x=0$. Lemma~\ref{lem:pi2f2sphere} implies
  that the $2$-torsion in $\pi_{2}\f_2\unit/\pi_{3}\s_1\unit$, which injects
  into the $2$-torsion in $\pi_{2}\f_1\unit/12\hyper$, is generated by
  $\nu^2$ and $\eta_\Top^2$ as a $\KMW$-module. In order to show that $x=0$, let
  $y$ be its image in $\pi_{2}\f_1\unit/2$ under the canonical map
  $\unit/12\hyper\to \unit/2$. It has the property that the composition
  $\unit\to \unit/12\hyper\to \unit/2$ of canonical maps is the canonical
  map. This canonical map induces an injection on the sub-$\KMW$-module
  of $\pi_{2}\f_2\unit/\pi_{3}\s_1\unit$ generated by $\nu^2$ and
  $\eta_\Top^2$, because these elements are $2$-torsion.
  Hence it suffices to prove that $y=0\in \pi_{2}\f_1\unit/2$.
  The canonical maps $\unit\to \unit/12\hyper\to \unit/2$
  induce a commutative diagram
  \begin{center}
    \begin{tikzcd}
      \pi_{3}\s_0\unit \ar[r,"\pr^\infty_{24}"] \ar[d] &
      \pi_{3}\s_0\unit/12\hyper \ar[r,"\pr^{24}_2"]
      \ar[d] & \pi_{3}\s_0\unit/2 \ar[d]\\
      \pi_{2}\f_1\unit\ar[r]  &
      \pi_{2}\f_1\unit/12\hyper \ar[r] & \pi_{2}\f_1\unit/2.
    \end{tikzcd}
  \end{center}
  Naturality implies that $y$ is in the image of
  $\pi_{3}\s_0\unit/2\to \pi_{2}\f_1\unit/2$, the vertical map on the right hand side.
  It is determined by the image of the
  (unique) generator in $\pi_{3-(3)}\MZ/2$, which already lives over
  the prime field $F_0\subset F$. In this degree, 
  $ \pi_{2-(3)}\f_2\unit/\pi_{3-(3)}\s_1\unit (F_0)
  \iso \kmil_7(F_0)=\{0,\rho^7\nu^2\}$, as mentioned before. Since
  $2\nu^2=0$, the element $\rho^7\nu^2$ maps to a nonzero element
  in $\pi_{2-(3)}\f_1\unit_{\QQ}/2$; real realization may be invoked as
  well to see this. As a consequence, $y=0$.
  It follows that $x$ is already zero. This concludes the proof that 
  \[\image\bigl(\pi_{3}\s_012\hyper \to \pi_{2}\f_1\unit/12\hyper\bigr)
    \cap \image\bigl(\pi_{2}\f_2\unit \to \pi_{2}\f_1\unit/12\hyper\bigr) = \{0\}. \]
  In particular, the map $\pi_{3}\s_0\unit\to \pi_{2}\f_1\unit$ is zero.
\end{proof}

\begin{corollary}\label{cor:unit-higher-diff-from-30}
  The slice $d^j$ differential
  \[ 
    E^j_{-n+3,0,-n} (\unit)
    \to 
    E^j_{-n+2,j,-n} (\unit)
  \]
  in the $-n$th slice spectral sequence for $\unit$ is zero.
\end{corollary}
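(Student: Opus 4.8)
The plan is to deduce this statement entirely from Lemma~\ref{lem:pi3MZmapstrivially} by unwinding the exact couple underlying the slice spectral sequence. For a fixed weight $w=-n$, the slice tower $\cdots\to \f_2\unit\to \f_1\unit\to \f_0\unit=\unit$, with cofiber sequences $\f_{q+1}\unit\to \f_q\unit\to \s_q\unit\xrightarrow{\partial}\Sigma^{1,0}\f_{q+1}\unit$, gives rise to the exact couple whose derived pages form the slice spectral sequence. A differential $d^j$ emanating from the column of the zeroth slice is then computed as usual: a class $x\in E^1_{-n+3,0,-n}(\unit)=\pi_{-n+3,-n}\s_0\unit$ survives to the page $E^j$ precisely when its boundary $\partial(x)\in \pi_{-n+2,-n}\f_1\unit$ lifts along $\pi_{-n+2,-n}\f_j\unit\to \pi_{-n+2,-n}\f_1\unit$, and in that case $d^j(x)$ is the class of such a lift in $\pi_{-n+2,-n}\s_j\unit=E^1_{-n+2,j,-n}(\unit)$.

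First I would observe that $E^j_{-n+3,0,-n}(\unit)$ is a subgroup of $\pi_{-n+3,-n}\s_0\unit$ for every $j$: no differential enters this tridegree, since $\s_q\unit=\ast$ for $q<0$, and the outgoing $d^1$ already vanishes by Lemma~\ref{lem:e2-unit-30} (it is $\Sq^2\pr^\infty_2$, which is zero by Corollary~\ref{cor:sq2prinfty2iszero}), so $E^2_{-n+3,0,-n}(\unit)=E^1_{-n+3,0,-n}(\unit)\iso H^{n-3,n}$ and each later page is a subgroup of this. Hence it suffices to show that $d^j$ vanishes on every $x\in \pi_{-n+3,-n}\s_0\unit$. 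Next I would identify the relevant boundary map: in the homotopy-module grading, the cofiber sequence $\f_1\unit\to \unit\to \s_0\unit\to \Sigma^{1,0}\f_1\unit$ induces exactly the canonical map $\pi_3\s_0\unit\xrightarrow{\partial}\pi_2\f_1\unit$, whose weight-$(-n)$ component is the map $\partial\colon \pi_{-n+3,-n}\s_0\unit\to \pi_{-n+2,-n}\f_1\unit$ above. By Lemma~\ref{lem:pi3MZmapstrivially} this map is zero. Therefore $\partial(x)=0$ for all $x$, so $0\in \pi_{-n+2,-n}\f_j\unit$ lifts $\partial(x)$ for every $j$, whence $d^j(x)=0$; as $j\geq 2$ was arbitrary (the case $j=1$ is contained in Lemma~\ref{lem:e2-unit-30}), the differential is identically zero.

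The substantive work has already been carried out in Lemma~\ref{lem:pi3MZmapstrivially}, whose proof handles the genuinely delicate $2$- and $3$-primary torsion phenomena by base change to subfields of small cohomological dimension together with real realization. Consequently I do not expect any further obstacle here; the only points requiring care are bookkeeping ones, namely matching the tridegree $(-n+3,0,-n)$ with the weight-$(-n)$ part of $\pi_3\s_0\unit$ and confirming that the spectral-sequence boundary out of the zeroth slice is precisely the homotopy-module map named in Lemma~\ref{lem:pi3MZmapstrivially}.
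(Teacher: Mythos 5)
Your argument is correct and is exactly the paper's proof: the paper deduces the corollary in one line from Corollary~\ref{cor:sq2prinfty2iszero} (for $j=1$, giving $E^2_{-n+3,0,-n}(\unit)\iso H^{n-3,n}$) and Lemma~\ref{lem:pi3MZmapstrivially} (the vanishing of $\pi_{3}\s_0\unit\to\pi_{2}\f_1\unit$, which by the exact-couple description of the higher differentials forces $d^j=0$ for all $j\geq 2$). Your unpacking of the exact-couple mechanics is the intended reading of that one-line proof.
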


\begin{proof}
  This follows from Corollary~\ref{cor:sq2prinfty2iszero} and
  Lemma~\ref{lem:pi3MZmapstrivially}.
\end{proof}

As a consequence of Corollary~\ref{cor:unit-higher-diff-from-30},
the remaining statements of this section 
address actually the $\E^{\infty}$-page of the
slice spectral sequence for $\unit$.

\begin{corollary}
\label{cor:kernel-24}
The kernel of $E^{2}_{-n+2,4,-n}(\unit\to \kq)$ is $h^{n+4,n+4}$ generated by $\beta_{2/2}$.
\end{corollary}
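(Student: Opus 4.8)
The plan is simply to observe that the target of the map in question vanishes, so that its kernel is the entire source. First I would invoke Lemma~\ref{lem:e2-unit-24}, which identifies the source $E^{2}_{-n+2,4,-n}(\unit)$ with $h^{n+4,n+4}$, generated by $\beta_{2/2}=\alpha_{2}^{2}$. Then I would invoke Lemma~\ref{lem:e2-kq-2>3}, which shows $E^{2}_{-n+2,m,-n}(\kq)=0$ for all $m\geq 4$, and in particular $E^{2}_{-n+2,4,-n}(\kq)=0$. Since the unit map $\unit\to \kq$ induces a morphism of slice spectral sequences, the induced homomorphism $E^{2}_{-n+2,4,-n}(\unit\to \kq)\colon h^{n+4,n+4}\to 0$ is necessarily zero, so its kernel is all of $h^{n+4,n+4}$, generated by $\beta_{2/2}$.

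There is no genuine obstacle remaining here: the content has already been supplied by the two lemmas cited. The delicate input was the identification of the source in Lemma~\ref{lem:e2-unit-24}, where pinning down the square class $\phi$ and the coefficient $b$ in the entering $d^{1}$-differential required comparison with $\ZZ[\tfrac{1}{2}]$-coefficients together with a real-realization argument; by contrast, the vanishing of the target in Lemma~\ref{lem:e2-kq-2>3} is immediate from surjectivity of the entering $d^{1}$-differential for $\kq$. One could equally argue directly, as observed in the course of the proof of Lemma~\ref{lem:e2-unit-24}, that the induced map $\s_{4}(\unit\to \kq)$ restricts trivially to the summand $\Sigma^{6,4}\MZ/2\{\beta_{2/2}\}$ of $\s_{4}(\unit)$; either route yields the same conclusion.

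For completeness, as noted in the remark preceding the corollary, this tridegree of the slice spectral sequence for $\unit$ already supports $E^{2}=E^{\infty}$: no outgoing differential can be nonzero, since the first column of the $E^{2}$-page vanishes above row $3$ by Theorem~\ref{theoremE2sphere}, while the entering $d^{2}$, $d^{3}$, and higher differentials vanish by Lemma~\ref{lem:unit-2nd-diff-to-24}, Lemma~\ref{lem:unit-3rd-diff-to-24}, and Corollary~\ref{cor:unit-higher-diff-from-30}. Thus the corollary may equally be read on the $E^{\infty}$-page.
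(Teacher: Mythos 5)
Your proposal is correct and coincides with the paper's own argument: the proof there consists precisely of citing Lemma~\ref{lem:e2-unit-24} for the identification of the source and Lemma~\ref{lem:e2-kq-2>3} for the vanishing of the target. The additional remarks on the direct argument via $\s_{4}(\unit\to\kq)$ and on $E^{2}=E^{\infty}$ are accurate but not needed for the corollary itself.
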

\begin{proof}
This follows from Lemmas~\ref{lem:e2-unit-24} and~\ref{lem:e2-kq-2>3}.
\end{proof} 

\begin{corollary}
\label{cor:kernel-23}
The kernel of the induced map $E^{2}_{-n+2,3,-n}(\unit\to \kq)$ is isomorphic to 
the image of $\pr^{\infty}_{2} H^{n+1,n+2}=H^{n+1,n+2}/2$ in 
$h^{n+1,n+2}/\Sq^2h^{n-1,n+1}$.
\end{corollary}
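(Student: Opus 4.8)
The plan is to read off the kernel by comparing the second-page computations of Lemma~\ref{lem:e2-unit-23} and Lemma~\ref{lem:e2-kq-23} through the identity that the unit map induces on third slices, and then to transport the answer one weight downward via multiplication by $\tau$.

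First I would record, using Lemma~\ref{lem:unit-kq-sq}, that $\unit\to\kq$ is the identity on $\s_3$; hence it induces the identity on $E^1_{-n+2,3,-n}$, which for both spectra equals $h^{n+1,n+3}\directsum h^{n+3,n+3}$, and by naturality of the slice spectral sequence the induced map on $E^2_{-n+2,3,-n}$ is compatible with the $d^1$-differentials. As recorded in the proofs of Lemma~\ref{lem:e2-unit-23} and Lemma~\ref{lem:e2-kq-23}, the kernel of the outgoing $d^1$ is in both cases the subgroup $\{(c_2,\tau^{-1}\Sq^2 c_2)\colon c_2\in h^{n+1,n+3}\}$, and under this common identification the induced map becomes the canonical quotient
\[ h^{n+1,n+3}/\Sq^2 h^{n-1,n+2}\longrightarrow h^{n+1,n+3}/\bigl(\Sq^2 h^{n-1,n+2}+\tau\pr^\infty_2 H^{n+1,n+2}\bigr), \]
where $\Sq^2 h^{n-1,n+2}=\tau^2\rho^2 h^{n-1,n-1}$ is the image of the entering $d^1$ for $\unit$ and $\tau\pr^\infty_2 H^{n+1,n+2}$ is the additional summand of the image of the entering $d^1$ for $\kq$ (Lemma~\ref{lem:e2-kq-23}). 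Consequently the kernel of $E^2_{-n+2,3,-n}(\unit\to\kq)$ is the image of $\tau\pr^\infty_2 H^{n+1,n+2}$ in $h^{n+1,n+3}/\Sq^2 h^{n-1,n+2}$.

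It then remains to identify this with the image of $\pr^\infty_2 H^{n+1,n+2}$ in $h^{n+1,n+2}/\Sq^2 h^{n-1,n+1}$. Since $n+1\leq n+2$ and $n-1\leq n+1$, the Beilinson--Lichtenbaum isomorphism makes multiplication by $\tau$ an isomorphism $h^{n+1,n+2}\xrightarrow{\;\sim\;}h^{n+1,n+3}$ and $h^{n-1,n+1}\xrightarrow{\;\sim\;}h^{n-1,n+2}$. The first evidently sends $\pr^\infty_2 H^{n+1,n+2}$ onto $\tau\pr^\infty_2 H^{n+1,n+2}$, and it sends $\Sq^2 h^{n-1,n+1}$ onto $\Sq^2 h^{n-1,n+2}$: every class in $h^{n-1,n+1}$ has the form $\tau^2 y$ with $y$ of top weight, so $\Sq^1(\tau^2 y)=\Sq^1(\tau^2)y+\tau^2\Sq^1(y)=0$ (as $\Sq^1(\tau^2)=0$ and $\Sq^1(y)\in h^{n,n-1}=0$); the motivic Cartan formula then gives $\Sq^2(\tau x)=\tau\Sq^2(x)$ for $x\in h^{n-1,n+1}$, and combining this with the isomorphism $\tau\colon h^{n-1,n+1}\xrightarrow{\sim}h^{n-1,n+2}$ yields $\tau\cdot\Sq^2 h^{n-1,n+1}=\Sq^2 h^{n-1,n+2}$. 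Passing to quotients, multiplication by $\tau$ produces the desired isomorphism and, since $\pr^\infty_2 H^{n+1,n+2}=H^{n+1,n+2}/2$ by the Bockstein sequence, the corollary follows.

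Most of the work is the (routine) check that the two $E^2$-descriptions are compatible with the common identification of $E^1_{-n+2,3,-n}$; the one step that needs a genuine, if short, argument is the submodule equality $\tau\cdot\Sq^2 h^{n-1,n+1}=\Sq^2 h^{n-1,n+2}$ inside $h^{n+1,n+3}$, i.e. that increasing the weight from $n+2$ to $n+3$ does not alter the relevant relation subgroup — this is where the vanishing of the motivic cohomology of a field above the diagonal, and hence of $\Sq^1$ on $h^{n-1,n+1}$, is used, and it is the place I would be most careful.
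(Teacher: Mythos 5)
Your proposal is correct and follows the same route as the paper, whose proof simply cites Lemmas~\ref{lem:e2-unit-23}, \ref{lem:unit-kq-sq}, and~\ref{lem:e2-kq-23}; you have merely made explicit the weight-shifting identification by multiplication with $\tau$ (including the check that $\tau\cdot\Sq^2 h^{n-1,n+1}=\Sq^2 h^{n-1,n+2}=\tau^2\rho^2h^{n-1,n-1}$) that the paper leaves implicit in the statement.
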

\begin{proof}
This follows from 
Lemmas~\ref{lem:e2-unit-23},~\ref{lem:unit-kq-sq}, and~\ref{lem:e2-kq-23}.
\end{proof}

\begin{corollary}
\label{cor:kernel-22}
The kernel of $E^{2}_{-n+2,2,-n}(\unit\to \kq)$
is isomorphic to $H^{n+1,n+2}/12$.
\end{corollary}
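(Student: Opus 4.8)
The plan is to read the kernel off from the explicit descriptions of the two $E^{2}$-terms in Lemmas~\ref{lem:e2-unit-22} and~\ref{lem:e2-kq-22}, exploiting that the unit map respects the split short exact sequences appearing there. First I would record the morphism of split short exact sequences induced by $\unit\to\kq$: the top row is
\[ 0\to h^{n,n+2}/\Sq^{2}h^{n-2,n+1}\to E^{2}_{-n+2,2,-n}(\unit)\to \im\bigl(h^{n+1,n+2}_{24}\to h^{n+1,n+2}_{12}\bigr)\to 0, \]
the bottom row is
\[ 0\to h^{n,n+2}/\Sq^{2}h^{n-2,n+1}\to E^{2}_{-n+2,2,-n}(\kq)\to 2H^{n+2,n+2}\to 0, \]
and by Lemma~\ref{lem:unit-kq-s2} the map $\s_{2}(\unit)\to\s_{2}(\kq)$ is the identity on the $\alpha_{1}^{2}$-summand and the connecting map $\partial^{12}_{\infty}$ on the $\alpha_{2/2}$-summand. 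Hence the left-hand vertical map is the identity, while the right-hand vertical map is the restriction of $\partial^{12}_{\infty}\colon h^{n+1,n+2}_{12}\to H^{n+2,n+2}$ to $\im(h^{n+1,n+2}_{24}\to h^{n+1,n+2}_{12})$. Applying the kernel--cokernel exact sequence of this map of short exact sequences, and using that the left map is an isomorphism, identifies the kernel of $E^{2}_{-n+2,2,-n}(\unit\to\kq)$ with the kernel of that restricted connecting map.

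The remaining step is to compute that kernel. The long exact motivic cohomology sequence of the coefficient sequence $0\to\ZZ\xrightarrow{12}\ZZ\to\ZZ/12\to 0$ gives $\ker\bigl(\partial^{12}_{\infty}\colon h^{n+1,n+2}_{12}\to H^{n+2,n+2}\bigr)=\im\bigl(\pr^{\infty}_{12}\colon H^{n+1,n+2}\to h^{n+1,n+2}_{12}\bigr)\iso H^{n+1,n+2}/12$. Since $\pr^{\infty}_{12}=\pr^{24}_{12}\pr^{\infty}_{24}$ factors through $h^{n+1,n+2}_{24}$, this subgroup already lies inside $\im(h^{n+1,n+2}_{24}\to h^{n+1,n+2}_{12})$, so it coincides with the kernel of the restricted map; therefore the kernel of $E^{2}_{-n+2,2,-n}(\unit\to\kq)$ is $H^{n+1,n+2}/12$.

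The one point that genuinely needs care, and so the main obstacle, is that the morphism of short exact sequences above really has the stated vertical maps --- in particular that it restricts to the identity on the common sub-object $h^{n,n+2}/\Sq^{2}h^{n-2,n+1}$ rather than merely agreeing with it up to some automorphism. The quickest route is to note that this two-step filtration on $E^{2}_{-n+2,2,-n}$ is the one induced by the slice filtration, so the required compatibility is automatic from functoriality, precisely as in the diagram in the proof of Lemma~\ref{lem:pi2f2sphere}. If one instead argues directly from the explicit splittings through the surjections $\phi_{n}$ and $\psi_{n}$ of Lemmas~\ref{lem:e2-unit-22} and~\ref{lem:e2-kq-22}, one is reduced to the identities $\pr^{\infty}_{2}\partial^{12}_{\infty}=\partial^{12}_{2}$ and $\partial^{12}_{\infty}\bigl(\partial^{2}_{12}(x)\cdot\tau_{12}\bigr)=\partial^{2}_{\infty}(x)\cdot\{-1\}$ among the connecting homomorphisms, which are routine consequences of the multiplicative structure on mod-$m$ motivic cohomology and of the coefficient sequences for $2$, $12$, and $24$.
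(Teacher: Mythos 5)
Your proposal is correct and is essentially the argument the paper compresses into its citation of Lemmas~\ref{lem:e2-unit-22}, \ref{lem:unit-kq-s2}, \ref{lem:e2-kq-22} and~\ref{lem:kernel-aux-1}: the map of split short exact sequences is the identity on the $\alpha_1^2$-summand and the restriction of $\partial^{12}_{\infty}$ on the other, so the kernel is $\ker(\partial^{12}_{\infty})\iso H^{n+1,n+2}/12$, and your observation that $\pr^{\infty}_{12}=\pr^{24}_{12}\pr^{\infty}_{24}$ is exactly the content of Lemma~\ref{lem:kernel-aux-1} ensuring this kernel sits inside $\im(h^{n+1,n+2}_{24}\to h^{n+1,n+2}_{12})=\ker(\partial^{12}_{2})$.
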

\begin{proof}
This follows from Lemmas~\ref{lem:e2-unit-22},~\ref{lem:unit-kq-s2},~\ref{lem:e2-kq-22}, and \ref{lem:kernel-aux-1}.
\end{proof}

\begin{lemma}
\label{lem:kernel-aux-1}
The kernel $H^{n+1,n+2}/12$ of $\partial^{12}_{\infty}\colon h^{n+1,n+2}_{12}\to H^{n+2,n+2}$ is contained in the kernel of $\partial^{12}_{2}\colon h^{n+1,n+2}_{12}\to h^{n+2,n+2}$.
\end{lemma}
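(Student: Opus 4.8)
The plan is to show that $\partial^{12}_2$ is nothing but the mod-$2$ reduction of $\partial^{12}_\infty$, so that every class killed by the latter is automatically killed by the former. Concretely, I would first write down the morphism of short exact sequences of coefficient groups
\begin{center}
\begin{tikzcd}
0 \ar[r] & \ZZ \ar[r,"\cdot 12"] \ar[d,"\pr"] & \ZZ \ar[r] \ar[d,"\pr"] & \ZZ/12 \ar[r] \ar[d,"\id"] & 0 \\
0 \ar[r] & \ZZ/2 \ar[r,"\cdot 12"] & \ZZ/24 \ar[r] & \ZZ/12 \ar[r] & 0
\end{tikzcd}
\end{center}
in which the left vertical map is the reduction $\ZZ\to\ZZ/2$, the middle one is the reduction $\ZZ\to\ZZ/24$, and the right one is the identity. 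A one-line check on generators shows both squares commute; the top row is the coefficient sequence defining $\partial^{12}_\infty$ and the bottom row is the one defining $\partial^{12}_2$.

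Next I would apply motivic cohomology and invoke naturality of the Bockstein connecting homomorphism with respect to this morphism of short exact sequences. The resulting ladder of long exact sequences furnishes a commuting square which reads $\partial^{12}_2=\pr^\infty_2\circ\partial^{12}_\infty$ as maps $h^{n+1,n+2}_{12}\to h^{n+2,n+2}$, factoring through $H^{n+2,n+2}$. From this factorization the inclusion $\ker(\partial^{12}_\infty)\subseteq\ker(\partial^{12}_2)$ is immediate. To match the statement it then remains to recall that, by the long exact sequence attached to the top row, $\ker(\partial^{12}_\infty)$ is the image of $\pr^\infty_{12}\colon H^{n+1,n+2}\to h^{n+1,n+2}_{12}$, i.e. exactly the subgroup written $H^{n+1,n+2}/12$.

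There is no genuinely hard step here: once the correct morphism of coefficient sequences is in place, everything is formal naturality. The only point to be careful about is bookkeeping with the paper's conventions, namely that $\partial^{a}_{b}$ for finite $b$ is the Bockstein of $0\to\ZZ/b\to\ZZ/(ab)\to\ZZ/a\to 0$ while $\partial^{a}_{\infty}$ is the Bockstein of $0\to\ZZ\xrightarrow{\cdot a}\ZZ\to\ZZ/a\to 0$, so that the vertical map $\ZZ\to\ZZ/2$ above indeed induces $\pr^\infty_2$ on cohomology and the bottom row honestly computes $\partial^{12}_2$.
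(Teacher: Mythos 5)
Your proposal is correct and follows essentially the same route as the paper, whose one-line proof simply invokes the identity $\partial^{12}_{2}=\pr^{\infty}_{2}\circ\partial^{12}_{\infty}$ that you derive explicitly from the morphism of coefficient sequences. The extra naturality argument you supply is exactly the justification the paper leaves implicit.
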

\begin{proof}
If $x\in h^{n+1,n+2}_{12}$ satisfies $\partial^{12}_{\infty}x=0$ then $\partial^{12}_{2}x=\pr^{\infty}_{2}\partial^{12}_{\infty}x=0$.
\end{proof}

\begin{lemma}
\label{lem:unit-kq-e2-iso}
For $n,k \in \ZZ$ the unit map $\unit\to \kq$ induces an isomorphism on
\begin{align*}
& E^{2}_{-n,m,-n}     &  \\
& E^{2}_{-n+1,m,-n} & m\neq 2,3 \\
& E^{2}_{-n+2,m,-n} & m\neq 2,3,4 \\
& E^{2}_{-n+k,m,-n} & m\leq 1,
\end{align*}
and a surjection on $E^{2}_{-n+1,m,-n}$ for $2\leq m\leq 3$.
\end{lemma}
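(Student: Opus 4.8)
The plan is to combine the $E^1$-level comparison of Corollary~\ref{cor:unit-kq-e1-iso} with the shape of the slice $\dd^1$-differential, and at the few positions where that comparison degenerates, with the explicit $E^2$-descriptions already obtained. The unit map induces a map of slice spectral sequences, hence a map of $E^1$-pages commuting with $\dd^1$; fixing the weight $-n$ and recalling that $\dd^1$ lowers $p$ by one and raises $q$ by one, the group $E^2_{-n+i,m,-n}$ is the homology of the three-term complex $E^1_{-n+i+1,m-1,-n}\xrightarrow{\dd^1}E^1_{-n+i,m,-n}\xrightarrow{\dd^1}E^1_{-n+i-1,m+1,-n}$, and $E^1_{p,q,-n}=0$ for $p<-n$ or $q<0$. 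By Corollary~\ref{cor:unit-kq-e1-iso} the map on $E^1$ is an isomorphism at $(i=0,\text{any }m)$, at $(i=1,m\neq2)$, at $(i=2,m\neq2,4)$, and at $(i=k,m\le1)$ for every $k$, and a surjection at $(i=1,m=2)$ and $(i=2,m=4)$; at the remaining positions $(i=2,m=2)$ and $(i\ge3,m\ge2)$ no control is claimed. Moreover, by Lemmas~\ref{lem:e2-unit-2>4},~\ref{lem:e2-unit-24},~\ref{lem:e2-kq-1>2} and~\ref{lem:e2-kq-2>3} the groups $E^2_{-n+1,m,-n}$ vanish for both $\unit$ and $\kq$ when $m\ge4$, and $E^2_{-n+2,m,-n}$ vanish for both when $m\ge5$, so there the comparison is trivially an isomorphism. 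What remains is column $-n$, the rows $m\le1$ in every column, and the rows $m=2,3$ in column $-n+1$.

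For column $-n$ the exiting differential is zero (its target lies in column $-n-1$), so $E^2_{-n,m,-n}$ is the cokernel of the entering $\dd^1$ from $E^1_{-n+1,m-1,-n}$; since the middle term $E^1_{-n,m,-n}$ is always iso and the source is iso for $m\neq3$ and surjective for $m=3$, in either case the two images of $\dd^1$ correspond under the iso on $E^1_{-n,m,-n}$, giving an isomorphism on $E^2_{-n,m,-n}$. For $(i,m)=(1,2)$ the middle term is only surjective, but the exiting target $E^1_{-n,3,-n}$ is iso (hence, by injectivity there, $\ker\dd^1(\unit)\twoheadrightarrow\ker\dd^1(\kq)$) and the entering source $E^1_{-n+2,1,-n}$ is iso (hence the images of the entering $\dd^1$ match, up to this surjection), so $E^2_{-n+1,2,-n}(\unit)\twoheadrightarrow E^2_{-n+1,2,-n}(\kq)$. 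For $(i,m)=(1,3)$ the terms $E^1_{-n+1,3,-n}$ and $E^1_{-n,4,-n}$ are iso, so $\ker\dd^1$ is preserved, while the entering source $E^1_{-n+2,2,-n}$ is uncontrolled, so the image of the entering $\dd^1$ can only shrink under $\unit\to\kq$, still leaving a surjection $E^2_{-n+1,3,-n}(\unit)\twoheadrightarrow E^2_{-n+1,3,-n}(\kq)$, consistent with Corollary~\ref{cor:kernel-23}.

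For the rows $m\le1$ in an arbitrary column $-n+k$ the claim is that the comparison is literally the identity. By Lemmas~\ref{lem:unit-kq-s0} and~\ref{lem:unit-kq-s1}, $\s_0$ and $\s_1$ of $\unit\to\kq$ are identities; hence all $E^1$-groups in rows $q\le1$ and all $\dd^1$-differentials between them (those being $\s_0\to\Sigma^{1,0}\s_1$-differentials, determined by $\s_0,\s_1$) agree for $\unit$ and $\kq$ with the comparison the identity. For $m=0$, $E^2_{-n+k,0,-n}$ is just the kernel of the exiting $\dd^1\colon E^1_{-n+k,0,-n}\to E^1_{-n+k-1,1,-n}$ (the entering one comes from the zero group in row $-1$), and is therefore identified. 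For $m=1$, $E^2_{-n+k,1,-n}$ is $\ker(\dd^1\colon E^1_{-n+k,1,-n}\to E^1_{-n+k-1,2,-n})$ modulo the image of the common entering differential $\Sq^2\pr^\infty_2$; the exiting $\dd^1$ does involve $\s_2$, whose summand structure differs between $\unit$ and $\kq$, but its kernel does not, since its component into the common summand $\Sigma^{2,2}\MZ/2$ of $\s_2(\unit)$ and $\s_2(\kq)$ is $\Sq^2$ in both cases, while the remaining component is $\inc^2_{12}\Sq^2\Sq^1$ for $\unit$ and, by naturality (Lemma~\ref{lem:unit-kq-s2} together with $\partial^{12}_\infty\inc^2_{12}=\partial^2_\infty$), equals $\partial^2_\infty\Sq^2\Sq^1$ for $\kq$ — and each factors through $\Sq^2\Sq^1$, which vanishes on $\ker\Sq^2$ in the relevant Milnor-range bidegrees by the $\tau$-divisibility computation used in the proof of Lemma~\ref{lem:e2-unit-31}. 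Thus both kernels equal $\ker\Sq^2$, and the comparison is the identity on $E^2_{-n+k,1,-n}$. Taking $k=1,2$ gives the stated isomorphisms on $E^2_{-n+1,m,-n}$ and $E^2_{-n+2,m,-n}$ for $m\le1$.

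The one genuinely delicate step is this last one: matching the kernel of the row-$1$-to-row-$2$ differential for $\unit$ and $\kq$ although its target is built from the non-isomorphic slices $\s_2$. This is what forces the appeal to the motivic Steenrod-algebra inclusion $\ker\Sq^2\subseteq\ker\Sq^2\Sq^1$ in the Milnor range (already used in Section~\ref{sec:slice-spectr-sequ}); everything else is a formal comparison of three-term complexes driven by the $E^1$-information of Corollary~\ref{cor:unit-kq-e1-iso}.
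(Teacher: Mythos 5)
Your proof is correct and takes essentially the same route as the paper's: a formal comparison of the three-term $d^1$-complexes driven by Corollary~\ref{cor:unit-kq-e1-iso}, with the one substantive input being the inclusion $\ker(\Sq^2)\subseteq\ker(\Sq^2\Sq^1)$ on $h^{n+1-k,n+1}$, which is exactly the exceptional case the paper isolates (there for $k\equiv 3\bmod 4$; in the remaining residue classes $\Sq^2\Sq^1$ vanishes identically, so your uniform treatment is fine). One small citation slip: the vanishing of $E^{2}_{-n+1,m,-n}(\unit)$ for $m\geq 4$ is recorded in Theorem~\ref{theoremE2sphere} (via the first column computed in the earlier paper) rather than in the column-two Lemmas~\ref{lem:e2-unit-2>4} and~\ref{lem:e2-unit-24}.
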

\begin{proof}
This follows by inspection of $d^{1}$ in Figure~\ref{fig:d1-kq} and Corollary~\ref{cor:unit-kq-e1-iso}, 
with the exception of $E^{2}_{-n+k,1,-n}$ for $k\equiv 3\bmod 4$. 
In the latter case, 
if an element of $h^{n+1-k,n+1}$ is in $\ker(\Sq^{2})$ then it is also in $\ker(\Sq^{2}\Sq^1)$. 
The result for the direct summand $\Sigma^{2,2}\MZ/2$ of $\s_{2}(\unit)$ in Lemma~\ref{lem:unit-kq-s2} implies now the 
isomorphism on $E^{2}_{-n+k,1,-n}$ for all $k\in\ZZ$. 
\end{proof}

\section{The $1$-line}
\label{section:1line}

Let $F$ is a field of exponential characteristic $c\neq 2$, and
set $\Lambda:=\mathbb{Z}[\tfrac{1}{c}])$. 
Then the slice spectral sequence for $\unit_F$ converges conditionally to the homotopy groups of the $\eta$-completion ${\unit}^\wedge_\eta$ \cite[Theorem 3.50]{rso.oneline}.
The calculation of the $E^{2}$-page given in Theorem \ref{theoremE2sphere} shows that $\pi_{1+(n)}{\unit}^\wedge_\eta = 0$ for $n\geq 3$, 
and $\pi_{2+(n)}{\unit}^\wedge_\eta= 0$ for $n\geq 5$. 
As a consequence, 
\[ 
\pi_{1}{\unit}^\wedge_\eta[\tfrac{1}{\eta}]
= 
\pi_{2}{\unit}^\wedge_\eta[\tfrac{1}{\eta}] 
=
0. 
\]
The vanishing $\pi_{1}{\unit}[\tfrac{1}{\eta}]=\pi_{2}{\unit}[\tfrac{1}{\eta}]=0$ from \cite[Theorem 8.3]{roendigs.etainv} and the $\eta$-arithmetic square imply there is a short exact sequence
\[ 
0 
\to \pi_{0}\unit 
\to \pi_{0} \unit^\wedge_\eta\directsum \pi_{0}\unit[\tfrac{1}{\eta}] 
\to \pi_{0}\unit^\wedge_\eta[\tfrac{1}{\eta}]\to 0,
\]
an isomorphism
\[ 
\pi_{1}\unit 
\xrightarrow{\cong} \pi_{1} \unit^\wedge_\eta, 
\]
and an exact sequence
\[
\dotsm \to \pi_{3}{\unit}^\wedge_\eta[\tfrac{1}{\eta}] 
\to \pi_{2}\unit 
\to \pi_{2} \unit^\wedge_\eta 
\to 0
\]
of $\KMW$-modules.

\begin{theorem}
\label{theorem:1line-kq}
  Let $F$ be a field of exponential characteristic $c\neq 2$. The
  unit map $\unit\to \kq$ induces an isomorphism
  $\pi_{0+\bideg}\unit \to \pi_{0+\bideg}\kq$, and a surjection
  $\pi_{1+\bideg}\unit\to \pi_{1+\bideg}\kq$ whose kernel coincides
  with $\KMil_{2-\star}/24$ after inverting $c$. 
  In particular, 
  since $\pi_{1+(n)}\kq=0$ for $n\geq 2$,
  $\pi_{1+(2)}\unit_{\Lambda}\cong \Lambda/24$ and
  $\pi_{1+(n)}\unit_{\Lambda}=0$ for $n\geq 3$.
\end{theorem}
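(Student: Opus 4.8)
The plan is to compare the slice spectral sequences of $\unit$ and $\kq$ by means of the unit map; in the range relevant to the $0$- and $1$-lines both collapse at $E^{2}$ and compute the correct homotopy groups. For $\kq$, Proposition~\ref{prop:eff-kq-conv} applies (since $1\not\equiv 0,3\bmod 4$) and identifies the abutment with $\pi_{1+\bideg}\kq$, while Lemmas~\ref{lem:e1-kq-0}, \ref{lem:kq-perm-cycle-1} and~\ref{lem:d2-pi1kq-zero} give $E^{2}=E^{\infty}$ in the first two columns, the terms being those of Theorem~\ref{theoremE2kq}. For $\unit$, the convergence statements at the start of this section give $\pi_{1+\bideg}\unit\iso\pi_{1+\bideg}\unit^{\wedge}_{\eta}$ and the pullback description of $\pi_{0+\bideg}\unit$; that the slice spectral sequence for $\unit$ has $E^{2}=E^{\infty}$ in the first two columns is the content of the $\f_{q}$-filtration analysis of $\pi_{1+\bideg}\unit$ given below (see also \cite{rondigs.moore}), the $d^{1}$'s being already recorded in Theorem~\ref{theoremE2sphere} and differentials out of the $0$-line column landing in the zero columns $p<-n$.

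On the $0$-line, $\pi_{0+\bideg}\unit\iso\KMW$ by Morel's theorem, and Lemma~\ref{lem:unit-kq-e2-iso} shows the unit map is an isomorphism on every term $E^{2}_{-n,m,-n}=E^{\infty}_{-n,m,-n}$, that is, on every associated graded piece of the finite, fundamental-ideal-adic slice filtration on $\pi_{0+(-n)}$ of both spectra; with the convergence just recalled this forces $\pi_{0+\bideg}\unit\to\pi_{0+\bideg}\kq$ to be an isomorphism (in particular $\pi_{0+\bideg}\kq\iso\KMW$).

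For the $1$-line, surjectivity comes first: by Theorem~\ref{theoremE2kq} the first column of $E^{\infty}$ for $\kq$ is concentrated in slices $q=0,1,2$, and by Lemma~\ref{lem:unit-kq-e2-iso} the unit map is an isomorphism on $E^{2}_{-n+1,m,-n}$ for $m\neq 2,3$ and a surjection for $m=2,3$; a descending induction over this finite filtration gives surjectivity of $\pi_{1+\bideg}\unit\to\pi_{1+\bideg}\kq$. To identify the kernel we pass to the slice filtration $\f_{q}\unit$: Lemmas~\ref{lem:pi1f4sphere}, \ref{lem:pi1f3sphere} and~\ref{lem:pi1f2sphere} compute the $\pi_{1}\f_{q}\unit$, the last giving $0\to\KMil(2)/24\to\pi_{1}\f_{2}\unit\to\kmil(1)\to 0$ with $\KMil(2)/24$ generated by $\nu$, and together with the presentation~\eqref{eq:pi1f1unit} of $\pi_{1}\f_{1}\unit$ and the vanishing of $\pi_{1}\f_{q}\unit\to\pi_{1}\f_{q-1}\unit$ recorded there, the submodule $\langle\nu\rangle\iso\KMil(2)/24$ injects into $\pi_{1+\bideg}\unit$. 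On the other hand the unit map annihilates $\nu$: on the $\f_{2}$-filtration it factors through the slice-$2$ map of Lemma~\ref{lem:unit-kq-s2}, whose component on the $\alpha_{2/2}$-summand carrying $\nu$ is $\partial^{12}_{\infty}\colon\M\Lambda/12\to\Sigma^{1,0}\M\Lambda$, landing in an integral motivic cohomology group that vanishes for weight reasons; equivalently, by Lemma~\ref{lem:pi-f1kq} the map $\pi_{1}\f_{1}\unit\to\pi_{1}\f_{1}\kq\iso\KMW/(2,\eta^{2})$ is surjective with kernel exactly $\langle\nu\rangle$, while the slice-$\leq 1$ contributions and the image of $\s_{0}\unit$ agree on both sides by Lemmas~\ref{lem:unit-kq-s0} and~\ref{lem:unit-kq-s1}. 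Combined with surjectivity this identifies the kernel of $\pi_{1+\bideg}\unit\to\pi_{1+\bideg}\kq$ with $\KMil_{2-\star}/24$.

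The main obstacle is this last step. The slice-spectral-sequence comparison disposes of the $0$-line and of surjectivity on the $1$-line essentially formally, but showing that the kernel is the \emph{non-split} cyclic $\KMW$-module $\KMil_{2-\star}/24$ --- assembled from the slice-$2$ ($\Lambda/12$) and slice-$3$ ($\Lambda/2$) contributions via the Milnor $K$-theory module structure --- and that this copy of $\KMil(2)/24$ both injects into $\pi_{1+\bideg}\unit$ and dies in $\pi_{1+\bideg}\kq$ requires the detailed $\f_{q}$-filtration bookkeeping of Lemmas~\ref{lem:pi1f2sphere}--\ref{lem:pi1f4sphere} and its naturality along the unit map. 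The concluding "in particular" is then immediate: $\pi_{1+(n)}\kq=0$ for $n\geq 2$ by Theorem~\ref{theoremE2kq} (or~\eqref{equation:slices-kq}), so $\pi_{1+(n)}\unit_{\Lambda}\iso\KMil_{2-n}(F)\otimes\Lambda/24$ there, which is $\Lambda/24$ for $n=2$ and vanishes for $n\geq 3$ since Milnor $K$-theory vanishes in negative degrees.
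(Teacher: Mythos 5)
Your argument is correct in substance and reaches the same conclusions, but it routes the key step differently from the paper. For surjectivity on the $1$-line and for the ``in particular'' clause you do exactly what the paper does: Lemma~\ref{lem:unit-kq-e2-iso} on the $E^{2}=E^{\infty}$ first column (with Lemma~\ref{lem:d2-pi1kq-zero} for $\kq$ and the permanent-cycle statements for $\unit$), then the vanishing $\pi_{1+(n)}\kq=W^{2n-1}(F)=0$ for $n\geq 2$. Where you diverge is the kernel: the paper simply quotes \cite[Theorem 5.5]{rso.oneline}, observing that its proof applies verbatim to $\kq$ in place of $\f_0\KQ$, whereas you reconstruct the identification internally from Lemmas~\ref{lem:pi1f4sphere}--\ref{lem:pi1f2sphere}, the presentation~(\ref{eq:pi1f1unit}), Lemma~\ref{lem:pi-f1kq}, and the explicit observation that the $\alpha_{2/2}$-component of $\s_2(\unit\to\kq)$ is $\partial^{12}_{\infty}$ landing in $H^{1,0}=0$, so that $\nu\mapsto 0$. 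This is a legitimate and arguably more self-contained route; what it costs you is that you must supply the inputs the citation would have packaged, and two of these deserve more care than you give them. First, the injectivity of $\KMil(2)/24\{\nu\}$ into $\pi_{1+\bideg}\unit$ (not just into $\pi_{1}\f_1\unit$) requires that the higher differentials $\pi_{2}\s_0\unit\to\pi_{1}\f_1\unit$ miss the $\nu$-summand, i.e.\ the analogue for $\unit$ of Lemma~\ref{lem:d2-pi1kq-zero}; this is \cite[Lemmas 4.15--4.17]{rso.oneline} and should be cited explicitly rather than waved at. Second, your $0$-line argument conflates $\pi_{0+\bideg}\unit$ with the abutment of the slice spectral sequence, which is $\pi_{0+\bideg}\unit^{\wedge}_{\eta}$; the arithmetic-square pullback you allude to needs its $\kq$-counterpart and a comparison of the $\eta$-inverted pieces ($\pi_{0}\unit[\tfrac{1}{\eta}]\to\pi_{0}\kw$, both the Witt ring) before the $E^{\infty}$-comparison forces an isomorphism on $\pi_{0}$ itself. (Also, the $\f_{q}$-filtration lemmas you invoke appear in the section \emph{before} the theorem, not ``below'' it.) None of these is a fatal gap, but each is a genuine ingredient rather than a formality.
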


\begin{proof}
  Surjectivity of $\pi_{1} \unit\rightarrow \pi_{1}\kq$ follows from
  Lemma~\ref{lem:unit-kq-e2-iso} for the $\pi_1$-column of
  the $E^{2}=E^\infty$-page; 
  see Lemma~\ref{lem:d2-pi1kq-zero} for
  $E^{2}_{-n+1,j,-n}(\kq)=E^\infty_{-n+1,h,-n}(\kq)$ and
  \cite[Lemma 4.16 and 4.17]{rso.oneline} for
  $E^{2}_{-n+1,j,-n}(\unit)=E^\infty_{-n+1,h,-n}(\unit)$.
  The statement on the kernel of $\pi_{1+(n)} \unit\rightarrow \pi_{1+(n)}\kq$ follows
  from \cite[Theorem 5.5]{rso.oneline}, which applies to $\f_0\KQ$ instead of
  $\kq$.
  Finally, for $n>1$, 
  $\pi_{1+(n)}\kq=\pi_{1+(n)}\KQ
  =W^{2n-1}(F)=0$, although it follows as well from the presentation
  of $\pi_{1}\f_1\kq$ given in Lemma~\ref{lem:pi-f1kq}.
\end{proof}
 
At least after inverting the exponential characteristic $c$
of the base field,
the kernel of $\pi_{1+\bideg}\unit\to \pi_{1+\bideg}\kq$
is generated as a $\KMW$-module by the element $\nu\in \pi_{1+(2)}\unit$.
An unstable representative of $\nu$, already defined over
the integers, is obtained as the Hopf construction on
$\mathrm{SL}_2\simeq \A^2\minus \{0\}$.
Another description of this representative is the attaching map
$\nu \colon S^{7,4}\simeq \A^{4}\minus \{0\}\to \mathbf{HP}^1 \simeq S^{4,2}$
for the quaternionic plane, again defined over any base scheme. 
The equation $\eta\nu=0$ from \cite[Theorem 1.4]{dugger-isaksen.hopf}
(which follows from Theorem~\ref{theorem:1line-kq} as well)
implies that the kernel of $\pi_{1+\bideg}\unit\to \pi_{1+\bideg}\kq$
is in fact a $\KMil$-module.

\section{The $2$-line}
\label{section:2line}

The obvious three elements in $\pi_{2}\unit$ are
\[ \nu^2\in \pi_{2+(4)}\unit, \quad \nu\eta_\Top \in \pi_{2+(2)}\unit, \quad
  \eta^2_\Top \in \pi_{2+(0)}\unit. \]
Even the first one is $2$-torsion by the following
statement.

\begin{lemma}
\label{lem:nusquared-twotorsion}
Over any base scheme we have $(1-\varepsilon)\nu^{2}=2\nu^{2} = 0$. 
\end{lemma}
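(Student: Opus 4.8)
The plan is to deduce both equalities directly from graded-commutativity of the bigraded homotopy ring $\pi_{\ast,\ast}\unit$ together with the relation $\eta\nu=0$; no slice-theoretic input is needed. First I would record that $\nu$ lies in bidegree $(3,2)$, so the representing motivic sphere $S^{3,2}$ involves an odd number of simplicial suspensions (namely one) and an even number of $\G$-factors (namely two). Applying the motivic Koszul sign rule to the self-product $\nu\cdot\nu$ therefore contributes a sign $(-1)^{1}\langle-1\rangle^{2}$, and since $\langle-1\rangle^{2}=\langle(-1)^{2}\rangle=\langle 1\rangle=1$ in $\pi_{0,0}\unit$, this collapses to $\nu^{2}=-\nu^{2}$, whence $2\nu^{2}=0$. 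This is the motivic shadow of the classical fact that an odd-degree class in a graded-commutative ring squares to a $2$-torsion element, which is why the classical stable stem generated by $\nu^{2}$ is $2$-torsion.

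For the remaining equality $(1-\varepsilon)\nu^{2}=2\nu^{2}$ I would use the universal relation $\langle-1\rangle=1+\eta[-1]$ in $\pi_{0,0}\unit$, valid over any base scheme, which rewrites $1-\varepsilon=1+\langle-1\rangle$ as $2+\eta[-1]$. Multiplying by $\nu^{2}$ and invoking $\eta\nu=0$ \cite[Theorem~1.4]{dugger-isaksen.hopf} (equivalently, a consequence of Theorem~\ref{theorem:1line-kq}), the correction term $\eta[-1]\nu^{2}=[-1]\,(\eta\nu)\,\nu$ vanishes, so $(1-\varepsilon)\nu^{2}=2\nu^{2}$, which is $0$ by the previous paragraph.

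I do not anticipate a genuine obstacle: the only inputs are the Koszul sign rule (standard, a consequence of the $\E_{\infty}$-structure on $\unit$) and $\eta\nu=0$, both available over any base. The single point requiring care is the bookkeeping of the sign — confirming that an odd number of simplicial suspensions contributes an honest $-1$ while the $\G$-contributions cancel via $\langle-1\rangle^{2}=1$ (by contrast, for $\eta^{2}$ the $-1$ is absent, and one obtains only $(1-\varepsilon)\eta^{2}=0$ rather than $2\eta^{2}=0$). As a cross-check one may argue instead through the slice filtration: by Lemma~\ref{lem:pi1f2sphere} the element $\nu$ lifts to $\pi_{1+(2)}\f_{2}\unit$, so multiplicativity of the slice filtration \cite{grso} places $\nu^{2}$ in $\pi_{2+(4)}\f_{4}\unit$, a group which by Lemmas~\ref{lem:e2-unit-24} and~\ref{lem:e2-unit-2>4} is a subquotient of $h^{0,0}$ and hence $2$-torsion.
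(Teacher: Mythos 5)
Your proof is correct, and the key step --- $2\nu^{2}=0$ via the motivic Koszul sign for the self-smash of a class in odd simplicial degree --- is exactly the paper's argument; the paper just makes it concrete by drawing the twist diagram on $S^{7,4}\wedge S^{7,4}$ and $S^{4,2}\wedge S^{4,2}$. For the first equality the paper instead cites $\varepsilon\nu=-\nu$ directly from Dugger--Isaksen, whereas you expand $1-\varepsilon=2+\eta[-1]$ and invoke $\eta\nu=0$; these are equivalent since $1+\varepsilon=-\eta[-1]$, so both reduce to the same input. One cosmetic slip: the $\langle-1\rangle$-exponent in the self-swap of $S^{1+(2)}$ is $2\cdot 2=4$ rather than $2$, which is harmless because $\langle-1\rangle^{2}=1$.
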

\begin{proof}
The first equality follows by $\varepsilon\nu=-\nu$ \cite[Corollary 1.5]{dugger-isaksen.hopf}. 
The commutative diagram
\begin{center}
\begin{tikzcd}
S^{7,4}\smash S^{7,4} \ar[rrr,"{\mathrm{twist}=(-1)^3\epsilon^4=-1}"] \ar[d,"{\nu\smash \nu}"] 
&&& 
S^{7,4}\smash S^{7,4} \ar[d,"{\nu\smash \nu}"] \\
S^{4,2}\smash S^{4,2} \ar[rrr,"{\mathrm{twist}=(-1)^{2}\epsilon^4=1}"] 
&&&
S^{4,2}\smash S^{4,2}
\end{tikzcd}
\end{center}
shows that $\nu^{2}=-\nu^{2}$,
which implies the second equation. 
\end{proof}

The passage to $\pi_{2}\unit$ via the slice spectral sequence
requires a discussion of the exact sequence
\[
\dotsm \to \pi_{3}{\unit}^\wedge_\eta[\tfrac{1}{\eta}] 
\to \pi_{2}\unit 
\to \pi_{2} \unit^\wedge_\eta 
\to 0
\]
of $\KMW$-modules given in Section~\ref{section:1line}.
The equality $\pi_{2}\unit=\pi_{2}\unit^\wedge_\eta$ is equivalent
to the surjectivity of the map
$\pi_{3}\unit^\wedge_\eta\directsum \pi_{3}\unit[\tfrac{1}{\eta}] \to
\pi_{3}\unit^\wedge_\eta[\tfrac{1}{\eta}]$ and
the latter $\KMW$-module can be understood completely via the
slice spectral sequence.

\begin{lemma}
\label{lem:eta-surj-ngeq5}
For $n\geq 5$, multiplication with
the first Hopf map $\eta$ induces a surjection
$\pi_{3+(n)}\unit^\wedge_\eta \to \pi_{3+(n+1)}\unit^\wedge_\eta$.
It is an isomorphism when $n\geq 7$.
\end{lemma}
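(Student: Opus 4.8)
The plan is to deduce the statement from the $E^\infty$-page of the slice spectral sequence for $\unit$, which converges conditionally to $\pi_{*,*}\unit^{\wedge}_{\eta}$. First I would record which slices feed the $3$-line $\pi_{n+3,n}\unit^{\wedge}_{\eta}$: a summand $\Sigma^{2q-p,q}\M\Lambda/m$ of $\s_q(\unit)$, with $p$ the Adams--Novikov filtration of its generator, contributes only when $q\geq n$ and $q-3\leq p\leq 2q-3-n$. Thus in weight $n$ only slices $q\geq n$ matter, and inside each such slice only the generators of lowest Adams--Novikov filtration: the $\eta$-power tower $\{\alpha_1^{q}\}$, the towers $\{\alpha_1^{q-3}\alpha_3\}$ and $\{\alpha_1^{q-4}\alpha_2^2\}$, and the towers built on $\alpha_4$, $\beta_2$ and $\alpha_2^3$; by the description of the slices in \cite[Theorem 2.12]{rso.oneline} (see Figure~\ref{fig:d1-sphere}) every further non-$\alpha_1$-divisible Adams--Novikov generator is too deep to meet the $3$-line.

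Next I would identify $E^\infty$ with $E^2$ in the relevant tridegrees. The higher slice differentials leaving the $3$-line for the $2$-line vanish by Corollary~\ref{cor:unit-higher-diff-from-30}, Lemma~\ref{lem:unit-2nd-diff-to-24}, and the vanishing $\pi_{2+(n)}\unit^{\wedge}_{\eta}=0$ for $n\geq 5$ extracted from Theorem~\ref{theoremE2sphere}; the differentials entering from the $4$-line are controlled by the same vanishing pushed one stem higher, the $4$-line in weight $\geq 5$ being built from the analogous $\eta$-towers. The decisive structural point is then that each surviving tower $\{\alpha_1^{j}\gamma\}$ is $\eta$-periodic: multiplication by $\eta$ raises the slice degree and the weight each by one and carries the weight-$n$ contribution isomorphically onto the weight-$(n+1)$ contribution --- exactly the mechanism behind Proposition~\ref{prop:eff-kq-conv}. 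Hence, on the associated graded of the slice filtration, $\eta\colon \pi_{n+3,n}\unit^{\wedge}_{\eta}\to\pi_{n+4,n+1}\unit^{\wedge}_{\eta}$ is onto unless some tower bottom --- a generator occurring with $j=0$ --- first meets the $3$-line in weight $n+1$, and is bijective unless in addition a tower first leaves the $3$-line going up in weight $n$.

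It remains to run the finite bookkeeping over the list $\{1,\alpha_3,\alpha_4,\alpha_2^2,\beta_2,\alpha_2^3\}$, which lives in slices $0,3,4,4,5,6$: a tower bottom $\gamma$ in slice $q$ of filtration $p$ meets the $3$-line only in weights $\leq 2q-3-p$, so the relevant thresholds are weight $\leq 4$ for $1,\alpha_3,\alpha_2^2,\alpha_4$ and weight $\leq 5$, $\leq 6$ for $\beta_2$ and $\alpha_2^3$. For $n\geq 5$ one checks that the only potential obstruction is the slice-$6$ class $\alpha_2^3$ in weight $6$, and for $n\geq 7$ the whole list is irrelevant, so the assertion comes down to analysing $\alpha_2^3$ (and its companions $\alpha_1\beta_2$, $\alpha_1^2\alpha_4$ in slice $6$): whether it survives to $E^\infty$ and whether it is in fact an $\eta$-multiple, which --- as in the proofs of Lemmas~\ref{lem:e2-unit-24} and~\ref{lem:unit-2nd-diff-to-24} --- I would settle by complex and real realisation and by comparison over $\ZZ[\tfrac12]$. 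I expect this low-weight input around $\beta_2$ and $\alpha_2^3$, rather than the stable $\eta$-periodic part, to be the main obstacle; it is exactly what forces the thresholds $5$ and $7$.
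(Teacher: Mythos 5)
Your overall strategy is the paper's: read the third column of the weight-$n$ slice spectral sequence off \eqref{equation:sphereslices}, observe that for $n\geq 5$ it is built from $\alpha_1$-towers on which multiplication by $\eta$ acts as the identity (while the second column vanishes, so everything in the third column is a permanent cycle), and isolate the finitely many exceptional generators that force the thresholds $5$ and $7$. But two steps are not sound as written. First, you propose to ``identify $E^\infty$ with $E^2$'' and dispose of the differentials entering from the fourth column by ``the same vanishing pushed one stem higher''. There is no such vanishing, and none is needed: the correct move is to note that multiplication by $\alpha_1$ is also surjective (resp.\ bijective) on the fourth column and that the slice differentials are $\eta$-linear (Lemma~\ref{lem:differential-module-hom}), so the image of the incoming differentials in weight $n+1$ is exactly $\eta$ times the image in weight $n$ and the surjection/isomorphism descends from $E^1$ to $E^\infty$ page by page. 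Your aside that the fourth column is ``built from the analogous $\eta$-towers'' points at this, but it is not the argument you actually run.

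Second, and decisively, the one input that makes $n=5$ work is left unresolved and attacked with the wrong tool. Surjectivity from weight $5$ to weight $6$ comes down to whether every summand of the $9$-dimensional part of $\s_6(\unit)$ is generated by an $\alpha_1$-multiple in $\Ext_{\BP_\ast\BP}$. That is a statement about the classical Adams--Novikov $E_2$-page, read off from \cite[Proposition 6.1]{zahler} and \cite[Tables A.3.3, A.3.4]{ravenel.green}: this part is $\Sigma^{9,6}\MZ/2\{\alpha_1^2\alpha_4\}\vee\Sigma^{9,6}\MZ/2\{\alpha_1\beta_2\}$, so both generators are $\alpha_1$-divisible and surjectivity follows; likewise the failure of injectivity at $n=5,6$ is just the fact that the $\beta_2$-tower stops ($\alpha_1^2\beta_2=0$). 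Your plan to decide ``whether $\alpha_2^3$ survives to $E^\infty$ and whether it is an $\eta$-multiple'' by real and complex realisation conflates an $E^1$-level question about $\Ext$ with a homotopy-level one (survival to $E^\infty$ is irrelevant for surjectivity on the associated graded), and you explicitly flag it as the expected ``main obstacle'' rather than resolving it. There are also slips in the bookkeeping---the threshold for $\alpha_3$ is weight $2$ and for $\alpha_2^2$ it is $3$, not $4$, and there is no tower $\{\alpha_1^{q-4}\alpha_2^2\}$ since $\alpha_1\alpha_2^2=0$---though these happen not to affect the range $n\geq 5$.
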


\begin{proof}
The slice spectral sequence, 
the weak equivalence $\slicecomp(\unit)\simeq \unit^\wedge_\eta$, 
and the data in \cite[Proposition 6.1]{zahler} and \cite[Tables A.3.3, A.3.4]{ravenel.green} to identify the relevant slices, imply this result. Here are the details.

Recall that $\s_{0}(\unit)\cong\MZ$ is generated by $\alpha_{1}^0$.
For $q\geq 1$, 
the $q$-dimensional part of $s_{q}(\unit)$ is $\Sigma^{q,q}\MZ/2$ on the generator $\alpha_{1}^{q}$.
For $q\geq 0$ and $q\neq 2$, 
the $q+1$-dimensional part of $s_{q}(\unit)$ is trivial. 
We note that $\s_{2}(\unit)$ contains $\Sigma^{3,2}\MZ/12$ as a direct summand. 
For $q\geq 3$ and $q\neq 4$, 
the $q+2$-dimensional part of $s_{q}(\unit)$ is $\Sigma^{q+2,q}\MZ/2$ generated by $\alpha_{1}^{q-3}\alpha_{3}$.
We note that $\s_{4}(\unit)$ has an additional direct summand of dimension $4+2$; 
it is $\Sigma^{6,4}\MZ/2$ generated by $\beta_{2/2}$. 
For $q\geq 7$, 
the $q+3$-dimensional part of $s_{q}(\unit)$ is $\Sigma^{q+3,q}\MZ/2$ generated by $\alpha_{1}^{q-4}\alpha_{4}$.
The $7$-dimensional part $\Sigma^{7,4}\MZ/240$ of $\s_{4}(\unit)$ is generated by $\alpha_{1}^0\alpha_{4}$.
The $8$-dimensional part of $\s_5(\unit)$ is $\Sigma^{8,5}\MZ/2\{\alpha_{1}^1\alpha_{4}\}\vee \Sigma^{8,5}\MZ/2\{\beta_{2}\}$.
The $9$-dimensional part of $\s_{6}(\unit)$ is $\Sigma^{9,6}\MZ/2\{\alpha_{1}^{2}\alpha_{4}\}\vee \Sigma^{9,6}\MZ/2\{\alpha_{1}\beta_{2}\}$.
For $q\geq 5$ and $q\neq 6$, 
the $q+4$-dimensional part of $s_{q}(\unit)$ is $\Sigma^{q+4,q}\MZ/2$ generated by $\alpha_{1}^{q-5}\alpha_5$.
Finally, 
the $10$-dimensional part of $\s_{6}(\unit)$ is $\Sigma^{10,6}\MZ/2\{\alpha_{1}^1\alpha_5\}\vee \Sigma^{10,6}\MZ/3\{\beta_{1}\}$.

For $q\geq 5$, 
the second column of the $E^{2}$-page of the $q$-th slice spectral sequence for $\unit$ contains only zeros.
Hence all elements in the third column of the corresponding $E^{2}$-page are permanent cycles. 
The description of the generators of the slice summands shows that multiplication by $\eta$ 
-- which is represented by $\alpha_{1}$ -- 
induces an isomorphism between the third column of the $q$-th and the $q+1$-th slice spectral sequence, 
provided $q\geq 7$. 
For $q\in\{5,6\}$ it is surjective. 
Since the same is true for the fourth column, 
and since the differentials are $\eta$-linear, 
there results in an isomorphism resp. surjection between the third column of the infinite terms in the $q$-th and the $q+1$-th slice spectral sequences. 
\end{proof}

\begin{lemma}
\label{lem:connecting-eta-arithmetic-zero}
If $\Char(F)\neq 2$ then $\pi_{3+(n)}\unit^\wedge_\eta \to \pi_{3+(n)}\unit^\wedge_\eta[\tfrac{1}{\eta}]$ is surjective for $n\geq 4$.
\end{lemma}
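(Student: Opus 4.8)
The plan is to reduce the assertion to the surjectivity of a single iterated $\eta$-multiplication and then to read that off the slice spectral sequence. Recall from Section~\ref{section:1line} that the slice spectral sequence for $\unit$ converges to $\pi_{\ast,\ast}\unit^\wedge_\eta$; equivalently $\unit^\wedge_\eta\simeq\slicecomp(\unit)$, and since inverting $\eta$ is a filtered colimit,
\[
\pi_{3+(n)}\unit^\wedge_\eta[\tfrac1\eta]\iso \colim\bigl(\pi_{3+(n)}\unit^\wedge_\eta\xrightarrow{\eta}\pi_{3+(n+1)}\unit^\wedge_\eta\xrightarrow{\eta}\pi_{3+(n+2)}\unit^\wedge_\eta\xrightarrow{\eta}\dotsm\bigr),
\]
the transition maps being multiplication with $\eta\in\pi_{1,1}\unit$ and the localization map $\pi_{3+(n)}\unit^\wedge_\eta\to\pi_{3+(n)}\unit^\wedge_\eta[\tfrac1\eta]$ the canonical map from the term indexed by $0$. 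Hence the localization map is surjective as soon as every transition map $\pi_{3+(m)}\unit^\wedge_\eta\to\pi_{3+(m+1)}\unit^\wedge_\eta$ with $m\geq n$ is surjective. By Lemma~\ref{lem:eta-surj-ngeq5} this holds for $m\geq 5$, and the transition maps are isomorphisms for $m\geq 7$; in particular the statement is already proved for $n\geq 5$, and $\pi_{3+(7)}\unit^\wedge_\eta\xrightarrow{\cong}\pi_{3+(7)}\unit^\wedge_\eta[\tfrac1\eta]$. It therefore remains to treat $n=4$, and for this it suffices to show that
\[
\eta^{3}\colon \pi_{3+(4)}\unit^\wedge_\eta\longrightarrow \pi_{3+(7)}\unit^\wedge_\eta
\]
is surjective, because the localization map in weight $4$ factors as $\eta^{3}$ followed by the isomorphism $\pi_{3+(7)}\unit^\wedge_\eta\xrightarrow{\cong}\pi_{3+(7)}\unit^\wedge_\eta[\tfrac1\eta]$.

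Both sides are computed from the weight $4$ and weight $7$ slice spectral sequences for $\unit$. The first slice differential is given by Lemma~\ref{lem:first-diff-unit-1} and Figure~\ref{fig:d1-sphere}, while all higher differentials into or out of the columns contributing to the $s=3$ stem vanish in this range by Corollary~\ref{cor:unit-higher-diff-from-30} together with Lemmas~\ref{lem:e2-unit-24}, \ref{lem:unit-2nd-diff-to-23}, \ref{lem:unit-3rd-diff-to-24} and~\ref{lem:unit-2nd-diff-to-24}; thus the $E^{2}=E^{\infty}$-pages are at hand. As in the proof of Lemma~\ref{lem:eta-surj-ngeq5}, the slice summands visible in the $s=3$ stem are the ones carrying the Adams--Novikov classes $\alpha_{1}^{q-3}\alpha_{3}$, $\alpha_{1}^{q-4}\alpha_{4}$, $\alpha_{1}^{q-5}\alpha_{5}$, $\beta_{2/2}=\alpha_{2}^{2}$ and the $\alpha_{1}$-power multiples of $\beta_{2}$; and multiplication with $\eta$ acts on $\s_{\ast}(\unit)$ as multiplication with $\alpha_{1}$, up to the multiplicative structure recorded in Figure~\ref{fig:d1-sphere}. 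Hence $\eta^{3}$ carries the $\alpha_{3}$-, $\alpha_{4}$- and $\alpha_{5}$-towers of the weight $4$ column onto the corresponding parts of the weight $7$ column, and the remaining task is to check, slice by slice, that the $\eta^{3}$-images of the contributions of $\s_{4}(\unit),\dots,\s_{7}(\unit)$ exhaust $\pi_{3+(7)}\unit^\wedge_\eta$, using real and complex realization (as in Lemmas~\ref{lem:e2-unit-24} and~\ref{lem:unit-2nd-diff-to-24}) to detect the surviving classes and to resolve ambiguous square classes.

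The main obstacle is that the contribution of the bottom slice $\s_{4}(\unit)$ does not suffice by itself: it contributes a cyclic group, a quotient of $\Lambda/240$, generated by the class of $\alpha_{4}$ (see Figure~\ref{fig:d1-sphere}), whose image under $\eta^{3}$ is cyclic of order at most $2$, whereas $\pi_{3+(7)}\unit^\wedge_\eta$ is in general larger. Surjectivity must therefore be wrung out of the higher slices $\s_{5}(\unit)$ and $\s_{6}(\unit)$ in weight $4$, namely the $\alpha_{1}^{q-3}\alpha_{3}$-family and the $\beta_{2}$-family, which after the $d^{1}$-differential of Lemma~\ref{lem:first-diff-unit-1} land on the remaining generators in weight $7$. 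Keeping track of which of the low-weight classes are $\eta$-power torsion, hence irrelevant for the colimit, and which genuinely map onto stable classes, and verifying this uniformly over all base fields by base change to the prime field and real realization, is the delicate part of the argument; it is parallel to, though more intricate than, the bookkeeping carried out in the proof of Lemma~\ref{lem:eta-surj-ngeq5}.
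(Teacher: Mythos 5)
Your reduction is exactly the one the paper uses: for $n\geq 5$ the claim is immediate from Lemma~\ref{lem:eta-surj-ngeq5} via the colimit description $\pi_{3+(n)}\unit^\wedge_\eta[\tfrac{1}{\eta}]\iso\colim_k\pi_{3+(n+k)}\unit^\wedge_\eta$, the transition maps being isomorphisms from weight $7$ on, and the only remaining content is that $\eta^3\colon\pi_{3+(4)}\unit^\wedge_\eta\to\pi_{3+(7)}\unit^\wedge_\eta$ is surjective. Up to that point everything you write is correct and matches the paper.

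The gap is that you never actually establish that surjectivity: the last two paragraphs describe the verification ("the remaining task is to check, slice by slice, \dots", "the delicate part of the argument") rather than perform it, and that verification \emph{is} the lemma for $n=4$. Concretely, what has to be checked is finite and explicit: the column of the weight-$7$ spectral sequence computing $\pi_{3+(7)}$ has $E^1$-entries $h^{0,0}\{\alpha_1^3\alpha_4\}$ in slice $7$ and, for $q\geq 8$, $h^{q-8,q-7}\{\alpha_1^{q-3}\alpha_3\}\directsum h^{q-7,q-7}\{\alpha_1^{q-4}\alpha_4\}$ (the $\alpha_5$- and $\beta$-families contribute zero there), and one must see that each surviving class is the $\alpha_1^3$-multiple of a permanent cycle in the weight-$4$ column, whose entries are $\ZZ/240\{\alpha_4\}$ in slice $4$, $h^{0,1}\{\alpha_1^2\alpha_3\}\directsum h^{1,1}\{\alpha_1\alpha_4\}\directsum h^{1,1}\{\beta_2\}$ in slice $5$, and so on. This is the "inspection of the slices" the paper invokes, resting on the list of slice summands and generators recorded in the proof of Lemma~\ref{lem:eta-surj-ngeq5}. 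Two further inaccuracies in your setup would surface in carrying this out: the higher-differential vanishing results you cite (Corollary~\ref{cor:unit-higher-diff-from-30}, Lemmas~\ref{lem:unit-2nd-diff-to-23}, \ref{lem:unit-3rd-diff-to-24}, \ref{lem:unit-2nd-diff-to-24}) concern differentials landing in the \emph{second} column, not the third, so they do not by themselves give $E^2=E^\infty$ for the columns you need (for weight $7$ one instead uses that the second column vanishes for $q\geq 5$, as in the proof of Lemma~\ref{lem:eta-surj-ngeq5}); and the appeal to real and complex realization is not needed here -- the argument is purely about $\eta$-linearity ($\alpha_1$-multiplication) on the $E^\infty$-pages.
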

\begin{proof}
This follows directly from Lemma~\ref{lem:eta-surj-ngeq5} for $n\geq 5$.
Although multiplication with $\eta$ is not surjective as a map $\pi_{3+(4)}\unit^\wedge_\eta\to \pi_{3+(5)}\unit^\wedge_\eta$, 
an inspection of the slices described in detail in the proof of Lemma~\ref{lem:eta-surj-ngeq5} shows that 
multiplication with $\eta^3$ induces a surjection
\[ 
\pi_{3+(4)}\unit^\wedge_\eta
\to 
\pi_{3+(7)}\unit^\wedge_\eta. 
\]
\end{proof}

The following statement, which generalizes
Lemma~\ref{lem:connecting-eta-arithmetic-zero},
uses the fundamental ideal $\fundid\subset \Witt$ 
in the Witt ring of $F$, and its powers $\fundid^\ell\subset \Witt$
which by definition coincide with $\Witt$ for $\ell \leq 0$.

\begin{lemma}
  \label{lem:cok-eta-arithmetic-leq3}
  For $n\in \ZZ$, the cokernel of 
  \[ \pi_{3+(n)}\unit^\wedge_\eta \to \pi_{3+(n)}\unit^\wedge_\eta[\tfrac{1}{\eta}]\]
  is a quotient of $\Witt/\fundid^{-(n-4)}$. 
\end{lemma}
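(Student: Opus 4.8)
The plan is to compute the relevant third column of the slice spectral sequence for $\unit^\wedge_\eta\simeq\slicecomp(\unit)$ and its $\eta$-inverted version $\kw$, and to identify the cokernel in question with the cokernel of $\eta$-multiplication in a Witt-theoretic range. First I would recall that $\pi_{3+\bideg}\unit^\wedge_\eta[\tfrac{1}{\eta}]=\pi_{3+\bideg}\kw$, since inverting $\eta$ commutes with the slice completion and $\kw\simeq\KW_{\geq 0}$ is the connective cover of $\KW$ by Proposition~\ref{prop:kw-defn-agree}. Using the $E^2=E^\infty$-page of the slice spectral sequence for $\KW$ from \cite[Theorem 6.3]{roendigs-oestvaer.hermitian}, the group $\pi_{3+(n)}\kw$ is concentrated in slice degrees $\equiv 0\bmod 4$ and, as a graded $\KMW$-module, its value in weight $n$ with $n\equiv 3\bmod 4$ is the relevant Witt-group $\Witt^{\cdot}$ (in the other residues it vanishes), so $\pi_{3+\bideg}\kw$ is essentially a shifted copy of the graded Witt ring $\directsum_\ell \fundid^\ell$ on a single generator. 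Thus the cokernel we must bound is the failure of the composite
\[ \pi_{3+(n)}\unit^\wedge_\eta \to \pi_{3+(n)}\kw \]
to be surjective.

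Next I would run the argument exactly as in Lemma~\ref{lem:eta-surj-ngeq5} and Lemma~\ref{lem:connecting-eta-arithmetic-zero}, but keeping track of the image rather than only proving surjectivity in the stable range. For $n\geq 7$ the third and fourth columns of the weight $n$ and weight $n+1$ slice spectral sequences agree via $\eta$-multiplication (the relevant slice summands are generated by $\alpha_1$-multiples), so the map to $\pi_{3+(n)}\kw$ is already onto and the cokernel vanishes; this matches $\fundid^{-(n-4)}=\fundid^{\ell}$ with $\ell<0$, where $\Witt/\fundid^{\ell}=\Witt/\Witt=0$. For smaller $n$ the discrepancy between $\unit^\wedge_\eta$ and $\kw$ in the third column is controlled by the finitely many extra slice summands listed in the proof of Lemma~\ref{lem:eta-surj-ngeq5} (those generated by $\alpha_3$, $\beta_{2/2}$, $\alpha_4$, $\beta_2$, $\alpha_5$, $\beta_1$ and their $\alpha_1$-multiples), all of which die after inverting $\eta$. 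The image of $\pi_{3+(n)}\unit^\wedge_\eta$ in $\pi_{3+(n)}\kw$ is then the image of $\eta^{k}$-multiplication for $k$ large, i.e. it contains $\eta^k\cdot \pi_{3+(n-k)}\kw$ for $k\gg 0$, and under the identification $\pi_{3+\bideg}\kw\iso\bigoplus_\ell \fundid^{\ell-c}$ (for a fixed shift $c$) this image is precisely $\fundid^{-(n-4)}$ in weight $n$, because $\eta$ corresponds to the inclusion $\fundid^{\ell+1}\subset \fundid^\ell$ and the "safe" range $n'\geq 7$ contributes all of $\Witt$.

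The main obstacle will be pinning down the exact power of $\fundid$, i.e. verifying that $\eta$-multiplication on $\pi_{3+\bideg}\kw$ is modeled by the filtration by powers of the fundamental ideal with the claimed shift, and that no cancellation from differentials or extension problems in the sphere's slice spectral sequence enlarges the image in low weights. I would handle this by comparing with the known $\KMW$-module structure: $\pi_{3+\bideg}\kw$ is cyclic over $\KMW$ on a generator in weight $3$ (detected by $\nu$ under the unit map, or by direct inspection of the $\KW$ slice spectral sequence), $\eta$ acts via the canonical surjection $\KMW\to\Witt$, and $\eta^\ell$-multiplication lands in $\fundid^{\ell}\cdot(\text{generator})$; combined with surjectivity of $\pi_{3+(n')}\unit^\wedge_\eta\to\pi_{3+(n')}\kw$ for $n'\geq 7$ this forces the image in weight $n$ to contain $\fundid^{7-n}\cdot(\text{generator})=\fundid^{-(n-4)}$ after the weight-$3$ normalization, whence the cokernel is a quotient of $\Witt/\fundid^{-(n-4)}$. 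The finitely many exceptional slices in weights $4,5,6$ only need to be checked not to obstruct this, which is the content of the detailed slice bookkeeping already carried out in Lemma~\ref{lem:eta-surj-ngeq5}.
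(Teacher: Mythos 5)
Your opening identification is false, and the rest of the argument is built on it. By the proof of Proposition~\ref{prop:eff-kq-conv}, $\pi_{k}\kw=0$ for $k\not\equiv 0\bmod 4$, so $\pi_{3+\bideg}\kw=0$; but the target of the map in the lemma is $\pi_{3+(n)}\unit^\wedge_\eta[\tfrac{1}{\eta}]\iso\colim_k\pi_{3+(n+k)}\unit^\wedge_\eta\iso\pi_{3+(7)}\unit^\wedge_\eta$, a quotient of the $\fundid$-adic completion $\Witt^\wedge_\fundid$ which is generically nonzero (over $\RR$, for instance, it carries the nonzero image of $\sigma$; cf.\ Lemma~\ref{lem:cok-eta-arithmetic-generator}). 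The underlying error is the claim that inverting $\eta$ commutes with slice completion: it does not, and the difference between $\slicecomp(\unit)[\tfrac{1}{\eta}]$ and the $\eta$-inverted sphere is precisely the corner of the $\eta$-arithmetic square that this lemma is controlling. Your third paragraph compounds the problem by proposing $\nu$ as the generator of the target; since $\eta\nu=0$, the element $\nu$ dies under $\eta$-inversion, and the correct generator is the third Hopf map $\sigma\in\pi_{3+(4)}\unit$. Finally, the containment ``the image contains $\eta^k\cdot\pi_{3+(n-k)}\kw$ for $k\gg 0$'' is a lower bound by a subgroup of the very image you are trying to bound from below, so it carries no information.

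The salvageable core of your sketch is the second half, and it is close to the actual proof once the target is identified correctly. For $n\geq 7$ the third column of the weight-$n$ slice spectral sequence consists of the groups $h^{q,q}$ carried by the $\alpha_1^{\ast}\alpha_3$-summands; by Milnor's conjecture on quadratic forms these form the associated graded of the $\fundid$-adic filtration on $\Witt$, so $\pi_{3+(n)}\unit^\wedge_\eta$, and hence the common value $\pi_{3+(n)}\unit^\wedge_\eta[\tfrac{1}{\eta}]\iso\pi_{3+(7)}\unit^\wedge_\eta$, is a quotient of $\Witt^\wedge_\fundid$. For $n\leq 4$ one then checks directly on these columns (no appeal to $\kw$ is needed or possible) that multiplication by $\eta^{7-n}=\alpha_1^{7-n}$ hits the filtration pieces $h^{4-n,4-n},h^{5-n,5-n},\dotsc$ but misses $h^{0,0},\dotsc,h^{3-n,3-n}$, so the image of $\pi_{3+(n)}\unit^\wedge_\eta$ contains everything in filtration $\fundid^{4-n}$ and the cokernel is a quotient of $\Witt/\fundid^{-(n-4)}$.
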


\begin{proof}
  If $n\geq 7$, then the column on the $E^{2}$-page of the $n$-th slice spectral sequence computing $\pi_{3}\unit^\wedge_\eta$ is independent of $n$ by the proof of Lemma~\ref{lem:eta-surj-ngeq5}. 
  It consists of the groups $h^{0,0},h^{1,1},\dotsc$, where $h^{q,q}$ is generated by $\alpha_{1}^{q+n}\alpha_{3}$. 
  By Milnor's conjecture on quadratic forms these groups form the associated graded of the $\fundid$-adic filtration on $\Witt$. 
  The multiplicative structure on the generators imply that for $n\geq 7$, 
  $\pi_{3+(n)}\unit^\wedge_\eta$ is a quotient of the $\fundid$-adic completion $\Witt^\wedge_\fundid$. The same holds thus for 
  $\pi_{3+(n)}\unit^\wedge_\eta[\tfrac{1}{\eta}]$, which is independent of $n\in \ZZ$,
  by Lemma~\ref{lem:eta-surj-ngeq5}.
  This quotient is the zero quotient for $n\geq 4$ by Lemma~\ref{lem:connecting-eta-arithmetic-zero}.

  If $n\leq 4$ there are no motivic cohomology groups of weight $<4-n$ in the column on the $E^{2}$-page of the $n+4$-th slice spectral sequence computing $\pi_{3}\unit^\wedge_\eta$.
  That is,  by the proof of Lemma~\ref{lem:eta-surj-ngeq5}, 
  multiplication with $\alpha_{1}^{3-n}$  hits $h^{-n+4,-n+4},h^{-n+5,-n+5},\dotsc$, but not $h^{0,0},h^{1,1},\dotsc,h^{-n+3,-n+3}$. 
  Thus the cokernel of
  \[ 
    \pi_{3+(n)}\unit^\wedge_\eta 
    \xrightarrow{\eta^{7-n}}
    \pi_{3+(7)}\unit^\wedge_\eta 
    \cong 
    \pi_{3+(n)}\unit^\wedge_\eta[\tfrac{1}{\eta}] 
  \]
  is a quotient of $\Witt/\fundid^{-(n-4)}$.
\end{proof}

Lemma~\ref{lem:cok-eta-arithmetic-leq3} implies that the cokernel of 
\[ 
  \pi_{3}\unit^\wedge_\eta 
  \to 
  \pi_{3}\unit^\wedge_\eta[\tfrac{1}{\eta}] 
\]
is generated, as a $\KMW$-module (in fact even a $\KMW[\eta^{-1}]=\Witt[\eta,\eta^{-1}]$-module), by a single element. What is it?

\begin{lemma}
  \label{lem:cok-eta-arithmetic-generator}
  The image of the third Hopf map $\sigma\in \pi_{3+(4)}\unit$
  generates the cokernel of
  \[\pi_{3}\unit^\wedge_\eta \to \pi_{3}\unit^\wedge_\eta[\tfrac{1}{\eta}]\] as a
  $\KMW$-module.
\end{lemma}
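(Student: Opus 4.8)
The plan is to identify explicitly a single $\KMW$-module generator of the cokernel in question, and to show it is the image of the third Hopf map $\sigma$. By Lemma~\ref{lem:cok-eta-arithmetic-leq3} the cokernel of $\pi_{3}\unit^\wedge_\eta \to \pi_{3}\unit^\wedge_\eta[\tfrac{1}{\eta}]$ is, in each weight $n$, a quotient of $\Witt/\fundid^{-(n-4)}$, and these assemble (after inverting $\eta$) into a single $\KMW[\eta^{-1}]=\Witt[\eta,\eta^{-1}]$-module which is cyclic, generated by a class sitting in weight $4$ — precisely the weight in which $\Witt/\fundid^{-(4-4)}=\Witt/\Witt=0$ first fails to vanish, i.e.\ the class detected on the $E^\infty$-page in the column computing $\pi_{3+(4)}$ by the generator $\alpha_{1}^{0}\alpha_{4}\in \Ext^{1,8}_{\BP_\ast\BP}$ of the summand $\Sigma^{7,4}\M\Lambda/240$ of $\s_4(\unit)$. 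So the first step is to pin down that the generator of the cokernel is represented by this $\alpha_4$-class of weight $4$ in the slice $E^\infty$-page.

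Next I would recall that $\sigma\in \pi_{3+(4)}\unit$ is by construction the class whose slice filtration is detected, on the $E^1$-page, by the generator $\alpha_{1}^{0}\alpha_{4}$ of $\Sigma^{7,4}\M\Lambda/240\subset \s_4(\unit)$ — this is the motivic lift of the classical Hopf element $\sigma\in \pi_7^s$, detected in the Adams–Novikov $E_2$-page by $\alpha_{4/4}$, and it survives the slice spectral sequence because by Lemma~\ref{lem:e2-unit-2>4} and the inspection of slices in the proof of Lemma~\ref{lem:eta-surj-ngeq5} there are no differentials into or out of this tridegree in the relevant range (the second column of the weight-$4$ slice spectral sequence for $\unit$ has a single possibly-nonzero contribution coming from $\beta_{2/2}$ in weight $4$, which by Corollary~\ref{cor:first-diff-to-nusquared} and Lemma~\ref{lem:unit-2nd-diff-to-24} does not interact with the $\alpha_4$-class). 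Thus $\sigma$ maps, under $\unit\to \unit^\wedge_\eta \to \unit^\wedge_\eta[\tfrac1\eta]$, to a class whose image in the cokernel is exactly the asserted generator.

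The third step is to check that the image of $\sigma$ is \emph{nonzero} in the cokernel, i.e.\ that $\sigma$ is not itself $\eta$-divisible in $\pi_{3}\unit^\wedge_\eta[\tfrac1\eta]$ after subtracting an element coming from $\pi_{3}\unit^\wedge_\eta$. This follows from the weight bookkeeping in Lemma~\ref{lem:cok-eta-arithmetic-leq3}: the weight-$4$ part of $\pi_{3}\unit^\wedge_\eta$ surjects onto $\pi_{3+(4)}\unit^\wedge_\eta[\tfrac1\eta]$ with cokernel a quotient of $\Witt/\fundid^{0}=\Witt$, and the $\alpha_4$-generator is precisely the class that is killed by no power of $\eta$ acting from lower weights — it is the bottom of the $\fundid$-adic filtration contribution not already present in $\pi_{3+(4)}\unit^\wedge_\eta$. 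Combining the surjectivity of multiplication by $\eta$ from weight $\geq 4$ (Lemmas~\ref{lem:eta-surj-ngeq5} and~\ref{lem:connecting-eta-arithmetic-zero}) with the cyclicity just established shows that a single $\KMW$-module generator suffices and that this generator may be taken to be the image of $\sigma$.

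The main obstacle I expect is the bookkeeping in the third step: carefully matching the $\fundid$-adic (Witt-theoretic) description of the cokernel in Lemma~\ref{lem:cok-eta-arithmetic-leq3} with the slice-filtration description, so as to be sure that the \emph{specific} class $\sigma$ — rather than some $\eta$-multiple of it, or $\sigma$ plus a correction term — is the generator, and in particular that $\sigma$ does not already lie in the image of $\pi_{3}\unit^\wedge_\eta$. This requires knowing that the $\alpha_4$-summand of $\s_4(\unit)$ contributes, after $\eta$-inversion, exactly to weight $4$ and above (not below), which is visible from the grading conventions but must be stated; and it requires the observation that the integral (as opposed to mod-$2$) coefficients $\Lambda/240$ do not disturb the $\eta$-inverted computation, since $\eta$-inverted motivic cohomology with finite coefficients vanishes and only the associated graded $h^{q,q}\iso \fundid^q/\fundid^{q+1}$ survives. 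Once this identification is in place, the conclusion that $\sigma$ generates the cokernel as a $\KMW$-module is immediate.
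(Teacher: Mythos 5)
Your first two steps are in line with the paper's (much shorter) argument: the paper simply observes that $\sigma$ lifts uniquely to $\pi_{3+(4)}\f_4\unit$ for weight reasons, that this lift hits a generator of the cyclic group $\pi_{3+(4)}\s_4\unit$ (your $\alpha_4$-summand $\Sigma^{7,4}\M\Lambda/240$), and that by Lemma~\ref{lem:connecting-eta-arithmetic-zero} it therefore generates all of $\pi_{3}\unit^\wedge_\eta[\tfrac{1}{\eta}]$ as a $\KMW$-module --- whence its image generates any quotient, in particular the cokernel. Your digression about $\beta_{2/2}$ and the second column is irrelevant here (that column computes $\pi_2$, not $\pi_3$), but harmless.

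Your third step, however, contains a genuine error. You set out to show that the image of $\sigma$ is \emph{nonzero} in the cokernel, and you support this with the computation $\Witt/\fundid^{0}=\Witt$. By the paper's convention $\fundid^{\ell}=\Witt$ for $\ell\leq 0$, so $\Witt/\fundid^{0}=0$: the weight-$4$ (indeed every weight $\geq 4$) part of the cokernel vanishes --- that is exactly the content of Lemma~\ref{lem:connecting-eta-arithmetic-zero}. Consequently the image of $\sigma$ in the cokernel is the zero element of the zero group in weight $4$, and the ``nonvanishing'' you are trying to establish is both false and beside the point. What the lemma actually requires is the statement that the image of $\sigma$ generates the \emph{target} $\pi_{3}\unit^\wedge_\eta[\tfrac{1}{\eta}]$ as a $\KMW$-module (so that the quotient map carries $\KMW\cdot\sigma$ onto the cokernel); this is what the paper extracts from the fact that $\sigma$ detects the bottom class $h^{0,0}$ of the $\fundid$-adic filtration on the quotient of $\Witt^\wedge_\fundid$ identified in the proof of Lemma~\ref{lem:cok-eta-arithmetic-leq3}. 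Your step 1 gestures at this but is also garbled at the same spot (``the weight in which $\Witt/\Witt=0$ first fails to vanish''): the cokernel can only be nonzero in weights $n<4$, and it is reached from $\sigma$ by multiplication with symbols in $\KMW_{\geq 1}$, not by sitting nontrivially in weight $4$.
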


\begin{proof}
  The third Hopf map $\sigma\in \pi_{3+(4)}\unit$ lifts (uniquely)
  to $\pi_{3+(4)}\f_4\unit$ for weight reasons. This lift maps to a generator in
  the cyclic group $\pi_{3+(4)}\s_4\unit$ and hence by
  constitutes by Lemma~\ref{lem:connecting-eta-arithmetic-zero} a
  generator of $\pi_{3}\unit^\wedge_\eta[\tfrac{1}{\eta}]$, hence also
  of its cokernel.
\end{proof}

As a consequence we can improve on Lemma~\ref{lem:connecting-eta-arithmetic-zero}. 

\begin{lemma}
  \label{lem:connecting-eta-arithmetic-surj}
  The canonical map
  \[ \pi_{3}\unit^\wedge_\eta \directsum \pi_{3}\unit[\tfrac{1}{\eta}]\to \pi_{3}\unit^\wedge_\eta[\tfrac{1}{\eta}]\]
  is surjective for all $n\in\Z$.
\end{lemma}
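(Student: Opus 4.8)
The plan is to bootstrap from the two preceding lemmas together with the naturality of the $\eta$-arithmetic square. By Lemma~\ref{lem:cok-eta-arithmetic-generator} the cokernel of the first structure map $\pi_{3}\unit^\wedge_\eta\to\pi_{3}\unit^\wedge_\eta[\tfrac1\eta]$ is generated, as a module over $\KMW[\eta^{-1}]\iso\Witt[\eta,\eta^{-1}]$, by the image $\bar{\sigma}$ of the third Hopf map $\sigma$. The key observation I would exploit is that $\sigma$ is already an element of the honest graded homotopy module $\pi_{3+(4)}\unit$, not merely of the $\eta$-complete or $\eta$-inverted sphere; hence, since the arithmetic square commutes, its image $\bar{\sigma}\in\pi_{3}\unit^\wedge_\eta[\tfrac1\eta]$ also factors through the second structure map $\pi_{3}\unit[\tfrac1\eta]\to\pi_{3}\unit^\wedge_\eta[\tfrac1\eta]$, namely via the canonical map $\pi_{3}\unit\to\pi_{3}\unit[\tfrac1\eta]$.

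Next I would record that every map in the square is a homomorphism of $\pi_{\bideg}\unit$-modules, hence of $\KMW$-modules, and therefore $\KMW[\eta^{-1}]=\Witt[\eta,\eta^{-1}]$-linear on the $\eta$-inverted corners. Consequently the image of the second structure map is a $\Witt[\eta,\eta^{-1}]$-submodule of $\pi_{3}\unit^\wedge_\eta[\tfrac1\eta]$ containing $\bar{\sigma}$, hence containing the whole cyclic submodule $\Witt[\eta,\eta^{-1}]\cdot\bar{\sigma}$. Since $\bar{\sigma}$ generates the cokernel of the first structure map, this forces the sum of the two images to be everything: given an arbitrary class, subtract off a suitable $\Witt[\eta,\eta^{-1}]$-multiple of $\bar{\sigma}$ to land in the image of $\pi_{3}\unit^\wedge_\eta$. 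This yields surjectivity of the canonical map out of the direct sum, and the phrase ``for all $n\in\Z$'' is obtained by reading the argument off in each weight; for $n\geq 4$ the relevant cokernel already vanishes by Lemma~\ref{lem:connecting-eta-arithmetic-zero}, so there is nothing to do there.

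I do not anticipate a genuine obstacle: the substantive input — the identification of the cokernel in Lemma~\ref{lem:cok-eta-arithmetic-leq3} and the exhibition of $\sigma$ as a generator of it in Lemma~\ref{lem:cok-eta-arithmetic-generator} — is already in hand, and what remains is the module-theoretic bookkeeping above. The only points deserving a line of care are the identification of ``generated as a $\KMW$-module'' with ``generated as a $\Witt[\eta,\eta^{-1}]$-module'' after inverting $\eta$ (via Morel's $\KMW[\eta^{-1}]\iso\Witt[\eta,\eta^{-1}]$, as already noted in the paragraph preceding Lemma~\ref{lem:cok-eta-arithmetic-generator}), and the compatibility of the three images of $\sigma$ across the square, which is immediate from functoriality of $(-)^\wedge_\eta$ and $(-)[\tfrac1\eta]$ applied to the ring spectrum $\unit$.
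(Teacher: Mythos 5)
Your proposal is correct and is essentially the paper's own argument: the cokernel of $\pi_{3}\unit^\wedge_\eta \to \pi_{3}\unit^\wedge_\eta[\tfrac{1}{\eta}]$ is generated by the image of $\sigma$ (Lemma~\ref{lem:cok-eta-arithmetic-generator}), and since $\sigma$ lives in $\pi_{3+(4)}\unit$ its image factors through $\pi_{3}\unit[\tfrac{1}{\eta}]$, so the two images together exhaust the target. The paper states this in one sentence; your additional bookkeeping about $\Witt[\eta,\eta^{-1}]$-linearity is the (correct) justification it leaves implicit.
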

\begin{proof}
  Lemma~\ref{lem:cok-eta-arithmetic-generator} says that the cokernel of the map
  $\pi_{3}\unit^\wedge_\eta \to \pi_{3}\unit^\wedge_\eta[\tfrac{1}{\eta}]$
  is generated by the image of $\sigma$, which in turn is in the image
  of $\pi_{3}\unit[\tfrac{1}{\eta}]\to \pi_{3}\unit^\wedge_\eta[\tfrac{1}{\eta}]$.
\end{proof}

\begin{theorem}
  \label{thm:twoline-slicecomp}
  If $\Char(F)\neq 2$ the canonical map 
  \[\pi_{n+2,n}\unit 
    \to 
    \pi_{n+2,n}\unit^\wedge_\eta
  \]
  is an isomorphism for all $n\in\Z$.
\end{theorem}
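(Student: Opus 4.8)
The plan is to deduce the statement directly from the $\eta$-arithmetic fracture square for the sphere, the same square exploited throughout Section~\ref{section:1line}. Since the square
\[
\begin{tikzcd}
\unit \ar[r]\ar[d] & \unit^\wedge_\eta \ar[d]\\
\unit[\tfrac1\eta]\ar[r] & \unit^\wedge_\eta[\tfrac1\eta]
\end{tikzcd}
\]
is homotopy cartesian, it produces a long exact Mayer--Vietoris sequence of $\KMW$-modules. The first step is to isolate the portion around $\pi_{2}=\pi_{2+\bideg}$:
\[
\pi_{3}\unit^\wedge_\eta\directsum \pi_{3}\unit[\tfrac1\eta]\longrightarrow \pi_{3}\unit^\wedge_\eta[\tfrac1\eta]\xrightarrow{\ \partial\ }\pi_{2}\unit\longrightarrow \pi_{2}\unit^\wedge_\eta\directsum \pi_{2}\unit[\tfrac1\eta]\longrightarrow \pi_{2}\unit^\wedge_\eta[\tfrac1\eta],
\]
in which the map $\pi_{2}\unit\to \pi_{2}\unit^\wedge_\eta\directsum\pi_{2}\unit[\tfrac1\eta]$ has the canonical map $\pi_{2}\unit\to\pi_{2}\unit^\wedge_\eta$ as its first component and is the relevant piece of the $\eta$-arithmetic sequence quoted in Section~\ref{section:1line}.

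Next I would feed in the two vanishing statements already available: $\pi_{2}\unit[\tfrac1\eta]=0$ by \cite[Theorem 8.3]{roendigs.etainv}, and $\pi_{2}\unit^\wedge_\eta[\tfrac1\eta]=0$, which is recorded in Section~\ref{section:1line} as a consequence of the $E^{2}$-computation of Theorem~\ref{theoremE2sphere} (the weight-$w$ summand of $\pi_{2}\unit^\wedge_\eta$ vanishes for $w\ge 5$, and $\eta$ raises weight, so the $\eta$-colimit is trivial). The segment then collapses to the four-term exact sequence
\[
\pi_{3}\unit^\wedge_\eta\directsum \pi_{3}\unit[\tfrac1\eta]\longrightarrow \pi_{3}\unit^\wedge_\eta[\tfrac1\eta]\xrightarrow{\ \partial\ }\pi_{2}\unit\longrightarrow \pi_{2}\unit^\wedge_\eta\longrightarrow 0,
\]
whose final map is precisely the one in the theorem. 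Surjectivity of $\pi_{2}\unit\to\pi_{2}\unit^\wedge_\eta$ is then immediate, and for injectivity it suffices to show $\partial=0$, i.e.\ that the leftmost map is surjective --- but that is exactly Lemma~\ref{lem:connecting-eta-arithmetic-surj}. Since every map in sight respects the weight grading, restricting to the $w=n$ summand yields the asserted isomorphism $\pi_{n+2,n}\unit\xrightarrow{\cong}\pi_{n+2,n}\unit^\wedge_\eta$ for each $n\in\Z$.

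The genuine content --- the surjectivity of $\pi_{3}\unit^\wedge_\eta\directsum\pi_{3}\unit[\tfrac1\eta]\to\pi_{3}\unit^\wedge_\eta[\tfrac1\eta]$ --- has already been established in Lemma~\ref{lem:connecting-eta-arithmetic-surj}, which rests on the identification of the image of the third Hopf map $\sigma\in\pi_{3+(4)}\unit$ as an $\eta$-inverted generator (Lemma~\ref{lem:cok-eta-arithmetic-generator}, in turn built on Lemmas~\ref{lem:eta-surj-ngeq5}--\ref{lem:connecting-eta-arithmetic-zero} and the Milnor conjecture on quadratic forms). Consequently there is no remaining mathematical obstacle; the only point needing care is purely formal, namely confirming that the displayed square is homotopy cartesian in $\SH(F)$ --- which is where the hypothesis $\Char(F)\neq 2$ enters, via the convergence and $\eta$-inverted finiteness facts invoked in Section~\ref{section:1line} --- and that the Mayer--Vietoris boundary coincides with the connecting homomorphism $\partial$ appearing in the exact sequence there, so that Lemma~\ref{lem:connecting-eta-arithmetic-surj} applies verbatim.
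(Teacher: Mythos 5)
Your argument is correct and is essentially the paper's own proof, just written out in full: the paper likewise deduces injectivity from Lemma~\ref{lem:connecting-eta-arithmetic-surj} (which makes the connecting map of the $\eta$-arithmetic square vanish) together with the vanishing of $\pi_{n+2,n}\unit[\tfrac{1}{\eta}]$ from \cite[Theorem 8.3]{roendigs.etainv}, with surjectivity already supplied by the exact sequence recorded in Section~\ref{section:1line}. The only cosmetic inaccuracy is attributing the hypothesis $\Char(F)\neq 2$ to the homotopy cartesianness of the fracture square, which holds formally; the hypothesis actually enters through the $\eta$-inverted vanishing and the slice computations.
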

\begin{proof}
Lemma~\ref{lem:connecting-eta-arithmetic-surj} implies the map $\pi_{n+2,n}\unit\to \pi_{n+2,n}\unit^\wedge_\eta\directsum \pi_{n+2,n}\unit[\tfrac{1}{\eta}]$ from the $\eta$-arithmetic square is injective. 
The group $\pi_{n+2,n}\unit[\tfrac{1}{\eta}]$ is trivial by \cite[Theorem 8.3]{roendigs.etainv}. 
\end{proof}  

The Twoline Convergence Theorem~\ref{thm:twoline-slicecomp} provides
that the slice spectral sequence for $\unit$ computes the $2$-line of $\unit$. 
Proposition~\ref{prop:eff-kq-conv} implies the same for $\kq$. 
Hence the kernel of $\pi_{2+(n)}\unit\to\pi_{2+(n)}\kq$ can be read off, 
up to extensions, 
from the kernel of the map
\[ 
E^{2}_{n+2,\star,n}(\unit) 
= 
E^{\infty}_{n+2,\star,n}(\unit) 
\to 
E^{\infty }_{n+2,\star,n}(\kq) 
= E^{2}_{n+2,\star,n}(\kq), 
\]
described in Corollaries~\ref{cor:kernel-24} ($\star=4$),~\ref{cor:kernel-23} ($\star=3$) and~\ref{cor:kernel-22} ($\star=2$).
It remains to clarify extensions, which a priori are $\KMW$-module extensions by varying $n$. The crucial contributions to the kernel
of $\pi_{2+(n)}\unit\to\pi_{2+(n)}\kq$ occur in the fourth, third and second
filtration, as Lemma~\ref{lem:unit-kq-e2-iso} already indicates.

\begin{theorem}
\label{thm:ext-kernel}
If $F$ is a field of exponential characteristic $c\neq 2$, the kernel 
of \[\pi_{2+\bideg}\unit \to \pi_{2+\bideg}\kq\] 
is isomorphic to the $\KMil$-module
$ \kmil(4)\directsum \bigl(\pi_{1}\Sigma^{(2)}\MZ\bigr)/24$
after inverting $c$. The (unique) generator for $\kmil(4)$ is
$\nu^2\in \pi_{2+(4)}\unit$.
\end{theorem}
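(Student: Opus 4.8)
The plan is to assemble the kernel of $\pi_{2+\bideg}\unit \to \pi_{2+\bideg}\kq$ out of the slice-filtration layers, using the convergence results already established. By Theorem~\ref{thm:twoline-slicecomp} and Proposition~\ref{prop:eff-kq-conv}, both source and target are computed by their slice spectral sequences, and Corollary~\ref{cor:unit-higher-diff-from-30} together with Lemmas~\ref{lem:unit-2nd-diff-to-24}, \ref{lem:unit-2nd-diff-to-23}, \ref{lem:unit-3rd-diff-to-24} shows that no higher differentials enter or leave the relevant portion of the second column for $\unit$; likewise Lemmas~\ref{lem:d2-pi1kq-zero}, \ref{lem:d2-pi2kq-zero}, \ref{lem:d3-pi2kq-zero} handle $\kq$. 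Hence the kernel is filtered, with associated graded given by the kernels on $E^\infty_{n+2,\star,n}$, i.e. by Corollaries~\ref{cor:kernel-24}, \ref{cor:kernel-23}, \ref{cor:kernel-22}: namely $h^{\star+4,\star+4}$ in filtration $4$, $\pr^\infty_2 H^{\star+1,\star+2}$ inside $h^{\star+1,\star+2}/\Sq^2h^{\star-1,\star+1}$ in filtration $3$, and $H^{\star+1,\star+2}/12$ in filtration $2$. Lemma~\ref{lem:unit-kq-e2-iso} confirms these are the only contributions.

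First I would promote this associated-graded description to an honest computation of the kernel as a $\KMW$-module by working with the filtration quotients $\f_q\unit$. The sub-$\KMW$-module of $\pi_{2}\unit$ mapping to zero in $\pi_{2}\kq$ is controlled by $\pi_{2}\f_2\unit \to \pi_{2}\f_2\kq$, and Lemma~\ref{lem:pi2f2sphere} already gives $\pi_{2}\f_2\unit$ as an extension of $\kmil$ by $\kmil(4)\directsum\bigl(\pi_{1}\Sigma^{(2)}\MZ/24\bigr)/\inc^2_{24}\rho^2\tau\kmil$, with the $\kmil$ quotient (generated by $\eta_\Top^2$) mapping isomorphically onto the corresponding $\kmil$ in $\pi_{2}\f_2\kq$ by the diagram~(\ref{eq:extf2sphere}). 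So the kernel already lives in the submodule $\kmil(4)\directsum\bigl(\pi_{1}\Sigma^{(2)}\MZ/24\bigr)/\inc^2_{24}\rho^2\tau\kmil$ of $\pi_2\f_2\unit$. Next, diagram~(\ref{eq:extf2sphere}) shows the map on this submodule to $\pi_{2}\f_2\kq$ kills the summand $\kmil(4)$ entirely (it projects away from $\kmil(4)$, by Lemma~\ref{lem:unit-kq-sq}), and on the remaining summand is the surjection onto ${}_{2}\KMil(2)/\{-1\}^3\KMil(-1)$ induced by $\partial^2_\infty$. Therefore the kernel of $\pi_{2}\f_2\unit\to\pi_{2}\f_2\kq$ is $\kmil(4)$ together with the kernel of that $\partial^2_\infty$-surjection, which by the exact sequence $0\to(\pi_1\MZ)/2\to\pi_1\MZ/2\to{}_2\pi_0\MZ\to 0$ and the mod-$24$ version is exactly $\bigl(\pi_{1}\Sigma^{(2)}\MZ\bigr)/24$ — the integral motivic cohomology in weight two reduced mod $24$, not the mod-$24$ cohomology. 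One must also check that no further kernel is picked up from $\pi_2\f_1\unit/\pi_2\f_2\unit$ and $\pi_2\kq/\pi_2\f_2\kq$: this is where Lemma~\ref{lem:unit-kq-e2-iso} is used, since $E^2_{-n+2,m,-n}(\unit)\to E^2_{-n+2,m,-n}(\kq)$ is an isomorphism for $m\leq 1$, so no kernel arises below filtration $2$.

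Finally I would identify the $\KMil$-module structure and the generator. Lemma~\ref{lem:nusquared-twotorsion} gives $2\nu^2=0$, and Lemma~\ref{lem:pi2f4sphere} (or its image in $\pi_2\f_2\unit$ via Lemma~\ref{lem:pi2f3sphere}, \ref{lem:pi2f2sphere}) identifies the summand $\kmil(4)$ as generated by $\nu^2$, mapping to the unique nonzero element of $E^2_{6,4,4}(\unit)$ computed in Lemma~\ref{lem:e2-unit-24}; since $\eta\nu=0$ (Theorem~\ref{theorem:1line-kq}) the whole kernel is a $\KMil$-module, not merely a $\KMW$-module. The remaining summand $\bigl(\pi_{1}\Sigma^{(2)}\MZ\bigr)/24$ is already a $\KMil$-module. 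It remains to verify the extension between the two summands is split; this follows because the two pieces are detected in distinct slice filtration degrees ($4$ versus $2$) and the filtration-$2$ piece, being torsion prime to the structure seen by $\nu^2$, admits a splitting — concretely, the image of $\nu^2$ generates a $\KMil$-direct-summand since $\nu^2$ lifts to $\pi_2\f_4\unit$ while nothing in $\bigl(\pi_{1}\Sigma^{(2)}\MZ\bigr)/24$ does, for weight reasons. The main obstacle I anticipate is precisely this last splitting, i.e. ruling out a nontrivial $\KMil$-module extension between $\kmil(4)\{\nu^2\}$ and $\bigl(\pi_{1}\Sigma^{(2)}\MZ\bigr)/24$; I expect it to be resolved by the observation that $\nu^2$ is detected in slice filtration $4$ whereas every nonzero element of $\bigl(\pi_{1}\Sigma^{(2)}\MZ\bigr)/24$ is detected in slice filtration $2$, so the sub-$\KMil$-module generated by $\nu^2$ meets the filtration-$2$ part trivially.
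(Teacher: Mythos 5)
Your proposal is correct and follows essentially the same route as the paper: the heart of both arguments is the reduction to $\pi_{2}\f_2\unit\to\pi_{2}\f_2\kq$ via the commutative diagram~(\ref{eq:extf2sphere}) built from Lemmas~\ref{lem:unit-kq-s2} and~\ref{lem:unit-kq-sq}, followed by identifying the kernel of the induced map $\delta$ (projection away from $\kmil(4)$ composed with $\partial^{24}_{\infty}$) using the exactness of $0\to H^{n+1,n+2}/24\to h^{n+1,n+2}_{24}\to H^{n+2,n+2}$. The splitting you worry about at the end is already supplied by the direct-sum decomposition in Lemma~\ref{lem:pi2f2sphere}, so no additional weight-filtration argument is needed.
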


\begin{proof}
  Lemma~\ref{lem:pi2-f2kq} and Lemma~\ref{lem:pi2f2sphere} describe
  the $\KMW$-modules $\pi_{2}\f_2\kq$ and $\pi_{2}\f_2\unit$, respectively.
  The behaviour of the unit map on slices, as given in
  Lemma~\ref{lem:unit-kq-s2} and Lemma~\ref{lem:unit-kq-sq}, provides the
  commutative diagram
  \begin{center}
    \begin{tikzcd}
     0 \ar[r] & \kmil(4)\directsum \bigl(\pi_{1}\Sigma^{(2)}\MZ/24\bigr)/\inc^{2}_{24}\rho^2\tau\kmil \ar[r] \ar[d,"\delta"] & \pi_{2}\f_2\unit \ar[r] \ar[d] & \kmil \ar[r] \ar[d,"\id"]&  0\\
     0 \ar[r] & \bigl(\pi_{0}\Sigma^{(2)}\MZ\bigr)/\partial^2_\infty\rho^2\tau\kmil \ar[r]  & \pi_{2}\f_2\kq \ar[r]  & \kmil \ar[r] &  0
    \end{tikzcd}
  \end{center}
  where the map $\delta$ is the composition
  \begin{center}
    \begin{tikzcd}
      \kmil(4)\directsum \bigl(\pi_{1}\Sigma^{(2)}\MZ/24\bigr)/\inc^{2}_{24}\rho^2\tau\kmil \ar[d,"{\pr}"] \\ \bigl(\pi_{1}\Sigma^{(2)}\MZ/24\bigr)/\inc^{2}_{24}\rho^2\tau\kmil \ar[d,"{\partial^{24}_{\infty}}"] \\
      \bigl(\pi_{0}\Sigma^{(2)}\MZ\bigr)/\partial^2_\infty\rho^2\tau\kmil.
    \end{tikzcd}
  \end{center}
  The snake lemma implies that the kernel of $\pi_{2}\f_2\unit\to \pi_{2}\f_2\kq$
  coincides with the kernel of $\delta$.
  Since the sequence
  \[ 0 \to H^{n+1,n+2}/24\to h^{n+1,n+2}_{24}\to H^{n+2,n+2} \]
  is exact, another application of the snake lemma
  shows that the kernel of $\delta$ is thus the $\KMil$-module
  $\kmil(4)\directsum \bigl(\pi_{1}\Sigma^{(2)}\MZ\bigr)/24$.
  The kernel of $\pi_{2}\f_2\unit\to \pi_{2}\f_2\kq$ coincides with
  the kernel of $\pi_{2}\unit\to \pi_{2}\kq$ because of Lemma~\ref{lem:unit-kq-s0}
  and Lemma~\ref{lem:unit-kq-s1}.
\end{proof}

It is worthwhile to revisit the three obvious elements listed at the
beginning of this section. The element $\nu^2\in \pi_{2+(4)}\unit$
figured prominently in Theorem~\ref{thm:ext-kernel} already.
The element $\eta_\Top\nu\in \pi_{2+(2)}\unit$ lifts by construction to
$\pi_{2+(2)}\f_3\unit\iso \kmil_2$, whence there exists a unique
element $\varphi\in \kmil_2$ with $\eta_\Top\nu=\varphi\nu^2$.
Since these elements are defined over $\Spec(\ZZ)$, base change
implies that $\varphi=\rho^2$, giving us the equation
\begin{equation}\label{eq:etatopnu}
  \eta_\Top\nu = \rho^2\nu^2\in \pi_{2+(2)}\unit
\end{equation}
over every field of characteristic not two. The element
$\eta_\Top^2\in \pi_{2+(0)}\unit$
lifts to an element in $\pi_{2+(0)}\f_2\unit$, as described
in Lemma~\ref{lem:pi2f2sphere}, by construction.
This element has the property that $\eta\eta_\Top^2\in \pi_{2+(1)}\unit$ is the unique
nonzero element of order $2$ in $h^{0,1}_{24}$. Moreover,
$\eta^2\eta_\Top^2=0$. The group $h^{0,1}_{24}=\mu_{24}$ of $24$-th roots
of unity contributing to $\pi_{2+(1)}\unit$ has the following property.

\begin{lemma}
\label{lem:pi31-complex}
Complex realization induces an isomorphism $\pi_{2+(1)}\unit_{\CC} \overset{\iso}{\to} \pi_{3}\sphere$.
\end{lemma}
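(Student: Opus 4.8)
The plan is to compare the source and target of complex realization $\pi_{2+(1)}\unit_\CC \to \pi_3\sphere$ through the slice (equivalently Adams--Novikov) filtrations, using that complex realization sends the slice spectral sequence for $\unit_\CC$ to the Adams--Novikov spectral sequence for $\sphere$ in a filtration-preserving way. First I would recall that $\pi_3\sphere\iso \ZZ/24$ is generated by the third topological Hopf map $\nu_\Top$, detected by $\overline{\alpha}_1^3$ on the $E^2$-page with the appropriate $p$-primary pieces (the $2$-primary part by $\alpha_1^3$ in Adams--Novikov filtration $3$, and the $3$-primary part by $\overline{\alpha}_1$). On the motivic side, the Twoline Convergence Theorem~\ref{thm:twoline-slicecomp} identifies $\pi_{2+(1)}\unit$ with $\pi_{2+(1)}\unit^\wedge_\eta$, which is computed by the slice spectral sequence; over $\CC$ the group $h^{0,1}_{24}=\mu_{24}(\CC)$ appears as $E^\infty_{-n+2,2,-n}(\unit_\CC)$ in the relevant tridegree $n=-1$, and all other contributions to $\pi_{2+(1)}\unit_\CC$ vanish because $\rho=0$ over $\CC$ kills the $\rho^2\nu^2$-summand coming from $E^\infty_{-n+2,4,-n}$ and, by the $E^2$-page computation of Theorem~\ref{theoremE2sphere} together with Corollary~\ref{cor:kernel-22} and the vanishing of motivic cohomology in negative weights, the $E^\infty$-page contributing to $\pi_{2+(1)}\unit_\CC$ is concentrated in the single filtration with associated graded $h^{0,1}_{24}=\mu_{24}$.

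Next I would show complex realization induces a comparison of the two filtered groups. The realization functor carries $\s_q(\unit)$ to the $q$-th layer of the Adams--Novikov filtration of $\sphere$, by the very construction of \eqref{equation:sphereslices}: the slices are built from $\Ext_{\MU_\ast\MU}^{p,2q}$, and the motivic $\dd^1$-differentials realize to the Adams--Novikov $d_1$'s. In particular the class $g\in \pi_{2+(1)}\s_2\unit_\CC$ generating $h^{0,1}_{24}$ realizes to the class generating the $E^2_{3}$-line $\Ext^{1}$-contribution for $\nu_\Top$ — equivalently, $\eta\eta_\Top^2\in\pi_{2+(1)}\unit$ is (up to the sign/unit detected already in Lemma~\ref{lem:pi2f2sphere}) the image of a generator, and complex realization of $\eta$ and $\eta_\Top$ are the classical $\eta$. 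Since both $\pi_{2+(1)}\unit_\CC$ and $\pi_3\sphere$ are cyclic of order $24$ and the realization map is a surjection onto associated graded (it hits a generator of the $E^\infty$-layer), it is surjective; a surjection of finite cyclic groups of the same order is an isomorphism.

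The main obstacle is the bookkeeping required to pin down that the full group $\pi_{2+(1)}\unit_\CC$ is exactly cyclic of order $24$ with no extra summands and no extension ambiguity — one must verify that over $\CC$ all potentially contributing $E^\infty$-entries except $h^{0,1}_{24}$ vanish (using $\rho=0$, and $H^{p,w}(\CC)=0$ for $w<0$ or for $w=0,p\neq 0$, the latter from Remark~\ref{rem:KSP4}-type input and the Beilinson--Soulé-type vanishing $H^{1,1}(\CC)=\CC^\times$, $H^{0,2}(\CC)=0$), and that the filtration quotient giving $\mu_{24}$ is genuinely all of $\pi_{2+(1)}\unit_\CC$ rather than a proper subquotient. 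Alternatively, and perhaps more cleanly, I would invoke Levine's theorem \cite{levine.comparison} that complex realization induces an isomorphism $\pi_{n,0}\unit_\CC \overset{\iso}{\to}\pi_n\sphere$ on the weight-zero part after $\Sigma$-completion, combined with the fact (from Theorem~\ref{theorem:2line} / Theorem~\ref{thm:ext-kernel}) that $\pi_{2+(1)}\unit_\CC=\pi_{3,1}\unit_\CC$ is already $2$-complete-and-$3$-complete and finite, so that the comparison map is an isomorphism on the nose; this reduces the statement to citing \cite{levine.comparison} and checking the torsion/finiteness, which the $2$-line computation of this section supplies.
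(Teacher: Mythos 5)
Your overall strategy --- compare the two groups through the Betti realization of the slice/Adams--Novikov filtration and conclude from cyclicity of order $24$ --- is the same as the paper's, but two of your concrete claims are wrong and they matter. First, the $E^\infty$-page contributing to $\pi_{2+(1)}\unit_{\CC}$ is \emph{not} concentrated in a single filtration with associated graded $h^{0,1}_{24}=\mu_{24}$. By Theorem~\ref{theoremE2sphere} (with $n=-1$, so weight $1$), the surviving entries are $h^{0,1}_{12}\iso\ZZ/12$ in slice filtration $2$ and $h^{0,2}\iso\ZZ/2$ in slice filtration $3$ (generated by $\eta\eta_{\Top}^{2}$); the group $\mu_{24}$ only arises as the total (nonsplit) extension $0\to\ZZ/2\to\pi_{2+(1)}\unit_{\CC}\to\ZZ/12\to 0$. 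Consequently the surjectivity argument cannot proceed by ``hitting a generator of the single $E^\infty$-layer'': you must check the realization map on \emph{both} graded pieces. The paper does this by invoking Levine's theorem from \cite{levine.an} to identify the Betti realization of the filtration $\f_3\subset\f_2$ with the Adams--Novikov filtration $0\to\ZZ/2\to\pi_3\sphere\to\ZZ/12\to 0$, then checking the $\ZZ/2$-piece via $\eta\eta_\Top^2\mapsto\eta_\Top^3$ and the $\ZZ/12$-piece via the separate computation that a primitive root of unity in $\pi_{1-(1)}\MZ/n$ realizes to a generator of $\pi_0\HZ/n$ (Lemma~\ref{lem:betti-roots}); your sketch omits this last input entirely, and it is exactly where the content lies. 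Note also that the compatibility of realization with the slice filtration is not ``by the very construction of \eqref{equation:sphereslices}'' --- it is a substantive theorem of Levine.

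Second, your proposed ``cleaner alternative'' does not apply: Levine's comparison theorem in \cite{levine.comparison} concerns the weight-zero homotopy groups $\pi_{n,0}\unit_{\CC}=\pi_{n+(0)}\unit_{\CC}$, whereas the group in question is $\pi_{2+(1)}\unit_{\CC}=\pi_{3,1}\unit_{\CC}$, which sits in weight one. There is no formal passage between the two weights (the paper itself records, e.g., $\pi_{0+(1)}\unit_{\CC}=\ZZ/2$ while $\pi_{0+(-1)}\unit_{\CC}=0$), so reducing the lemma to a citation of \cite{levine.comparison} plus finiteness does not work. With the filtration corrected to the two-step extension and with Lemma~\ref{lem:betti-roots} supplied for the $\mu_{12}$-quotient, your first route becomes the paper's proof.
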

\begin{proof}
By \cite{levine.sliceconvergence} the slice spectral sequence for $\unit_{\CC}=\unit$ is strongly convergent. 
The nontrivial groups on the $E^1$-page contributing to $\pi_{2+(1)}\unit$ are
$\pi_{2+(1)}\s_{2}(\unit) = h^{0,1}_{12}$ and $\pi_{2+(1)}\s_{3}(\unit)=h^{0,2}=\pi_{2+(1)}\f_{3}(\unit)$ generated by $\eta\eta^{2}_{\Top}$.
Since $\pi_{3+(1)}\s_{2}(\unit) = \pi_{1+(1)}\f_{3}(\unit) = 0$, 
there is a short exact sequence
\[ 
0 
\to 
\pi_{2+(1)}\f_{3}(\unit) 
\to 
\pi_{2+(1)}\f_{2}(\unit) 
\to 
\pi_{2+(1)}\s_{2}(\unit) 
\to 
0.
\]
The main result in \cite{levine.an} shows its complex Betti realization is the short exact sequence 
\[ 
0 
\to 
\ZZ/2 
\to 
\pi_{3}\sphere 
\to 
\ZZ/12 
\to 
0
\]
obtained from the Adams-Novikov filtration. 
The map $\pi_{2+(1)}\f_{3}(\unit) \to\ZZ/2$ is an isomorphism since the complex Betti realization of $\eta\eta^{2}_{\Top}$ is $\eta^{3}_{\Top}$. 
It remains to show that $\pi_{2+(1)}\s_{2}(\unit) \to\ZZ/12$ is an isomorphism. 
This follows from Lemma~\ref{lem:betti-roots} below. 
\end{proof}

\begin{remark}
\label{rem:betti-twoline}
More generally than Lemma~\ref{lem:pi31-complex}, 
the complex Betti realization induces an isomorphism $\pi_{2+(n)}\unit_{\CC} \overset{\cong}{\to} \pi_{2+(n)}\sphere$ for $-1\leq n\leq 4$.
For $n=4$, the generator $\nu^{2}$ maps to $\nu_{\Top}^{2}$.
The groups $\pi_{2+(3)}\unit_{\CC}$ and $\pi_{2+(2)}\unit_{\CC}$ are
trivial by Theorem~\ref{theorem:2line}, 
the vanishing of $\pi_{2+(3)}\kq=\pi_{2+(3)}\KQ$, 
and since $\pi_{2+(2)}\kq=\pi_{2+(2)}\KQ$ is the group of even integers. 
For $n=0$, 
$\pi_{2+(0)}\unit_{\CC}\to \pi_{2+(0)}\kq_{\CC}$ is injective (both $H^{1,2}(\CC)$ and $K^\M_{4}(\CC)$ are divisible), 
and the image is $h^{0,2}$ generated by $\eta_{\Top}^{2}$.
The case $n=-1$ is quite interesting. The slice spectral sequence
for $\unit_{\CC}$ contains a single nonzero element $\eta_\Top^2/\eta$ on the column
of the $E^1$-page contributing to $\pi_{2-(1)}\unit_\CC$. It comes
from $\pi_{2-(1)}\s_1\unit\iso h^{0,2}$, which is isomorphic
to $\pi_{2-(1)}\s_1\kq$. Since $\rho=0$, and in particular $\rho^2=0$,
the element $\eta_\Top^2/\eta$
is a permanent cycle, both for $\unit$ and $\kq$.
The multiplicative structure on the slices of $\unit$ or $\kq$
shows that $\eta\cdot \eta_\Top^2/\eta$ coincides with the unique element
in $\pi_{2+(0)}\s_2\unit_\CC$ detecting $\eta_\Top^2$, whence the
awkward name for this element. 
Consider the commutative diagram
\begin{equation}\label{eq:betti-kq}
  \begin{tikzcd}
    \dotsm \ar[r,"\forget"] &
    \pi_{3-(0)} \kgl_{\CC} \ar[r,"\mathrm{hyper}"] \ar[d,"b_4"] &
    \pi_{2-(1)}\kq_{\CC} \ar[r,"\eta"] \ar[d,"b_3"] &
    \pi_{2-(0)}\kq_{\CC} \ar[r,"\forget"] \ar[d,"b_2"] &
    \pi_{2-(0)}\kgl_{\CC} \ar[r] \ar[d,"b_1"] & \dotsm \\
    \dotsm \ar[r] &
    \pi_{3}\ku \ar[r] &
    \pi_{1}\ko \ar[r,"\eta_\Top"] &
    \pi_{2}\ko \ar[r] &
    \pi_{2}\ku \ar[r] &\dotsm
  \end{tikzcd}
\end{equation}
of long exact sequences, where Theorem~\ref{thm:kq-wood}
provides the top sequence, and the vertical maps are given by
complex Betti realization. The map labelled ``$\eta_\Top$'' is an
isomorphism. The map
$b_2\colon \pi_{2-(0)}\kq_{\CC}\iso \KMil_2(\CC)\directsum \kmil_0 \to \pi_{2}\ko$
is surjective, with the nonzero element in $\kmil_0$ given by
the image of $\eta_\Top^2$ realizing to $\eta_\Top^2$. Since this element is
mapped to $0\in \pi_{2+(0)}\kgl_{\CC}$ via the forgetful map, it is in the
image of multiplication with $\eta$. Hence also
the map labelled ``$b_3$'' is surjective, implying the same for
$\pi_{2-(1)}\unit_\CC\to \pi_{1}\sphere\iso \pi_{1}\ko$.
Complex Betti realization $\pi_{2-(2)}\unit_{\CC}\to \pi_{0}\sphere$ is the
zero map.
\end{remark}

\begin{lemma}
  \label{lem:betti-roots}
  Complex Betti realization induces an isomorphism
  \[ \pi_{1-(1)}\MZ/n \overset{\cong}{\to} \pi_{0}\HZ/n\iso \ZZ/n\ZZ \]
  of groups.
\end{lemma}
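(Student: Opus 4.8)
The plan is to identify both sides with $\ZZ/n$ and then to verify that the realization map is an isomorphism via the comparison theorems relating motivic, \'etale and singular cohomology.

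First I would rewrite $\pi_{1-(1)}\MZ/n=\pi_{0,-1}\MZ/n=[\Sigma^{0,-1}\unit,\MZ/n]$. Complex Betti realization is symmetric monoidal and exact, sends $\G$ to $S^{1}$ and $\MZ$ to $\HZ$, hence sends $\Sigma^{0,-1}\unit$ to $S^{0}$ and $\MZ/n$ to $\HZ/n$; it therefore induces a homomorphism $\pi_{0,-1}(\MZ/n)_{\CC}\to \pi_{0}\HZ/n$. Applying $\pi_{0}$ to the cofiber sequence $\HZ\xrightarrow{n}\HZ\to\HZ/n\to\Sigma\HZ$ gives $\pi_{0}\HZ/n\iso\ZZ/n$. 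For the source I would apply $[\Sigma^{0,-1}\unit,-]$ to $\MZ\xrightarrow{n}\MZ\to\MZ/n\to\Sigma^{1,0}\MZ$ and use the standard values $H^{0,1}(\CC;\ZZ)=0$ and $H^{1,1}(\CC;\ZZ)=\CC^{\times}$ (from $\ZZ(1)\simeq\G[-1]$), obtaining $\pi_{0,-1}(\MZ/n)_{\CC}\iso{}_{n}\CC^{\times}=\mu_{n}(\CC)$, a cyclic group of order $n$.

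It remains to see the resulting map $\mu_{n}(\CC)\to\ZZ/n$ is an isomorphism. By the Beilinson--Lichtenbaum isomorphism the group $H^{0,1}(\CC;\ZZ/n)$ is identified with $H^{0}_{\mathrm{et}}(\Spec\CC;\mu_{n})=\mu_{n}(\CC)$, and, in this range, the Betti realization map agrees with the composite of this identification with the Artin comparison $H^{0}_{\mathrm{et}}(\Spec\CC;\mu_{n})\to H^{0}(\{\ast\};\ZZ/n)$, which is an isomorphism. Hence the realization map is an isomorphism. The one genuinely delicate point is this compatibility of the Betti realization with the weight filtration, i.e.\ its identification in weight one with the \'etale-to-singular comparison map; the rest of the argument is routine bookkeeping with cofiber sequences and the known values of the motivic cohomology of $\Spec\CC$. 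Alternatively one may reduce to $n=\ell$ prime by naturality in $n$ and a $p$-primary devissage, and invoke the standard computation that the Bott element $\tau\in\pi_{0,-1}\MZ/\ell$ realizes to the unit of $\pi_{0}\HZ/\ell$.
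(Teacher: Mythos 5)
Your identifications of source and target are correct and agree with what the paper uses implicitly: $\pi_{1-(1)}\MZ/n=\pi_{0,-1}\MZ/n\iso H^{0,1}(\CC;\ZZ/n)\iso\mu_n(\CC)$ via the Bockstein, and $\pi_0\HZ/n\iso\ZZ/n$. Where you diverge is in showing the induced map $\mu_n(\CC)\to\ZZ/n$ is an isomorphism. Your main route (Beilinson--Lichtenbaum plus the Artin comparison) is legitimate in principle, but the point you yourself flag as ``genuinely delicate'' --- that Betti realization on $H^{0,1}(-;\ZZ/n)$ \emph{is} the \'etale-to-singular comparison map --- is the entire content of the lemma; granting it makes the statement a tautology, so as written this route has the key step asserted rather than proved. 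The paper instead makes exactly this point concrete: a primitive $n$-th root of unity $\xi$ gives an explicit map of schemes $\Spec(\CC)_+\to\mu_n$ (the fiber of multiplication by $n$ on $B\G_m=\PP^\infty$, the first space of $\MZ$), representing the generator of $\pi_{0,-1}\MZ/n$; its Betti realization is visibly the choice of a generator of $\ZZ/n$, i.e.\ the unit of $\pi_0\HZ/n$ up to a unit. Multiplication by $\xi$ then transports the known weight-zero comparison isomorphism $\pi_{0,0}\MZ/n\iso\pi_0\HZ/n$ (Levine) to weight $-1$. Note that your ``alternative'' argument --- the Bott element $\tau\in\pi_{0,-1}\MZ/\ell$ realizes to the unit of $\pi_0\HZ/\ell$ --- is precisely this argument, with $\xi$ playing the role of $\tau$; so rather than invoking it as ``standard,'' you should supply the explicit unstable representative as the paper does (and then the $p$-primary d\'evissage becomes unnecessary, since the argument works for all $n$ at once).
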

\begin{proof}
  The complex Betti realization of $\MZ/n$ is $\HZ/n$ by
  \cite[Proposition 5.10]{levine.comparison}. 
A primitive $n$-th root of unity $\xi\in\CC^{\times}$ provides a stable map $\xi\colon S^{1-(1)}\to \MZ/n$ or alternatively an unstable map $\xi\colon \Spec(\CC)_+ \to \mu_n$,
with target the fiber of multiplication by $n$ on $B \mathbb{G}_m =\PP^\infty$ (the first space in the $\PP^1$-spectrum $\MZ$).
The complex Betti realization of $\Spec(\CC)_+\to \mu_n$ is the choice of a generator of $\ZZ/n$. 
Hence it sends $\xi\colon S^{1+(-1)}\to \MZ/n$ to the unit map $\sphere \to \HZ/n$. 
The result follows now from \cite[Corollary 5.12]{levine.an} after multiplication with $\xi$.
\end{proof}

\appendix 

\section{Modules over Milnor $K$-Theory}
\label{sec:motiv-cohom}

The following result from \cite{rso.oneline} permeates the differential calculations.
\begin{lemma}
\label{lem:differential-module-hom}
The $r$th slice differentials induce a $\KMil=\bigoplus_{n\in\NN}H^{n,n}$-module map
\[ 
\bigoplus_{n\in \ZZ}
d^{r}_{p+n,q,n}(\E)
\colon 
\bigoplus_{n\in \ZZ} E^r_{p+n,q,n}(\E) 
\rightarrow 
\bigoplus_{n\in \ZZ} E^r_{p-1+n,q+r,n}(\E). 
\]
\end{lemma}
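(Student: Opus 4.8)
The final statement to prove is Lemma~\ref{lem:differential-module-hom}, asserting that the $r$th slice differentials, suitably assembled over all weights, form a map of $\KMil$-modules.

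\medskip

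The plan is to reduce the claim to the compatibility of the slice filtration with the multiplicative structure on the motivic sphere spectrum. First I would recall that $\KMil = \bigoplus_{n\in\NN} H^{n,n}(F)$ is precisely $\bigoplus_{n} \pi_{n,n}\MZ = \pi_{0,0}(\bigvee_n \Sigma^{n,n}\MZ)$ viewed through the zeroth slice $\s_0(\unit)\iso \MZ$, so that each class in $H^{n,n}$ is represented by a map $\Sigma^{n,n}\unit \to \unit$ (more precisely, lifts to the slice tower through multiplicativity of the slice filtration, see \cite{grso}). The key point is that the slice functors $\f_q$, $\s_q$ and the associated slice spectral sequence $E^r_{p,q,w}(\E)$ are lax monoidal in the relevant sense: a class $\lambda\in H^{n,n}$, realized on $\s_0(\unit)$, induces for each $q$ a map $\s_q(\unit)\smash \Sigma^{n,n}\unit \to \s_q(\unit)$ and hence, by functoriality of the slice tower and of the exact couple defining the spectral sequence, an operator on $E^r_{p+n,q,w+n}(\E)$ landing in $E^r_{p+n,q,w+n}(\E)$ — i.e.\ multiplication by $\lambda$ raises both the simplicial degree index and the weight by $n$ while preserving the column $q$. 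This is why the correct statement groups the differentials as $\bigoplus_n d^r_{p+n,q,n}(\E)$: along a fixed ``line'' $s - w = p$ the $\KMil$-action is the internal grading shift.

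\medskip

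Next I would verify that the $r$th differential $d^r$ commutes with this action. Since $d^r$ is the map induced on subquotients by a composite of structure maps in the exact couple of the slice tower of $\E$, and since the $\KMil$-action arises by smashing that entire tower with the fixed spectra $\Sigma^{n,n}\unit$ and applying the module structure map $\Sigma^{n,n}\unit \smash \E \to \E$ (which is a map of slice towers, again by \cite{grso}), naturality of the exact couple gives at once that $\lambda\cdot d^r(x) = d^r(\lambda\cdot x)$. Concretely, for $r=1$ the slice $\dd^1$-differential is a map $\s_q(\E)\to \Sigma^{1,0}\s_{q+1}(\E)$ of $\s_0(\unit)$-modules — because $\s_\ast$ is lax monoidal and $\dd^1$ is the connecting map of the cofiber sequence $\s_{q+1}\to \f_q/\f_{q+2}\to \s_q$, which is a sequence of $\s_0(\unit)$-modules — and the higher differentials are built from these by the standard zig-zag, each step of which is $\s_0(\unit)$-linear. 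Taking homotopy groups and summing over weights turns $\s_0(\unit)=\MZ$-linearity into $\KMil$-linearity for the assembled differential, which is exactly the assertion.

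\medskip

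I expect the main obstacle to be purely bookkeeping: making precise the sense in which the slice tower of $\E$ is a module over the slice tower of $\unit$ (so that the $\KMil$-action is compatible with all structure maps simultaneously), and matching the resulting degree shifts with the indexing $E^r_{p+n,q,n}\to E^r_{p+n,q,n}$. Since the paper cites \cite{grso} for the multiplicativity of the slice filtration and \cite{rso.oneline} for this very lemma, the cleanest route is to invoke those: state that the slice spectral sequence is a spectral sequence of modules over $\pi_{\star,\star}\s_0(\unit)$-modules graded appropriately, and then observe that restricting the $\pi_{0,0}\unit$-action through the image of $\KMil\to \pi_{0,0}\s_0(\unit)$ (an isomorphism onto the ``positive symbol'' part) yields the claimed $\KMil$-module structure, with the differentials being module maps by naturality. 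No genuinely new argument beyond lax monoidality of $\s_\ast$ and functoriality of the exact couple should be required.
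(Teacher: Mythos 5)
Your proposal is correct and matches the intended argument: the paper itself gives no proof of this lemma beyond citing \cite{rso.oneline}, and the proof there is exactly the multiplicativity of the slice filtration from \cite{grso}, making the slice tower of $\E$ a module over that of $\unit$ so that the exact-couple maps, hence all $d^r$, commute with the action of $\bigoplus_n\pi_{-n,-n}\s_0(\unit)\iso\KMil$. The only blemish is the index slip where you write that multiplication by $\lambda\in H^{n,n}$ lands in $E^r_{p+n,q,w+n}$ starting from the same group; it should shift $E^r_{p,q,w}$ to $E^r_{p+n,q,w+n}$ in the lemma's indexing, which is the bookkeeping you already flag.
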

The triviality of the higher slice differentials for the relevant
column follows from arguing with $\KMil$-module generators.
An important example is the $\KMil$-module of mod-$2$ Milnor $K$-theory
\[ 
\kmil
= 
\bigoplus_{n\in \ZZ} h^{n,n}. 
\]
It is generated by the nontrivial element $1\in h^{0,0}$.
More generally, 
Voevodsky's solution of the Milnor conjecture implies that for a fixed integer $k\geq 0$, 
the $\KMil$-module
\[ 
\bigoplus_{n\in \ZZ} h^{n-k,n} 
\]
is generated by the unique nontrivial element $\tau^k\in h^{0,k}$ in bidegree $(0,k)$. 
Other examples of generators for $\KMil$-modules are given by \cite[Theorem 3.3]{ovv} for fields of characteristic zero and \cite[Theorem 2.1]{merkurjev-suslin.rost} for fields of odd characteristic. 
A central example is the kernel of $\Sq^{2}\colon h^{n-2,n}\to h^{n,n+1}$, 
since it corresponds 
to multiplication with the pure symbol $\{-1,-1\}$).

\begin{lemma}\label{lem:cone-alpha}
  Let $\alpha\in \pi_{s+(w)}\unit$, and $\E$ a motivic spectrum.
  The cofiber sequence
  \[ \Sigma^{s+(w)}\E \xrightarrow{\alpha} \E \to \E/\alpha \to \Sigma^{s+1+(w)}\E\]
  provides
  a short exact sequence
  \[ 0 \to \bigl(\pi_{m+\bideg}\E\bigr)/\alpha \to \pi_{m+\bideg}\E/\alpha
    \to {}_{\alpha}\pi_{m-s-1+\bideg}\E \]
  of $\KMW$-modules for every $m\in \ZZ$.
\end{lemma}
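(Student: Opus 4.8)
The plan is to apply the homotopy $\pi_{\ast+\bideg}$-functor to the cofiber sequence and extract the relevant long exact sequence. First I would note that, since $\Sigma^{s+(w)}\E$ is a model for the shift of $\E$, applying $[\Sigma^{m+(v)}\unit,-]$ for all $v$ and assembling over $v$ yields a long exact sequence of $\KMW$-modules
\[
\dotsm \to \pi_{m-s+\bideg}\E \xrightarrow{\alpha} \pi_{m+\bideg}\E \to \pi_{m+\bideg}\E/\alpha \to \pi_{m-s-1+\bideg}\E \xrightarrow{\alpha} \pi_{m-s-1+\bideg}\E \to \dotsm
\]
Here the maps labelled $\alpha$ are multiplication by $\alpha$, using that the cofiber sequence is obtained by smashing the cofiber sequence $\Sigma^{s+(w)}\unit\xrightarrow{\alpha}\unit\to\unit/\alpha$ with $\E$, so that the connecting maps are $\alpha$-multiplication after the appropriate reindexing; one must be careful that the grading shift $\Sigma^{s+(w)}$ on the source indeed corresponds to passing from $\pi_{m+\bideg}$ to $\pi_{m-s+\bideg}$ in the weight-compatible sense, which is exactly the bookkeeping encoded in the $\pi_{s+\bideg}$ notation set up in the introduction.

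Next I would truncate this long exact sequence. Exactness at $\pi_{m+\bideg}\E/\alpha$ gives that the cokernel of $\alpha\colon \pi_{m-s+\bideg}\E\to\pi_{m+\bideg}\E$ — which by definition is $\bigl(\pi_{m+\bideg}\E\bigr)/\alpha$ — injects into $\pi_{m+\bideg}\E/\alpha$. Exactness at $\pi_{m-s-1+\bideg}\E$ identifies the image of $\pi_{m+\bideg}\E/\alpha\to\pi_{m-s-1+\bideg}\E$ with the kernel of multiplication by $\alpha$ on $\pi_{m-s-1+\bideg}\E$, namely ${}_{\alpha}\pi_{m-s-1+\bideg}\E$. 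Splicing these two facts together produces exactly the short exact sequence
\[
0 \to \bigl(\pi_{m+\bideg}\E\bigr)/\alpha \to \pi_{m+\bideg}\E/\alpha \to {}_{\alpha}\pi_{m-s-1+\bideg}\E,
\]
and all three terms are $\KMW$-modules with $\KMW$-linear maps because multiplication by $\alpha\in\pi_{s+(w)}\unit$ commutes with the $\pi_{0+\bideg}\unit\iso\KMW$-action (the action is via composition in the homotopy category, which is associative) and the connecting homomorphism in the long exact sequence is itself $\pi_{\ast+\bideg}\unit$-linear.

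The step needing the most care is the verification that the connecting maps are genuinely multiplication by $\alpha$ with the correct grading shift, rather than just something in the right degree: this amounts to recalling that $\E/\alpha:=\cone(\alpha)$ is functorially $\E\smash(\unit/\alpha)$ and that the boundary map $\unit/\alpha\to\Sigma^{s+1+(w)}\unit$ of the universal cofiber sequence, when smashed with $\E$ and composed into the next term, reproduces $\alpha$-multiplication up to the sign conventions that do not affect kernels and cokernels. This is routine but is the only place where one could slip. Everything else is a formal extraction from a long exact sequence of graded modules, so I do not anticipate a serious obstacle; the statement is essentially a packaging lemma for use elsewhere (e.g.\ in analysing $\pi_{m+\bideg}\unit/n$ and $\pi_{m+\bideg}\unit/n\hyper$ appearing in the $2$-line computation).
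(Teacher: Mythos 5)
Your proof is correct and is exactly the standard argument the paper has in mind: the paper's own proof consists of the single sentence that the claim "follows directly from the definition of $\E/\alpha$," i.e.\ from the long exact sequence of homotopy modules associated to the cofiber sequence, truncated at the two ends as you do. Your extra care about identifying the boundary map with $\alpha$-multiplication up to sign and about $\KMW$-linearity is sound and fills in precisely what the paper leaves implicit.
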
  

\begin{proof}
  This claim follows directly from the definition of $\E/\alpha$.
\end{proof}

\begin{lemma}
  Let $n>0$ be a natural number. The canonical maps induce
  a short exact sequence
  \[ 0\to \bigl(\pi_{1+\bideg}\MZ\bigr)/{2^n} \to \pi_{1+\bideg} \MZ/2^n
    \to {}_{2^n} \KMil \to 0 \]
  of $\KMil$-modules.
\end{lemma}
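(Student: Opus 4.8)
The plan is to apply Lemma~\ref{lem:cone-alpha} with $\alpha = 2^n \in \pi_{0+(0)}\unit$ and $\E = \MZ$. This immediately produces a short exact sequence
\[ 0 \to \bigl(\pi_{1+\bideg}\MZ\bigr)/2^n \to \pi_{1+\bideg}\MZ/2^n \to {}_{2^n}\pi_{0+\bideg}\MZ \to 0 \]
of $\KMW$-modules, where exactness on the right follows because the next term is $\bigl(\pi_{0+\bideg}\MZ\bigr)/2^n$ and the map ${}_{2^n}\pi_{0+\bideg}\MZ \to \bigl(\pi_{0+\bideg}\MZ\bigr)/2^n$ in the long exact sequence is the zero map (it is the composite of the inclusion of $2^n$-torsion into $\pi_{0+\bideg}\MZ$ followed by reduction mod $2^n$, which kills anything already $2^n$-torsion). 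Alternatively this is exactly the six-term exact sequence coming from smashing the cofiber sequence $\unit \xrightarrow{2^n}\unit \to \unit/2^n$ with $\MZ$.

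Next I would identify the outer two terms. For the right-hand term, Morel's computation gives $\pi_{0+\bideg}\MZ \iso \KMil$ as a $\KMW$-module, the graded ring of Milnor $K$-theory, which is torsion-free in each degree? No --- rather, one uses that $\pi_{0+\bideg}\MZ = \s_0(\unit)$'s homotopy is $\KMil$ by the identification $\s_0(\unit)\iso \MZ$ and the computation $\pi_{n,n}\MZ \iso \KMil_n(F)$; so ${}_{2^n}\pi_{0+\bideg}\MZ = {}_{2^n}\KMil$. Since $\eta$ acts as zero on $\MZ$, the $\KMW$-module structure on all three terms factors through $\KMW \to \KMil$, so the sequence is automatically a sequence of $\KMil$-modules. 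This upgrades the $\KMW$-linearity from Lemma~\ref{lem:cone-alpha} to $\KMil$-linearity, which is what the statement asks for.

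The only remaining point is to check that the first map is genuinely $\bigl(\pi_{1+\bideg}\MZ\bigr)/2^n \hookrightarrow \pi_{1+\bideg}\MZ/2^n$ with the stated source, i.e.\ that the relevant map $\pi_{1+\bideg}\MZ \xrightarrow{2^n}\pi_{1+\bideg}\MZ$ has cokernel $\bigl(\pi_{1+\bideg}\MZ\bigr)/2^n$ sitting inside $\pi_{1+\bideg}\MZ/2^n$ --- but this is precisely the content of Lemma~\ref{lem:cone-alpha} applied in degree $m=1$ with $s=0$, $w=0$. So there is essentially nothing to prove beyond invoking that lemma and recording the identifications of $\pi_{0+\bideg}\MZ$ and its $\eta$-triviality.

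The main (very minor) obstacle is bookkeeping: making sure the grading shift in Lemma~\ref{lem:cone-alpha} is applied correctly (with $s=w=0$ there is no shift, so ${}_{\alpha}\pi_{m-s-1+\bideg}\E = {}_{2^n}\pi_{0+\bideg}\MZ$) and confirming that the connecting map out of the $2^n$-torsion vanishes so that the six-term sequence splits into the claimed short exact sequence. Both are immediate. I would therefore write the proof as: ``Apply Lemma~\ref{lem:cone-alpha} with $\alpha=2^n$ and $\E=\MZ$; since $\eta$ acts trivially on $\MZ$, all terms are $\KMil$-modules, and $\pi_{0+\bideg}\MZ\iso\KMil$ by Morel's theorem, so ${}_{2^n}\pi_{0+\bideg}\MZ={}_{2^n}\KMil$; the connecting homomorphism ${}_{2^n}\KMil\to\bigl(\pi_{0+\bideg}\MZ\bigr)/2^n$ is zero, giving the stated short exact sequence.''
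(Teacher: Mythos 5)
Your proposal is correct and is exactly the paper's argument: the paper's proof of this lemma is the single sentence that it is a particular case of Lemma~\ref{lem:cone-alpha}, and your specialization ($\alpha=2^n$, $\E=\MZ$, $m=1$, $s=w=0$), together with the identifications $\pi_{0+\bideg}\MZ\iso\KMil=\bigoplus_n H^{n,n}$ and the $\eta$-triviality upgrading $\KMW$-linearity to $\KMil$-linearity, is precisely the intended content. (Only a small attribution quibble: $H^{n,n}(F)\iso\KMil_n(F)$ is Nesterenko--Suslin--Totaro rather than Morel's theorem, which concerns $\pi_0\unit$.)
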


\begin{proof}
 This result is a particular case of Lemma~\ref{lem:cone-alpha}.
\end{proof}

Specializing even further, let us consider mod-$4$ coefficients.  
Let $F_{0}$ denote the prime field of $F$.

\begin{lemma}
\label{lem:generators-motcohommod4}
For $k\geq 0$ a fixed natural number, consider the $\KMil$-module
\[ 
\bigoplus_{n\in \ZZ} h^{n-k,n}_{4}. 
\]
If $k=2n$ is even, 
it is generated by a single element $\theta^n\in h^{0,k}_{4}=\ZZ/4$ 
where $\theta \in h^{0,2}_{4}$. The element $\theta$
is defined over $F_{0}$. 
If $k$ is odd,
it is generated by elements in bidegrees $(0,k)$ and $(1,k+1)$; 
every generator in bidegree $(0,k)$ is defined over $F_{0}$ or 
$F_{0}[\sqrt{-1}]$, 
whereas for every generator $\overline{g}\in h^{1,k+1}_{4}(F)$ 
there exists a unit $g\in F^{\times}$ such that $\overline{g}$ is 
defined over the field $F_{0}(g)$.
\end{lemma}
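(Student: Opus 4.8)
The plan is to analyze the $\KMil$-module $\bigoplus_{n\in\ZZ}h^{n-k,n}_{4}$ by means of the coefficient sequences relating mod-$2$, mod-$4$, and integral motivic cohomology, and to reduce each generation statement to facts about $\kmil$ and integral motivic cohomology that are already available. Throughout I will use the short exact sequence of $\KMil$-modules
\[
0 \to \bigoplus_{n} h^{n-k,n}\cdot \rho \to \bigoplus_{n} h^{n-k,n}_{4} \xrightarrow{\pr^4_2} \bigoplus_{n} h^{n-k,n}\to 0
\]
coming from $0\to \ZZ/2\to \ZZ/4\to \ZZ/2\to 0$, together with the fact from Voevodsky's solution of the Milnor conjecture (recalled in the appendix) that $\bigoplus_n h^{n-k,n}$ is generated as a $\KMil$-module by $\tau^k\in h^{0,k}$.

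First I would treat the even case $k=2m$. The projection $\pr^4_2$ sends the target generator $\tau^{2m}$ down from any lift; the issue is whether a single lift suffices, i.e. whether the submodule $\bigoplus_n h^{n-2m,n}\rho$ is already hit. Here the key observation is that $\rho\cdot\tau^{2m}$ lies in the image: indeed $\rho^2\in h^{2,2}$ is $2$-torsion (it equals $\{-1,-1\}$), hence already $\tau\rho\in h^{1,2}$ lifts to the element $\theta\in h^{0,2}_4$ whose double is the class of $-1$ in $H^{1,1}$ — more precisely, the Bockstein-type identity $\pr^4_2(\theta)=\tau^{?}$ combined with $2\theta=\{-1\}$ forces $\theta^m$ to generate. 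So I would set $\theta\in h^{0,2}_4$ to be the canonical element with $\partial^4_\infty\theta=\{-1\}$ (this is the $\mu_4/\mu_2$ story on $\mathbf{B}\G$), note that $\theta$ is visibly defined over $F_0$ since $-1\in F_0^\times$, and check by a direct $\KMil$-module computation — using that $\bigoplus_n h^{n-2m,n}$ is generated by $\tau^{2m}=\pr^4_2(\theta^m)$ and that the kernel submodule $\bigoplus_n h^{n-2m,n}\rho$ is spanned over $\KMil$ by $\rho\theta^{m}$, itself in the image of multiplication by symbols on $\theta^m$ — that $\theta^m$ generates the whole module. The ``defined over $F_0$'' claim is then immediate.

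Next the odd case $k$: now $\bigoplus_n h^{n-k,n}$ is generated by $\tau^k\in h^{0,k}$, and the kernel submodule $\bigoplus_n h^{n-k,n}\rho$ is a shift of the even-type module and hence, by the even case applied with $k-1$, generated by $\rho\theta^{(k-1)/2}$. Lifting $\tau^k$ to $h^{0,k}_4$ and combining with a lift of $\rho\theta^{(k-1)/2}$ — which lives in bidegree $(1,k)$ of the kernel, i.e. comes from something in bidegree $(1,k+1)$ of the mod-$4$ module once one accounts for the degree shift of $\rho$ — produces the claimed generators in bidegrees $(0,k)$ and $(1,k+1)$. For the field-of-definition statements: a generator $\bar g\in h^{0,k}_4(F)$ is a power-type class built from a choice of fourth root of unity, so it is pulled back from $F_0$ when $\sqrt{-1}\notin F$ (where $h^{0,\ast}_4$ sees only $\{\pm1\}$) and from $F_0[\sqrt{-1}]$ otherwise; a generator $\bar g\in h^{1,k+1}_4(F)$ is, after projecting to $h^{1,k+1}$ and using the symbol description, of the form $\{g\}\cdot\tau^{k}$ for a unit $g\in F^\times$ (modulo the kernel contribution which is itself pulled back from the prime field), hence defined over $F_0(g)$ — here one invokes naturality of the whole picture under field extensions.

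The main obstacle I anticipate is pinning down the generator $\theta\in h^{0,2}_4$ precisely enough that the $\KMil$-module generation in the even case is genuinely forced, rather than merely plausible: one must verify that the extension
\[
0\to \textstyle\bigoplus_n h^{n-2m,n}\rho \to \bigoplus_n h^{n-2m,n}_4 \to \bigoplus_n h^{n-2m,n}\to 0
\]
does not split in a way that would require a second generator, and this rests on the identity $2\theta=\{-1\}$ together with $\rho^2=0$ in... well, together with the exact behaviour of $\rho\cdot$ on the relevant groups. Once that single coefficient computation is secured, the odd case and all the base-field assertions follow by naturality and by reduction to the even case, so the real work is concentrated in that one lemma about $h^{\ast,\ast}_4$.
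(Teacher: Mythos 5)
Your starting point is not correct: the sequence
\[
0 \to \textstyle\bigoplus_n h^{n-k,n} \to \bigoplus_n h^{n-k,n}_{4} \xrightarrow{\pr^4_2} \bigoplus_n h^{n-k,n}\to 0
\]
is not exact, because the Bockstein for $0\to \ZZ/2\to\ZZ/4\to\ZZ/2\to 0$ is the Milnor operation $\Sq^1$, which on $h^{n-j,n}$ sends $\tau^j x$ to $\tau^{j-1}\rho x$ when $j$ is odd and to $0$ when $j$ is even. So for $k$ even the map $\inc^2_4$ is not injective (its kernel is $\Sq^1 h^{n-k-1,n}=\tau^k\rho\,h^{n-k-1,n-k-1}$), and for $k$ odd the map $\pr^4_2$ is not surjective: its image is $\ker\bigl(\Sq^1\colon h^{n-k,n}\to h^{n-k+1,n}\bigr)$. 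This second failure is fatal to your treatment of the odd case. You propose to ``lift $\tau^k$ to $h^{0,k}_4$,'' but $\Sq^1(\tau^k)=\tau^{k-1}\rho\neq 0$ whenever $\rho\neq 0$ (e.g.\ over $\QQ$ or $\RR$), so no such lift exists. The bidegree-$(1,k+1)$ generators in the statement are not an artifact of a ``degree shift of $\rho$'' in the kernel term; they are forced because the quotient $\ker(\Sq^1)$ is, when $\rho\neq 0$, generated by classes $\tau^k\overline{g}$ with $\rho\overline{g}=0$, and controlling these requires that $\ker(\rho\cdot\colon \kmil\to\kmil)$ is generated in degrees $\leq 1$ together with a transfer argument for $F\subset F[\sqrt{-1}]$ (this is where \cite[Theorem 3.3]{ovv} and the field-of-definition claims enter). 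None of this is present in your proposal, and the field-of-definition statement for the $(1,k+1)$ generators cannot be obtained by ``naturality'' alone.

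Your even case is closer to correct but the justification is muddled: identities like $2\theta=\{-1\}$ and $\partial^4_\infty\theta=\{-1\}$ do not typecheck ($2\theta\in h^{0,k}_4$ while $\{-1\}\in H^{1,1}$), and the splitting question you worry about is settled much more simply. The correct short exact sequence for $k$ even is
\[
0\to h^{n-k,n}/\Sq^1 h^{n-k-1,n}\to h^{n-k,n}_4\to h^{n-k,n}\to 0,
\]
with both outer $\KMil$-modules generated in bidegree $(0,k)$ by (the image of) $\tau^k$; in that bidegree one has $0\to\ZZ/2\to h^{0,k}_4\to\ZZ/2\to 0$, and comparison with $k=2$ (where $h^{0,2}_4={}_4H^{1,2}\iso\ZZ/4$ over any field of characteristic $\neq 2$) shows the extension is non-split, so the single class $\theta^{k/2}$ generates. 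I would redo the argument along these lines, with the parity of $k$ governing which end of the Bockstein sequence is nontrivial, and concentrate the real work in the odd case rather than the even one.
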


\begin{proof}
The boundary map in motivic cohomology for the change of coefficients exact sequence 
\[ 
0 \to \ZZ/2\to \ZZ/4\to\ZZ/2\to 0
\]
is the motivic Steenrod operation $\Sq^1\colon h^{n-k,n}\to h^{n-k+1,n}$. 
This map is trivial if $k$ is even, 
and if $k$ is odd it exchanges one factor of $\tau$ for $\rho$.

Assuming $k$ is even there is a short exact sequence
\[ 
0\to h^{n-k,n}/\Sq^1h^{n-k-1,n} \to h^{n-k,n}_{4} \to h^{n-k,n}\to 0. 
\]
Both of the outer modules are generated by a single element in bidegree $(0,k)$, 
and there is a short exact sequence
\[ 
0 \to \ZZ/2 = h^{0,k} \to h^{0,k}_{4} \to h^{0,k} =\ZZ/2 \to 0.
\]
Comparison with the case $k=2$ shows this is a non-split extension over any field. 
The above proves the statement for $k$ even.
  
Assuming $k$ is odd, there is a short exact sequence  
\[ 
0\to h^{n-k,n} \to h^{n-k,n}_{4} \to \ker(h^{n-k,n}\xrightarrow{\Sq^1}h^{n-k+1,n})\to 0. 
\]
As modules,  
$h^{n-k,n}$ is generated in bidegree $(0,k)$ and $\ker(h^{n-k,n}\xrightarrow{\Sq^1}h^{n-k+1,n})$ is generated in $(0,k)$ or in $(1,k+1)$. 
More precisely, 
if $\rho=0$ over $F$,
it is generated in bidegree $(0,k)$, and the generator already
lives over the smallest subfield of $F$ containing $\sqrt{-1}$. 
If $\rho\neq 0$ over $F$, 
there exists a set of units $G\subset F$ such that for $g\in G$,
$\Sq^{1}(\tau^{k}\overline{g}) = \tau^{k-1}\rho\overline{g}=0$ and every element in $\ker(\Sq^1\colon h^{n-k,n}\rightarrow h^{n-k+1,n})$ is a finite $\KMil$-linear combination of elements $\tau^{k}\overline{g}$,  
for $g\in G$. 
Finally,   
in the $\kmil$-theory long exact sequence for the field extension $F\subset F[\sqrt{-1}]$ every $\overline{g}$ is the image of the transfer map of some $\overline{h}$,  
$h\in F[\sqrt{-1}]$.
The field $F_{0}(g^\prime)$, 
where $g^\prime$ is the transfer of $h$,
has the property that $\tau^{k}\overline{g}=\tau^{k}\overline{g^\prime}$ over $F$ and $\Sq^1(\tau^{k}\overline{g})=0$.
\end{proof} 

Next, we consider mod-$8$ coefficients.  
Let $F_{0}$ denote the prime field of $F$.

\begin{lemma}
\label{lem:generators-motcohommod8}
For $k\geq 0$ a fixed natural number, consider the $\KMil$-module
\[ 
\bigoplus_{n\in \ZZ} h^{n-k,n}_{8}. 
 \]
 If $k=2n$ is even, 
 it is generated by a single element $\omega^n\in h^{0,k}_{8}=\ZZ/8$, where
 $\omega\in h^{0,2}_{8}$ is defined over $F_{0}$. 
\end{lemma}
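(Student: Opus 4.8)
The plan is to run the even case of Lemma~\ref{lem:generators-motcohommod4} one further step, using the change-of-coefficients sequence $0\to\ZZ/4\xrightarrow{\cdot 2}\ZZ/8\xrightarrow{\pr^8_2}\ZZ/2\to0$ in place of $0\to\ZZ/2\to\ZZ/4\to\ZZ/2\to0$ and feeding in the mod-$4$ statement just established. The single new ingredient is the identification
\[ h^{0,k}_8\;\cong\;\ZZ/8\qquad(k\geq 0\ \text{even}).\]
Indeed, by the Beilinson--Lichtenbaum isomorphism (Voevodsky's resolution of the Bloch--Kato conjecture, available since $\Char(F)\neq 2$) one has $h^{0,k}_8\cong H^0_{\mathrm{et}}(F,\mu_8^{\otimes k})$, and $\mu_8^{\otimes 2}$, hence $\mu_8^{\otimes k}$ for $k$ even, is the constant Galois module $\ZZ/8$ because every unit of $\ZZ/8$ squares to $1$. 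In particular $h^{0,k}_8$ is generated by the image $\omega^n$ (with $k=2n$) of $1\in\ZZ/8=H^0_{\mathrm{et}}(F_0,\ZZ/8)$; this generator is defined over the prime field $F_0$, and one sets $\omega:=\omega^1\in h^{0,2}_8$.

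Granting this, the first step is to check that the connecting homomorphism $\partial^2_4\colon h^{\star,\star}_2\to h^{\star+1,\star}_4$ of the sequence above annihilates the $\KMil$-module $h^{\star-k,\star}_2$ when $k$ is even. Assembled over all weights, $\partial^2_4$ is a map of graded $\KMil$-modules, and $h^{\star-k,\star}_2$ is cyclic on $\tau^k\in h^{0,k}_2$ by Voevodsky's solution of the Milnor conjecture; so it suffices that $\partial^2_4(\tau^k)=0$, which is immediate since $\tau^k=\pr^8_2(\omega^n)$ lifts to $h^{0,k}_8$ and $\partial^2_4\circ\pr^8_2=0$ by exactness. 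Consequently $\pr^8_2\colon h^{n-k,n}_8\to h^{n-k,n}_2$ is, for every $n$, a $\KMil$-linear surjection whose kernel is $\im(\inc^4_8\colon h^{n-k,n}_4\to h^{n-k,n}_8)$.

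The last step is a Nakayama-type argument. The submodule $\KMil\cdot\omega^n\subseteq h^{\star-k,\star}_8$ maps onto $\KMil\cdot\tau^k=h^{\star-k,\star}_2$ under $\pr^8_2$, so $h^{\star-k,\star}_8=\KMil\cdot\omega^n+\inc^4_8(h^{\star-k,\star}_4)$. By the even case of Lemma~\ref{lem:generators-motcohommod4} the module $h^{\star-k,\star}_4$ equals $\KMil\cdot\theta^n$, whence $\inc^4_8(h^{\star-k,\star}_4)=\KMil\cdot\inc^4_8(\theta^n)$; and $\inc^4_8(\theta^n)$, being the image of a generator of $\ZZ/4$ under multiplication by $2$ into $\ZZ/8\cong h^{0,k}_8$, equals $\pm2\,\omega^n\in\KMil\cdot\omega^n$. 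Hence $h^{\star-k,\star}_8=\KMil\cdot\omega^n$, as asserted.

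The only step beyond bookkeeping is the identification $h^{0,k}_8\cong\ZZ/8$, i.e.\ that even powers of $\tau$ lift to $\ZZ/8$-coefficients; given this, the remainder is a diagram chase through change-of-coefficients sequences together with the mod-$4$ input. One can also bypass the Bockstein entirely: identify $h^{n-k,n}_8\cong H^{n-k}_{\mathrm{et}}(F,\mu_8^{\otimes n})$ throughout, note that cup product with $\omega^n$, i.e.\ with $1\in\ZZ/8\cong\mu_8^{\otimes k}$, is an isomorphism $H^{n-k}_{\mathrm{et}}(F,\mu_8^{\otimes(n-k)})\xrightarrow{\cong}H^{n-k}_{\mathrm{et}}(F,\mu_8^{\otimes n})$, and invoke the Bloch--Kato surjection $\KMil_{n-k}(F)/8\twoheadrightarrow H^{n-k}_{\mathrm{et}}(F,\mu_8^{\otimes(n-k)})$ to conclude that $\KMil\cdot\omega^n$ is everything.
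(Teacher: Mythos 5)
Your argument is correct, and its skeleton coincides with the paper's: run the Bockstein sequence $0\to\ZZ/4\to\ZZ/8\to\ZZ/2\to 0$ on top of the mod-$4$ case, use $\KMil$-linearity of $\partial^{2}_{4}$ together with cyclicity of $\bigoplus_n h^{n-k,n}$ on $\tau^k$ to kill the boundary in all weights, and then conclude generation from the resulting short exact sequences. The genuine difference lies in how the key input -- that $h^{0,k}_{8}\iso\ZZ/8$, equivalently that $\tau^k$ lifts to mod-$8$ coefficients -- is obtained. You get it in one stroke, uniformly in $k$ and in $F$, from the Beilinson--Lichtenbaum identification $h^{0,k}_{8}\iso H^0_{\mathrm{et}}(F,\mu_8^{\otimes k})$ and the observation that $\mu_8^{\otimes 2}$ is the constant Galois module $\ZZ/8$ (every unit of $\ZZ/8$ squares to $1$); this also hands you a canonical generator $\omega=\zeta\otimes\zeta$ defined over $F_{0}$. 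The paper instead stays inside motivic cohomology: it identifies $h^{0,2}_{8}$ with ${}_{8}H^{1,2}$, computes this to be $\ZZ/8$ over the prime field, deduces surjectivity of $\pr^{8}_{4}$ there, and propagates to general even $k$ by induction via the multiplicative structure, invoking naturality under base change to pass from $F_{0}$ to $F$. Your route is shorter and avoids the induction on $k$, at the cost of importing the mod-$8$ Beilinson--Lichtenbaum comparison (a consequence of the Bloch--Kato/Milnor conjecture machinery the paper already relies on); the paper's route is more self-contained relative to the tools it has set up and makes the surjectivity of $\pr^{8}_{4}$ explicit, which is what Corollary~\ref{cor:partial42zerooneven} reuses. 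One small point in your favour: your Nakayama step, showing $\inc^{4}_{8}(\theta^{n})=\pm 2\omega^{n}\in\KMil\cdot\omega^{n}$, makes explicit why the module is generated by the \emph{single} element $\omega^{n}$ rather than by two elements in bidegree $(0,k)$, a point the paper's final sentence passes over quickly.
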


\begin{proof}
The change of coefficients exact sequence 
\[ 0 \to \ZZ/4\to \ZZ/8\to\ZZ/2\to 0 \]
induces a long exact sequence of motivic cohomology groups
\[ 
0 
\to 
h^{0,k}_{4}
\to 
h^{0,k}_{8}
\to 
h^{0,k} 
\to 
h^{1,k}_{4}
\to 
\dotsm. 
\]
Assuming $k$ is even, 
Lemma~\ref{lem:generators-motcohommod4} implies $h^{0,k}_{4}\iso \ZZ/4$. 
Hence $h^{0,k}_{8}$ has order at most $8$, and order precisely $8$ if $h^{0,k}\to h^{1,k}_{4}$ is trivial. 
It suffices to prove this map is trivial over $F_{0}$. 
For $k=2$ note that $h^{0,2}_{8}$, 
the kernel of multiplication by $8$ on $H^{1,2}$, 
is $\ZZ/8$ when $\Char(F_{0})\neq 2$. 
The group $h^{0,2}_{4}$, 
i.e., 
the kernel of multiplication by $4$ on $H^{1,2}$, 
is $\ZZ/4$ when $\Char(F_{0})\neq 2$. 
The projection map $h^{0,2}_{8}\to h^{0,2}_{4}$ is induced by multiplication by $2$ on $H^{1,2}$, 
and hence surjective over prime fields. 
For $k>2$ even, 
using the commutative diagram
\begin{center}
\begin{tikzcd}
h^{0,2}_{8}\times h^{0,k-2}_{8} \ar[d,"{\mathrm{mult}}"] \ar[r,"{\pr^{8}_{4}}"] & h^{0,2}_{4}\times h^{0,k-2}_{4} \ar[d,"{\mathrm{mult}}"] \\
h^{0,k}_{8} \ar[r,"{\pr^{8}_{4}}"] &h^{0,2}_{4} 
\end{tikzcd}
\end{center}
we conclude the claimed surjection by induction and 
Lemma~\ref{lem:generators-motcohommod4}.
It follows that $\partial^{2}_{4}h^{0,k}=\{0\}$. 
To prove $\partial^{2}_{4}h^{n-k,n}=\{0\}$ for all $n>k$, let 
$\tau^kx\in h^{n-k,n}$, with $x\in h^{n-k,n-k}$.
We have
\[ 
\partial^{2}_{4}(\tau^kx)
=
\partial^{2}_{4}(\tau^k)\cdot x 
= 
0\cdot x 
=
0, 
\]
since $\partial^{2}_{4}$ is a $\KMil$-module map.
Thus there is a short exact sequence of $\KMil$-modules
\[ 
0 
\to 
\bigoplus_{n\in \ZZ}h^{n -k,n}_{4}/\partial^{2}_{4}h^{n-k-1,n}
\to 
\bigoplus_{n\in \ZZ}h^{n -k,n}_{8}
\to 
\bigoplus_{n\in \ZZ} h^{n -k,n}_{}
\to 
0 
\]
with outer terms generated in bidegree $(0,k)$. 
Hence so is the middle term, concluding the case where
$k$ is even.
\end{proof} 

\begin{corollary}\label{cor:partial42zerooneven}
  Let $k$ be an even natural number. Then
  $\partial^{4}_{2}\colon h^{n-k,n}_{4}\to h^{n-k+1,n}$ is 
  the zero map.
\end{corollary}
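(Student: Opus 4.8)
The plan is to reduce the vanishing to a single bidegree using the Milnor $K$-theory module structure, and then to settle that bidegree by a bookkeeping argument with the mod-$8$ coefficient sequence. First I would record that $\partial^4_2$ is the connecting homomorphism attached to the short exact coefficient sequence $0\to \ZZ/2\to \ZZ/8\to \ZZ/4\to 0$, so that it is a homomorphism of $\KMil$-modules: connecting maps for such sequences of the motivic complexes $\ZZ/m(\star)$ are linear over integral motivic cohomology, hence over $\KMil=\bigoplus_n H^{n,n}$, exactly as the analogous principle is used for $\partial^2_4$ in the proof of Lemma~\ref{lem:generators-motcohommod8}. By Lemma~\ref{lem:generators-motcohommod4}, for $k=2m$ even the $\KMil$-module $\bigoplus_{n} h^{n-k,n}_4$ is generated by the single element $\theta^m\in h^{0,k}_4\iso \ZZ/4$. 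It therefore suffices to prove $\partial^4_2(\theta^m)=0$ in $h^{1,k}$, since the source $h^{n-k,n}_4$ vanishes for $n<k$ and for $n\geq k$ every element is a $\KMil$-multiple of $\theta^m$.

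For the bidegree $(0,k)$ I would invoke the long exact sequence on motivic cohomology attached to $0\to \ZZ/2\to \ZZ/8\to \ZZ/4\to 0$, whose relevant portion is
\[
h^{-1,k}_{4}\to h^{0,k}_{2}\to h^{0,k}_{8}\xrightarrow{\pr^{8}_{4}} h^{0,k}_{4}\xrightarrow{\partial^{4}_{2}} h^{1,k}_{2}.
\]
Here $h^{-1,k}_4=0$, so $h^{0,k}_2\hookrightarrow h^{0,k}_8$; by Lemma~\ref{lem:generators-motcohommod4} one has $h^{0,k}_2\iso \ZZ/2$ and $h^{0,k}_4\iso\ZZ/4$, while the proof of Lemma~\ref{lem:generators-motcohommod8} shows $h^{0,k}_8\iso \ZZ/8$ for $k$ even. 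Comparing orders forces $\pr^8_4$ to be surjective onto $h^{0,k}_4$, whence $\partial^4_2$ vanishes on $h^{0,k}_4$; in particular $\partial^4_2(\theta^m)=0$. Together with the $\KMil$-linearity and the generation statement recalled above, this shows $\partial^4_2$ is zero on every summand $h^{n-k,n}_4$, which is the assertion.

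The one genuinely arithmetic ingredient is the identity $h^{0,k}_8\iso \ZZ/8$ for $k$ even, equivalently the vanishing of $\partial^2_4$ on $h^{0,k}$; this is precisely what the (prime-field-by-prime-field, mildly $2$-adic) computation inside the proof of Lemma~\ref{lem:generators-motcohommod8} establishes, via $h^{0,k}_{2^j}\iso {}_{2^j}H^{1,k}$ and the surjectivity of multiplication by $2$ from ${}_{8}H^{1,k}(F_0)$ onto ${}_{4}H^{1,k}(F_0)$ over the prime field $F_0$. Everything else — the $\KMil$-linearity of $\partial^4_2$, the generation of $\bigoplus_n h^{n-k,n}_4$ by $\theta^m$, and the order count in the long exact sequence — is formal, so I expect the proof to amount to the reduction in the first paragraph followed by the short exact-sequence argument in the second, with a pointer to the proof of Lemma~\ref{lem:generators-motcohommod8} for the order of $h^{0,k}_8$.
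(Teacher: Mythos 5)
Your proof is correct and takes essentially the same route as the paper's: the paper's proof is a one-line pointer to the argument of Lemma~\ref{lem:generators-motcohommod8} applied to the generator $\theta^{k/2}\in h^{0,k}_{4}$ supplied by Lemma~\ref{lem:generators-motcohommod4}, which is exactly the reduction-to-a-single-generator via $\KMil$-linearity followed by the order count in the long exact sequence for $0\to\ZZ/2\to\ZZ/8\to\ZZ/4\to 0$ that you spell out. No gaps.
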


\begin{proof}
  This follows as in the proof of Lemma~\ref{lem:generators-motcohommod8},
  using a generator in $h^{0,k}_{4}$
  from~\ref{lem:generators-motcohommod4}.
\end{proof}

\begin{lemma}
\label{lem:modcohom8-surjective}
Multiplication with a generator $\omega\in h^{0,2}_{8}$ induces an isomorphism
\[ 
\bigoplus_{n\in \ZZ}h^{n-k,n}_{8}
\overset{\cong}{\to}
\bigoplus_{n\in \ZZ}h^{n-k,n+2}_{8} 
\]
of $\KMil$-modules for every natural number $k$.
\end{lemma}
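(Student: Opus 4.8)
The plan is to identify multiplication by $\omega$ with a twisting isomorphism in étale cohomology. By the Beilinson--Lichtenbaum property --- a consequence of Voevodsky's solution of the Milnor conjecture, used repeatedly above --- the comparison map
\[ h^{p,q}_{8} \longrightarrow H^{p}_{\mathrm{et}}\bigl(F;\mu_{8}^{\otimes q}\bigr) \]
is an isomorphism whenever $p\le q$, and it is multiplicative. In the $\KMil$-module $\bigoplus_{n}h^{n-k,n}_{8}$ every bidegree $(n-k,n)$ satisfies $n-k\le n$ because $k\ge 0$, and likewise for the target bidegrees $(n-k,n+2)$; moreover for $n<0$ both $h^{n-k,n}_{8}$ and $h^{n-k,n+2}_{8}$ vanish for degree or weight reasons, so multiplication by $\omega$ is trivially bijective in those weights. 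Hence it suffices to prove that, for $n\ge 0$, multiplication by $\omega$ induces an isomorphism $H^{n-k}_{\mathrm{et}}(F;\mu_{8}^{\otimes n})\to H^{n-k}_{\mathrm{et}}(F;\mu_{8}^{\otimes n+2})$.

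The next step is to record that the Galois module $\mu_{8}^{\otimes 2}$ is constant over any field $F$ with $\Char(F)\neq 2$: its Galois action is obtained from the $8$th cyclotomic character $\chi\colon G_{F}\to(\Z/8)^{\times}$ by composing with the squaring homomorphism, and $(\Z/8)^{\times}\cong(\Z/2)^{2}$ is annihilated by squaring, so $\chi^{2}$ is trivial. Consequently $H^{0}_{\mathrm{et}}(F;\mu_{8}^{\otimes 2})\cong \Z/8$ --- which via Beilinson--Lichtenbaum recovers the identification $h^{0,2}_{8}\cong\Z/8$ underlying Lemma~\ref{lem:generators-motcohommod8} --- and the image of the generator $\omega$ is a generator of this cyclic group, equivalently a trivialization, i.e., an isomorphism $\phi\colon\Z/8\xrightarrow{\ \cong\ }\mu_{8}^{\otimes 2}$ of étale sheaves (induced from one over $F_{0}$, compatibly with the base change $F_{0}\subset F$, so that the identifications are natural in $F$).

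Finally, cup product with the image of $\omega$ is, by the projection formula, the map $H^{n-k}_{\mathrm{et}}(F;\mu_{8}^{\otimes n})\to H^{n-k}_{\mathrm{et}}(F;\mu_{8}^{\otimes n}\otimes_{\Z/8}\mu_{8}^{\otimes 2})=H^{n-k}_{\mathrm{et}}(F;\mu_{8}^{\otimes n+2})$ induced by $\mathrm{id}\otimes\phi$, hence an isomorphism; transporting back through Beilinson--Lichtenbaum and summing over $n$ gives the asserted bijection, which is a $\KMil$-module homomorphism because cup product with $\omega$ commutes with cup product by classes in $\KMil=\bigoplus_{n}H^{n,n}$ by associativity and graded-commutativity of the product on $h^{\ast,\ast}_{8}$. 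The only delicate point is the compatibility of the comparison map with products and the identification of $\omega$ with the trivialization $\phi$; this is standard, since the comparison arises from a multiplicative map of motivic to étale (hyper)sheaves and $\omega$ was chosen as a generator of the cyclic group $h^{0,2}_{8}$. Alternatively one may bypass étale cohomology: $\pr^{8}_{2}\omega=\tau^{2}$, multiplication by $\tau^{2}$ is an isomorphism $h^{p,q}\to h^{p,q+2}$ for $q\ge p$, and a two-step induction over the coefficient sequences $0\to\Z/2\to\Z/4\to\Z/2\to 0$ and $0\to\Z/2\to\Z/8\to\Z/4\to 0$ together with the five lemma propagates the isomorphism from mod-$2$ to mod-$8$ coefficients.
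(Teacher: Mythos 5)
Your primary argument is correct but takes a genuinely different route from the paper's, while your closing ``alternative'' is essentially the paper's proof. The paper stays entirely inside motivic cohomology: it first shows that multiplication by a generator $\theta\in h^{0,2}_{4}$ is an isomorphism by comparing it, via the Bockstein ladder for $0\to\Z/2\to\Z/4\to\Z/2\to 0$ and the five lemma, with multiplication by $\tau^{2}$ on the mod-$2$ groups (an isomorphism in the range $p\le q$ by the Milnor conjecture), and then runs the five lemma once more on the ladder for $0\to\Z/4\to\Z/8\to\Z/2\to 0$ to pass from $\theta$ to $\omega$ --- the same two-step Bockstein induction you sketch at the end, up to the immaterial choice of the second coefficient sequence. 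Your main route instead uses the Beilinson--Lichtenbaum isomorphism $h^{p,q}_{8}\cong H^{p}_{\mathrm{et}}(F;\mu_{8}^{\otimes q})$ for $p\le q$ together with the observation that $\mu_{8}^{\otimes 2}$ is a trivial Galois module (the mod-$8$ cyclotomic character lands in the exponent-$2$ group $(\Z/8)^{\times}$), so that cup product with the generator $\omega$ is just a re-identification of coefficients. This is conceptually illuminating --- it explains \emph{why} $\omega$ acts invertibly --- but at the cost of invoking Beilinson--Lichtenbaum with $\Z/8$-coefficients and the multiplicativity of the comparison map, which, while standard consequences of the Milnor conjecture, are heavier external inputs than the paper's $\tau$-invertibility plus five-lemma bookkeeping. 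Both arguments correctly exploit that every relevant bidegree satisfies $p\le q$ because $k\ge 0$, and your side remarks (vanishing for $n<0$, centrality of the degree-zero class $\omega$ for $\KMil$-linearity) are accurate.
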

\begin{proof}
First, we show that multiplication with a generator $\theta\in h^{0,2}_{4}$ induces an isomorphism
\[ 
\bigoplus_{n\in \ZZ}h^{n-k,n}_{4}
\overset{\cong}{\to}
\bigoplus_{n\in \ZZ}h^{n-k,n+2}_{4}.
\]
In effect, 
for $\tau^2\in h^{0,2}$ and $\theta\in h^{0,2}_{4}$, 
consider the diagram of Bockstein sequences
\begin{center}
    \begin{tikzcd}

\dotsm \ar[r,"{\Sq^1}"] & 
h^{m,n+1} \ar[r]\ar[d,"\tau^2"] &
h^{m,n+1}_{4} \ar[r] \ar[d,"\theta"] &
h^{m,n+1} \ar[r,"{\Sq^1}"]\ar[d,"\tau^2"] &
h^{m+1,n+1} \ar[r]\ar[d,"\tau^2"] &
\dotsm \\ 
\dotsm \ar[r,"{\Sq^1}"] & 
h^{m,n+3} \ar[r] &
h^{m,n+3}_{4} \ar[r]  &
h^{m,n+3} \ar[r,"{\Sq^1}"] &
h^{m+1,n+3} \ar[r] &
\dotsm. 
\end{tikzcd}
\end{center}
Recall $\MZ/4\to \MZ/2$ is a map of motivic ring spectra, 
and the image of $\theta$ is $\tau^2$.
Whence the middle square commutes. 
Moreover, 
for $\Sq^1\colon h^{n-k,n} \to h^{n-k+1,n}$ we have
\[ 
\Sq^1(\tau^kx) 
= 
\begin{cases} 
\tau^{k-1}\rho x & k \equiv 1\bmod 2 \\
0 & k\equiv 0 \bmod 2,
\end{cases}
\]
which implies the right-hand square commutes.
For commutativity of the left-hand square, we use that $\MZ/2\to \MZ/4$ is a $\MZ/4$-module map.
Since multiplication with $\tau^2$ is an isomorphism, so is multiplication with $\theta$ according to the five lemma.

The above feeds into the case of $\Z/8$-coefficients via the diagram of Bockstein sequences
\begin{center}\begin{tikzcd}
\dotsm \ar[r,"{\partial^{2}_{4}}"] & 
h^{m,n+1}_{4} \ar[r]\ar[d,"{\theta}"] &
h^{m,n+1}_{8} \ar[r] \ar[d,"{\omega}"] &
h^{m,n+1}_{2} \ar[r,"{\partial^{2}_{4}}"]\ar[d,"{\tau^2}"] &
h^{m+1,n+1}_{4} \ar[r]\ar[d,"{\theta}"] &
\dotsm \\ 
\dotsm \ar[r,"{\partial^{2}_{4}}"] & 
h^{m,n+3}_{4} \ar[r] &
h^{m,n+3}_{8} \ar[r]  &
h^{m,n+3} \ar[r,"{\partial^{2}_{4}}"] &
h^{m+1,n+3}_{4} \ar[r] &
\dotsm. 
\end{tikzcd}\end{center}
Commutativity of this diagram follows for the same reasons as above since the $\KMil$-module map $\partial^{2}_{4}\colon h^{n-k,n} \to h^{n-k+1,n}_{4}$ satisfies
\[ 
\theta\cdot \partial^{2}_{4}(\tau^kx) 
= 
\partial^{2}_{4}(\tau^{k+2}x).
\]
Using the five lemma we conclude that multiplication with $\theta\in h^{0,2}_{4}$ is an isomorphism.
\end{proof}

{\bf Acknowledgements.}
The authors gratefully acknowledge support by the DFG priority program 1786 "Homotopy theory and algebraic geometry" 
and the RCN Frontier Research Group Project no.~250399 "Motivic Hopf Equations."
Work on this paper took place at the Institut Mittag-Leffler in Djursholm, 
the Hausdorff Research Institute for Mathematics in Bonn;
we thank both institutions for providing excellent working conditions, hospitality, and support.
We also acknowledge the support of the Centre for Advanced Study at the Norwegian Academy of Science and Letters in Oslo, 
which funded and hosted our research project ``Motivic Geometry" during the 2020/21 academic year.
The third author acknowledges support from the Humboldt Foundation and the Radboud Excellence Initiative.

\begin{small}

\begin{center}
Institut f\"ur Mathematik, Universit\"at Osnabr\"uck, Germany.\\
e-mail: oliver.roendigs@uni-osnabrueck.de
\end{center}
\begin{center}
Institut f\"ur Mathematik, Universit\"at Osnabr\"uck, Germany.\\
e-mail: markus.spitzweck@uni-osnabrueck.de
\end{center}
\begin{center}
Department of Mathematics, University of Oslo, Norway.\\
e-mail: paularne@math.uio.no
\end{center}
\end{small}
\end{document}